\documentclass[twoside, 14pt]{article}
\usepackage{amsmath,amssymb,amsthm,mathrsfs}
\usepackage[margin=2.0cm]{geometry} 
\usepackage{lipsum}
\usepackage{titlesec,hyperref}
\usepackage{fancyhdr}
\usepackage[numbers,sort&compress]{natbib}
\usepackage{color}
\usepackage[titletoc]{appendix}

\pagestyle{fancy}
\fancyhf{}
\fancyhead[CO]{\footnotesize\it
	Well-posedness of inhomogeneous Prandtl equations}
\fancyhead[CE]{\footnotesize\it J.C.Gao, L.Y.Peng, Z.A.Yao}
\fancyfoot[CE,CO]{\footnotesize\rm\thepage}
\fancypagestyle{plain}
{
	\fancyhf{}
	
}

\linespread{1.1}

\titleformat{\subsection}{\it}{\thesubsection.\enspace}{1.5pt}{}
\titleformat{\subsubsection}{\it}{\thesubsubsection.\enspace}{1.5pt}{}

\newtheorem{theo}{Theorem}[section]
\newtheorem{lemm}[theo]{Lemma}

\newtheorem{prop}[theo]{Proposition}
\newtheorem{rema}{Remark}[section]
\numberwithin{equation}{section}

\allowdisplaybreaks

\def\p{\partial}
\def\re{\rho_e}
\def\ue{u_e}

\def\i{\int_{\mathbb{T}}}
\def\j{\int_{0}^{+\infty}}

\def\f{\frac}
\def\dl{\delta}

\def\beq{\begin{equation}}
	\def\bal{\begin{aligned}}
		\def\dal{\end{aligned}}
	\def\deq{\end{equation}}
\def\beqq{\begin{equation*}}
	\def\deqq{\end{equation*}}

\def\vr{\varrho}
\def\vrf{\varrho_\infty}
\def\uf{u_\infty}
\def\xw{\langle y\rangle}
\def\gm{\gamma}
\def\sg{\sigma_1}
\def\p{\partial}
\def\al{\alpha}
\def\var{\varepsilon}
\def\x{\mathcal{X}}
\def\y{\mathcal{Y}}
\def \e{\mathcal{E}_1(t)}
\def \ea {\mathcal{E}_2(t)}
\def\se{\mathcal{X}(x)}

\def\sy{\mathcal{Y}(x)}
\def\br{\bar{\rho}}
\def\bde{\delta_1}
\def\sde{\delta_2}
\def\ssg{\sigma_2}
\def\bu{\bar{u}}
\def\tu{\tilde{u}}
\def\tv{\tilde{v}}
\def\tq{\tilde{q}}
\def\tr{\tilde{\rho}}
\def\hu{\hat{u}}
\def\hr{\hat{\rho}}
\def\hq{\hat{q}}

\begin{document}
	\title{Local existence and uniqueness of
		solution to the two-dimensional inhomogeneous Prandtl equations
		by energy method \hspace{-4mm}}
	\author{Jincheng Gao$^\dag$\quad Lianyun Peng$^\ddag$ \quad Zheng-an Yao$^\sharp$ \\[10pt]
		\small {School of Mathematics, Sun Yat-Sen University,}\\
		\small {510275, Guangzhou, P. R. China}\\[5pt]
	}

	\footnotetext{Email: \it $^\dag$gaojch5@mail.sysu.edu.cn,
		\it $^\ddag$pengly8@mail2.sysu.edu.cn,
		\it $^\sharp$mcsyao@mail.sysu.edu.cn}
	\date{}
	
	\maketitle

	\begin{abstract}
		{
			In this paper, we consider the local  existence and uniqueness result for the inhomogeneous Prandtl equations in dimension two by energy method.
			First of all, for the homogeneous case, the local-in-time
well-posedness theory of unsteady
			Prandtl equations was obtained by
			[Alexandre, Wang, Xu, Yang, J. Am. Math. Soc., 28 (3), 745-784 (2015)]
and [Masmoudi, Wong, Comm. Pure Appl. Math., 68 (10), 1683-1741 (2015)]
			independently by energy method without any transformation.
			However, for the inhomogeneous case, the appearance of density will create
			some new difficulties for us to overcome the loss of tangential derivative
			of horizontal velocity.
			Thus, our first result is to overcome the loss of tangential derivative
			such that one can establish the local-in-time well-posedness result for the
			inhomogeneous Prandtl equations by energy method.
			Secondly, for the homogeneous case, the local-in-$x$
 well-posedness in higher regular space
			for the steady Prandtl equations was obtained by [Guo, Iyer, Comm. Math. Phys., 382 (3), 1403-447 (2021)] by energy method since
			they firstly found the good quantity(called `quotient').
            With the help of this quotient, our second result is to
            establish the local-in-$x$ well-posedness in higher regular Sobolev space
            for the steady inhomogeneous Prandtl equations.
		}
		
		\vspace*{5pt}
		\noindent{\it {\rm Keywords}}:
		inhomogeneous Prandtl equations, well-posedness theory, energy method.
		%
	\end{abstract}

	\tableofcontents

	\section{Introduction}
	
	In this paper, we are concerned with the initial-boundary value problem for the
	following inhomogeneous Prandtl equations in a periodic domain $\Omega \overset{def}{=} \{(x, y)|(x, y) \in \mathbb{T}\times \mathbb{R}^+\}$:
	\beq\label{Prandtl}
	\left\{\begin{aligned}
		&\p_t \rho+u \p_x \rho+ v \p_y \rho=0,\\
		&\rho \p_t u+\rho u \p_x u+\rho v \p_y u-\p_y^2 u+\p_x p=0,\\
		&\p_x u+\p_y v=0,\\
		&(u, v)|_{y=0}=0,
		\underset{y\rightarrow +\infty}{\lim}(\rho, u)(t,x, y)
		=(\re(t,x),\ue(t,x)),\\
		&(\rho, u)|_{t=0}=(\rho_0, u_{0})(x,y).
	\end{aligned}\right.
	\deq
	where $\rho$ denotes the density, $(u, v)$ denotes the fluid horizontal
	and vertical velocity field, $p$ denotes the pressure.
	The known function $(\re(t,x), \ue(t,x), p(t,x))$
	satisfies the Bernoulli law:
	\beq\label{Bernoulli}
	\left\{\begin{aligned}
		&\p_t \re +\ue \p_x \re=0,\\
		&\re \p_t \ue+\re \ue \p_x \ue+\p_x p=0.
	\end{aligned}\right.
	\deq
	The Prandtl equations \eqref{Prandtl}-\eqref{Bernoulli} can be derived from the inhomogeneous Navier-Stokes equations, which can be found in Appendix \ref{appendix-derivation} in detail.
	
	When the flow is steady, the system \eqref{Prandtl} is reduced to the following equations:
	\beq\label{s-Prandtl}
	\left\{\begin{aligned}
		&u \p_x \rho+ v \p_y \rho=0,\\
		&\rho u \p_x u+\rho v \p_y u-\p_y^2 u+\p_x p=0,\\
		&\p_x u+\p_y v=0,\\
		&(u, v)|_{y=0}=0,
		\underset{y\rightarrow +\infty}{\lim}(\rho, u)(x, y)
		=(\re(x),\ue(x)),\\
		&(\rho, u)|_{x=0}=(\rho_0, u_{0})(y).
	\end{aligned}\right.
	\deq	
	In this paper, we deal with the case
	$(\re(t, x), \ue(t, x))=(\varrho_\infty, u_\infty)$, where $\varrho_\infty$ and $u_\infty$ are constants. It means $\p_x p=0$.
	We now review some related work to the system \eqref{Prandtl} and \eqref{s-Prandtl} in the sequence.
	
	\textbf{(I) Some results for unsteady Prandtl equations}
	
	The vanishing viscosity limit of the incompressible Navier-Stokes equations that,
	in a domain with Dirichlet boundary condition, is an important problem in
	both physics and mathematics.
	As the viscosity coefficient tends to zero, the solution undergoes a sharp transition
	from a solution of the Euler system to the zero non-slip boundary condition on boundary of the
	Navier-Stokes system. This sharp transition will lead to the formation of the boundary layer.
	Indeed, Prandtl \cite{Prandtl} derived the Prandtl equations for boundary layer
	from the incompressible Navier-Stokes equations with non-slip boundary condition. If the density in \eqref{Prandtl} is a constant, this system is the classical Prandtl equations.
	Now, let us introduce the related matter of well-posedness results for the Prandtl equation.
	If the tangential velocity field in the normal direction to the boundary satisfies
	the monotonicity condition, Oleinik \cite{{Oleinik1999},{Oleinik4}} applied the Crocco transform
	to establish the global-in-time regular solutions on $[0, L]\times \mathbb{R}^+$
	for small $L$, and local-in-time solutions on $[0, L]\times \mathbb{R}^+$ for arbitrary
	large by finite $L$.
	Under the monotonicity condition and a favorable pressure gradient of the Euler flow, the
	global-in-time weak solutions were obtained in \cite{Xin-Zhang-2004} for arbitrarily $L$ by Xin and Zhang. Very recently, Xin, Zhang and Zhao \cite{Xin2022} proved that such a weak solution is unique and in fact is a classical solution in some sense.
	It should be noted that all of the above results are achieved by using Crocco transformations.
	By taking care of the cancellation in the convection term to overcome the
	loss of derivative in the tangential direction of velocity, the researchers
	in \cite{Xu-Yang-Xu} and \cite{Masmoudi} independently used the simply
	energy method to establish well-posedness theory for the
	two-dimensional Prandtl equations in the framework of Sobolev space.
	For more results in this direction, the interested readers can refer to
	the well-posedness results in the analytic or Gevrey setting without monotonicity
	\cite{{Sammartino-Caflisch1},{Sammartino-Caflisch2},{Kukavica-Vicol-2013},{Li-Yang2020},{Li-Yang2017},{Li-Wu-2016},
		{Lombardo-Cannone-Sammartino-2003},{Ignatova-Vicol-2016},{David-Masmoudi-2015},{Paicu2021}},
	ill-posedness results in the Sobolev setting without monotonicity
	\cite{{David-Dormy-2010},{David-Nguyen-2012}, {Liu-Yang-JMPA}}, generic invalidity of boundary
	layer expansions in the Sobolev spaces \cite{{Grenier-Guo-Nguyen-2015},{Grenier-Guo-Nguyen-2016-01},
		{Grenier-Guo-Nguyen-2016-02},{Guo-Nguyen-2011},
		{Grenier-Nguyen-2017},{Grenier-Nguyen-2018},{Grenier-Nguyen-2019}}
	and references therein.
	It should be pointed out that most works focus on two-dimensional case, and there are only a few results in the three-dimensional case such as \cite{Li-Masmoudi-2022, Liu-Wang-Yang2016,Liu-Wang-Yang2017,Luo-Xin2018}.
	
	\textbf{(II) Some results for steady Prandtl equations}
	
	If the density in \eqref{s-Prandtl} is  constant, this system is the classical steady Prandtl equations. It should be pointed out that the boundary conditions
$\eqref{s-Prandtl}_4$ are in themselves not enough to determine uniquely a solution
of Prandtl equations. In order to obtain the well-set problem, one should suppose
the initial data condition at the initial position $x=0$.
The initial problem of steady Prandtl equations is analogous to the initial
value problem in the theory of parabolic partial differential equations.
Thus, the coordinate $x$ plays the role of time and the velocity component
$u$ plays the role of a temperature.
The local well-posedness result
	was obtained by Oleinik \cite{Oleinik3}, who used the Von-Mises transformation
	and maximum principle. Due to the degenerate property  of horizontal velocity
	near the boundary, it is hard to obtain the higher regularity for the steady
	Prandtl equation. This problem was settled by Guo and Iyer \cite{Guo-Iyer-2021}
	since they found the good unknown quantity (called ``quotient") to establish the closed energy estimate.
	By the way, the estimates for the ``quotient" play an important role in establishing the validity of the Prandtl layer expansion, see \cite{Guo-Iyer-2023-1, Iyer-Masmoudi-2020}.
	And \cite{Wang-Zhang-2021} established the global $C^{\infty}$ regularity by using further maximum principle techniques despite the degeneracy near the boundary.
	For the incompressible steady Navier-Stokes equations, Guo and Nguyen \cite{Guo-Nguyen2} justified the boundary layer expansion for the flow with a non-slip boundary
	condition on a moving plate.
	This result has been extended to the case of a rotating disk and to the case
	of nonshear Euler flows(\cite{{Iyer-ARMA},{Iyer-SIAM}}).
	Recently, Guo and Iyer \cite{Guo-Iyer} studied the boundary layer expansion for the small viscous flows with the classical no slip boundary conditions or on the static plate.
	This work was extended to the case of global theory in the $x-$variable
	for a large class of boundary layer with sharp decay rates.
	For more results about the boundary layer expansion, the reader should consult
	\cite{{Li-Ding-2020},{Gao-Zhang-2020},{Gerard-Varet-2019-ARMA}}.
	In terms of the asymptotic behavior of the solution, Serrin \cite{Serrin-1967}
	used the maximum principle techniques to single out that similarity solution as those which asymptotically develop downstream, whatever may be the state of motion at the initial position at $x=0$.
	In the case of localized data near the Blasius solution, Iyer \cite{Iyer-2020-ARMA}
	applied the energy method instead of maximum principle method for the good known quantity to establish specific convergence rate.
	Recently Wang and Zhang \cite{Wang-Zhang-2023} used the maximum principle techniques to prove the explicit decay estimate for general initial data with exponential decay.
	There are many results on the steady Prandtl equations, for the dynamic stability in \cite{Guo-Wang-Zhang-2023}, for the separation boundary layer in \cite{Dalibard-2019,Shen-Wang-Zhang-2021}.

	To the best of authors' knowledge,
there are no results concerning the well-posedness of the inhomogeneous
Prandtl equations \eqref{Prandtl} and \eqref{s-Prandtl}.
	\textit{Thus, our purpose in this paper is to establish the well-posedness result for the inhomogeneous Prandtl equations \eqref{Prandtl} and \eqref{s-Prandtl} respectively.} Motivated by \cite{Masmoudi}, our first target is to use the energy method to investigate the local-in-time well-posedness  for the inhomogeneous unsteady Prandtl equations \eqref{Prandtl}.
Motivated by the work on the homogeneous case \cite{Guo-Iyer-2021}, our second target is to establish the local-in-$x$ well-posedness result in higher regular Sobolev space for the inhomogeneous steady Prandtl equations \eqref{s-Prandtl}.
	Before we state our results, we give some definitions at first.
	
	\textbf{Definitions.}
	Now, let us define the weighted $H^s$ norm  $\|\cdot\|_{H^{s}_{\gamma}}$ ,  $\|\cdot\|_{H^{s}_{y,\gamma}}$ and
	$\|\cdot\|_{\bar{H}^{s}_{y,\gamma}}$ as follows,
	\begin{align*}
		&\|f\|_{H^{s}_\gamma}^2 \overset{def}{=}
		\sum_{|\alpha|\le s}\|\xw^{\gamma+\alpha_2}\p^\alpha f
		\|_{L^2(\mathbb{T}\times \mathbb{R}^+)}^2;\\
		&\|f\|_{H^{s}_{y,\gamma}}^2\overset{def}{=}
		\sum_{|\alpha|\le s}\|\xw^{\gamma}\p^\alpha f
		\|_{L^2(\mathbb{R}^+)}^2;\\
		&\|f\|_{\bar{H}^{s}_{y,\gamma}}^2\overset{def}{=}
		\sum_{\alpha_2 \le s}\|\xw^{\gamma}\p_y^{\alpha_2} f \|_{L^2(\mathbb{R}^+)}^2,
	\end{align*}	
	where $\alpha=(\alpha_1, \alpha_2)$, $|\alpha| =\alpha_1+\alpha_2$, $\p^\alpha \overset{def}{=} \p_x^{\alpha_1} \p_y^{\al_2}$ and the weight is defined by $\langle y \rangle \overset{def}{=} 1 + y$.
	Notice that $H_{0}^m \neq H^m$, where $H^m=H^m(\mathbb{T}\times \mathbb{R}^+)$ is the classical Sobolev space.
	For any $\beta=(\beta_1,\beta_2)$, set $\beta \le \alpha(\text{or} ~\beta < \alpha)$ means $|\beta| \le |\alpha|( \text{or} ~|\beta| < |\alpha|)$ and $\beta_1 \le \alpha_1, \beta_2 \le \alpha_2$.
	We shall use the notation
	$C_{\alpha}^{\beta}\overset{def}{=}C_{\alpha_1}^{\beta_1} C_{\alpha_2}^{\beta_2}$ to denote the quadratic coefficient, $[A, B] \overset{def}{=} AB - B A$ to denote the commutator between $A$ and $B$. $C_{D,E}$ means the constant $C$ depends on the parameters $D$ and $E$. For the sake of simplicity, let us set
$
	\int f(x,y) dxdy \overset{def}{=} \j\i  f(x,y)dxdy.
$
	
	First of all, for the inhomogeneous unsteady Prandtl equations \eqref{Prandtl}, let us denote
	$w(t,x,y) \overset{def}{=} \p_y u(t,x,y)$, $ \vr(t,x,y)\overset{def}{=}\rho(t,x,y)-\varrho_\infty$
	and $w|_{t=0}=w_0
	\overset{def}{=}\p_y u_{0}(x,y)$.
	Define the energy norm
	\beqq
	\begin{aligned}
		\mathcal{E}(t) \overset{def}{=}
		&\sum_{|\alpha|\le 6, \alpha_1 \le 5}
		\|\p^\alpha w \xw^{\gm+\alpha_2}\|_{L^2}^2
		+\sum_{|\alpha|\le 6, \alpha_1 \le 5}
		\|\p^\alpha \vr \xw^{\sg+\alpha_2}\|_{L^2}^2\\
		&+\|\vr_g \xw^{\gm}\|_{L^2}^2+\|w_g\xw^{\gm}\|_{L^2}^2
		+\sum_{|\alpha|\le 2}\! \xw^{2\sg+2\alpha_2}|\p^\alpha w|^2,
	\end{aligned}
	\deqq
	and the dissipation norm
	\beqq
	\begin{aligned}
		\mathcal{D}(t) \overset{def}{=}& \sum_{|\alpha|\le 6, \alpha_1 \le 5}
		\|\p_y \p^\alpha w \xw^{\gm+\alpha_2}\|_{L^2}^2
		+\|\p_y w_g \xw^{\gm} \|_{L^2}^2,
	\end{aligned}
	\deqq
	where
	\beqq
	(g_w, g_\vr)\overset{def}{=}(\frac{\p_y w}{w}, \frac{\p_y \vr}{w}),\quad
	(\vr_g, w_g)\overset{def}{=}
	(\p_x^6 \vr-g_\vr \p_x^6 u, \p_x^6 w-g_w \p_x^6 u).
	\deqq	
	Therefore, our first result can be stated as follows:
	\begin{theo}[Well-posedness result of the unsteady Prandtl equation]\label{main-result-un}
		Let the constants satisfy $\gm > \frac32$, $\gm+\frac12<\sg \leq 2\gm-1$, $\bde>0$ and $\kappa_2 \ge 4 \kappa_1>0$.
		For positive constants $(\varrho_\infty, u_\infty)$, suppose
		the outer Euler flow $(\re(t, x), \ue(t, x))=(\varrho_\infty, u_\infty)$.
		Assume that the initial data satisfy
		$\mathcal{E}(0)<+\infty$
		and
		$$
		\quad w_0(x,y) \xw^{\sg} \ge 2 \bde,
		\quad 2\kappa_1 \le \rho_0(x,y) \le \frac{\kappa_2}{2},
		$$
		for all $(x,y) \in  \mathbb{T}\times \mathbb{R}^+$.
		Then, there exist a time
		$0<T_0<<1$
		(depending on initial data $\mathcal{E}(0)$ and parameters $\bde$,  $\kappa_1$, $\kappa_2$, $\gm$, $\sg$)
		and a unique classical solution $(\rho, u, v)$ to the inhomogeneous
		unsteady Prandtl equations \eqref{Prandtl} satisfying
		\beq\label{estimate-w,rho}
		w(t,x,y) \xw^{\sg}\ge \bde, \quad  \quad \kappa_1 \leq \rho(t,x,y) \le \kappa_2,
		\deq
		for all $(t,x,y) \in [0,T_0]\times \mathbb{T}\times \mathbb{R}^+$ and
		\beq\label{estimate-X}
		\begin{aligned}
			\underset{t\in [0, T_0]}{\sup} \mathcal{E}(t)
			+ \int_0^{T_0}\mathcal{D}(t)d t
			\le 2 \mathcal{E}(0).
		\end{aligned}
		\deq
	\end{theo}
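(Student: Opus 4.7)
The plan is to adapt the Masmoudi--Wong ``good unknown'' energy method to the inhomogeneous setting, where the density transport equation introduces a second tangential derivative loss that must be handled by an additional good unknown $\vr_g$ alongside the usual $w_g$. The overall strategy is: (a) regularize the system, (b) derive uniform a priori estimates in $\mathcal{E}(t)$, (c) pass to the limit, (d) prove uniqueness. Approximate solutions can be built by adding a small parabolic regularization $-\ep\p_x^2$ to both the density and momentum equations, solving the resulting linear system by iteration, and passing $\ep\to 0$ via a compactness argument.

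For the a priori estimate, the bound $\kappa_1\le\rho\le\kappa_2$ follows from $\p_t\rho+u\p_x\rho+v\p_y\rho=0$ by the maximum principle along characteristics, using that $(u,v)$ stays Lipschitz as long as $\mathcal{E}(t)$ is bounded. The pointwise lower bound $w\xw^{\sg}\ge\bde$ comes from deriving the convection-diffusion equation satisfied by $W\overset{def}{=}w\xw^{\sg}$ (differentiate the momentum equation in $y$ and multiply by $\xw^{\sg}$) and applying the weak maximum principle, with forcing controlled by $\mathcal{E}(t)$. Weighted Sobolev estimates on $\p^\alpha w$ and $\p^\alpha\vr$ for $|\alpha|\le 6$ with $\alpha_1\le 5$ are then routine: at most five tangential derivatives ever land on the velocity factor, and the free $\p_y$ recovers the lost tangential derivative through $v=-\int_0^y\p_x u$.

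The main obstacle is the top-order estimate for $\p_x^6$. Applying $\p_x^6$ to the momentum equation produces the bad term $\p_x^6 v\cdot\p_y u=-(\int_0^y\p_x^7 u)\p_y u$, which cannot be absorbed by integration by parts, and commuting $\p_x^6$ through $\rho u\p_x u+\rho v\p_y u$ generates further bad contributions involving $\p_x^6\vr$ that carry the same structural loss. The homogeneous cancellation by $w_g=\p_x^6 w-g_w\p_x^6 u$ alone is insufficient, which is precisely the motivation for $\vr_g=\p_x^6\vr-g_\vr\p_x^6 u$ with $g_\vr=\p_y\vr/w$. I would first verify that $(g_w,g_\vr)$ satisfy pure convection equations (no $\p_y^2$ terms) whose right-hand sides are quadratic in lower-order quantities already controlled by $\mathcal{E}(t)$. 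Subtracting then yields coupled evolution equations for $(w_g,\vr_g)$ whose worst tangential-derivative terms cancel pairwise. Weighted $L^2$ estimates with weight $\xw^{\gm}$ close the top order and recover the dissipation $\|\p_y w_g\xw^{\gm}\|_{L^2}^2$ in $\mathcal{D}(t)$; there is no corresponding dissipation for $\vr_g$ because the density equation is pure transport. The assumptions $\gm>\f32$ and $\gm+\f12<\sg\le 2\gm-1$ enter here: the first gives weighted $L^\infty$ control of $g_w,g_\vr$ via Sobolev embedding combined with the lower bound on $w\xw^{\sg}$, while the upper constraint keeps the weight structure consistent when integrating by parts against the $v$-term $-\int_0^y\p_x^7 u$.

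Assembling these ingredients yields a closed differential inequality $\f{d}{dt}\mathcal{E}(t)+\mathcal{D}(t)\le C(\mathcal{E}(t))$ for a continuous $C$. A standard continuation argument provides the short time $T_0$ (depending on $\mathcal{E}(0),\bde,\kappa_1,\kappa_2,\gm,\sg$) on which \eqref{estimate-X} holds with constant $2$; the pointwise bounds \eqref{estimate-w,rho} then survive by continuity because the initial gaps are strict ($2\bde$ and $\kappa_i/2$). Uniqueness is established by performing the same energy argument on the difference of two solutions at a reduced regularity level, where a single application of the good-unknown cancellation suffices, and closing by Gronwall's inequality.
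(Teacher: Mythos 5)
Your proposal follows the paper's strategy faithfully in its broad strokes: the $-\varepsilon\p_x^2$ parabolic regularization, the pair of good unknowns $(w_g,\vr_g)$ with $g_w=\p_y w/w$, $g_\vr=\p_y\vr/w$, the weighted energy estimate with dissipation only in $\p_y w_g$, the closed differential inequality, the $\varepsilon\to 0^+$ compactness limit, and the Gronwall-type uniqueness argument on good-unknown differences. These are precisely the moves the paper makes. There are, however, two genuine gaps in the plan as stated.

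First, you describe the weighted Sobolev estimates for $\p^\alpha w$ with $|\alpha|\le 6$, $\alpha_1\le 5$ as ``routine,'' but the cases with large $\alpha_2$ are not. When you integrate by parts in $y$ against the diffusive term, you pick up boundary integrals involving $\p_y^j w|_{y=0}$ for $j$ as large as $7$, which are not controlled by the energy and cannot be discarded. Handling them requires the boundary reduction identities expressing $\p_y^3 w|_{y=0}$, $\p_y^5 w|_{y=0}$, $\p_y^7 w|_{y=0}$ in terms of lower-order quantities through repeated use of the vorticity equation (the paper devotes Appendix \ref{appendix-b} to precisely this). Moreover, the very first of these cancellations and the vanishing of boundary terms in $I_4$, $I_5$ rely on $\p_y w|_{y=0}=0$, which holds only because the outer Euler flow is a constant $(\vrf,\uf)$; this is flagged explicitly in Remark 1.3 of the paper and restricts the theorem to the constant-outer-flow case. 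Your proposal never invokes this boundary condition, and without it the argument does not close.

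Second, the energy norm $\mathcal{E}(t)$ contains the pointwise weighted quantity $\sum_{|\alpha|\le 2}\xw^{2\sg+2\alpha_2}|\p^\alpha w|^2$. You only address the $|\alpha|=0$ case through the maximum principle giving $w\xw^{\sg}\ge\bde$. The corresponding weighted $L^\infty$ bounds on $\p^\alpha w$ for $|\alpha|=1,2$ with weight $\sg+\alpha_2$ (rather than $\gm+\alpha_2$, which appears in the $L^2$ part of the energy) do not follow by Sobolev embedding from the $L^2$ estimates precisely because the weights differ; the paper (Lemma \ref{lemma26}) instead constructs auxiliary quantities $B_\alpha = \p^\alpha w\xw^{\sg+\alpha_2}$, derives a coupled parabolic system for $I=\sum|B_\beta|^2$, and runs a maximum-principle argument together with a trace-in-time estimate on the boundary $\{y=0\}$. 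This is a distinct structural step, not part of what one would call routine weighted Sobolev estimates. As a minor additional comment, the role you assign to the constraint $\sg\le 2\gm-1$ (weight consistency against $\int_0^y\p_x^7 u$) does not match the paper's: that constraint is needed to estimate the commutator $[\p^\alpha,u]\p_x\vr$ in the density equation via the Morse-type inequality (see estimate \eqref{2304} and Remark 1.2), while the good-unknown cancellation already removes the $\p_x^7 u$ term.
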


	\begin{rema}
		Due to the relation $w \xw^\gamma \in L^2$ and $w \xw^{\sg}\ge \bde$,
		we need to require $\sg$ and $\gamma$ satisfy the relation $\sg>\gamma+\frac12$.
		In order to control the nonlinear term in density equation(see estimate \eqref{2304}),
		we require $\sg \le 2\gamma-1$. Thus, the weight index $\sg$ should satisfy
		$\gamma+\frac12<\sg\le 2\gamma-1$.
	\end{rema}
	
	\begin{rema}
		The outer constant Euler flow will give rise to the important boundary condition
		$\p_y w|_{y=0}=0$. This boundary condition will play an important role
		as we deal with the nonlinear term $I_{4}$(see \eqref{2210}).
		Therefore, the local-in-time well-posedness in Theorem \ref{main-result-un}
		holds only for the case of outer constant Euler flow.
	\end{rema}

	\begin{rema}
		For the sake of simplicity, the local-in-time existence and uniqueness results
		in Theorem \ref{main-result-un} are established only under the $H^6$-framework.
		However, it should be pointed out that one can establish
		similar result in more higher regular Sobolev space.
	\end{rema}

	Next, for the inhomogeneous steady Prandtl equations \eqref{s-Prandtl}, we denote $\br(x,y)\overset{def}{=}\rho(x,y)-\varrho_{\infty}$ and $\bu(x,y)\overset{def}{=}u(x,y)-u_{\infty}$.
	Let us define the energy norm
	\beqq
	\begin{aligned}
		\mathcal{X}(x)\overset{def}{=}
		\sum_{|\alpha|\le m}\|\p^\alpha \br \|_{L_y^2}^2
		+\sum_{|\alpha|\le m}
		\|\sqrt{\rho} u \p^{\al}\p_y (\f{v}{u}) \xw^{\ssg}\|_{L_y^2}^2
		+\|\bu \|_{L_y^2}^2+\|\p_y \bu \|_{L_y^2}^2,
	\end{aligned}
	\deqq
	and the dissipation norm
	\beqq
	\mathcal{Y}(x)\overset{def}{=}
	\sum_{|\alpha|\le m}\|\sqrt{u}\p_y^2 \p^\alpha(\f{v}{u}) \xw^{\ssg}\|_{L_y^2}^2.
	\deqq
	
	Our second result can be stated as follows:
	\begin{theo}
		[Well-posedness result of the steady Prandtl equation]\label{main-result-steady}
		Let the constants satisfy $m \ge 3$, $\ssg \ge 2$ and  $\kappa_3>0$.
		For positive constants $(\varrho_\infty, u_\infty)$, suppose
		the outer Euler flow $(\re(x), \ue(x))=(\varrho_\infty, u_\infty)$.
		Assume $(\rho_0(y)-\varrho_\infty, u_0(y)-u_\infty)\in C^\infty(\mathbb{R}^+)$
		decays fast and satisfies the following conditions:
		\beq\label{stherom-intial-u}
		u_0(y)>0 ~~\text{for~all}~ y>0, \quad u_0(0)=0,   \quad \lim_{y \rightarrow+\infty} u_0(y)= u_\infty, \quad u_0'(0)>0, \quad u_0''(0)=0,
		\deq
		and for all $y \in \mathbb{R}^+$,
		\beq\label{density-assumption}
		\rho_0(y) \geq \kappa_3.
		\deq
		Suppose also the generic compatibility conditions hold at the corner $(0, 0)$
		up to order $m$. Then there exists a time $0<L_a<<1$
		(depending on initial data $\rho_0$, $u_0$ and parameters $m$, $\ssg$, $\kappa_3$)
		and a unique classical solution $(\rho, u, v)$ to the inhomogeneous
		steady Prandtl equations \eqref{s-Prandtl} satisfying
		\beq \label{estimate-s-rho}
		u(x,y)>0, \quad \rho(x,y) \ge \kappa_3,
		\deq
		for all $(x,y) \in [0,L_a]\times \mathbb{R}^+$ and
		\beq\label{estimate-uX}
		\begin{aligned}
			\underset{x \in [0, L_a]}{\sup} \se+ \int_0^{L_a}{\sy}d x
			\le 2C_{\rho_0, u_0}.
		\end{aligned}
		\deq
	\end{theo}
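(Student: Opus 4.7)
The plan is to adapt the Guo--Iyer scheme \cite{Guo-Iyer-2021} to the inhomogeneous setting, viewing $x$ as a time variable. I will work with the quotient $q \overset{def}{=} v/u$, whose evolution equation does not lose a tangential derivative, as the principal unknown alongside $\br$. The new difficulty compared to the homogeneous case is the transport equation for the density, which must be controlled in the same $H^m$-scale as the quotient.

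\textbf{Deriving the good equation for $q$.} First I will divide the momentum equation $\rho u \p_x u + \rho v \p_y u = \p_y^2 u$ by $\rho u^2$ and use $\p_y v = -\p_x u$ to obtain the identity
\[
\p_y q = -\f{\p_y^2 u}{\rho u^2}.
\]
Differentiating the momentum equation once in $y$ and substituting via this identity produces a quasilinear parabolic (in $x$) equation for $\p_y q$ of schematic form
\[
\rho u\, \p_x(\p_y q) - \p_y\bigl(u^2 \p_y^2 q\bigr) = \mathcal{R}(\rho, u, q),
\]
in which no tangential derivative of $u$ is lost and the diffusion coefficient degenerates at $y=0$. Applying $\p^\al$ for $|\al| \le m$ and testing against $\p^\al \p_y q \cdot \xw^{2\ssg}$ will produce, after integration by parts, the energy contribution $\|\sqrt{\rho}\, u\, \p^\al \p_y q\, \xw^{\ssg}\|_{L_y^2}^2$ and the dissipation $\|\sqrt{u}\, \p_y^2 \p^\al q\, \xw^{\ssg}\|_{L_y^2}^2$ appearing in $\se$ and $\sy$; the assumption $\ssg \ge 2$ is used so that the boundary and weight-derivative terms generated by the weighted integration by parts are absorbed by the dissipation.

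\textbf{Density estimates.} The density equation becomes $\p_x \br + q \p_y \br = 0$ after dividing by $u > 0$. The characteristic formulation $dy/dx = q(x,y)$ yields the pointwise preservation $\rho \ge \kappa_3$. For the Sobolev estimates I will apply $\p^\al$ and pair with $\p^\al \br$ in $L_y^2$; the commutator $[\p^\al, q \p_y]\br$ expands into terms that are either of lower order in $q$ (controlled by Sobolev embedding from $\se$) or of top order. The delicate top-order piece $\p^\al q \cdot \p_y \br$ is absorbed by moving one $y$-derivative off $q$ onto $\br$ and using the vertical dissipation $\sy$ to compensate.

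\textbf{Closure, existence/uniqueness, and main obstacle.} These estimates will combine into a differential inequality of the form $\f{d}{dx}\se + \sy \le C(\se)(1 + \se)$, from which \eqref{estimate-uX} follows on a short interval $[0, L_a]$ by a standard continuity argument. Positivity of $u$ will propagate from $u_0'(0) > 0$ via the Hardy bound $u(x, y) \gtrsim y$ near the wall combined with a maximum principle away from it. Existence will be obtained by a regularized iteration scheme (replacing $u$ in denominators by $u + \epsilon$ and linearizing at each step) followed by a compactness argument, while uniqueness will be established at one order of regularity below the construction, on the difference of two solutions. I expect the hardest part to be closing the top-order coupling between $\br$ and $q$: the cross term $\p^\al q \cdot \p_y \br$ in the density estimate and the contributions produced when $\p^\al$ falls on $\rho$ inside the quotient equation must be absorbed simultaneously without destroying the Guo--Iyer cancellation that removes the tangential-derivative loss. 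The specific choice $\ssg \ge 2$ and the $\sqrt{\rho}\, u$ weighting built into $\se$ are designed precisely to make this compatibility hold.
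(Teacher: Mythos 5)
Your proposal follows essentially the same route as the paper: you rewrite the system in terms of the Guo--Iyer quotient $q = v/u$ together with $\bar\rho$, close a weighted $H^m_y$ energy with $\sqrt{\rho}\,u$-weighting on $\p^\al\p_y q$, propagate the pointwise density lower bound along characteristics, obtain existence via a regularized iteration (the paper uses $u+\theta$ in the coefficient and a Picard iteration in $k$), and prove uniqueness on differences at a lower regularity level. This matches the paper's argument in Sections 5 and 6, including the role of the divergence-free equation $\p_x u + \p_y(uq) = 0$ as the evolution equation supplying the $\|\bar u\|_{L^2_y}^2 + \|\p_y\bar u\|_{L^2_y}^2$ part of $\se$.

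One small imprecision worth flagging: the paper's principal equation is $\p_x(\rho u^2 \p_y q) - \p_y^3(uq) = 0$, which is the $\p_x$-derivative of the momentum relation $\rho u^2\p_y q = -\p_y^2 u$ combined with $\p_y^2(uq)=\p_y^2 v = -\p_{xy} u$, rather than your schematic $\rho u\,\p_x(\p_y q) - \p_y(u^2\p_y^2 q)=\mathcal R$, which has the wrong power of $u$ on the principal transport part. Testing the paper's form against $\p^\al\p_y q\,\xw^{2\ssg}$ is what naturally delivers the $\|\sqrt{\rho}\,u\,\p^\al\p_y q\,\xw^{\ssg}\|_{L^2_y}^2$ energy term; your stated energy is correct, so when you carry out the computation you should recover the paper's formulation.
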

	
	\begin{rema}
		Although the initial data of vertical velocity is not given,
		one can give the initial control of $\mathcal{X}(0)$ in terms of $\rho_0$ and $u_0$.
		To achieve the target, we need the generic compatibility conditions hold at the
		corner $(0, 0)$ up to order $m$, see Appendix \ref{appendix-com} in detail.
		Under the conditions of Theorem \ref{main-result-steady},
		then we can apply the equation \eqref{s-Prandtl} to obtain the following estimates:
		\beqq
		\|u-u_{\infty}\|_{L^\infty_x L^2_y}
		+\|\p_y u \|_{L^\infty_x L^2_y}
		+\|\p_x^{\alpha_1} \p_y^{\alpha_2}  \p_y^2 u \xw^{\ssg}\|_{L^\infty_x L^2_y}
		+\|\p_x^{\alpha_1} \p_y^{\alpha_2} (\rho-\varrho_{\infty})\|_{L^\infty_x L^2_y}
		\le C_{m,\ssg,\kappa_3, \rho_0, u_0},
		\deqq
		for all $0\le \alpha_1+\alpha_2 \le m$.
	\end{rema}

	The rest of the paper is organized as follows.
	In Section \ref{section-approach}, we explain the difficulties and our approach to establish the well-posedness theory for the inhomogeneous Prandtl equations \eqref{Prandtl} and \eqref{s-Prandtl} respectively.
	In Section  \ref{prior-estimate} and \ref{well-posedness}, we prove the local-in-time well-posedness of \eqref{Prandtl} in weighted Sobolev spaces by energy methods. Precisely, in Section \ref{prior-estimate}, we  derive a priori estimate and
	obtain the closed energy estimates of the regularized inhomogeneous Prandtl system \eqref{Prandtl-01}.
	Finally, the well-posedness result for the original Prandtl equations \eqref{Prandtl} will be
	investigated in Section \ref{well-posedness}.
	In Sections \ref{sec:priori-steady} and \ref{sec:well-posedness-steady}, the 
local-in-$x$ well-posedness of the inhomogeneous steady Prandtl equations \eqref{s-Prandtl} in higher regular Sobolev space is established.
	
	\section{Difficulties and outline of our approach}\label{section-approach}
	In this section, we will explain the main difficulties of proving Theorems \ref{main-result-un} and  \ref{main-result-steady} as well as our strategies for overcoming them.
	
	\subsection{Difficulties and outline of our approach for the inhomogeneous unsteady  Prandtl system}
	First of all, similar to the classical Prandtl equations, the main difficulty of Prandtl system \eqref{Prandtl} comes from the loss of tangential derivative
	of horizontal velocity, therefore the standard energy estimate can not apply.
	Without using the classical Crocco transformation as well as other change of variables, Masmoudi and Wong \cite{Masmoudi} introduce a new weighted $H^s$-norm to avoid the regularity loss created by the vertical velocity $v$ through a cancellation property.
	Since the outer Euler flow $(\rho_e, u_e)$ is independent of variables $x$ and $t$,
	motivated by their idea, we consider the following approximated system
	\beq\label{Prandtl-01}
	\left\{\begin{aligned}
		&\p_t \vr^\var+u^\var \p_x \vr^\var+ v^\var \p_y \vr^\var-\var\p_x^2 \vr^\var=0,\\
		&\p_t u^\var+u^\var \p_x u^\var+v^\var \p_y u^\var
		-\var \p_x^2 u^\var
		-\frac{1}{\vr^\var+\vrf}\p_y^2 u^\var=0,\\
		&\p_x u^\var+\p_y v^\var=0,\\
		&(u^\var, v^\var)|_{y=0}=0,
		\underset{y\rightarrow +\infty}{\lim}(\vr^\var, u^\var)(t,x,y)=(0, \uf),\\
		&(\vr^\var, u^\var)|_{t=0}
		\overset{def}{=}(\rho_0-\vrf, u_{0})(x,y),
	\end{aligned}\right.
	\deq
	where $\vr^\var \overset{def}{=}\rho^\var-\vrf$. Let us define $w^\var \overset{def}{=}\p_y u^\var$,
	and taking $\partial_y$-operator to the equation $\eqref{Prandtl-01}_2$, we have
	\beq\label{Prandtl-02}
	\left\{\begin{aligned}
		&\p_t w^\var+u^\var \p_x w^\var+v^\var \p_y w^\var
		-\var \p_x^2 w^\var
		-\p_y(\frac{1}{\vr^\var+\vrf}\p_y w^\var)=0,\\
		&\p_y w^\var |_{y=0}=0,\\
		&w^\var|_{t=0}
		\overset{def}{=}\p_y u_{0}(x,y).
	\end{aligned}\right.
	\deq
	It should be pointed out that the nonlinear terms $v^\var \p_y \vr^\var$
	and $v^\var \p_y w^\var$ will create loss of tangential derivative.
	Similar to \cite{Masmoudi}, taking $\partial_x^6$-operator to the density equation
	and velocity equation respectively, we have
	\beq\label{0201}
	(\p_t+u^\var \p_x +v^\var \p_y)\p_x^6 \vr^\var
	+\p_x^6 v^\var \p_y \vr^\var
	=\var\p_x^2 \p_x^6 \vr^\var+...,
	\deq
	and
	\beq\label{0202}
	(\p_t+u^\var \p_x +v^\var \p_y)\p_x^6 u^\var
	+\p_x^6 v^\var w^\var=\var \p_x^8 u^\var+\p_x^6 \{\frac{1}{\vr^\var+\vrf}\p_y w^\var\}+...,
	\deq
	where the symbol ... represents the lower-order terms that
	we want the reader to ignore at this moment.
	The main obstacle in both \eqref{0201} and \eqref{0202} is the term $\p_x^6 v^\var=-\p_x^6 \p_y^{-1}u^\var$, which has
	7 $x$-derivatives so that the standard energy estimates cannot apply.
	However,  we can eliminate the problematic term by subtracting them
	in an appropriate way by using the combination of equations \eqref{0201} and \eqref{0202}.
	Subtracting $g_\vr^\var(\overset{def}{=}\frac{\p_y \vr^\var}{w^\var})\times$ \eqref{0202} from \eqref{0201}, we obtain
	\beqq
	\p_t \vr_g^\var+u^\var \p_x \vr_g^\var+v^\var \p_y \vr_g^\var-\var \p_x^2 \vr_g^\var
	=2\var\p_x^7 u^\var \p_x g_\vr^\var+...,
	\deqq
	where the function $\vr_g^\var\overset{def}{=}\p_x^6 \vr^\var-g_\vr^\var \p_x^6 u^\var$.
	This good quantity $\vr_g^\var$ will avoid the loss of tangential derivative.
	Similarly, in order to deal with the term $\p_x^6 v^\var w^\var$, we establish
	the following equation
	\beqq
	\p_t w_g^\var +u^\var \p_x w_g^\var +v^\var \p_y w_g^\var
	=\var \p_x^2 w_g^\var +\p_y\{\frac{1}{\vr^\var+\vrf}\p_y w_g^\var \}
	+...,
	\deqq
	where $w_g^\var \overset{def}{=} \p_x^6 w^\var-g_w^\var \p_x^6 u^\var$
	and $g_w^\var \overset{def}{=} \frac{\p_y w^\var}{w^\var}$.
	Instead of estimating the weighted norm of $(\p_x^6 \vr^\var, \p_x^6 w^\var)$,
	we estimate that of $(\vr_g^\var, w_g^\var)$ with the same weight $\gamma$ to close the energy estimate, which can avoid the loss of $x$-variable thanks to some cancellation mechanism.
	
	Secondly, since the domain $\mathbb{T} \times \mathbb{R}^+$ is unbounded,
	we use the Hardy-type inequalities (see \eqref{Hardy1} and \eqref{Hardy2})
	to control the quantities $u^\var$ and  $v^\var =-\int^{y}_0 \p_x u^\var dy'$
	by the vorticity $w^\var$.
	Since it will generate some space weight, we add the suitable weights respectively to vorticity and density, including the weight $\xw$ for each $y$-variable.
	In order to close the energy estimate, it remains to derive the weighted $L^{\infty}$ controls on the lower-order derivatives of $w^\var$ and $\vr^\var$
	with the same weight $\sg$.
	On the one hand, we view the equation of $\p^\alpha w^\var $ as a ``linear" parabolic equations and control the lower-order derivatives of $w^\var$
	by using the classical maximum principle.
	On the other hand, we use the Sobolev embedding inequality and good estimate
	of transport equation to control the lower-order derivatives
	of $\vr^\var$ with weight $\sg$.
	Then, we require the lower order derivative of density in energy norm
	is along with the weight $\sg$.
	This weight, as we deal with the term $I_8$(see estimate \eqref{2304}),
	needs us to require the index condition $\sg\le 2\gamma-1$.
	The weights should be chosen carefully so that
	we can establish the closed energy estimate.

	Finally, the lack of high-order boundary conditions prevents us from using the integration by part in the $y$-variable. However, when the highest order of derivative is even, one can use the trace estimate \eqref{trace} and the equation $\eqref{Prandtl-02}_1$ to reduce
	the order of derivatives (see the term $I_{6}$).
	However, due to the appearance of density, this procedure is somewhat complicated.
	Thus, we only consider the $H^6$-framework in our paper just for the sake of simplicity.
	Therefore, the local-in-time well-posedness for the original system \eqref{Prandtl} can be obtained directly as the parameter $\var$ tends to $0^+$.
	
	\subsection{Difficulties and outline of our approach for
		the  inhomogeneous steady Prandtl system}
	
	For the case of homogeneous flow,  the main difficulty of steady Prandtl
	equation arises from the loss of tangential derivative of horizontal velocity.
	Thus, with the help of Von-Mises transformation, Oleinik and Samokhin \cite{Oleinik1999}
	established the local-in-$x$ well-posedness in lower regular space.
	Due to the degenerate property of horizontal velocity on the boundary,
	it fails to establish the higher regularity of solution.
	In this respect, Guo and Iyer \cite{Guo-Iyer-2021} have settled this open problem since they found
	the good quotient $\p_y(\f{v}{\bar{u}})$. Here $\bar{u}$ is the classical Prandtl solution obtained by Oleinik and Samokhin \cite{Oleinik1999}.
	
	Now let us focus on the local-in-$x$ well-posedness of inhomogeneous
	steady Prandtl system \eqref{s-Prandtl}.
	Motivated by the idea by Guo and Iyer \cite{Guo-Iyer-2021}, we rewrite the original system
	as follows
	\beq\label{e:rewrite}
	\left\{
	\bal
	&\p_x \rho+ q \p_y \rho=0,\\
	&\p_x( \rho u^2 \p_y q)-\p_y^3 (u q) =0,\\
	&\p_x u+\p_y(u q)=0,
	\dal\right.
	\deq
	where the quotient $q \overset{def}{=} \frac{v}{u}$.
	From the Eqs. $\eqref{e:rewrite}_1$ and $\eqref{e:rewrite}_2$, one can establish some estimate
	for the density $\rho$ and quotient $\p_y(\frac{v}{u})$.
	However, in order to close the energy estimate, we need to provide estimate for
	the horizontal velocity $u$. Indeed, due to assumption of positive initial
	vorticity on the boundary, the horizontal velocity $u$ is equivalent to $y$
	near the boundary. Thus, it only needs to give the estimate for the far field
	component of horizontal velocity $u$.
	As mentioned by Serrin \cite{Serrin-1967}, the classical steady Prandtl equation
	is acting like an evolution equation, with $x$ being a time-like variable, $y$ being space-like.
	\textit{Thus, the divergence-free condition is essentially an evolution equation of
		horizontal velocity $u$.
	Then, one can establish the estimate for horizontal velocity $u$ to
	establish closed a priori energy estimate.}
	Finally, we point out that we need to control the term $q$ by $\p_y q$ by using Hardy inequality. Since it will generate one space weight, our method here is to add the suitable weight $\xw^{\ssg}$ to the quantity $\p_y q$.
	Therefore, we can establish the local-in-$x$ well-posedness of inhomogeneous
	steady Prandtl equation in higher regular space.
	
	\section{A priori estimate of the approximated unsteady
		Prandtl equations}\label{prior-estimate}
	
	In this section, we will establish a priori estimate for the
	regularized inhomogeneous Prandtl system \eqref{Prandtl-01}.
	First of all, we define the energy norms
	\beqq
	\begin{aligned}
		\e \overset{def}{=}
		&\sum_{|\alpha|\le 6, \alpha_1 \le 5}\|\p^\alpha w^\var \xw^{\gm+\alpha_2}\|_{L^2}^2
		+\sum_{|\alpha|\le 6, \alpha_1 \le 5}\|\p^\alpha \vr^\var \xw^{\sg+\alpha_2}\|_{L^2}^2\\
		&+\|\vr_g^\var \xw^{\gm}\|_{L^2}^2+\|w_g^\var \xw^{\gm}\|_{L^2}^2
		+\sum_{|\alpha|\le 2}|\xw^{\sg+\alpha_2}\p^\alpha w^\var|^2,
	\end{aligned}
	\deqq
	and
	\beqq
	\begin{aligned}
		\ea \overset{def}{=}
		&\sum_{|\alpha|\le 6, \alpha_1 \le 6}\|\p^\alpha w^\var \xw^{\gm+\alpha_2}\|_{L^2}^2
		+\sum_{|\alpha|\le 6, \alpha_1 \le 5}\|\p^\alpha \vr^\var \xw^{\sg+\alpha_2}\|_{L^2}^2\\
		&+\|\p_x^6 \vr^\var \xw^{\gm}\|_{L^2}^2
		+\sum_{|\alpha|\le 2}|\xw^{\sg+\alpha_2}\p^\alpha w^\var|^2.
	\end{aligned}
	\deqq
	Then we define the dissipation norm
	\beqq
	\begin{aligned}
		\mathcal{D}_1 (t)\overset{def}{=}&\sum_{|\alpha|\le 6, \alpha_1 \le 5}
		\var \|\p_x \p^\alpha w^\var \xw^{\gm+\alpha_2}\|_{L^2}^2 +\sum_{|\alpha|\le 6, \alpha_1 \le 5}
		\|\frac{1}{\sqrt{\vr^\var+\vrf}}\p_y \p^\alpha w^\var \xw^{\gm+\alpha_2}\|_{L^2}^2\\
		&+\sum_{|\alpha|\le 6, \alpha_1 \le 5}
		\var \|\p_x \p^\alpha \vr^\var \xw^{\sg+\alpha_2}\|_{L^2}^2
		+\var\|\p_x w_g^{\var} \xw^{\gm} \|_{L^2}^2
		+\|\frac{1}{\sqrt{\vr^\var+\vrf}}\p_y w_g^{\var} \xw^{\gm} \|_{L^2}^2
		+\var\|\p_x \vr_g^{\var} \xw^{\gamma}\|_{L^2}^2.
	\end{aligned}
	\deqq
	
	Now, let us state the following Proposition.
	\begin{prop}\label{Main-Pro}
		Assume $\gm > \frac32, \gm+\frac12<\sg\leq2\gm-1$, $\dl>0$, $0 < \kappa \le \bar{\kappa}$
		and $\mathcal{E}_1(0)<+\infty$.
		Suppose
		\beq\label{Lowerb}
		w^{\var}\xw^{\sg}\ge \dl,
		\quad \kappa \leq \vr^\var+\vrf \le \bar{\kappa},
		\deq
		holds for all $(t, x, y)\in [0, T]\times \mathbb{T}\times \mathbb{R}^+$.
		Then, it holds for all $T_0 \le T$
		\beq\label{uiform-estimate}
		\begin{aligned}
			E_1(T_0)+\int_0^{T_0} \mathcal{D}_1 (\tau) d\tau
			\le &\; \mathcal{E}_1(0)
			+C_{\bar{\kappa},\kappa,\dl,\gm, \sg} (1+E_1(T_0)^{20})T_0 \\
			&\; + (\|I(0)\|_{L^\infty(\mathbb{T}\times \mathbb{R}^+)}
			+C_{\kappa} T_0 (1+E_1(T_0)^4))
			e^{C_{\sg,\kappa} (1+E_1(T_0)^2)T_0},
		\end{aligned}
		\deq
		where $E_1(T) \overset{def}{=} \sup_{0\le t \le T} \e$.
	\end{prop}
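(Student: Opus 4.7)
The plan is to estimate $\mathcal{E}_1(t)$ and the time integral of $\mathcal{D}_1(t)$ by splitting the energy into three blocks: (i) the weighted $L^2$ norms of derivatives $\partial^\alpha(w^\varepsilon,\varrho^\varepsilon)$ with $\alpha_1\le 5$; (ii) the top-order ($\alpha_1=6$) norms of the good quantities $(\varrho_g^\varepsilon,w_g^\varepsilon)$ introduced in Section \ref{section-approach}; and (iii) the pointwise block $\sum_{|\alpha|\le 2}|\langle y\rangle^{\sigma+\alpha_2}\partial^\alpha w^\varepsilon|^2$. The lower/upper bounds in \eqref{Lowerb} convert the factors $1/(\varrho^\varepsilon+\varrho_\infty)$ and $1/w^\varepsilon$, which appear both explicitly in \eqref{Prandtl-02} and inside $g_w^\varepsilon=\partial_y w^\varepsilon/w^\varepsilon$, $g_\varrho^\varepsilon=\partial_y\varrho^\varepsilon/w^\varepsilon$, into bounded multipliers whose constants depend only on $\kappa,\bar\kappa,\delta$.

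For block (i), I would apply $\partial^\alpha$ to the equations in \eqref{Prandtl-01}--\eqref{Prandtl-02}, multiply $\partial^\alpha w^\varepsilon$ by $\langle y\rangle^{2(\gamma+\alpha_2)}\partial^\alpha w^\varepsilon$ (analogously $\partial^\alpha \varrho^\varepsilon$ against weight $\sigma+\alpha_2$), and integrate over $\mathbb{T}\times\mathbb{R}^+$. Integration by parts using $\partial_y w^\varepsilon|_{y=0}=0$ produces the positive dissipation in $\mathcal{D}_1$ together with lower-order pieces coming from differentiating the weight. In the transport commutators $[\partial^\alpha,u^\varepsilon\partial_x]$ and $[\partial^\alpha,v^\varepsilon\partial_y]$ the worst factor is $\partial_x^{\alpha_1+1}u^\varepsilon$ with $\alpha_1+1\le 6$, which is controlled by $\mathcal{E}_1$; the vertical velocity $v^\varepsilon=-\int_0^y\partial_x u^\varepsilon\,dy'$ is absorbed via the Hardy inequalities \eqref{Hardy1}--\eqref{Hardy2}, whose weight loss $\langle y\rangle^{-1}$ is precisely compensated by the extra $\langle y\rangle^{\alpha_2}$ carried by each vertical derivative. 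Cauchy--Schwarz combined with the Sobolev embedding $H^3\hookrightarrow L^\infty$ then bounds every nonlinearity by a polynomial in $\mathcal{E}_1$ times the square root of the dissipation, contributing to the $C(1+E_1^{20})T_0$ piece after Young's inequality and time integration.

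For block (ii), I would derive the transport-diffusion equations for $(\varrho_g^\varepsilon,w_g^\varepsilon)$ sketched in Section \ref{section-approach}: subtracting $g_\varrho^\varepsilon$ (resp.\ $g_w^\varepsilon$) times the $\partial_x^6$-velocity equation from the $\partial_x^6$-equations of $\varrho^\varepsilon$ (resp.\ $w^\varepsilon$) removes the singular contributions $\partial_x^6 v^\varepsilon\cdot\partial_y\varrho^\varepsilon$ and $\partial_x^6 v^\varepsilon\cdot w^\varepsilon$. Performing the weighted $L^2$ energy estimate with weight $\langle y\rangle^{2\gamma}$ then proceeds as in block (i); the remaining source terms are polynomial in quantities already controlled by $\mathcal{E}_1$, and the density contribution is where the constraint $\sigma\le 2\gamma-1$ enters so that $\langle y\rangle^{2\sigma}\le\langle y\rangle^{2(2\gamma-1)}$ pairs against the available weighted norms. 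The high polynomial exponent $E_1^{20}$ in \eqref{uiform-estimate} originates from chain-rule expansions of $g_w^\varepsilon$ and $g_\varrho^\varepsilon$, in which the factor $1/w^\varepsilon$ is repeatedly differentiated and each such differentiation multiplies a copy of $\mathcal{E}_1$.

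For block (iii), the pointwise quantity $I(t,x,y):=\sum_{|\alpha|\le 2}\langle y\rangle^{2(\sigma+\alpha_2)}|\partial^\alpha w^\varepsilon|^2$ satisfies a linear parabolic inequality whose coefficients and forcing terms are bounded pointwise by functions of $\mathcal{E}_1$; the classical maximum principle applied to this inequality yields $\sup I(t)\le(\|I(0)\|_{L^\infty}+CT_0(1+E_1^4))e^{C(1+E_1^2)T_0}$, which is exactly the exponential block in \eqref{uiform-estimate}. \textbf{The main obstacle} is the careful bookkeeping of weights so that Hardy's inequality exactly balances the $\alpha_2$ shift throughout the derivative hierarchy, and the treatment of the $\partial_x^6$-block in the absence of a high-order boundary condition at $y=0$: as indicated in Section \ref{section-approach}, when integration by parts in $y$ is not available one must invoke the trace inequality \eqref{trace} together with $\eqref{Prandtl-02}_1$ to lower the order of $y$-derivatives, a maneuver that is tractable only in the $H^6$-framework and is responsible for the slightly delicate algebra underlying the constants $C_{\bar\kappa,\kappa,\delta,\gamma,\sigma}$ in \eqref{uiform-estimate}.
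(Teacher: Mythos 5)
Your proposal follows essentially the same route as the paper: the energy is split into the lower-order weighted block ($|\alpha|\le 6$, $\alpha_1\le 5$, corresponding to the paper's Lemmas~\ref{lemma22}--\ref{lemma23}), the top-order block handled via the good unknowns $(\vr_g^\var,w_g^\var)$ (Lemmas~\ref{lemma24}--\ref{lemma25}), and the low-order pointwise block controlled by the parabolic maximum principle (Lemma~\ref{lemma26}), with Hardy inequalities absorbing the vertical velocity and the trace inequality plus boundary reduction handling the lack of high-order boundary conditions. One small correction: the constraint $\sg\le 2\gm-1$ is actually used in your block (i), specifically in the commutator term $I_8$ of the density estimate \eqref{2304}, not in the top-order good-unknown block as you suggest; this is a minor bookkeeping slip and does not affect the overall strategy, which matches the paper.
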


	\subsection{Weighted estimates with normal derivatives}\label{reformulation}
	
	Before we obtain the weighted estimates with normal derivatives for the vorticity and density, we need to investigate the estimates as follows, which will be used frequently to deal with the nonlinear term.
	\begin{lemm}\label{lemma21}
		Under the condition of \eqref{Lowerb},
		the solution $(u^\var, v^\var, w^\var)$ of regularized system
		\eqref{Prandtl-01} will satisfy
		\beq\label{2101}
		\ea \le C_{\gamma, \sg, \dl} (1+\e^3),
		\deq
		and
		\beq\label{2102}
		\|\p_y\p_x^6 w^\var \xw^\gm\|_{L^2}^2
		\le \|\p_y w_g^\var \xw^\gm\|_{L^2}^2+C_{\gamma, \sg, \dl}(1+\e^4).
		\deq
	\end{lemm}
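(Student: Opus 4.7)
The two inequalities compare $\ea$, which records the raw top-order tangential derivatives $\p_x^6 w^\var$ and $\p_x^6 \vr^\var$, with $\e$, whose corresponding slots instead contain the good quantities $w_g^\var=\p_x^6 w^\var-g_w^\var \p_x^6 u^\var$ and $\vr_g^\var=\p_x^6 \vr^\var-g_\vr^\var \p_x^6 u^\var$. The plan is to use these very definitions to recover the raw derivatives modulo cross-terms of the form $g\cdot\p_x^6 u^\var$, then to control those cross-terms by a weighted $L^2$ estimate on $\p_x^6 u^\var$. Combining the $L^\infty$ slots of $\e$ (controlling $|\p^\alpha w^\var|\xw^{\sg+\alpha_2}$ for $|\alpha|\le 2$) with the hypothesis $w^\var\ge \dl\,\xw^{-\sg}$ yields pointwise bounds of the form $|g_w^\var|\le C\sqrt{\e}\,\xw^{-1}/\dl$ and $|\p_y g_w^\var|\le C(1+\e)\,\xw^{-2}/\dl^2$. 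For $g_\vr^\var=\p_y \vr^\var/w^\var$, since $\e$ contains no direct $L^\infty$-control on density derivatives, I would invoke a 2D Sobolev embedding on $\mathbb{T}\times\mathbb{R}^+$ to convert the weighted Sobolev bounds available in $\e$ into the pointwise bound $|g_\vr^\var|\le C\sqrt{\e}\,\xw^{-1}/\dl$.

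\textbf{The key $L^2$-bound on $\p_x^6 u^\var$.}
The crux is to control $\|\p_x^6 u^\var \xw^{\gm-1}\|_{L^2}$ (and the $\xw^{\gm-2}$ analogue needed for the second inequality) without circularly re-introducing $\p_x^6 w^\var$. The trick I would use is the factorization $\p_x^6 u^\var=w^\var\cdot V$ with $V:=\p_x^6 u^\var/w^\var$. Because $u^\var|_{y=0}=0$ one has $V|_{y=0}=0$, and a direct computation gives $\p_y V=w_g^\var/w^\var$. Under the index condition $\sg>\gm+\f12$ one has $\gm-1-\sg<-1$, so a Hardy-type inequality for functions vanishing at the wall yields
\[
\|V\xw^{\gm-1-\sg}\|_{L^2}^2 \le C\|\p_y V \xw^{\gm-\sg}\|_{L^2}^2 \le \frac{C}{\dl^2}\|w_g^\var\xw^\gm\|_{L^2}^2 \le \frac{C}{\dl^2}\e.
\]
Multiplying by $|w^\var|\le \sqrt{\e}\,\xw^{-\sg}$ then produces $\|\p_x^6 u^\var \xw^{\gm-1}\|_{L^2}^2 \le C\e^2/\dl^2$; the same recipe with the weight $\gm-2$ gives the corresponding bound of the same order.

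\textbf{Assembly of the two inequalities.}
For the first inequality I split $\|\p_x^6 w^\var \xw^\gm\|^2 \le 2\|w_g^\var \xw^\gm\|^2+2\|g_w^\var \p_x^6 u^\var \xw^\gm\|^2$; the second piece is at most $(\e/\dl^2)\|\p_x^6 u^\var \xw^{\gm-1}\|^2 \le C\e^3/\dl^4$, and the density term $\p_x^6 \vr^\var=\vr_g^\var+g_\vr^\var \p_x^6 u^\var$ is handled in exactly the same way, while the remaining slots of $\ea$ are already present in (or trivially bounded by) $\e$. This delivers $\ea\le C(1+\e^3)$. For the second inequality I differentiate the same decomposition:
\[
\p_y \p_x^6 w^\var = \p_y w_g^\var + (\p_y g_w^\var) \p_x^6 u^\var + g_w^\var \p_x^6 w^\var,
\]
keep $\p_y w_g^\var$ on the right, and absorb the two remaining cross-terms via the pointwise bounds on $g_w^\var$ and $\p_y g_w^\var$, the first inequality $\|\p_x^6 w^\var\xw^\gm\|^2\le C(1+\e^3)$, and the $\p_x^6 u^\var$ estimates just proved, producing a remainder of order $1+\e^4$.

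\textbf{Main obstacle.}
The central difficulty is circularity: Hardy applied directly to $\p_x^6 u^\var$ only yields $\|\p_x^6 u^\var \xw^{\gm-1}\|\le C\|\p_x^6 w^\var \xw^\gm\|$, so feeding it back into the split of $\|\p_x^6 w^\var \xw^\gm\|^2$ would require a smallness assumption on $\e$ that the lemma does not make. Introducing the quotient $V=\p_x^6 u^\var/w^\var$ is the step that breaks this loop, because $V$ satisfies the elementary ODE $\p_y V=w_g^\var/w^\var$ whose source is genuinely controlled by $\e$ through the lower bound on $w^\var$, and the potential singularity at $y=0$ is absorbed by $V|_{y=0}=0$ together with the weight ordering $\sg>\gm+\f12$; once this is in place, the rest of the proof is bookkeeping.
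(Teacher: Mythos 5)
Your proof is correct and takes essentially the same approach as the paper: both hinge on the identity $w_g^\var = w^\var \p_y(\p_x^6 u^\var / w^\var)$, Hardy's inequality applied to the quotient $\p_x^6 u^\var/w^\var$ (which vanishes at $y=0$, under the weight ordering $\sg>\gm+\tfrac12$), the lower bound $w^\var\xw^{\sg}\ge\dl$, and the $L^\infty$ control of $w^\var$ and its low-order derivatives coming from the supremum slot of $\e$ together with Sobolev embedding for $\p_y\vr^\var$. The paper's version of your ``key $L^2$-bound'' appears verbatim as the middle of the chain in estimate \eqref{2105}, where the factor you call $w^\var$ is replaced by $\p_y w^\var$ because it is multiplied by the whole cross-term $g_w^\var\p_x^6 u^\var$ rather than by $\p_x^6 u^\var$ alone; the bookkeeping is otherwise identical.
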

	\begin{proof}
		First of all, it is easy to check that
		\beq\label{2103}
		w_g^\var=\p_x^6 w^\var-\frac{\p_y w^\var}{w^\var} \p_x^6 u^\var
		=w^\var \p_y(\frac{\p_x^6 u^\var}{w^\var}).
		\deq
		Using relation \eqref{2103}, the Hardy inequality \eqref{Hardy2}
		and lower bound condition \eqref{Lowerb}, it holds
		\beq\label{2105}
		\begin{aligned}
			\|\p_x^6 w^\var \xw^\gm\|_{L^2}
			&\le \|w_g^\var \xw^\gm\|_{L^2}
			+\|\p_y w^\var \xw^{\sg+1}\|_{L^\infty}
			\| \frac{\p_x^6 u^\var}{w^\var} \xw^{\gm-\sg-1}\|_{L^2}\\
			&\le \|w_g^\var \xw^\gm\|_{L^2}
			+C_{\gamma,\sg} \|\p_y w^\var \xw^{\sg+1}\|_{L^\infty} \|\p_y\left(\frac{\p_x^6 u^\var}{w^\var}\right) \xw^{\gm-\sg}\|_{L^2}
			\\
			&\le \|w_g^\var \xw^\gm\|_{L^2}
			+C_{\gamma,\sg} \|\p_y w^\var \xw^{\sg+1}\|_{L^\infty}
			\|\frac{w_g^\var}{w^\var} \xw^{\gm-\sg}\|_{L^2}
			\\
			&\le C_{\gamma, \sg, \dl}\|w_g^\var \xw^\gm\|_{L^2}
			(1+\|\p_y w^\var \xw^{\sg+1}\|_{L^\infty}).
		\end{aligned}
		\deq
		Due to the relation
		$\vr_g^\var=\p_x^6 \vr^\var-g_\vr^\var \p_x^6 u^\var
		=\p_x^6 \vr^\var-\frac{\p_y \vr^\var}{w^\var} g_\vr^\var \p_x^6 u^\var$,
		using the Hardy inequality \eqref{Hardy2}
		we can obtain
		\beq\label{2106}
		\begin{aligned}
			\|\p_x^6 \vr^\var \xw^\gm\|_{L^2}
			&\le \|\vr_g^\var \xw^\gm\|_{L^2}
			+\|g_\vr^\var \p_x^6 u^\var \xw^\gm\|_{L^2}\\
			&\le \|\vr_g^\var \xw^\gm\|_{L^2}
			+\frac{1}{\dl}\|\p_y \vr^\var \xw^{\sg+1}\|_{L^\infty}
			\| \p_x^6 u^\var \xw^{\gm-1}\|_{L^2}\\
			&\le \|\vr_g^\var \xw^\gm\|_{L^2}
			+C_{\gamma, \dl} \|\p_y \vr^\var \xw^{\sg+1}\|_{H^2}
			\| \p_x^6 w^\var \xw^{\gm}\|_{L^2}\\
			&\le \|\vr_g^\var \xw^\gm\|_{L^2} +C_{\gamma, \sg, \dl} \|\p_y \vr^\var \xw^{\sg+1}\|_{H^2}
			\|w_g^\var \xw^\gm\|_{L^2}
			(1+\|\p_y w^\var \xw^{\sg+1}\|_{L^\infty}),
		\end{aligned}
		\deq
		where we have used \eqref{2105} in the last inequality.
		The combination of estimates \eqref{2105} and \eqref{2106} yields directly
		\beqq
		\ea \le C_{\gamma, \sg, \dl} (1+\e^3).
		\deqq
		Finally, due to the relation \eqref{2103} and
		lower bound condition \eqref{Lowerb}, we have
		\beqq
		\begin{aligned}
			\|\p_y\p_x^6 w^\var \xw^\gm\|_{L^2}
			\le &\| \p_y (w_g^\var+\frac{\p_y w^\var}{w^\var}
			\p_x^6 u^\var)\xw^\gm\|_{L^2}\\
			\le &\;\|\p_y w_g^\var \xw^\gm\|_{L^2}
			+\|\frac{\p_y w^\var}{w^\var} \p_x^6 w^\var \xw^\gm\|_{L^2}
			+\|\frac{\p_y^2 w^\var w^\var-(\p_y w^\var)^2}
			{(w^\var)^2} \p_x^6 u^\var \xw^\gm\|_{L^2}\\
			\le & \;\|\p_y w_g^\var \xw^\gm\|_{L^2}
			+C_\dl \|\p_y w^\var \xw^{\sg+1}\|_{L^\infty}
			\|\p_x^6 w^\var \xw^\gm\|_{L^2}\\
			&\;+C_{\dl,\gamma}(\|\p_y^2 w^\var \xw^{\sg+2}\|_{L^\infty}
			+\|\p_y w^\var \xw^{\sg+1}\|_{L^\infty}^2)
			\|\p_x^6 w^\var \xw^{\gm}\|_{L^2},
		\end{aligned}
		\deqq
		which, together with  estimate \eqref{2105}, yields directly
		\beqq
		\begin{aligned}
			\|\p_y\p_x^6 w^\var \xw^\gm\|_{L^2}
			\le \|\p_y w_g^\var \xw^\gm\|_{L^2}+C_{\gamma, \sg, \dl}(1+\e^2).
		\end{aligned}
		\deqq
		Therefore, we complete the proof of this lemma.
	\end{proof}
	
	Now we first establish the weighted estimates with normal derivatives for the vorticity.
	\begin{lemm}\label{lemma22}
		Under the condition of \eqref{Lowerb},
		the solution $(u^\var, v^\var, w^\var)$ of regularized system
		\eqref{Prandtl-01} will satisfy
		\beq\label{2201}
		\begin{aligned}
			&\frac{d}{dt}\!\! \sum_{|\alpha|\le 6, \alpha_1 \le 5}
			\|\p^\alpha w^\var \xw^{\gm+\alpha_2}\|_{L^2}^2
			+\!\!\!\!\sum_{|\alpha|\le 6, \alpha_1 \le 5}
			\var \|\p_x \p^\alpha w^\var \xw^{\gm+\alpha_2}\|_{L^2}^2
			+\!\!\!\!\sum_{|\alpha|\le 6, \alpha_1 \le 5}
			\|\frac{1}{\sqrt{\vr^{\var}+\vrf}}\p_y \p^\alpha w^\var \xw^{\gm+\alpha_2}\|_{L^2}^2\\
			\le &~ \nu_1 \|\p_y \p^\alpha w^\var \xw^{\gm+\al_2}\|_{L^2}^2 +\nu_2 \|\p_y^2 w^\var\|_{H^5}^2 +C_{\nu_1,  \nu_2, \kappa,\gamma, \sg, \dl}(1+\e^{20}),
		\end{aligned}
		\deq
		where $\nu_1$ and $\nu_2$ are the small constants that will be chosen later.
	\end{lemm}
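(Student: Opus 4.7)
The plan is to apply $\p^\alpha$ to the vorticity equation $\eqref{Prandtl-02}_1$ for each multi-index $\alpha$ with $|\alpha|\le 6$, $\alpha_1\le 5$, test against $\p^\alpha w^\var \xw^{2(\gamma+\alpha_2)}$, and integrate over $\mathbb{T}\times\mathbb{R}^+$. The $\p_t$-term produces the $\frac{d}{dt}$ on the left; the $-\var\p_x^2 w^\var$ term, after one integration by parts in $x$, produces $\var\|\p_x\p^\alpha w^\var\xw^{\gamma+\alpha_2}\|_{L^2}^2$. For the viscous term, I would write $\p^\alpha\p_y\bigl(\frac{1}{\vr^\var+\vrf}\p_y w^\var\bigr)=\p_y\bigl(\frac{1}{\vr^\var+\vrf}\p^\alpha\p_y w^\var\bigr)+\text{commutator}$, integrate by parts in $y$, and recover the dissipation $\|\frac{1}{\sqrt{\vr^\var+\vrf}}\p_y\p^\alpha w^\var\xw^{\gamma+\alpha_2}\|_{L^2}^2$ together with a weight-derivative cross term (absorbed by Cauchy--Schwarz into the dissipation and a $\nu_1$-contribution), commutator errors, and boundary contributions at $y=0$ to be treated below.

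For the convective part, the leading pieces $u^\var\p_x\p^\alpha w^\var\cdot\p^\alpha w^\var\xw^{2(\gamma+\alpha_2)}$ and $v^\var\p_y\p^\alpha w^\var\cdot\p^\alpha w^\var\xw^{2(\gamma+\alpha_2)}$ are symmetrized by integration by parts; combined with $\p_x u^\var+\p_y v^\var=0$ and the $y$-derivative of the weight, they reduce to $\|\p_x u^\var\|_{L^\infty}\|\p^\alpha w^\var\xw^{\gamma+\alpha_2}\|_{L^2}^2$ plus controllable weighted pieces. The critical commutator is $[\p^\alpha,v^\var]\p_y w^\var$: its extreme term $\p^\alpha v^\var\cdot\p_y w^\var$ equals $-\int_0^y\p_x^{\alpha_1+1}\p_y^{\alpha_2}u^\var\,dy'\cdot \p_y w^\var$ for $\alpha_2\ge 1$ and $-\int_0^y\p_x^{\alpha_1+1}u^\var\,dy'\cdot\p_y w^\var$ for $\alpha_2=0$. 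Because $\alpha_1\le 5$, the worst tangential order on $u^\var$ is six; via Hardy's inequality it reduces to $\|\p_x^6 w^\var\xw^\gamma\|_{L^2}$, which by Lemma \ref{lemma21} is controlled by $\sqrt{\ea}\le C_{\gamma,\sigma,\delta}(1+\e^{3/2})$. All intermediate commutator pieces are estimated by the standard high/low distribution of derivatives: the low-order factor is bounded in $L^\infty$ via Sobolev embedding on $\mathbb{T}$ or by the pointwise terms $\sum_{|\alpha|\le 2}|\xw^{\sigma+\alpha_2}\p^\alpha w^\var|$ already included in $\e$, while the high-order factor is paired in $L^2$ with the available weighted Sobolev norms.

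Commutator terms from the viscous part introduce derivatives of $(\vr^\var+\vrf)^{-1}$; by the chain/Faà di Bruno rule these bring up to six derivatives of $\vr^\var$, controlled through the $\sigma$-weighted norms in $\e$ and converted into $\gamma$-weighted controls using $\sigma\ge\gamma+\tfrac12$. After Cauchy--Schwarz each such term splits as a small multiple of $\|\p_y\p^\alpha w^\var\xw^{\gamma+\alpha_2}\|_{L^2}^2$ (absorbed into the $\nu_1$-term) or of $\|\p_y^2 w^\var\|_{H^5}^2$ (absorbed into the $\nu_2$-term), plus a polynomial in $\e$; tracking the maximal product of powers across all estimates yields the stated $\e^{20}$ bound.

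The main obstacle is the absence of a direct boundary condition on $\p_y\p^\alpha w^\var$ at $y=0$ when $\alpha_2\ge 1$; only $\p_y w^\var|_{y=0}=0$ is known. Following the strategy outlined for the term $I_6$, when the normal order is even I would use the vorticity equation $\eqref{Prandtl-02}_1$ to replace $\p_y^2 w^\var$ by lower-order quantities (including $\p_t w^\var$, $u^\var\p_x w^\var$ and $v^\var\p_y w^\var$, each of which has friendlier boundary behavior), and then apply the trace estimate \eqref{trace} to bound the resulting boundary integral by $\nu_1\cdot\text{dissipation}+\nu_2\|\p_y^2 w^\var\|_{H^5}^2+C(1+\e^{20})$. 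This reduction is exactly why the framework is limited to $H^6$ and why the second small parameter $\nu_2$ naturally enters the right-hand side, to be balanced later against the analogous density estimates. Combining all contributions and renaming constants gives \eqref{2201}.
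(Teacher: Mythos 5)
Your proposal follows the same strategy as the paper's proof: apply $\p^\alpha$ to the vorticity equation $\eqref{Prandtl-02}_1$, test against $\p^\alpha w^\var\xw^{2(\gamma+\alpha_2)}$, integrate by parts to extract the $\var$-dissipation and the $\frac{1}{\sqrt{\vr^\var+\vrf}}$-dissipation, control the transport commutators via Hardy's inequality and Lemma~\ref{lemma21} (converting $\p_x^6 w^\var$ and $\p_x^6 u^\var$ into $w_g^\var$), and handle the boundary contributions from the diffusion term by combining the boundary-reduction identities of Appendix~\ref{appendix-b} with the trace inequality~\eqref{trace}, introducing the two smallness parameters $\nu_1$ and $\nu_2$ exactly as the paper does. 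Two small inaccuracies in your write-up do not change the argument: (i) after symmetrizing $(u^\var\p_x+v^\var\p_y)$ on the leading term, the divergence-free condition kills the $\p_x u^\var+\p_y v^\var$ contribution outright, so the surviving piece is the weight-derivative term $(\gamma+\alpha_2)\int v^\var|\p^\alpha w^\var|^2\xw^{2(\gamma+\alpha_2)-1}$, which is controlled by $\|v^\var\xw^{-1}\|_{L^\infty}$ (the paper's $I_1$) rather than by $\|\p_x u^\var\|_{L^\infty}$; (ii) for $\alpha_2\ge1$ the extreme commutator piece $\p^\alpha v^\var$ is simply $-\p_x^{\alpha_1+1}\p_y^{\alpha_2-1}u^\var$ via $\p_y v^\var=-\p_x u^\var$ — no $y$-integral is involved, and the integral representation is used only when $\alpha_2=0$ (for $\alpha_2\ge 2$ your integral formula also omits a nonzero boundary term). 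These are notational slips; the rest matches the paper.
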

	\begin{proof}
		For any $|\alpha|\le 6$ and $\alpha_1\le 5$,
		applying the $\p^\al$ differential operator to the
		equation $\eqref{Prandtl-02}_1$, we have
		\beq\label{2202}
		\begin{aligned}
			&\p_t(\p^\al w^\var)+u^\var \p_x (\p^\al w^\var)+v^\var \p_y (\p^\al w^\var)
			-\var \p_x^2(\p^\al w^\var)
			-\p_y\left\{\frac{1}{\vr^{\var}+\vrf}\p_y \p^\al w^\var \right\}\\
			=&-[\p^\al, u^\var]\p_x w^\var-[\p^\al, v^\var]\p_y w^\var
			+\p_y\left\{\p^\alpha (\frac{1}{\vr^{\var}+\vrf})\p_y w^\var\right\}+ \p_y\left\{
			\sum_{0<\beta<\alpha}C^\alpha_\beta
			\p^\beta (\frac{1}{\vr^{\var}+\vrf})\p^{\alpha-\beta}\p_y w^\var\right\}.
		\end{aligned}
		\deq
		Multiplying \eqref{2202} by $\p^\alpha w^\var \xw^{2\gm+2\al_2}$
		and integrating over $\mathbb{T}\times \mathbb{R}^+$, we have
		\beq\label{2203}
		\frac{d}{dt}\frac{1}{2}\int |\p^\alpha w^\var |^2 \xw^{2\gm+2\al_2}dxdy
		+\var\int |\p_x \p^\alpha w^\var |^2 \xw^{2\gm+2\al_2}dxdy
		=\sum_{1\le i \le 6}I_{i},
		\deq
		where the terms $I_i(1\le i \le 6)$ are defined as follows:
		\beqq
		\begin{aligned}
			&I_{1}=(\gm+\al_2)\int u^\var_2 |\p^\al w^\var|^2 \xw^{2\gm+2\al_2-1} dxdy;\\
			&I_{2}=-\int [\p^\al, u^\var]\p_x w^\var \cdot \p^\alpha w^\var \xw^{2\gm+2\al_2} dxdy;\\
			&I_{3}=-\int [\p^\al, v^\var]\p_y w^\var \cdot \p^\alpha w^\var \xw^{2\gm+2\al_2} dxdy;\\
			&I_{4}=\int \p_y\left\{\p^\alpha (\frac{1}{\vr^{\var}+\vrf})\p_y w^\var\right\}
			\cdot \p^\alpha w^\var \xw^{2\gm+2\al_2} dxdy;\\
			&I_{5}=\sum_{0<\beta<\alpha}C^\alpha_\beta
			\int  \p_y\left\{\p^\beta (\frac{1}{\vr^{\var}+\vrf})\p^{\alpha-\beta}\p_y w^\var\right\}
			\cdot \p^\alpha w^\var \xw^{2\gm+2\al_2} dxdy; \\
			&I_{6}=\int \p_y\left\{\frac{1}{\vr^{\var}+\vrf}\p_y \p^\al w^\var \right\}
			\cdot \p^\alpha w^\var \xw^{2\gm+2\al_2} dxdy.
		\end{aligned}
		\deqq
		In the sequence, we will give the estimate from terms $I_1$ to $I_6$.
		\textbf{Deal with term $I_{1}$.}
		It is easy to check that
		\beq\label{2204}
		|I_{1}|\le C_\gm\|v^\var \xw^{-1}\|_{L^\infty}\|\p^\al w^\var \xw^{\gm+\al_2}\|_{L^2}^2
		\le C_\gm \|\p_x w^\var\|_{H^1_1}\|\p^\al w^\var \xw^{\gm+\al_2}\|_{L^2}^2
		\le C_\gm (1+\e^2),
		\deq
		where we have used the following estimate
		\beq\label{u2-est}
		\begin{aligned}
			\|v^\var \xw^{-1}\|_{L^\infty}
			&\le C   \left(\|v^\var \xw^{-1}\|_{L^2}+\|\p_x v^\var \xw^{-1}\|_{L^2}
			+\|\p_y^2 \left(v^\var \xw^{-1}\right)\|_{L^2}  \right)\\
			&\le C (\|\p_y v^\var\|_{L^2}+\|\p_{xy} v^\var\|_{L^2}+\|\p_y^2 v^\var\|_{L^2})\\
			&\le C \|\p_x w^\var\|_{H^1_1}.
		\end{aligned}
		\deq
		Here we have used Hardy inequality \eqref{Hardy1} and the Sobolev inequality \eqref{inf}.
		
		\textbf{Deal with term $I_{2}$.}
		For $i=1, 2$, using Hardy inequality \eqref{Hardy1} and the Morse-type inequality \eqref{morse}, we have
		\beq\label{2205}
		\begin{aligned}
			|I_{2}|
			&=|\sum_{0<\beta \le \alpha}
			C_\alpha^\beta \int \p^{\beta-e_i} \p^{e_i}u^\var \p^{\al-\beta}\p_x w^\var
			\cdot \p^\alpha w^\var \xw^{2\gm+2\al_2} dxdy|\\
			&\le C\|\p^{e_i}u^\var \xw^{i-1} \|_{H^5_{0}}\|\p_x w^\var\|_{H^5_\gm}
			\|\p^\alpha w^\var \xw^{\gm+\al_2}\|_{L^2}\\
			&\le C\|\p^{e_i}w^\var \xw^{i}\|_{H^5_{0}}\|\p_x w^\var\|_{H^5_\gm}
			\|\p^\alpha w^\var \xw^{\gm+\al_2}\|_{L^2}\\
			&\le C_{\gamma, \sg, \dl}(1+\e^4),
		\end{aligned}
		\deq
		where we have used the estimate \eqref{2101} in the last inequality.
		
		\textbf{Deal with term $I_{3}$.}
		If $\beta_2>0$, the divergence-free condition $\eqref{Prandtl-01}_3$ yields directly

		\beq\label{2206}
		\begin{aligned}
			|I_{3}|
			&=|\sum_{0<\beta\le \alpha}C_\alpha^\beta
			\int \p_x^{\beta_1} \p_y^{\beta_2-1}\p_x u^\var \p^{\al-\beta}\p_y w^\var
			\cdot \p^\alpha w^\var \xw^{2\gm+2\al_2} dxdy|\\
			&\le C\|\p_x u^\var\|_{H^5_0}\|\p_y w^\var \|_{H^5_{\gm+1}}\|\p^\alpha w^\var \xw^{\gm+\al_2}\|_{L^2}\\
			&\le C\|\p_x w^\var\|_{H^5_1}\|\p_y w^\var \|_{H^5_{\gm+1}}\|\p^\alpha w^\var \xw^{\gm+\al_2}\|_{L^2}.
		\end{aligned}
		\deq
		If $\beta_2=0$, we have for all $|\alpha|\le 5$,
		\beq\label{2207}
		\begin{aligned}
			|I_{3}|
			&=|\sum_{0<\beta\le \alpha}C_\alpha^\beta
			\int \p_x^{\beta_1-1} \p_x v^\var \p^{\al-\beta}
			\p_y w^\var \cdot \p^\alpha w^\var \xw^{2\gm+2\al_2} dxdy|\\
			&\le C\|\p_x v^\var\|_{H^4_{-1}}\|\p_y w^\var\|_{H^4_{\gm+1}}
			\|\p^\alpha w^\var \xw^{\gm+\al_2}\|_{L^2}\\
			&\le C\|\p_x^2 u^\var\|_{H^4_{0}}\|\p_y w^\var\|_{H^4_{\gm+1}}
			\|\p^\alpha w^\var \xw^{\gm+\al_2}\|_{L^2}\\
			&\le C\|\p_x^2 w^\var\|_{H^4_{1}}\|\p_y w^\var\|_{H^4_{\gm+1}}
			\|\p^\alpha w^\var \xw^{\gm+\al_2}\|_{L^2}.
		\end{aligned}
		\deq
		If $\beta_2=0$ and $|\alpha|=6$, then it implies $\alpha_2\ge 1$.
		Thus, we have
		\beq\label{2208}
		\begin{aligned}
			|I_{3}|
			&=|\sum_{0<\beta\le \alpha}C_\alpha^\beta
			\int \p_x^{\beta_1-1} \p_x v^\var \p^{\al-\beta-e_2}
			\p_y^2 w^\var \cdot \p^\alpha w^\var \xw^{2\gm+2\al_2} dxdy|\\
			&\le C\|\p_x v^\var\|_{H^4_{-1}}\|\p_y^2 w^\var\|_{H^4_{\gm+2}}
			\|\p^\alpha w^\var \xw^{\gm+\al_2}\|_{L^2}\\
			&\le C\|\p_x^2 w^\var\|_{H^4_{1}}\|\p_y^2 w^\var\|_{H^4_{\gm+2}}
			\|\p^\alpha w^\var \xw^{\gm+\al_2}\|_{L^2}.
		\end{aligned}
		\deq
		Thus, the combination of estimates \eqref{2206}-\eqref{2208}
		and \eqref{2101} yields directly
		\beq\label{2209}
		|I_{3}|\le C_{\gamma, \sg, \dl}(1+\e^3).
		\deq
		
		\textbf{Deal with term $I_{4}$.}
		Integrating by part and applying the boundary condition $\p_y w^\var|_{y=0}=0$, we have
		\beq\label{2210}
		\begin{aligned}
			|I_{4}|
			=\Big{|}&\;\int \p^\alpha (\frac{1}{\vr^\var+\vrf})\p_y w^\var
			\cdot \p_y \p^\alpha w^\var \xw^{2\gm+2\al_2} dx\\
			&\;+(2\gm+2\al_2)\int \p^\alpha (\frac{1}{\vr^\var+\vrf})\p_y w^\var
			\cdot \p^\alpha w^\var \xw^{2\gm+2\al_2-1} dx\Big{|}\\
			\le &\;C_\gm\|\p_y w^\var\|_{L^\infty}
			\|\p^\alpha (\frac{1}{\vr^\var+\vrf})\xw^{\gm+\al_2}\|_{L^2}
			\|\p_y \p^\alpha w^\var \xw^{\gm+\al_2}\|_{L^2}\\
			&+C_\gm\|\p_y w^\var\|_{L^\infty}
			\|\p^\alpha (\frac{1}{\vr^\var+\vrf})\xw^{\gm+\al_2}\|_{L^2}
			\| \p^\alpha w^\var \xw^{\gm+\al_2-1}\|_{L^2}\\
			\le &\;C_{\gm,\kappa}\|\p_y w^\var\|_{H^2}(1+\|\vr^\var\|_{H^6_\gm}^6)
			\|\p_y \p^\alpha w^\var \xw^{\gm+\al_2}\|_{L^2}\\
			&\;+C_{\gm,\kappa}\|\p_y w^\var\|_{H^2}(1+\|\vr^\var\|_{H^6_\gm}^6)
			\| \p^\alpha w^\var \xw^{\gm+\al_2-1}\|_{L^2}\\
			\le &\;\nu_1 \|\p_y \p^\alpha w^\var \xw^{\gm+\al_2}\|_{L^2}^2
			+C_{\nu_1, \kappa, \gamma, \sg, \dl}(1+\e^{19}),
		\end{aligned}
		\deq
		where we have used the estimate \eqref{2101} in the last inequality.
		
		\textbf{Deal with term $I_{5}$.}
		Integrating by part and applying the boundary condition $\p_y w^\var|_{y=0}=0$, we have
		\beqq
		\begin{aligned}
			I_{5}
			=&\sum_{0<\beta<\alpha}C^\alpha_\beta
			\int  \p_y\left\{\p^\beta (\frac{1}{\vr^\var+\vrf})\p^{\alpha-\beta}\p_y w^\var\right\}
			\cdot \p^\alpha w^\var \xw^{2\gm+2\al_2} dxdy\\
			=&\sum_{0<\beta<\alpha}C^\alpha_\beta \left( \int_{\mathbb{T}}
			\left(\p^\beta(\frac{1}{\vr^\var+\vrf})   \p^{\alpha-\beta} \p_y w^\var \cdot \p^{\alpha} w^\var \right)_{y=0} d x
			-
			\int  \p^\beta (\frac{1}{\vr^\var+\vrf})\p^{\alpha-\beta}\p_y w^\var \cdot \p_y \p^{\alpha} w^\var \xw^{2\gm+2\al_2} dxdy \right)\\
			&-(2\gm+2\al_2) \sum_{0<\beta<\alpha}C^\alpha_\beta
			\int  \p^\beta (\frac{1}{\vr^\var+\vrf})\p^{\alpha-\beta}\p_y w^\var
			\cdot \p^\alpha w^\var \xw^{2\gm+2\al_2-1} dx dy\\
			\overset{def}{=}&I_{5,1}+I_{5,2}+I_{5,3}. \\
		\end{aligned}
		\deqq
		Similar to the term $I_4$, for any $i=1,2$, we have
		\beqq
		\begin{aligned}
			|I_{5,2}|
			&\le C\|\xw^{i-1} \p^{e_i}(\frac{1}{\vr^\var+\vrf})\|_{H^5_0}
			\|\p_y w^\var\|_{H^5_\gm}
			\|\p_y \p^\alpha w^\var \xw^{\gm+\al_2}\|_{L^2}\\
			&\le C_\kappa(1+\|\vr^\var\|_{H^6_0}^6)\|\p_y w^\var\|_{H^5_\gm}
			\|\p_y \p^\alpha w^\var \xw^{\gm+\al_2}\|_{L^2}\\
			&\le \nu_1 \|\p_y \p^\alpha w^\var \xw^{\gm+\al_2}\|_{L^2}^2
			+ C_{\nu_1,\kappa,\gamma, \sg, \dl} (1+\e^{19});\\
			|I_{5,3}|
			&\le C_{\gamma}\|\xw^{i-1} \p^{e_i}(\frac{1}{\vr^\var+\vrf})\|_{H^5_0}
			\|\p_y w^\var\|_{H^5_\gm}
			\| \p^\alpha w^\var \xw^{\gm+\al_2}\|_{L^2}\\
			&\le C_{\gamma,\kappa}(1+\|\vr^\var\|_{H^6_0}^6)\|\p_y w^\var\|_{H^5_\gm}
			\| \p^\alpha w^\var \xw^{\gm+\al_2}\|_{L^2}\\
			&\le  C_{\gamma, \kappa,  \sg, \dl} (1+\e^{10}).
		\end{aligned}
		\deqq
		Using the trace inequality \eqref{trace},
		\beqq
		\begin{aligned}
			|I_{5,1}|
			\le&\; \sum_{0<\beta<\alpha} C (\| \p^\beta \p_y (\frac{1}{\vr^\var+\vrf})   \p^{\alpha-\beta} \p_y w^\var \|_{L^2} \|\p^\al w^\var\|_{L^2}
			+\| \p^\beta(\frac{1}{\vr^\var+\vrf})   \p^{\alpha-\beta} \p_y^2 w^\var \|_{L^2} \|\p^\al w^\var\|_{L^2}\\
			&\;+\| \p^\beta(\frac{1}{\vr^\var+\vrf})   \p^{\alpha-\beta} \p_y w^\var \|_{L^2} \|\p_y \p^\al w^\var\|_{L^2}
			)\\
			\le&\; C_{\kappa}(1+\|\vr^\var \|_{H^6}^6) \| \p_y w^\var\|_{H^5}
			\| \p^\al w^\var\|_{L^2}
			+C_{\kappa} (1+\| \vr^\var \|_{H^5}^5) \| \p_y^2 w^\var\|_{H^5}
			\| \p^\al w^\var\|_{L^2}\\
			&\; +C_{\kappa} (1+\| \vr^\var \|_{H^5}^5) \| \p_y w^\var\|_{H^5}
			\| \p^\al \p_y w^\var\|_{L^2}\\
			\le &\; \nu_2 \| \p_y^2 w^\var\|_{H^5}^2
			+ C_{\nu_2,\kappa, \gamma, \sg, \dl}(1+\e^{10}).
		\end{aligned}
		\deqq
		The combination of estimates from terms $I_{5,1}$ to $I_{5,3}$ gives directly
		\beq\label{2211}
		|I_{5}|
		\le \nu_1 \|\p_y \p^\alpha w^\var \xw^{\gm+\al_2}\|_{L^2}^2
		+\nu_2  \| \p_y^2 w^\var\|_{H^5}^2
		+ C_{\nu_1, \nu_2, \kappa, \gamma, \sg, \dl} (1+\e^{19}).
		\deq
		
		\textbf{Deal with term $I_{6}$.}
		Indeed, integrating by part, we have
		\beqq
		\begin{aligned}
			I_6
			=&\underset{I_{6,1}}{\underbrace{\int_{\mathbb{T}}
					\left(\frac{1}{\vr^\var+\vrf}\p_y \p^\al w^\var \cdot
					\p^\alpha w^\var\right)_{y=0} d x}}
			-\int \frac{1}{\vr^\var+\vrf}|\p_y \p^\al w^\var|^2 \xw^{2\gm+2\al_2} dxdy\\
			&-\underset{I_{6,2}}{\underbrace{\int \frac{2\gm+2\al_2}{\vr^\var+\vrf}\p_y \p^\al w^\var
					\cdot \p^\alpha w^\var \xw^{2\gm+2\al_2-1}dx dy}}. \\
		\end{aligned}
		\deqq
		Using the H\"{o}lder inequality, we have
		\beqq
		|I_{6,2}|
		\le C_{\gm}\|\frac{1}{\sqrt{\vr^\var+\vrf}}\p_y \p^\al w^\var \xw^{\gm+\al_2}\|_{L^2}
		\|\p^\al w^\var \xw^{\gm+\al_2-1}\|_{L^2}.
		\deqq
		Let us deal with the boundary term $I_{6,1}$.
		\textbf{Deal with the case: $|\al|\le 5$.}
		The application of boundary embedding inequality \eqref{trace} yields
		\beqq
		\begin{aligned}
			|I_{6,1}|
			&\le \|\p_y(\frac{1}{\vr^\var+\vrf}\p_y \p^\al w^\var)\|_{L^2}
			\|\p^\alpha w^\var\|_{L^2}
			+\|\frac{1}{\vr^\var+\vrf}\p_y \p^\al w^\var\|_{L^2}
			\|\p_y \p^\alpha  w^\var\|_{L^2}\\
			&\le (\|\p_y(\frac{1}{\vr^\var+\vrf})\p_y \p^\al w^\var\|_{L^2}
			+\|\frac{1}{\vr^\var+\vrf} \p_y^2 \p^\al w^\var\|_{L^2})
			\|\p^\alpha w^\var\|_{L^2}
			+\|\frac{1}{\vr^\var+\vrf}\p_y \p^\al w^\var\|_{L^2}
			\|\p_y \p^\alpha  w^\var\|_{L^2}\\
			&\le \mu\|\frac{1}{\sqrt{\vr^\var+\vrf}} \p_y^2 \p^\al w^\var\|_{L^2}^2
			+C_{\mu, \kappa}(1+\e^2).
		\end{aligned}
		\deqq
		The combination of $I_{6,1}$ and $I_{6,2}$ yields directly
		\beq\label{2212}
		\begin{aligned}
			I_6 \le (\mu-1)\|\frac{1}{\sqrt{\vr^\var+\vrf}}\p_y \p^\alpha w^\var \xw^{\gm+\alpha_2}\|_{L^2}^2 + C_{\mu,\kappa,\gm} (1+\e^2).
		\end{aligned}
		\deq
		\textbf{Deal with the case: $|\al|=6$}.
		\textbf{Case 1: $\alpha_2$ is even.}
		For $\alpha_2=2k(k=1,2,3)$, we can apply the vorticity
		equation $\eqref{Prandtl-02}_1$ to obtain
		\beq\label{bl-redu}
		\left\{
		\begin{aligned}
			\p_y^3 w^\var|_{y=0}
			=&((\vr^\var+\vrf)w^\var\p_x w^\var)_{y=0}+\left(\frac{2 \p_y \vr^\var}{\vr^\var+\vrf}\p_y^2 w^\var\right)_{y=0},\\
			\p_y^5 w^\var|_{y=0}=&N_1,\\
			\frac{1}{\vr^\var+\vrf}\p_y^7 w^\var|_{y=0}
			=&(\vr^\var+\vrf)N_2-\var \p_x^2 N_1
			+\left(\p_y^5(u^\var \p_x w^\var +v^\var \p_y w^\var)\right)_{y=0}\\
			&-\left(\p_y^k(\frac{1}{\vr^\var+\vrf})\p_y^{7-k}w^\var\right)_{y=0}.
		\end{aligned}
		\right.
		\deq
		where the functions $N_i(i=1,2)$ are defined in \eqref{a206} and \eqref{a225}.
		The boundary reduction relation \eqref{bl-redu}
		can be proven in Appendix \ref{appendix-b}.
		Thus, for $k=1,2,3$, it is easy to check that
		\beqq
		\begin{aligned}
			|I_{6,1}|
			=&|\int_{\mathbb{T}}
			\left(\frac{1}{\vr^\var+\vrf} \p_x^{6-2k} \p_y^{2k+1} w^\var \cdot
			\p_x^{6-2k} \p_y^{2k} w^\var\right)_{y=0} d x|
			\le  \nu_2 \|\p_y^2 w^\var\|_{H^5}^2
			+C_{\nu_2,\kappa, \gamma, \sg, \dl}(1+\e^{20}).
		\end{aligned}
		\deqq
		\textbf{Case 2: $\alpha_2$ is odd.}
		Then, $\alpha_1\le 5$ and $\alpha_1+\alpha_2=6$ implies $\alpha_2 \ge 1$.
		Obviously, if $\alpha_2=1$, then the condition $\p_y w^\var|_{y=0}=0$ yields directly
		\beqq
		I_{6,1}
		=\int_{\mathbb{T}}
		\left(\frac{1}{\vr^\var+\vrf} \p_x^5 \p_y^2 w^\var \cdot  \p_x^5 \p_y w^\var \right)_{y=0} d x=0.
		\deqq
		Thus, for $\alpha_2=2k+1(k=1,2)$, we integrate by part
		with respect to $x$ to obtain
		\beqq
		\begin{aligned}
			I_{6,1}
			=&\int_{\mathbb{T}}
			\left(\frac{1}{\vr^\var+\vrf}  \p_x^{5-2k}\p_y^{2k+2} w^\var \cdot
			\p_x^{5-2k} \p_y^{2k+1} w^\var \right)_{y=0} d x\\
			=&-\int_{\mathbb{T}}\left(
			\p_x(\frac{1}{\vr^\var+\vrf})  \p_x^{4-2k} \p_y^{2k+2} w^\var \cdot
			\p_x^{5-2k} \p_y^{2k+1} w^\var \right)_{y=0} dx \\
			&-\int_{\mathbb{T}} \left(
			\frac{1}{\vr^\var+\vrf}  \p_x^{4-2k} \p_y^{2k+2}w^\var \cdot
			\p_x^{6-2k} \p_y^{2k+1} w^\var \right)_{y=0} d x\\
			\overset{def}{=}&I_{6,1,1}+I_{6,1,2}.
		\end{aligned}
		\deqq
		Using the trace inequality \eqref{trace}, we have
		\beqq
		\begin{aligned}
			|I_{6,1,1}|
			\le &\;\|\p_x(\frac{1}{\vr^\var+\vrf})\p_x^{4-2k} \p_y^{2k+2} w^\var\|_{L^2}
			 \|\p_x^{5-2k} \p_y^{2k+2}  w^\var\|_{L^2}\\
			&\;+\|\p_y\left\{\p_x(\frac{1}{\vr^\var+\vrf})\p_x^{4-2k} \p_y^{2k+2} w^\var\right\}\|_{L^2}
			 \|\p_x^{5-2k} \p_y^{2k+1}  w^\var\|_{L^2}\\
			\le &\; C_\kappa\|\p_x \vr^\var\|_{H^2_0}
			(\|\p_x^{4-2k} \p_y^{2k+2} w^\var\|_{L^2}
			\|\p_x^{5-2k} \p_y^{2k+2}  w^\var\|_{L^2}
			+\|\p_x^{4-2k} \p_y^{2k+3} w^\var\|_{L^2}
			\|\p_x^{5-2k} \p_y^{2k+1}  w^\var\|_{L^2})\\
			&\;+C_\kappa(1+\|\vr^\var\|_{H^4_0}^2)
			\|\p_x^{4-2k} \p_y^{2k+2} w^\var\|_{L^2}
			\|\p_x^{5-2k} \p_y^{2k+1}  w^\var\|_{L^2}\\
			\le&\;\nu_2 \|\p_x^{4-2k} \p_y^{2k+3} w^\var\|_{L^2}^2
			+C_{\nu_2, \kappa}(1+\e^2).
		\end{aligned}
		\deqq
		For $k=1,2$, the quantity $\p_y^{2k+1} \p_x^{6-2k} w^\var |_{y=0}$
		in term $I_{6,1,1}$ has an odd number of $y$ derivatives,
		and hence, we can apply the boundary reduction equality
		$\eqref{bl-redu}_1$ and $\eqref{bl-redu}_2$ to further reduce
		the order of derivative in term $I_{6,1,1}$. Then, similar to the
		case 1, we can apply the trace inequality \eqref{trace} to obtain
		\beqq
		\begin{aligned}
			|I_{6,1,2}|
			\le  \nu_2 \|\p_y^2 w^\var\|_{H^5}^2
			+C_{\nu_2,\kappa, \gamma, \sg, \dl}(1+\e^{20}).
		\end{aligned}
		\deqq
		The combination of estimates from $I_{6,1,1}$ and $I_{6,1,2}$ yields directly
		\beqq
		\begin{aligned}
			|I_{6,1}|
			\le  \nu_2 \|\p_y^2 w^\var\|_{H^5}^2
			+C_{\nu_2,\kappa, \gamma, \sg, \dl}(1+\e^{20}),
		\end{aligned}
		\deqq
		which, together with the estimate of $I_{6,2}$, yields directly
		\beq\label{2215}
		\begin{aligned}
			I_6 \le \nu_2 \|\p_y^2 w^\var\|_{H^5}^2
			+(\mu-1)\|\frac{1}{\sqrt{\vr^\var+\vrf}}\p_y \p^\alpha w^\var \xw^{\gm+\alpha_2}\|_{L^2}^2
			+ C_{\nu_2,\mu,\kappa,\gamma, \sg, \dl} (1+\e^{20}).
		\end{aligned}
		\deq
		Substituting the estimates to \eqref{2204}, \eqref{2205}, \eqref{2209}-\eqref{2212} and \eqref{2215} into \eqref{2203} and choosing $\mu$ small enough, we can obtain the estimate \eqref{2201}.
		Therefore, we complete the proof of this lemma.
	\end{proof}
	
	Next, we establish the weighted estimates with normal derivatives for the density.
	\begin{lemm}\label{lemma23}
		Under the condition of \eqref{Lowerb},
		the solution $(u^\var, v^\var, w^\var)$ of regularized system
		\eqref{Prandtl-01} will satisfy
		\beqq
		\frac{d}{dt}\sum_{|\alpha|\le 6, \alpha_1 \le 5}\frac12\|\p^\al \vr^\var \xw^{\sg+\al_2}\|_{L^2}^2
		+\sum_{|\alpha|\le 6, \alpha_1 \le 5}
		\var \|\p_x \p^\alpha \vr^\var \xw^{\sg+\alpha_2}\|_{L^2}^2
		\le C_{\sg, \gm, \dl}(1+\e^4).
		\deqq
	\end{lemm}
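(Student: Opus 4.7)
The plan is to differentiate equation $\eqref{Prandtl-01}_1$ by $\p^\alpha$ for each multi-index $|\alpha|\le 6$, $\alpha_1\le 5$, multiply by $\p^\alpha\vr^\var \xw^{2\sg+2\alpha_2}$, and integrate over $\mathbb{T}\times\mathbb{R}^+$. The time derivative yields the energy increment, and integrating the $-\var \p_x^2 \p^\alpha \vr^\var$ term by parts in $x$ (with no boundary contribution on $\mathbb{T}$) produces the desired dissipation on the left-hand side. After summing over $\alpha$, what remains is to bound three groups of terms: the two transport principal parts involving $u^\var$ and $v^\var$, plus the commutators $-\int[\p^\alpha, u^\var]\p_x\vr^\var \cdot \p^\alpha \vr^\var \xw^{2\sg+2\alpha_2}\,dxdy$ and $-\int[\p^\alpha, v^\var]\p_y\vr^\var \cdot \p^\alpha \vr^\var \xw^{2\sg+2\alpha_2}\,dxdy$.

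For the transport principal parts $u^\var \p_x \p^\alpha \vr^\var + v^\var \p_y \p^\alpha \vr^\var$, integration by parts in $x$ and $y$---with the boundary term at $y=0$ vanishing thanks to $v^\var|_{y=0}=0$---together with the divergence-free relation $\p_x u^\var + \p_y v^\var = 0$, cancels all terms except the single weight-differentiation remainder
\begin{equation*}
-(\sg+\alpha_2)\int v^\var \xw^{-1} \,|\p^\alpha \vr^\var \xw^{\sg+\alpha_2}|^2\,dxdy,
\end{equation*}
which is bounded by $C_\sg \|v^\var \xw^{-1}\|_{L^\infty}\,\|\p^\alpha \vr^\var \xw^{\sg+\alpha_2}\|_{L^2}^2 \le C_\sg(1+\e^2)$ via the $v^\var$-estimate \eqref{u2-est} already used in Lemma \ref{lemma22}. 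For each commutator, I expand by Leibniz and, using the representations $u^\var - u_\infty = \int_0^y w^\var\,dy'$ and $v^\var = -\int_0^y \p_x u^\var\,dy'$ together with $\p_y v^\var = -\p_x u^\var$, convert every factor containing $u^\var$ or $v^\var$ into one containing $w^\var$ or $\p_x w^\var$ through the Hardy inequalities \eqref{Hardy1}--\eqref{Hardy2}, each substitution absorbing one unit of $\xw$-weight. I estimate the resulting trilinear integrals by the Morse-type product inequality \eqref{morse}, placing the lowest-order factor in $L^\infty$ through Sobolev embedding and the two others in weighted $L^2$; the hypothesis $\sg\le 2\gm-1$ is precisely what ensures that each weighted norm that appears is dominated by $\e$.

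The main obstacle is the genuinely top-order contribution in the $v^\var$-commutator arising from $\beta=\alpha$ with $\alpha_1=5$, $\alpha_2=1$, which produces the factor $\p_x^5\p_y v^\var = -\p_x^6 u^\var$. Since $\e$ controls $\p^\alpha w^\var$ only for $\alpha_1\le 5$, the quantity $\|\p_x^6 u^\var\|_{L^2}$---equivalently $\|\p_x^6 w^\var \xw\|_{L^2}$ after Hardy---is not directly inside the energy; here I invoke estimate \eqref{2105} of Lemma \ref{lemma21}, which bounds $\|\p_x^6 w^\var \xw^\gm\|_{L^2}$ (and hence, since $\gm>1$, $\|\p_x^6 w^\var \xw\|_{L^2}$) by $C_{\gm,\sg,\dl}\|w_g^\var \xw^\gm\|_{L^2}\bigl(1+\|\p_y w^\var \xw^{\sg+1}\|_{L^\infty}\bigr)$, both factors of which lie in $\e$. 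Collecting every contribution and summing over $\alpha$ produces a right-hand side of the form $C_{\sg,\gm,\dl}(1+\e^4)$, which is the asserted bound.
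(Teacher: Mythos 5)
Your proposal is correct and follows essentially the same route as the paper: write the equation for $\p^\alpha\vr^\var$, multiply by $\p^\alpha\vr^\var\langle y\rangle^{2\sigma_1+2\alpha_2}$, integrate by parts using the divergence-free relation and $v^\var|_{y=0}=0$ to reduce the transport terms to the weight-differentiation remainder bounded via the $v^\var$-$L^\infty$ estimate, and then control the two commutators with the Morse-type inequality, Hardy inequalities, the constraint $\sigma_1\le 2\gamma-1$, and the $\p_x^6 w^\var$ bound coming from Lemma 2.1. One cosmetic slip: the representation should read $u^\var(y)=\int_0^y w^\var\,dy'$ (equivalently $u_\infty-u^\var=\int_y^{\infty}w^\var\,dy'$), not $u^\var-u_\infty=\int_0^y w^\var\,dy'$, but the Hardy-inequality steps you invoke are unaffected.
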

	\begin{proof}
		For any $|\alpha|\le 6$ and $\alpha_1\le 5$,
		applying the $\p^\al$ differential operator to the
		equation $\eqref{Prandtl-01}_1$, we have
		\beqq
		\p_t(\p^\al \vr^\var)+u^\var \p_x (\p^\al \vr^\var)
		+v^\var \p_y(\p^\al \vr^\var)-\var \p_x^2(\p^\al \vr^\var)
		=-[\p^\al, u^\var]\p_x \vr^\var-[\p^\al, v^\var]\p_y \vr^\var.
		\deqq
		Applying this equation by $\p^\al \vr^\var \xw^{2\sg+2\al_2}$ and integrating
		over $\mathbb{T}\times \mathbb{R}^+$, we have
		\beq\label{2302}
		\begin{aligned}
			&\frac{d}{dt}\frac12\|\p^\al \vr^\var \xw^{\sg+\al_2}\|_{L^2}^2
			+\var\|\p_x \p^\al \vr^\var \xw^{\sg+\al_2}\|_{L^2}^2
			-\underset{I_{7}}{\underbrace{
					(\sg+\alpha_2)\int |\p^\al \vr^\var|^2 v^\var \xw^{2\sg+2\alpha_2-1}dxdy}}\\
			=&-\underset{I_{8}}{\underbrace{
					\int [\p^\al, u^\var]\p_x \vr^\var \cdot \p^\al \vr^\var \xw^{2\sg+2\al_2} dxdy}}
			-\underset{I_{9}}{\underbrace{
					\int [\p^\al, v^\var]\p_y \vr^\var \cdot \p^\al \vr^\var \xw^{2\sg+2\al_2} dxdy}}.
		\end{aligned}
		\deq
		Using the estimate \eqref{u2-est}, one has
		\beq\label{2303}
		\begin{aligned}
			|I_7|
			\le C_{\sg}\|v^\var \xw^{-1}\|_{L^\infty}
			\|\p^\al \vr^\var \xw^{\sg+\alpha_2}\|_{L^2}^2
			\le C_{\sg}\|\p_x w^\var\|_{H^1_1}
			\|\p^\al \vr^\var \xw^{\sg+\alpha_2}\|_{L^2}^2
			\le C_{\sg} (1+\e^2).
		\end{aligned}
		\deq
		For $i=1,2$, one can apply the Morse-type inequality \eqref{morse}, Hardy inequality \eqref{Hardy1} and $\sg\le2\gm-1$ to obtain
		\beq\label{2304}
		\begin{aligned}
			|I_8|
			&=|\sum_{0<\beta \le \alpha}C^\alpha_\beta
			\int \p^{\beta-e_i} \p^{e_i}u^\var \p^{\alpha-\beta} \p_x \vr^\var
			\cdot \p^\al \vr^\var \xw^{2\sg+2\al_2} dxdy|\\
			&\le C\|\p^{e_i}u^\var\|_{H^5_{\sg-\gm+i-1}}
			\|\p_x \vr^\var\|_{H^5_\gm}\|\p^\al \vr^\var \xw^{\sg+\al_2}\|_{L^2}\\
			&\le C_{\sg,\gamma,\dl}(1+\e^4),
		\end{aligned}
		\deq
		where we have used the estimate \eqref{2101}.
		Finally, let us deal with the term $I_{9}$.
		If $\beta_2>0$, then we apply the divergence free condition
		$\eqref{Prandtl-01}_3$ and Morse-type inequality \eqref{morse} to obtain
		\beqq
		\begin{aligned}
			|I_{9}|
			&=|\sum_{0<\beta\le \alpha}C_\alpha^\beta
			\int \p_x^{\beta_1} \p_y^{\beta_2-1}\p_x u^\var \p^{\al-\beta}\p_y \vr^\var
			\cdot \p^\alpha \vr^\var \xw^{2\sg+2\al_2} dxdy|\\
			&\le C\|\p_x u^\var\|_{H^5_0}\|\p_y \vr^\var\|_{H^5_{\sg+1}}
			\|\p^\alpha \vr^\var \xw^{\sg+\al_2}\|_{L^2}\\
			&\le C_{\sg,\gamma,\dl}(1+\e^3),
		\end{aligned}
		\deqq
		where we have used Hardy inequality \eqref{Hardy1}
		and estimate \eqref{2101}.
		If $\beta_2=0$, we have for all $|\alpha|\le 5$,
		\beqq
		\begin{aligned}
			|I_{9}|
			&=|\sum_{0<\beta\le \alpha}C_\alpha^\beta
			\int \p_x^{\beta_1-e_1} \p_x v^\var \p^{\al-\beta}
			\p_y \vr^\var \cdot \p^\alpha \vr^\var \xw^{2\sg+2\al_2} dxdy|\\
			&\le C\|\p_x v^\var\|_{H^4_{-1}}\|\p_y \vr^\var\|_{H^4_{\sg+1}}
			\|\p^\alpha \vr^\var \xw^{\sg+\al_2}\|_{L^2}\\
			&\le C\|\p_x^2 u^\var\|_{H^4_{0}}\|\p_y \vr^\var\|_{H^4_{\sg+1}}
			\|\p^\alpha \vr^\var \xw^{\sg+\al_2}\|_{L^2}\\
			&\le C_{\sg,\gamma,\dl}(1+\e^3),
		\end{aligned}
		\deqq
		where we have used the Hardy inequalities
		\eqref{Hardy1}, \eqref{Hardy2} and estimate \eqref{2101}.
		If $\beta_2=0$ and $|\alpha|=6$, then it implies $\alpha_2\ge 1$, and we have
		\beqq
		\begin{aligned}
			|I_{9}|
			&=|\sum_{0<\beta\le \alpha}C_\alpha^\beta
			\int \p_x^{\beta_1-e_1} \p_x v^\var \p^{\al-\beta-e_2}
			\p_y^2 \vr^\var \cdot \p^\alpha \vr^\var \xw^{2\sg+2\al_2} dxdy|\\
			&\le C\|\p_x v^\var\|_{H^4_{-1}}\|\p_y^2 \vr^\var\|_{H^4_{\sg+2}}
			\|\p^\alpha \vr^\var \xw^{\sg+\al_2}\|_{L^2}\\
			&\le C\|\p_x^2 u^\var\|_{H^4_{0}}\|\p_y^2 \vr^\var\|_{H^4_{\sg+2}}
			\|\p^\alpha \vr^\var \xw^{\sg+\al_2}\|_{L^2}\\
			&\le C_{\sg,\gamma,\dl}(1+\e^3).
		\end{aligned}
		\deqq
		Thus, the term $I_{9}$ can be estimated as follows
		\beq\label{2305}
		|I_{9}| \le  C_{\sg,\gamma,\dl}(1+\e^3).
		\deq
		Substituting the estimates \eqref{2303}, \eqref{2304}
		and \eqref{2305} into \eqref{2302}, we have
		\beqq
		\frac{d}{dt}\frac12\|\p^\al \vr^\var \xw^{\sg+\al_2}\|_{L^2}^2
		+\var\|\p_x \p^\al \vr^\var \xw^{\sg+\al_2}\|_{L^2}^2
		\le C_{\sg,\gamma,\dl}(1+\e^4).
		\deqq
		Therefore, we complete the proof of this lemma.
	\end{proof}

	\subsection{ Weighted estimates only in tangential Variables}\label{section-only tangential}
	First of all, we establish the estimate of only tangential derivative of vorticity.
	Applying $\p_x^6$-operator to the vorticity
	equation $\eqref{Prandtl-02}_1$, we have
	\beq\label{201}
	\p_t(\p_x^6 w^\var)+u^\var \p_x(\p_x^6 w^\var)+v^\var \p_y(\p_x^6 w^\var)
	+\p_x^6 v^\var \p_y w^\var-\var \p_x^8 w^\var
	-\p_x^6 \p_y \{\frac{1}{\vr^\var+\vrf}\p_y w^\var\}=Q_1,
	\deq
	where the function $Q_1$ is defined as follows
	\beqq\label{202}
	Q_1\overset{def}{=}-\sum_{0< k \le 6}C_6^k \p_x^k u^\var \p_x^{7-k} w^\var
	-\sum_{0< k <6}C_6^k \p_x^k v^\var \p_x^{6-k} \p_y w^\var.
	\deqq
	Applying $\p_x^6$-operator to the velocity
	equation $\eqref{Prandtl-01}_2$, we have
	\beq\label{203}
	\p_t(\p_x^6 u^\var)+u^\var \p_x (\p_x^6 u^\var)+v^\var \p_y (\p_x^6 u^\var)
	+\p_x^6 v^\var w^\var-\var \p_x^8 u^\var-\p_x^6 \{\frac{1}{\vr^\var+\vrf}\p_y w^\var\}=Q_2,
	\deq
	where the function $Q_2$ is defined as follows
	\beqq\label{204}
	Q_2\overset{def}{=}-\sum_{0< k \le 6}C_6^k \p_x^k u^\var \p_x^{7-k} u^\var
	-\sum_{0< k <6}C_6^k \p_x^k v^\var \p_x^{6-k} w^\var.
	\deqq
	The most difficult term in both  \eqref{201} and \eqref{203}
	is $\p_x^6 v^\var = -\p_y^{-1} \p_x^7  u^\var$,
	which causes the loss of $x$-derivative of horizontal velocity,
	so that the standard energy estimate cannot apply.
	To eliminate the problematic term $\p_x^6 v^\var$, we subtract the equations \eqref{201} and \eqref{203} in an appropriate way under the condition of \eqref{Lowerb} which makes sure that $w^\var >0$.
	Recall that $g_w^\var =\frac{\p_y w^\var}{w^\var}$, then multiplying  \eqref{203} by $g_w^\var$, it holds
	\beq\label{205}
	\p_t(\p_x^6 u^\var)g_w^\var+u^\var \p_x (\p_x^6 u^\var)g_w^\var
	+v^\var \p_y (\p_x^6 u^\var)g_w^\var
	+\p_x^6 v^\var \p_y w^\var-\var \p_x^8 u^\var g_w^\var
	-\p_x^6 \{\frac{1}{\vr^\var+\vrf}\p_y w^\var\}g_w^\var=Q_2\cdot g_w^\var.
	\deq
	Then, recall that $w_g^\var =\p_x^6 w^\var-g_w^\var \p_x^6 u^\var
	=\p_x^6 w^\var-\frac{\p_y w^\var}{w^\var} \p_x^6 u^\var$, it is easy to check that
	\beq\label{206}
	\begin{aligned}
		\p_t w_g^\var +u^\var \p_x w_g^\var +v^\var \p_y w_g^\var
		-\var \p_x^2 w_g^\var -\p_y\{\frac{1}{\vr^\var+\vrf}\p_y w_g^\var \}
		=Q_1-Q_2\cdot g_w^\var-Q_3-Q_4 \cdot \p_x^6 u^\var,
	\end{aligned}
	\deq
	here $Q_i(i=3,4)$ are defined as follows.
	\beqq\label{207}
	\begin{aligned}
		Q_3\overset{def}{=}&-\p_y\left\{\sum_{0<k \le 6}C_6^k \p_x^k(\frac{1}{\vr^\var+\vrf})
		\p_x^{6-k}\p_y w^\var\right\}
		-\p_y\left\{\frac{1}{\vr^\var+\vrf}\p_x^6 u^\var \p_y g_w^\var\right\}\\
		&-\p_x^6 w^\var \p_y\{\frac{1}{\vr^\var+\vrf}g_w^\var\}
		+\sum_{0< k \le 6}C_6^k \p_x^k(\frac{1}{\vr^\var+\vrf})\p_x^{6-k}\p_y w^\var g_w^\var
		-2\var\p_x^7 u^\var \p_x g_w^\var;\\
		Q_4\overset{def}{=}&\frac{1}{w^\var}\p_y \left\{-u^\var \p_x w^\var-v^\var \p_y w^\var
		+\p_y(\frac{1}{\vr^\var+\vrf}\p_y w^\var)\right\}
		-\frac{\p_y w^\var}{(w^\var)^2}
		\left(-u^\var \p_x w^\var-v^\var \p_y w^\var
		+\p_y(\frac{1}{\vr^\var+\vrf}\p_y w^\var)\right)\\
		&+u^\var\left(\frac{\p_{xy}w^\var}{w^\var}
		-\frac{\p_y w^\var \p_x w^\var}{(w^\var)^2}\right)
		+v^\var\left(\frac{\p_{y}^2 w^\var}{w^\var}
		-\frac{(\p_y w^\var)^2}{(w^\var)^2}\right)
		-\var \p_x^2 (\frac{\p_y w^\var}{w^\var})+\var \frac{\p_y \p_x^2 w^\var}{w^\var}-\var \frac{\p_y w^\var \p_x^2 w^\var}{(w^\var)^2}.
	\end{aligned}
	\deqq
	The above equation \eqref{206} is proven in
	Appendix \ref{good-unkonw} in detail.
	Now, we are going to establish the weighted estimate for
	this good quantity $w_g^\var$.
	\begin{lemm}\label{lemma24}
		Under the condition of \eqref{Lowerb},
		the solution $(u^\var, v^\var, w^\var)$ of regularized system
		\eqref{Prandtl-01} will satisfy
		\beqq
		\begin{aligned}
			&\frac{d}{dt}\|w_g^\var \xw^{\gm}\|_{L^2}^2
			+\var\|\p_x w_g^\var \xw^{\gm} \|_{L^2}^2
			+\|\frac{1}{\sqrt{\vr^\var+\vrf}}\p_y w_g^\var \xw^{\gm} \|_{L^2}^2\\
			\le&\; \nu_3 \|\p_y w_g^\var \xw^{\gm}\|_{L^2}^2
			+C_{\nu_3, \kappa, \dl, \gamma,\sg}(1+\e^{19}),
		\end{aligned}
		\deqq
		where $\nu_3$ is the small constant that will be chosen later.
	\end{lemm}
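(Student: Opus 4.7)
The plan is to multiply the evolution equation \eqref{206} for $w_g^\var$ by $w_g^\var \xw^{2\gm}$ and integrate over $\mathbb{T}\times\mathbb{R}^+$. The time derivative and the two diffusion terms produce, after integration by parts in $y$, the three terms appearing on the left of the stated inequality, namely $\frac12\frac{d}{dt}\|w_g^\var \xw^{\gm}\|_{L^2}^2$, $\var\|\p_x w_g^\var \xw^{\gm}\|_{L^2}^2$, and $\|\frac{1}{\sqrt{\vr^\var+\vrf}}\p_y w_g^\var \xw^{\gm}\|_{L^2}^2$, together with a weight cross term $2\gm\int \frac{1}{\vr^\var+\vrf}\p_y w_g^\var\, w_g^\var\, \xw^{2\gm-1}\,dxdy$ that is absorbed via Cauchy--Schwarz into the normal dissipation and a lower-order $\|w_g^\var \xw^{\gm}\|_{L^2}^2$ contribution. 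The transport piece $(u^\var \p_x + v^\var \p_y)w_g^\var$ reduces by integration by parts and the divergence-free condition to the single weight term $\gm \int v^\var |w_g^\var|^2 \xw^{2\gm-1}$, controlled by $\|v^\var \xw^{-1}\|_{L^\infty}\|w_g^\var \xw^{\gm}\|_{L^2}^2 \le C_\gm(1+\e^2)$ via \eqref{u2-est}, exactly as for $I_1$ in Lemma \ref{lemma22}.

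A crucial point is that the normal boundary term at $y=0$ vanishes. Indeed, $\p_y w^\var|_{y=0}=0$ forces $g_w^\var|_{y=0}=(\p_y w^\var/w^\var)|_{y=0}=0$, and differentiating $w_g^\var = \p_x^6 w^\var - g_w^\var \p_x^6 u^\var$ in $y$ and evaluating at $y=0$ gives $\p_y w_g^\var|_{y=0}=\p_x^6 \p_y w^\var|_{y=0}=0$; hence $\frac{1}{\vr^\var+\vrf}\p_y w_g^\var\, w_g^\var|_{y=0}=0$ and the integration by parts closes cleanly.

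For the right-hand side, we estimate the four source contributions separately. The terms $\int Q_1 w_g^\var \xw^{2\gm}\,dxdy$ and $\int Q_2 g_w^\var w_g^\var \xw^{2\gm}\,dxdy$ are handled by the Morse-type inequality \eqref{morse}, Hardy \eqref{Hardy1}, the pointwise lower bound $w^\var \xw^{\sg}\ge \dl$, and the polynomial bound \eqref{2101}; low-derivative factors go into weighted $L^\infty$ via Sobolev embedding or the pointwise controls on $\p^\al w^\var$ for $|\al|\le 2$ embedded in $\e$, while top-derivative factors lie in weighted $L^2$. The term $\int Q_4\cdot \p_x^6 u^\var\, w_g^\var \xw^{2\gm}\,dxdy$ is bounded by applying Hardy \eqref{Hardy1} to exchange $\p_x^6 u^\var \xw^{\gm-1}$ for $\p_x^6 w^\var \xw^{\gm}$, and then using \eqref{2105} to dominate the latter by $\|w_g^\var\xw^{\gm}\|_{L^2}$ times a polynomial in $\e$; the weighted $L^\infty$-size of $Q_4$, which involves $\p_y w^\var/w^\var$ and similar ratios together with $\var$-terms, is polynomial in $\e$ thanks to the lower bound $w^\var\xw^{\sg}\ge \dl$. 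For $Q_3$, each sub-term of the form $\p_y\{\cdots\}$ is integrated by parts; the boundary traces at $y=0$ vanish since $\p_x^j \p_y w^\var|_{y=0}=0$ for all $j$ and $\p_x^6 u^\var|_{y=0}=0$, transferring one $\p_y$ onto $w_g^\var \xw^{2\gm}$ and producing either an absorbable $\nu_3\|\p_y w_g^\var \xw^{\gm}\|_{L^2}^2$ factor (Cauchy--Schwarz with small constant) or a harmless weight cross term.

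The main obstacle is the $Q_3$ piece $-\p_y\{\frac{1}{\vr^\var+\vrf}\p_x^6 u^\var \p_y g_w^\var\}$, whose integration by parts yields $\int \frac{1}{\vr^\var+\vrf}\p_x^6 u^\var \p_y g_w^\var\, \p_y w_g^\var \xw^{2\gm}\,dxdy$. Here $\|\p_x^6 u^\var \xw^{\gm-1}\|_{L^2}$ is converted to $\|w_g^\var\xw^{\gm}\|_{L^2}\cdot\mathrm{poly}(\e)$ by Hardy \eqref{Hardy1} and \eqref{2105}; the factor $\p_y g_w^\var = \p_y^2 w^\var/w^\var -(\p_y w^\var/w^\var)^2$ is controlled in a weighted $L^\infty$ norm by the pointwise bounds on $|\xw^{\sg+\al_2}\p^\al w^\var|$ for $|\al|\le 2$ built into $\e$ together with $w^\var\xw^{\sg}\ge \dl$; the remaining $\p_y w_g^\var$ factor is absorbed into $\nu_3\|\p_y w_g^\var \xw^{\gm}\|_{L^2}^2$. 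The $\var$-regularization piece $-2\var\p_x^7 u^\var \p_x g_w^\var$ is tamed analogously, its $\var$ factor pairing with the tangential dissipation $\var\|\p_x w_g^\var \xw^{\gm}\|_{L^2}^2$ already present on the left-hand side. Collecting all these bounds with the transport and weight contributions closes the estimate at the polynomial order $\e^{19}$ stated in the lemma.
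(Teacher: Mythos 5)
Your overall strategy coincides with the paper's: multiply \eqref{206} by $w_g^\var\xw^{2\gm}$, integrate, show the normal boundary term vanishes, and control the source terms $Q_1$, $Q_2 g_w^\var$, $Q_3$, $Q_4\p_x^6 u^\var$ via Hardy \eqref{Hardy1}, the Morse-type inequality \eqref{morse}, the lower bound $w^\var\xw^{\sg}\ge\dl$, and the auxiliary estimates \eqref{2101}--\eqref{2105}. (In your boundary-term argument, the identity $\p_y w_g^\var|_{y=0}=0$ also needs $\p_x^6 u^\var|_{y=0}=0$ to kill the contribution $\p_y g_w^\var\,\p_x^6 u^\var$; you invoke that fact only later, for $Q_3$, but it is already required here.)

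The genuine gap is in your treatment of $Q_4$. You assert that the weighted $L^\infty$ norm of $Q_4$ is polynomial in $\e$ and then conclude by pairing it with $\|\p_x^6 u^\var\xw^{\gm-1}\|_{L^2}\|w_g^\var\xw^{\gm}\|_{L^2}$. This fails for the sub-piece $\frac1{w^\var}\p_y^2\bigl(\frac{\p_y w^\var}{\vr^\var+\vrf}\bigr)$: after expanding and using $w^\var\xw^{\sg}\ge\dl$ one would need $\|\p_y^3 w^\var\xw^{\sg+1}\|_{L^\infty}$, a third $y$-derivative that is not under pointwise control by $\e$ (the pointwise block of $\e$ stops at $|\alpha|\le 2$, and a Sobolev-embedding route would call for weights $\xw^{\sg+1}$ on $\p_y^5 w^\var$ and $\p_x\p_y^3 w^\var$ exceeding what $\e$ provides once $\sg>\gm+2$, a range permitted by $\gm+\tfrac12<\sg\le 2\gm-1$). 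The $\var$-piece $-\var\p_x^2\{\p_y w^\var/w^\var\}$ has the analogous problem with $\p_x^2\p_y w^\var$. The paper resolves both by splitting $Q_4$ and integrating $J_{6,4}$ by parts: in $y$ for the diffusion sub-piece, transferring one $\p_y$ onto $\frac{\p_x^6 u^\var}{w^\var}\,w_g^\var\xw^{2\gm}$ and using $\p_x^6 u^\var|_{y=0}=0$ to kill the boundary trace, after which only the second-order quantity $\p_y(\p_y w^\var/(\vr^\var+\vrf))$ survives in weighted $L^\infty$; and in $x$ for the $\var$-piece, whose byproduct $\p_x w_g^\var$ is absorbed by the $\var$-dissipation on the left. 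You correctly use this $\var$-dissipation device for the $Q_3$ piece $-2\var\p_x^7 u^\var\p_x g_w^\var$, but not for the two problematic sub-terms of $Q_4$; as written, your estimate of $\int Q_4\,\p_x^6 u^\var\,w_g^\var\xw^{2\gm}\,dxdy$ does not close.
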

	\begin{proof}
		Multiplying the equation \eqref{206} by $w_g^\var \xw^{2\gm}$ and
		integrating over $\mathbb{T}\times \mathbb{R}^+$, we have
		\beq\label{2402}
		\begin{aligned}
			&\;\frac{d}{dt}\frac12 \int (w_g^\var)^2 \xw^{2\gm}dxdy
			+\var\int |\p_x w_g^\var|^2 \xw^{2\gm} dxdy
			+\int \frac{1}{\vr^\var+\vrf}|\p_y w_g^\var|^2 \xw^{2\gm}dxdy\\
			=&\;\underset{J_{1}}{\underbrace{\gm \int v^\var (w_g^\var)^2 \xw^{2\gm-1}dxdy}}
			-\underset{J_{2}}{\underbrace{\int \frac{2\gm}{\vr^\var+\vrf}\p_y w_g^\var \cdot w_g^\var \xw^{2\gm-1}dxdy}}
			+\underset{J_{3}}{\underbrace{\int Q_1\cdot w_g^\var \xw^{2\gm} dxdy}}\\
			&\;-\underset{J_{4}}{\underbrace{\int Q_2\cdot g_w^\var \cdot w_g^\var \xw^{2\gm} dxdy}}
			-\underset{J_{5}}{\underbrace{\int Q_3 \cdot w_g^\var \xw^{2\gm} dxdy}}
			-\underset{J_{6}}{\underbrace{\int Q_4 \cdot \p_x^6 u^\var \cdot w_g^\var \xw^{2\gm} dxdy}}.
		\end{aligned}
		\deq
		Using the estimate \eqref{u2-est}, we have
		\beq\label{2403}
		|J_{1}|
		\le C_\gm\|v^\var \xw^{-1}\|_{L^\infty}\| w_g^\var \xw^{\gm}\|_{L^2}^2
		\le C_\gm\|\p_x w^\var\|_{H^1_1}\| w_g^\var \xw^{\gm}\|_{L^2}^2,
		\deq
		and
		\beq\label{2404}
		|J_2|
		\le C_{\gm,\kappa}\|\p_y w_g^\var \xw^{\gm}\|_{L^2}
		\|w_g^\var \xw^{\gm-1}\|_{L^2}
		\le \nu_3 \|\p_y w_g^\var \xw^{\gm}\|_{L^2}^2
		+C_{\nu_3, \gm,\kappa} \|w_g^\var \xw^{\gm-1}\|_{L^2}^2.
		\deq
		
		\textbf{Deal with $J_3$ and $J_4$ terms.}
		Using Hardy inequality \eqref{Hardy1}, the Morse-type inequality \eqref{morse}
		 and estimate \eqref{2101}, we have
		\beq\label{2405}
		\begin{aligned}
			|J_3|
			\le &\;|\sum_{1\le k \le 6}C_6^k
			\int \p_x^{k-1} \p_x u^\var \p_x^{6-k} \p_x w^\var \cdot w_g^\var \xw^{2\gm} dxdy|\\
			&\;+|\sum_{1\le  k \le 5}C_6^k \int \p_x^{k-1} \p_x v^\var
			\p_x^{5-k} \p_{xy} w^\var \cdot w_g^\var \xw^{2\gm} dxdy|\\
			\le&\;C\|\p_x u^\var\|_{H^5_0}\|\p_x w^\var\|_{H^5_\gm} \|w_g^\var \xw^{\gm}\|_{L^2}
			+C\|\p_x v^\var \|_{H^4_{-1}}
			\|\p_{xy} w^\var\|_{H^4_{\gm+1}}\|w_g^\var \xw^{\gm}\|_{L^2}\\
			\le&\;C\|\p_x w^\var\|_{H^5_1}\|\p_x w^\var\|_{H^5_\gm} \|w_g^\var \xw^{\gm}\|_{L^2}
			+C\|\p_x^2 w^\var\|_{H^4_{1}}\|\p_{xy} w^\var\|_{H^4_{\gm+1}}
			\|w_g^\var \xw^{\gm}\|_{L^2}\\
			\le &\;C_{\gamma, \dl, \sg}(1+\e^4),
		\end{aligned}
		\deq
		and using the lower bound condition \eqref{Lowerb}, we arrive at
		\beq\label{2406}
		\begin{aligned}
			|J_4|
			\le &\;C_\dl\|\p_x u^\var\|_{H^5_0}\|\p_x u^\var\|_{H^5_{\gm-1}}
			\|\p_y w^\var \xw^{\sg+1}\|_{L^\infty}\|w_g^\var \xw^\gm\|_{L^2}\\
			&\;+C_\dl\|\p_x v^\var\|_{H^4_{-1}}\|\p_x w^\var\|_{H^4_\gm}
			\|\p_y w^\var \xw^{\sg+1}\|_{L^\infty}\|w_g^\var \xw^\gm\|_{L^2}\\
			\le &\;C_\dl\|\p_x w^\var\|_{H^5_1}\|\p_x w^\var\|_{H^5_{\gm}}
			\|\p_y w^\var \xw^{\sg+1}\|_{L^\infty}\|w_g^\var \xw^\gm\|_{L^2}\\
			&\;+C_\dl\|\p_x^2 w^\var\|_{H^4_{1}}\|\p_x w^\var\|_{H^4_\gm}
			\|\p_y w^\var \xw^{\sg+1}\|_{L^\infty}\|w_g^\var \xw^\gm\|_{L^2}\\
			\le &\; C_{\gamma, \dl, \sg}(1+\e^4).
		\end{aligned}
		\deq
		
		\textbf{Deal with $J_5$ term.}
		It is easy to check that
		\beqq
		\begin{aligned}
			J_5
			=&\!\!\sum_{0<k \le 6}C_6^k\!\!
			\int\p_y\left\{ \p_x^k(\frac{1}{\vr^\var+\vrf})\p_x^{6-k}\p_y w^\var\right\}
			\cdot w_g^\var \xw^{2\gm}dxdy
			+\!\!\int \p_y\left\{\frac{1}{\vr^\var+\vrf}\p_x^6 u^\var \p_y g_w^\var\right\} \cdot w_g^\var \xw^{2\gm}dxdy\\
			&+\int \p_x^6 w^\var \p_y\{\frac{1}{\vr^\var+\vrf}g_w^\var\} \cdot w_g^\var \xw^{2\gm}dxdy
			-\sum_{0< k \le 6}C_6^k \int \p_x^k(\frac{1}{\vr^\var+\vrf})\p_x^{6-k}
			\p_y w^\var g_w^\var \cdot w_g^\var \xw^{2\gm}dxdy\\
			&-2\var\int \p_x^7 u^\var \p_x g_w^\var \cdot w_g^\var \xw^{2\gm} dxdy\\
			\overset{def}{=}&J_{5,1}+J_{5,2}+J_{5,3}-J_{5,4}-J_{5,5}.
		\end{aligned}
		\deqq
		Integrating by part and using inequalities \eqref{morse},
		\eqref{f02} and \eqref{2101}, we have
		\beqq
		\begin{aligned}
			|J_{5,1}|
			&\le C_{\gm}\|\p_x(\frac{1}{\vr^\var+\vrf})\|_{H^5_0}\|\p_y w^\var\|_{H^5_\gm}
			(\|\p_y w_g^\var \xw^{\gm}\|_{L^2}+\|w_g^\var \xw^{\gm-1}\|_{L^2})\\
			&\le C_{\gm,\kappa}(1+\|\vr^\var\|_{H^6_0}^6)\|\p_y w^\var\|_{H^5_\gm}
			(\|\p_y w_g^\var \xw^{\gm}\|_{L^2}+\|w_g^\var \xw^{\gm-1}\|_{L^2})\\
			&\le \nu_3 \|\p_y w_g^\var \xw^{\gm}\|_{L^2}^2
			+C_{\nu_3,\kappa,\gamma, \dl, \sg}(1+\e^{19}).
		\end{aligned}
		\deqq
		Integrating by part and using lower bound condition \eqref{Lowerb}, it holds
		\beqq
		\begin{aligned}
			|J_{5,2}|
			\le &\; C_{\kappa}\|\p_x^6 u^\var \xw^{\gm-2}\|_{L^2}\|\p_y g_w^\var \xw^2\|_{L^\infty}
			(\|\p_y w_g^\var \xw^{\gm}\|_{L^2}+\|w_g^\var \xw^{\gm-1}\|_{L^2})\\
			\le &\;C_{\kappa,\dl}\|\p_x^6 w^\var \xw^{\gm-1}\|_{L^2}
			(\|\p_y^2 w^\var \xw^{\sg+2}\|_{L^\infty}
			+\|\p_y w^\var \xw^{\sg+1}\|_{L^\infty}^2)\\
			&\;\times(\|\p_y w_g^\var \xw^{\gm}\|_{L^2}+\|w_g^\var \xw^{\gm-1}\|_{L^2})\\
			\le &\; \nu_3 \|\p_y w_g^\var \xw^{\gm}\|_{L^2}^2
			+C_{\nu_3,\kappa,\gamma, \dl, \sg}(1+\e^5).
		\end{aligned}
		\deqq
		Similarly, it is easy to obtain
		\beqq
		\begin{aligned}
			|J_{5,3}|
			=&\;|\int \p_x^6 w^\var \left(-\frac{\p_y \vr^\var g_w^\var}{(\vr^\var+\vrf)^2}
			+\frac{\p_y g_w^\var}{\vr^\var+\vrf}\right)\cdot w_g^\var \xw^{2\gm}dx|\\
			\le&\;C_{\dl,\kappa}
			(\|\p_y \vr^\var \xw\|_{L^\infty}\|\p_y w^\var \xw^{\sg+1}\|_{L^\infty}
			+\|\p_y^2 w^\var \xw^{\sg+2}\|_{L^\infty}
			+\|\p_y w^\var \xw^{\sg+1}\|_{L^\infty}^2)\\
			&\;\times \|\p_x^6 w^\var \xw^{\gm-2}\|_{L^2}\|w_g^\var \xw^{\gm}\|_{L^2}\\
			\le &\; C_{\kappa,\gamma, \dl, \sg}(1+\e^3),
		\end{aligned}
		\deqq
		and
		\beqq
		\begin{aligned}
			|J_{5,4}|
			\le \; & C_{\dl}\|\p_x(\frac{1}{\vr^\var+\vrf})\|_{H^5_0}\|\p_y w^\var\|_{H^5_{\gm-1}}
			\|\p_y w^\var \xw^{\sg+1}\|_{L^\infty}\|w_g^\var \xw^{\gm}\|_{L^2}\\
			\le & \; C_{\dl,\kappa}(1+\|\vr^\var\|_{H^6_0}^6)\|\p_y w^\var\|_{H^5_{\gm-1}}
			\|\p_y w^\var \xw^{\sg+1}\|_{L^\infty}\|w_g^\var \xw^{\gm}\|_{L^2}\\
			\le & \; C_{\kappa, \gamma, \dl, \sg}(1+\e^{11});\\
			|J_{5,5}|
			= &\;2\var|\int \p_x^7 u^\var (\frac{\p_{xy} w^\var}{w^\var}-\frac{\p_{x} w^\var \p_{y} w^\var}{(w^\var)^2}) \cdot w_g^\var \xw^{2\gm} dx|\\
			\le &\; \var C_{\dl} \|w_g^\var \xw^{\gm}\|_{L^2}
			(\| \p_{xy} w^\var \xw^{\sg+1}\|_{L^\infty} + \|\p_x w^\var \xw^{\sg} \|_{L^\infty} \|\p_y w^\var \xw^{\sg+1} \|_{L^\infty})\\
			&\;\times \|\xw^{\gm -\sg -1} \frac{\p_x^7 u^\var }{w^\var}  \|_{L^2}\\
			\le &\; \var C_{\dl,\gm,\sg}\|w_g^\var \xw^{\gm}\|_{L^2}
			(\| \p_{xy} w^\var \xw^{\sg+1}\|_{L^\infty} + \|\p_x w^\var \xw^{\sg} \|_{L^\infty} \|\p_y w^\var \xw^{\sg+1} \|_{L^\infty})\\
			&\; \times  (\|\p_x w
			_g \xw^{\gm} \|_{L^2} + \|\p_x g_w^\var \xw \|_{L^\infty}\|\p_x^6 w^\var \xw^\gm\|_{L^2})\\
			\le &\; \frac{\var}{2} \|\p_x w
			_g \xw^{\gm} \|_{L^2}^2 + C_{\dl,\gm,\sg} (1+\e^4),
		\end{aligned}
		\deqq
		where we have used the relation
		\beqq
		\p_x w_g^\var=\p_x^7 w^\var-g_w^\var \p_x^7 u^\var
		-\p_x g_w^\var \p_x^6 u^\var
		=w^\var \p_y\{\frac{\p_x^7 u^\var }{w^\var} \}
		-\p_x g_w^\var \p_x^6 u^\var,
		\deqq
		to establish the following estimate
		\beqq
		\begin{aligned}
			\|\xw^{\gm -\sg -1} \frac{\p_x^7 u^\var }{w^\var}  \|_{L^2}
			&\le  C_{\gm,\sg} \|\xw^{\gm -\sg }
			\p_y\{\frac{\p_x^7 u^\var }{w^\var} \} \|_{L^2}\\
			&\le   C_{\dl,\gm,\sg} \|\xw^{\gm  }
			w^\var \p_y\{\frac{\p_x^7 u^\var }{w^\var} \} \|_{L^2}\\
			&\le  C_{\dl,\gm,\sg} (\|  \p_x w_g^\var \xw^{\gm} \|_{L^2} + \|\p_x g_w^\var \xw \|_{L^\infty}\|\p_x^6 u^\var_1 \xw^{\gm-1}\|_{L^2})\\
			&\le  C_{\dl,\gm,\sg} (\|  \p_x w_g^\var \xw^{\gm} \|_{L^2} + \|\p_x g_w^\var \xw \|_{L^\infty}\|\p_x^6 w^\var \xw^\gm\|_{L^2}).
		\end{aligned}
		\deqq
		Thus, the combination of estimates of terms from
		$J_{5,1}$ to $J_{5,5}$ yields directly
		\beq\label{2407}
		|J_{5}|\le \nu_3 \|\p_y w_g^\var \xw^{\gm}\|_{L^2}^2+\frac{\var}{2} \|\p_x w
		_g \xw^{\gm} \|_{L^2}^2
		+C_{\nu_3, \kappa, \gamma, \dl, \sg}(1+\e^{19}).
		\deq
		
		\textbf{Deal with $J_6$ term.}
		First of all, let us decompose the quantity $Q_4$ as follows:
		\beqq
		Q_4 \overset{def}{=}Q_{4,1}+Q_{4,2}+Q_{4,3}+Q_{4,4},
		\deqq
		where $Q_{4,i}(i=1,2,3,4)$ is defined as follows:
		\beqq
		\begin{aligned}
			Q_{4,1}&\overset{def}{=}\frac{1}{w^\var}\p_y \left\{-u^\var \p_x w^\var-v^\var \p_y w^\var
			\right\};\\
			Q_{4,2}&\overset{def}{=}-\frac{\p_y w^\var}{(w^\var)^2}\left(-u^\var \p_x w^\var
			-v^\var \p_y w^\var
			+\p_y(\frac{1}{\vr^\var+\vrf}\p_y w^\var)\right);\\
			Q_{4,3}&\overset{def}{=}u^\var\left(\frac{\p_{xy}w^\var}{w^\var}
			-\frac{\p_y w^\var \p_x w^\var}{(w^\var)^2}\right)
			+v^\var\left(\frac{\p_{y}^2 w^\var}{w^\var}
			-\frac{(\p_y w^\var)^2}{(w^\var)^2}\right);\\
			Q_{4,4}&\overset{def}{=}-\var \p_x^2\left\{ \frac{ \p_y w^\var}{w^\var} \right\}+\f{1}{w^\var}\p_y^2(\frac{1}{\vr^\var+\vrf}\p_y w^\var).
		\end{aligned}
		\deqq
		Therefore,
		\beqq
		J_{6}= \sum_{1\leq i \leq 4} \int Q_{4,i} \cdot \p_x^6 u^\var \cdot w_g^\var \xw^{2\gm}dxdy
		\overset{def}{=} \sum_{1\leq i \leq 4} J_{6,i}.
		\deqq
		It is easy to check that for $i=1,2,3$,
		\beqq
		\begin{aligned}
			|J_{6,i}|
			&\le \|Q_{4,i} \xw\|_{L^\infty}\|\p_x^6 u^\var \xw^{\gm-1}\|_{L^2}
			\|w_g^\var \xw^{\gm}\|_{L^2}\\
			&\le C_{\gm}\|Q_{4,i} \xw\|_{L^\infty}\|\p_x^6 w^\var \xw^{\gm}\|_{L^2}
			\|w_g^\var \xw^{\gm}\|_{L^2}.
		\end{aligned}
		\deqq
		Let us first deal with estimate $\|Q_{4,i} \xw\|_{L^\infty}(i=1,2,3)$.
		Using the inequality \eqref{inf}, it is easy to check that
		\beqq
		\begin{aligned}
			&\;\|Q_{4,1}\xw\|_{L^\infty}\\
			\le &\; C_\dl(\|\p_x w^\var \xw\|_{L^\infty}
			+\|\p_{xy} w^\var \xw^{\sg+1}\|_{L^\infty}\|u^\var\|_{L^\infty}
			+\|\p_y w^\var \xw^{\sg+1}\|_{L^\infty}\|\p_x u^\var\|_{L^\infty})\\
			&\;+C_\dl(\|\p_y^2 w^\var \xw^{\sg+2}\|_{L^\infty}
			\|v^\var \xw^{-1}\|_{L^\infty}
			+\|\p_y^2(\frac{1}{\vr^\var+\vrf})\|_{L^\infty}
			\|\p_y w^\var \xw^{\sg+1} \|_{L^\infty})\\
			&\;+C_{\dl,\kappa}\|\p_y(\frac{1}{\vr^\var+\vrf})\|_{L^\infty}
			\|\p_y^2 w^\var \xw^{\sg+1}\|_{L^\infty}\\
			\le &\; C_\dl\|\p_x w^\var \xw\|_{H^2}
			+C_\dl\|\p_{xy} w^\var \xw^{\sg+1}\|_{L^\infty}(1+\|w^\var\|_{H^1_1})
			+C_\dl\|\p_y w^\var \xw^{\sg+1}\|_{L^\infty}\|\p_x w^\var\|_{H^2_1}\\
			&\;+C_\dl\|\p_y^2 w^\var \xw^{\sg+2}\|_{L^\infty}\|w^\var\|_{H^1_1}
			+C_{ \dl, \kappa}(1+\|\vr^\var\|_{H^4_0}^4)\|\p_y w^\var \xw^{\sg+1} \|_{L^\infty}\\
			&\;+C_{\dl, \kappa}(1+\|\vr^\var\|_{H^3_0}^3)\|\p_y^2 w^\var \xw^{\sg+1}\|_{L^\infty}\\
			\le &\; C_{\dl,\kappa}(1+\e^3).
		\end{aligned}
		\deqq
		Similarly, it is easy to check that
		\beqq
		\begin{aligned}
			&\;\|Q_{4,2}\xw\|_{L^\infty}\\
			\le &\;C_\dl \|\p_y w^\var \xw^{\sg+1}\|_{L^\infty}
			(\|u^\var\|_{L^\infty}\|\p_x w^\var \xw^{\sg}\|_{L^\infty}
			+\|v^\var \xw^{-1}\|_{L^\infty}
			\|\p_y w^\var \xw^{\sg+1}\|_{L^\infty})\\
			&\;+C_{\dl,\kappa} \|\p_y w^\var \xw^{\sg+1}\|_{L^\infty}
			(\|\p_y \vr^\var\|_{L^\infty}\|\p_y w^\var \xw^{\sg}\|_{L^\infty}
			+\|\p_y^2 w^\var \xw^{\sg}\|_{L^\infty})\\
			\le &\; C_{\dl,\kappa}(1+\e^2),
		\end{aligned}
		\deqq
		and
		\beqq
		\begin{aligned}
			&\;\|Q_{4,3}\xw\|_{L^\infty}\\
			\le &\;C_\dl\|u^\var\|_{L^\infty}(\|\p_{xy}w^\var \xw^{\sg+1}\|_{L^\infty}
			+\|\p_y w^\var \xw^{\sg+1}\|_{L^\infty}
			\|\p_x w^\var \xw^{\sg}\|_{L^\infty})\\
			&\;+C_\dl\|v^\var \xw^{-1}\|_{L^\infty}
			(\|\p_y^2 w^\var \xw^{\sg+2}\|_{L^\infty}
			+\|\p_y w^\var \xw^{\sg+1}\|_{L^\infty}^2)\\
			\le &\; C_{\dl} (1+\e^2).
		\end{aligned}
		\deqq
		Thus, the combination of estimates of terms from
		$Q_{4,1}$ to $Q_{4,3}$ yields directly
		\beq\label{2408}
		\sum_{1\leq i \leq 3}|J_{6,i}| \le C_{\kappa,\dl}(1+\e^5).
		\deq
		Finally, let us deal with $J_{6,4}$. Integrating by part and using the estimate of $J_{5,5}$, we have
		\beqq
		\begin{aligned}
			&\;|-\var \int   \p_x^2\left\{ \frac{ \p_y w^\var}{w^\var} \right\} \cdot \p_x^6 u^\var \cdot w_g^\var \xw^{2\gm}dxdy|\\
			=&\;|\var \int \p_x \left\{\frac{ \p_y w^\var}{w^\var} \right\} \cdot \p_x^7 u^\var \cdot w_g^\var \xw^{2\gm} dxdy
			+ \var \int \p_x  \left\{\frac{ \p_y w^\var}{w^\var} \right\} \cdot \p_x^6 u^\var \cdot \p_x w_g^\var \xw^{2\gm} dxdy|\\
			\le &\; \frac{\var}{4} \|\p_x w_g^\var \xw^{\gm}\|_{L^2}^2 +
			C_{\dl,\gm,\sg} (1+\e^4)\\
			&\; +\var \|\p_x \left\{ \frac{ \p_y w^\var}{w^\var} \right\}  \xw\|_{L^\infty}\|\p_x^6 u^\var \xw^{\gm-1}\|_{L^2}
			\|\p_x w_g^\var \xw^{\gm}\|_{L^2}\\
			\le &\; \frac{\var}{4} \|\p_x w_g^\var \xw^{\gm}\|_{L^2}^2 + C_{\dl,\gm,\sg} (1+\e^4) + C_{\dl}\|\p_{xy} w^\var \xw^{\sg+1}\|_{L^\infty}\|\p_x^6 w^\var \xw^{\gm}\|_{L^2}
			\|\p_x w_g^\var \xw^{\gm}\|_{L^2}\\
			&\;+		
			C_{\dl} \var\|\p_{y} w^\var \xw^{\sg+1}\|_{L^\infty}
			\|\p_{x} w^\var \xw^{\sg}\|_{L^\infty}
			\|\p_x^6 w^\var \xw^{\gm}\|_{L^2}
			\|\p_x w_g^\var \xw^{\gm}\|_{L^2}\\
			\le &\; \frac{\var}{2} \|\p_x w_g^\var \xw^{\gm}\|_{L^2}^2 +
			C_{\dl,\gm,\sg} (1+\e^4),
		\end{aligned}
		\deqq
		and
		\beqq
		\begin{aligned}
			&\;| \int  \f{1}{w^\var} \p_y^2\left\{ \frac{ \p_y w^\var}{\rho^\var+\varrho_\infty} \right\} \cdot \p_x^6 u^\var \cdot w_g^\var \xw^{2\gm}dxdy|\\
			= &\; | \int   \p_y \left\{ \frac{ \p_y w^\var}{\rho^\var+\varrho_\infty} \right\} \cdot  \p_y \left\{\f{\p_x^6 u^\var}{w^\var} \cdot w_g^\var \xw^{2\gm} \right\}dxdy|\\
			\le  &\; \|  \p_y \left\{ \frac{ \p_y w^\var}{\rho^\var+\varrho_\infty} \right\} \xw^{\sg+1}\|_{L^{\infty}}
			(\| \p_y \left\{\f{\p_x^6 u^\var}{w^\var}\right\} \xw^{\gamma- \sg-1}\|_{L^2}    \| w_g^\var \xw^{\gm}\|_{L^2}
			+ \|\p_x^6 u^\var \xw^{\gamma-1}\|_{L^2} \| \p_y w_g^\var \xw^{\gm}\|_{L^2}\\
			&\; +  \|\p_x^6 u^\var \xw^{\gamma-1}\|_{L^2} \| w_g^\var \xw^{\gm}\|_{L^2})\\
			\le &\; \nu_3 \|\p_y w_g^\var \xw^{\gm}\|_{L^2}^2 +
			+C_{\nu_3, \dl,\kappa,\gm,\sg}(1+\e^{3}).
		\end{aligned}
		\deqq
		Combining the above estimates, we can obtain that
		\beqq
		|J_{6,4}| \le \nu_3 \|\p_y w_g^\var \xw^{\gm}\|_{L^2}^2 + \frac{\var}{2} \|\p_x w_g^\var \xw^{\gm}\|_{L^2}^2+ C_{\nu_3, \kappa, \gamma, \dl, \sg}(1+\e^4),
		\deqq
		which, together with the estimate \eqref{2408}, yields directly
		\beq\label{2409}
		|J_{6}| \le \nu_3 \|\p_y w_g^\var \xw^{\gm}\|_{L^2}^2 + \frac{\var}{2} \|\p_x w_g^\var \xw^{\gm}\|_{L^2}^2+ C_{\nu_3, \kappa, \gamma, \dl, \sg}(1+\e^5).
		\deq
		Substituting the estimates \eqref{2403}-\eqref{2407}
		and \eqref{2409} into \eqref{2402}, we can obtain
		\beqq
		\begin{aligned}
			&\;\frac{d}{dt}\|w_g^\var \xw^\gm\|_{L^2}^2
			+\var\|\p_x w_g^\var \xw^{\gm}\|_{L^2}^2
			+\|\frac{1}{\sqrt{\vr^\var+\vrf}}\p_y w_g^\var \xw^\gm\|_{L^2}^2\\
			\le &\; \nu_3 \|\p_y w_g^\var \xw^{\gm}\|_{L^2}^2 +\frac{\var}{2} \|\p_x w_g^\var \xw^{\gm}\|_{L^2}^2
			+C_{\nu_3, \kappa, \gamma, \dl, \sg}(1+\e^{19}).
		\end{aligned}
		\deqq
		Therefore, we complete the proof of this lemma.
	\end{proof}
	
	Finally, let us establish the estimate of tangential derivative of density.
	Applying $\p_x^6$-operator to the density equation $\eqref{Prandtl-01}_1$,
	we have
	\beq\label{208}
	\p_t (\p_x^6 \vr^\var)+u^\var \p_x(\p_x^6 \vr^\var)
	+v^\var \p_y (\p_x^6 \vr^\var)+\p_x^6 v^\var \p_y \vr^\var
	-\var \p_x^8 \vr^\var=Q_5,
	\deq
	where $Q_5$ is define as follows
	\beqq\label{209}
	Q_5\overset{def}{=}-\sum_{0< k \le 6}C_6^k \p_x^k u^\var \p_x^{7-k} \vr^\var
	-\sum_{0< k <6}C_6^k \p_x^k v^\var \p_x^{6-k} \p_y \vr^\var.
	\deqq
	Recall that $g_\vr^\var =\frac{\p_y \vr^\var}{w^\var}$
	and multiplying \eqref{203} by $g_\vr^\var$, we have
	\beq\label{2010}
	\p_t(\p_x^6 u^\var)g_\vr^\var+u^\var \p_x (\p_x^6 u^\var)g_\vr^\var
	+v^\var \p_y (\p_x^6 u^\var)g_\vr^\var+\p_x^6 v^\var \p_y \vr^\var
	-\var \p_x^8 u^\var g_\vr^\var
	-\p_x^6 \left\{\frac{1}{\vr^\var+\vrf}\p_yw^\var\right\}g_\vr^\var=Q_2\cdot g_\vr^\var.
	\deq
	Recall that $\vr_g^\var =\p_x^6 \vr^\var-g_\vr^\var \p_x^6 u^\var$,
	then it is easy to check that
	\beq\label{2011}
	\p_t \vr_g^\var+u^\var \p_x \vr_g^\var+v^\var \p_y \vr_g^\var-\var \p_x^2 \vr_g^\var
	=Q_5-Q_6 \cdot g_\vr^\var-Q_7 \cdot \p_x^6 u^\var+2\var\p_x^7 u^\var \p_x g_\vr^\var,
	\deq
	here $Q_i(i=6,7)$ are defined as follows
	\beqq\label{2012}
	\begin{aligned}
		Q_6\overset{def}{=}&\;Q_2+\p_x^6(\frac{1}{\vr^\var+\vrf}\p_y w^\var),\\
		Q_7\overset{def}{=}&\;-\frac{\p_y\{u^\var \p_x \vr^\var+v^\var \p_y \vr^\var-\var \p_x^2 \vr^\var\}}{w^\var}
		-\frac{\p_y \vr^\var}{(w^\var)^2}
		\left(-u^\var \p_x w^\var-v^\var \p_y w^\var
		+\p_y\{\frac{1}{\vr^\var+\vrf}\p_y w^\var\}+\var \p_x^2 w^\var \right)\\
		&\;+u^\var\left(\frac{\p_{xy}\vr^\var}{w^\var}
		-\frac{\p_y \vr^\var \p_x w^\var}{(w^\var)^2}\right)
		+v^\var\left(\frac{\p_y^2 \vr^\var}{w^\var}
		-\frac{\p_y \vr^\var \p_y w^\var}{(w^\var)^2}\right)
		-\var \p_x^2 \left\{\frac{\p_y \vr^\var}{w^\var}\right\}.
	\end{aligned}
	\deqq
	The above equation \eqref{2011} is proven in Appendix \ref{good-unkonw} in detail.
	Now, let us establish the estimate for this good quantity $\vr_g^\var$ as follows.	
	\begin{lemm}\label{lemma25}
		Under the condition of \eqref{Lowerb},
		the solution $(u^\var, v^\var, w^\var)$ of regularized system
		\eqref{Prandtl-01} will satisfy
		\beqq
		\frac{d}{dt}\|\vr_g^\var \xw^{\gm}\|_{L^2}^2
		+\var\|\p_x \vr_g^\var \xw^{\gamma}\|_{L^2}^2
		\le \nu_3  \|\p_y w_g^\var \xw^{\gm}\|_{L^2}^2+C_{\nu_3,\dl,\kappa}(1+\e^{11}),
		\deqq
		where $\nu_3$ is the small constant that will be chosen later.
	\end{lemm}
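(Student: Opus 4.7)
The plan is to test the equation \eqref{2011} for $\vr_g^\var$ against $\vr_g^\var \xw^{2\gm}$ and integrate over $\mathbb{T}\times \mathbb{R}^+$, in parallel with the strategy of Lemma \ref{lemma24}. The left-hand side immediately produces the desired time derivative together with the $\var$-dissipation $\var\|\p_x \vr_g^\var \xw^\gm\|_{L^2}^2$; the convective term $v^\var \p_y \vr_g^\var$ contributes, after integration by parts in $y$, a weight-derivative piece $\gm \int v^\var |\vr_g^\var|^2 \xw^{2\gm-1}$ that I would control by $C_\gm \|v^\var \xw^{-1}\|_{L^\infty}\|\vr_g^\var \xw^\gm\|_{L^2}^2 \le C_\gm(1+\e^2)$ using \eqref{u2-est}. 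The remaining work is then to estimate the four source contributions coming from $Q_5$, $-Q_6 g_\vr^\var$, $-Q_7 \p_x^6 u^\var$, and the viscous remainder $2\var \p_x^7 u^\var \p_x g_\vr^\var$.

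The $Q_5$ term reduces, piece by piece, to the commutator analysis already performed for the terms $I_8$ and $I_9$ of Lemma \ref{lemma23}: by design of $\vr_g^\var$ the worst top-order contribution $\p_x^6 v^\var \p_y \vr^\var$ is cancelled, so only commutators with at most six tangential derivatives survive, and these are handled by the Morse-type inequality \eqref{morse}, Hardy \eqref{Hardy1}, the divergence-free identity, and \eqref{2101}. For $-Q_6 g_\vr^\var$ I would first use $|g_\vr^\var \xw^{\sg+1}|\le \|\p_y \vr^\var \xw^{\sg+1}\|_{L^\infty}/\dl$ together with the lower bound \eqref{Lowerb}: the $Q_2$ part of $Q_6$ is treated as in $J_4$ of Lemma \ref{lemma24}, while the $\p_x^6(\frac{1}{\vr^\var+\vrf}\p_y w^\var)$ part is expanded by Leibniz and each summand placed in $H^s_0 \cdot H^s_\gm$ by \eqref{morse} and \eqref{2101}, producing a polynomial bound of the form $C_{\kappa,\dl,\gm,\sg}(1+\e^k)$ with $k\le 11$.

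For $-Q_7 \p_x^6 u^\var$ I would split $Q_7$ into four pieces $Q_{7,i}$ in direct analogy with the decomposition $Q_{4,1}$–$Q_{4,4}$ of Lemma \ref{lemma24}, but with $\p_y \vr^\var$ in place of $\p_y w^\var$ in the numerators. Each $Q_{7,i}$ admits an $L^\infty$ bound $\|Q_{7,i}\xw\|_{L^\infty}\le C_{\dl,\kappa}(1+\e^k)$ obtained via the Sobolev embedding \eqref{inf}, the pointwise lower bound $w^\var\xw^\sg \ge \dl$, and control on $\|\p_y \vr^\var \xw^{\sg+1}\|_{H^2}$; pairing with $\|\p_x^6 u^\var \xw^{\gm-1}\|_{L^2}\le C_\gm \|\p_x^6 w^\var \xw^\gm\|_{L^2}$ through Hardy \eqref{Hardy2} and then with $\|\vr_g^\var \xw^\gm\|_{L^2}$ closes these contributions. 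The one piece of $Q_7$ that carries $\p_y(\frac{1}{\vr^\var+\vrf}\p_y w^\var)/(w^\var)^2$ is better handled by integrating by parts in $y$ against $\p_x^6 u^\var \vr_g^\var \xw^{2\gm}$; after using $\p_y(\p_x^6 u^\var/w^\var)= w_g^\var/w^\var$ there appears a factor $\p_y w_g^\var$, which through a Cauchy–Schwarz produces precisely a small multiple $\nu_3\|\p_y w_g^\var \xw^\gm\|_{L^2}^2$ on the right-hand side, as announced.

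The main obstacle is the last viscous remainder $2\var \int \p_x^7 u^\var \p_x g_\vr^\var \vr_g^\var \xw^{2\gm}$, which carries seven tangential derivatives on $u^\var$ while, on the density side, only $\sqrt{\var}\,\|\p_x \vr_g^\var \xw^\gm\|_{L^2}$ is dissipative and there is no normal-derivative dissipation available. Here I would reuse the trick that appears in the term $J_{5,5}$ of Lemma \ref{lemma24}: differentiating $w_g^\var = w^\var \p_y(\p_x^6 u^\var /w^\var)$ once in $x$ yields $\p_x w_g^\var = w^\var \p_y(\p_x^7 u^\var/w^\var) - \p_x g_w^\var \p_x^6 u^\var$, whence $\|\xw^{\gm-\sg-1}\p_x^7 u^\var/w^\var\|_{L^2}$ is controlled by $\|\p_x w_g^\var \xw^\gm\|_{L^2}$ plus lower-order energy. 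Pairing this with the smallness of the $\var$ prefactor and $\|\p_x g_\vr^\var \xw\|_{L^\infty}$-type bounds allows me to absorb at most one half of $\var\|\p_x w_g^\var \xw^\gm\|_{L^2}^2$, which is already controlled in $\mathcal{D}_1(t)$ by Lemma \ref{lemma24}, at the price of a polynomial error $C_{\nu_3,\dl,\kappa}(1+\e^{11})$. Collecting the six estimates then yields the stated differential inequality.
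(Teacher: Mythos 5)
Your treatment of the $J_9 = \int (Q_6\,g_\vr^\var)\,\vr_g^\var\,\xw^{2\gm}$ contribution has a genuine gap, and it is precisely the step that makes the $\nu_3\|\p_y w_g^\var\xw^\gm\|_{L^2}^2$ on the right-hand side of the lemma unavoidable. You claim that the $\p_x^6\bigl(\tfrac{1}{\vr^\var+\vrf}\p_y w^\var\bigr)$ part of $Q_6$ ``is expanded by Leibniz and each summand placed in $H^s_0\cdot H^s_\gm$ by \eqref{morse} and \eqref{2101}, producing a polynomial bound.'' That is false for the top-order Leibniz term $\tfrac{1}{\vr^\var+\vrf}\,\p_y\p_x^6 w^\var$, which carries seven derivatives on $w^\var$ ($\alpha=(6,1)$); this is outside the reach of the $H^6$ energy $\e$ and of the estimate \eqref{2101} (which only controls $\ea$, i.e.\ up to $|\alpha|\le 6$). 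The paper handles exactly this piece in \eqref{2506}: it pairs $\|\p_y\p_x^6 w^\var\xw^\gm\|_{L^2}$ with the $L^\infty$ bound on $g_\vr^\var$ and $\|\vr_g^\var\xw^{\gm-1}\|_{L^2}$, then applies Young's inequality to produce $\nu_3\|\p_y\p_x^6 w^\var\xw^\gm\|_{L^2}^2$, which is finally converted to $\nu_3\|\p_y w_g^\var\xw^\gm\|_{L^2}^2$ plus lower order via \eqref{2102}. Without this step, the closed energy estimate fails.

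Related to this, you misattribute the source of the $\nu_3\|\p_y w_g^\var\xw^\gm\|_{L^2}^2$ term. You claim it arises by integrating by parts in $y$ on the piece $-\tfrac{\p_y\vr^\var}{(w^\var)^2}\p_y\bigl(\tfrac{1}{\vr^\var+\vrf}\p_y w^\var\bigr)$ of $Q_7$, producing ``a factor $\p_y w_g^\var$.'' Tracing the computation, a single integration by parts moves one $\p_y$ onto $\p_x^6 u^\var/(w^\var)^2$ and yields $w_g^\var/(w^\var)^2$ (via $\p_y(\p_x^6 u^\var/w^\var)=w_g^\var/w^\var$), not $\p_y w_g^\var$; a second $\p_y$ would be needed, and none is available. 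Moreover, this piece of $Q_7$ is genuinely low order (at most $\p_y^2 w^\var$ and $\p_y\vr^\var$), and the paper closes it by a pointwise bound $\|Q_7\xw\|_{L^\infty}\le C_{\dl,\kappa}(1+\e^2)$ in \eqref{2510}, paired with $\|\p_x^6 u^\var\xw^{\gm-1}\|_{L^2}$ and $\|\vr_g^\var\xw^\gm\|_{L^2}$, with no integration by parts and no dissipation term needed.

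You deserve credit for flagging the $2\var\p_x^7 u^\var\,\p_x g_\vr^\var$ source term, which the paper's weak formulation \eqref{2502} does not display. Your proposed treatment, however, absorbs a portion of $\var\|\p_x w_g^\var\xw^\gm\|_{L^2}^2$, a dissipation quantity from Lemma \ref{lemma24} that does not appear on the right-hand side of the present lemma as stated. As written, this changes the lemma rather than proving it; to stay within the statement you would instead need to integrate by parts in $x$ (moving one derivative from $\p_x^7 u^\var$ onto the remaining factors), so that the resulting $\var\int\p_x^6 u^\var\,\p_x g_\vr^\var\,\p_x\vr_g^\var\,\xw^{2\gm}$ can be absorbed by the $\var\|\p_x\vr_g^\var\xw^\gm\|_{L^2}^2$ already on the left-hand side, with the remaining terms bounded by the energy.
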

	\begin{proof}
		Multiplying the equation \eqref{2011} by $\vr_g^\var \xw^{2\gm}$ and
		integrating over $\mathbb{T}\times \mathbb{R}^+$, we have
		\beq\label{2502}
		\begin{aligned}
			&\;\frac{d}{dt}\frac{1}{2}\int (\vr_g^\var)^2 \xw^{2\gm}dxdy
			+\var\int |\p_x \vr_g^\var|^2 \xw^{2 \gamma}dxdy
			-\underset{J_{7}}{\underbrace{\gm\int |\vr_g^\var|^2 v^\var \xw^{2\gm-1}dx dy}}\\
			=&\;\underset{J_{8}}{\underbrace{\int Q_5 \cdot \vr_g^\var \xw^{2\gm}dxdy}}
			-\underset{J_{9}}{\underbrace{\int (Q_6 \cdot g_\vr^\var)\cdot \vr_g^\var \xw^{2\gm}dxdy}}
			-\underset{J_{10}}{\underbrace{\int (Q_7 \cdot \p_x^6 u^\var)
					\cdot \vr_g^\var \xw^{2\gm}dxdy}}.
		\end{aligned}
		\deq
		
		\textbf{Deal with $J_{7}$ and $J_8$ terms.} Obviously, it is easy to check that
		\beq\label{2503}
		|J_{7}|\le C_\gm \|v^\var \xw^{-1}\|_{L^\infty}
		\|\vr_g^\var \xw^{\gm}\|_{L^2}^2
		\le C_\gm \|\p_x w^\var\|_{H^1_1}\|\vr_g^\var \xw^{\gm}\|_{L^2}^2,
		\deq
		and
		\beq\label{2504}
		\begin{aligned}
			|J_8|
			\le &\;|\sum_{1\le  k \le 6}C_6^k
			\int \p_x^{k-1} \p_x u^\var \p_x^{6-k} \p_x \vr^\var
			\cdot \vr_g^\var \xw^{2\gm}dxdy|\\
			&\;+|\sum_{1\le k \le 5}C_6^k
			\int \p_x^{k-1} \p_x v^\var \p_x^{5-k} \p_{xy} \vr^\var
			\cdot \vr_g^\var \xw^{2\gm}dxdy|\\
			\le &\;C\|\p_x u^\var\|_{H^5_0}\|\p_x \vr^\var\|_{H^5_\gm}
			\|\vr_g^\var \xw^{\gm}\|_{L^2}
			+C\|\p_x v^\var\|_{H^4_{-1}}\|\p_{xy} \vr^\var\|_{H^4_{\gm+1}}
			\|\vr_g^\var \xw^{\gm}\|_{L^2}\\
			\le &\; C_{\gamma, \dl, \sg}(1+\e^4).
		\end{aligned}
		\deq
		
		\textbf{Deal with $J_{9}$ term.}
		Using Hardy inequality \eqref{Hardy1}, the Morse-type inequality \eqref{morse}
		and estimate \eqref{2101},
		we have
		\beq\label{2505}
		\begin{aligned}
			&\;\int (Q_2 \cdot g_\vr^\var)\cdot \vr_g^\var \xw^{2\gm}dx dy\\
			\le &C_{\dl} \|\p_x u^\var\|_{H^5_0}\|\p_x u^\var\|_{H^5_{\gm-1}}
			\|\p_y \vr^\var \xw^{\sg+1}\|_{L^\infty}\|\vr_g^\var \xw^{\gm}\|_{L^2}\\
			&\;+C_{\dl}\|\p_x v^\var\|_{H^4_{-1}}\|\p_x w^\var\|_{H^4_\gm}
			\|\p_y \vr^\var \xw^{\sg+1}\|_{L^\infty}
			\|\vr_g^\var \xw^{\gm}\|_{L^2}\\
			\le &\;C_{\dl}\|\p_x w^\var\|_{H^5_1}\|\p_x w^\var\|_{H^5_{\gm}}
			\|\p_y \vr^\var \xw^{\sg+1}\|_{H^2}
			\|\vr_g^\var \xw^{\gm}\|_{L^2}\\
			&\;+C_{\dl}\|\p_x^2 w^\var\|_{H^4_{1}}\|\p_x w^\var\|_{H^4_\gm}
			\|\p_y \vr^\var \xw^{\sg+1}\|_{H^2}
			\|\vr_g^\var \xw^{\gm}\|_{L^2}\\
			\le &\; C_{\gamma, \dl, \sg}(1+\e^4).
		\end{aligned}
		\deq
		Using the inequality \eqref{f02}, one arrives at
		\beq\label{2506}
		\begin{aligned}
			&\int \left(\p_x^6\{\frac{1}{\vr^\var+\vrf}\p_y w^\var\}
			\cdot g_\vr^\var\right)\cdot \vr_g^\var \xw^{2\gm}dx dy\\
			= &\int \frac{1}{\vr^\var+\vrf}\p_y \p_x^6 w^\var
			\cdot g_\vr^\var \cdot \vr_g^\var \xw^{2\gm} dx dy\\
			&\;+\sum_{1\le k\le 6}C_6^k \int \p_x^{k-1}\p_x(\frac{1}{\vr^\var+\vrf})
			\p_x^{6-k}\p_y w^\var \cdot g_\vr^\var \cdot \vr_g^\var \xw^{2\gm}dx dy\\
			\le &\;C_{\dl,\kappa} \|\p_y \vr^\var \xw^{\sg+1}\|_{L^\infty}
			\|\p_y \p_x^6 w^\var \xw^{\gm}\|_{L^2} \|\vr_g^\var \xw^{\gm-1}\|_{L^2}\\
			&\;+C_\dl \|\p_y \vr^\var \xw^{\sg+1}\|_{L^\infty}
			\|\p_x(\frac{1}{\vr^\var+\vrf})\|_{H^5_0}\|\p_y w^\var\|_{H^5_\gm}
			\|\vr_g^\var \xw^{\gm-1}\|_{L^2}\\
			\le &\;\nu_3  \|\p_y \p_x^6 w^\var \xw^{\gm}\|_{L^2}^2
			+C_{\nu_3,\dl,\kappa}
			\|\p_y \vr^\var \xw^{\sg+1}\|_{H^2}^2
			\|\vr_g^\var \xw^{\gm-1}\|_{L^2}^2\\
			&\;+C_{\dl,\kappa}\|\p_y \vr^\var \xw^{\sg+1}\|_{H^2}(1+\|\vr^\var\|_{H^6_0}^6)
			\|\p_y w^\var\|_{H^5_\gm}\|\vr_g^\var \xw^{\gm-1}\|_{L^2}\\
			\le &\; \nu_3  \|\p_y \p_x^6 w^\var \xw^{\gm}\|_{L^2}^2
			+C_{\nu_3, \kappa, \gamma, \dl, \sg}(1+\e^{11}).
		\end{aligned}
		\deq
		Therefore, the combination of estimates \eqref{2505}
		and \eqref{2506} yields directly
		\beq\label{2507}
		|J_{9}|
		\le \nu_3  \|\p_y \p_x^6 w^\var \xw^{\gm}\|_{L^2}^2
		+C_{\nu_3, \kappa, \gamma, \dl, \sg}(1+\e^{11}).
		\deq
		
		\textbf{Deal with $J_{10}$ term}. Obviously, it holds
		\beq\label{2508}
		|J_{10}|\le \|Q_7 \xw\|_{L^\infty}\|\p_x^6 u^\var \xw^{\gm-1}
		\|_{L^2}\|\vr_g^\var \xw^\gm\|_{L^2}.
		\deq
		Using the lower bound condition \eqref{Lowerb}, it holds
		\beq\label{2509}
		\begin{aligned}
			&\;\|\frac{\p_y\{u^\var \p_x \vr^\var+v^\var \p_y \vr^\var-\var \p_x^2 \vr^\var\}\xw}{w^\var}\|_{L^\infty}\\
			= &\;\|\frac{(\p_y u^\var \p_x \vr^\var+u^\var\p_{xy}\vr^\var
				+\p_y v^\var \p_y \vr^\var+v^\var \p_y^2 \vr^\var-\var \p_x^2 \p_y \vr^\var) \xw}{w^\var}\|_{L^\infty}\\
			\le&\;C_\dl\|\p_x \vr^\var \xw\|_{L^\infty}
			+C_\dl(1+\|u^\var-u_\infty\|_{L^\infty})
			\|\p_{xy}\vr^\var \xw^{\sg+1}\|_{L^\infty}\\
			&\;+C_\dl\|\p_x u^\var\|_{L^\infty}
			\|\p_y \vr^\var \xw^{\sg+1}\|_{L^\infty}
			+C_\dl\|v^\var \xw^{-1}\|_{L^\infty}
			\|\p_y^2 \vr^\var \xw^{\sg+2}\|_{L^\infty}+C_\dl \|\p_x^2 \p_y \vr^\var \xw^{\sg+1}\|_{L^\infty}\\
			\le&\;C_\dl\|\p_x \vr^\var \xw\|_{H^2}   +C_\dl(1+\|w^\var\|_{H^2_1})\|\p_{xy}\vr^\var \xw^{\sg+1}\|_{H^2_0}\\
			&\;+C_\dl\|\p_x w^\var\|_{H^2_1}\|\p_y \vr^\var \xw^{\sg+1}\|_{H^2_0}
			+C_\dl\|\p_xw^\var\|_{H^1_1}\|\p_y^2 \vr^\var\xw^{\sg+2}\|_{H^2_0}+C_\dl \|\p_x^2 \p_y \vr^\var \xw^{\sg}\|_{L^\infty} \\
			\le&\; C_\dl(1+\e).
		\end{aligned}
		\deq
		Similarly, it is easy to check that
		\beq\label{2510}
		\begin{aligned}
			&\;\|\frac{\p_y \vr^\var}{(w^\var)^2}
			\left(-u^\var \p_x w^\var-v^\var \p_y w^\var
			+\p_y(\frac{1}{\vr^\var+\vrf}\p_y w^\var)+\var \p_x^2 w^\var\right)\xw\|_{L^\infty}\\
			\le &\;C_\dl\|\p_y \vr^\var \xw^{\sg+1}\|_{L^\infty}
			(1+\|u^\var-u_\infty\|_{L^\infty})
			\|\p_x w^\var \xw^{\sg}\|_{L^\infty}\\
			&\;+C_\dl\|\p_y \vr^\var \xw^{\sg+1}\|_{L^\infty}
			\|v^\var \xw^{-1}\|_{L^\infty}
			\|\p_y w^\var \xw^{\sg+1}\|_{L^\infty}\\
			&\;+C_\dl \|\p_y \vr^\var \xw^{\sg+1}\|_{L^\infty}
			\left(\|\p_y \vr^\var\|_{L^\infty}
			\|\p_y w^\var \xw^{\sg}\|_{L^\infty}
			+\|\p_y^2 w^\var \xw^{\sg}\|_{L^\infty}\right)\\
			&\;+C_{\dl,\kappa} \|\p_y \vr^\var \xw^{\sg+1}\|_{L^\infty} \|\p_x^2 w^\var \xw^{\sg}\|_{L^\infty}\\
			\le &\;C_{\dl,\kappa}(1+\e^2),
		\end{aligned}
		\deq
		and
		\beq\label{2511}
		\begin{aligned}
			&\;\|u^\var \left(\frac{\p_{xy}\vr^\var}{w^\var}
			-\frac{\p_y \vr^\var \p_x w^\var}{(w^\var)^2}\right)\xw\|_{L^\infty}
			+\|v^\var\left(\frac{\p_y^2 \vr^\var}{w^\var}
			-\frac{\p_y \vr^\var \p_y w^\var}{(w^\var)^2}\right)\xw\|_{L^\infty}
			+\var \|\p_x^2 \left\{\frac{\p_y \vr^\var}{w^\var}\right\}\|_{L^\infty}\\
			\le &\;C_\dl(1+\|u^\var-u_\infty\|_{L^\infty})
			(\|\p_{xy}\vr^\var \xw^{\sg+1}\|_{L^\infty}
			+\|\p_y \vr^\var \xw^{\sg+1}\|_{L^\infty}
			\|\p_x w^\var \xw^{\sg}\|_{L^\infty})\\
			&\;+C_\dl\|v^\var \xw^{-1}\|_{L^\infty}
			(\|\p_y^2 \vr^\var \xw^{\sg+2}\|_{L^\infty}
			+\|\p_y \vr^\var \xw^{\sg+1}\|_{L^\infty}
			\|\p_y w^\var \xw^{\sg+1}\|_{L^\infty})\\
			&\;+C_\dl \|\p_x^2 \p_y \vr^\var \xw^{\sg+1}\|_{L^\infty}
			+C_\dl \|\p_x \p_y \vr^\var \xw^{\sg+1}\|_{L^\infty} \|\p_x w^\var \xw^{\sg}\|_{L^\infty}\\
			&\;+C_\dl \|\p_y \vr^\var \xw^{\sg+1}\|_{L^\infty} (\|\p_x^2  w^\var \xw^{\sg}\|_{L^\infty}+\|\p_x w^\var \xw^{\sg}\|_{L^\infty})\\
			\le &\;C_\dl(1+\|w^\var\|_{H^2_1})
			(\|\p_{xy}\vr^\var \xw^{\sg+1}\|_{H^2}
			+\|\p_y \vr^\var \xw^{\sg+1}\|_{H^2}
			\|\p_x w^\var \xw^{\sg}\|_{L^\infty})\\
			&\;+C_\dl\|w^\var\|_{H^2_1}
			(\|\p_y^2 \vr^\var \xw^{\sg+2}\|_{H^2}
			+\|\p_y \vr^\var \xw^{\sg+1}\|_{H^2}
			\|\p_y w^\var \xw^{\sg+1}\|_{L^\infty})\\
			&\;+C_\dl \|\p_x^2 \p_y \vr^\var \xw^{\sg+1}\|_{L^\infty}
			+C_\dl \|\p_x \p_y \vr^\var \xw^{\sg+1}\|_{L^\infty} \|\p_x w^\var \xw^{\sg}\|_{L^\infty}\\
			&\;+C_\dl \|\p_y \vr^\var \xw^{\sg+1}\|_{L^\infty} (\|\p_x^2  w^\var \xw^{\sg}\|_{L^\infty}+\|\p_x w^\var \xw^{\sg}\|_{L^\infty})\\
			\le &\;C_\dl(1+\e^2).
		\end{aligned}
		\deq
		The combination of estimates \eqref{2509}-\eqref{2511} yields directly
		\beqq
		\|Q_7 \xw\|_{L^\infty} \le C_{\dl,\kappa}(1+\e^2),
		\deqq
		which, together with estimate \eqref{2508}, yields directly
		\beq\label{2512}
		|J_{10}| \le C_{\dl,\kappa}(1+\e^4) .
		\deq
		Substituting the estimates \eqref{2503}, \eqref{2504},
		\eqref{2507} and \eqref{2512} into \eqref{2502}, we have
		\beqq
		\frac{d}{dt}\|\vr_g^\var \xw^{\gm}\|_{L^2}^2
		+\var\|\p_x \vr_g^\var \xw^{\gamma}\|_{L^2}^2
		\le \nu_3  \|\p_y \p_x^6 w^\var \xw^{\gm}\|_{L^2}^2
		+C_{\nu_3, \kappa, \gamma, \dl, \sg}(1+\e^{11}),
		\deqq
		which, together with \eqref{2102}, yields directly
		\beqq
		\frac{d}{dt}\|\vr_g^\var \xw^{\gm}\|_{L^2}^2
		+\var\|\p_x \vr_g^\var \xw^{\gamma}\|_{L^2}^2
		\le \nu_3  \|\p_y w_g^\var \xw^{\gm}\|_{L^2}^2
		+C_{\nu_3,\kappa, \gamma, \dl, \sg}(1+\e^{11}).
		\deqq
		Therefore, we complete the proof of this lemma.
	\end{proof}

	\subsection{Weighted $L^\infty$-estimates for lower order terms}
	
	In order to close the energy estimate, we need to establish the weighted $L^\infty$-estimates for lower-order derivatives of $w^\var$ and density.
	First of all, we will establish the uniform (in $\var$) weighted $L^\infty$-estimates for $\p^\alpha w^\var (|\alpha| \leq 2)$. Through viewing the equation of $\p^\alpha w^\var $ as a ``linear" parabolic equations, the estimate can be obtained by
	using the classical maximum principle stated in Lemmas
	\ref{max} and  \ref{min}(see Appendix \ref{appendix-a}).
	For all $|\alpha|\le 2$, let us define
	$$
	B_\alpha\overset{def}{=}\p^\al w^\var \xw^{\sg+\alpha_2},
	$$	
	and
	$$I\overset{def}{=}\sum_{0\le \beta\le \alpha}|B_{\beta}^2|.$$
	Then, we can establish the following estimate.
	\begin{lemm}\label{lemma26}
		Under the condition of \eqref{Lowerb},
		the solution $(u^\var, v^\var, w^\var)$ of regularized system
		\eqref{Prandtl-01} will satisfy
		\beqq
		\|\sum_{|\alpha|\le 2}|\p^\alpha w^\var|^2 \xw^{2\sg+2\alpha_2}\|_{L^\infty}
		\le \left\{\|I(0)\|_{L^\infty(\mathbb{T}\times \mathbb{R}^+)}+C_{\kappa} t(1+\e^4)\right\}
		e^{C_{\sg,\kappa}(1+\e^2)t}.
		\deqq
	\end{lemm}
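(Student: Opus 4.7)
The plan is to derive, for each multi-index $\alpha$ with $|\alpha|\le 2$, a parabolic differential inequality for the weighted quantity $B_\alpha^2$, and then run a maximum-principle argument on the sum $I$. Applying $\p^\alpha$ to the vorticity equation $\eqref{Prandtl-02}_1$ and conjugating by the weight $\xw^{\sg+\alpha_2}$, I get
\begin{equation*}
\p_t B_\alpha + u^\var \p_x B_\alpha + v^\var \p_y B_\alpha - \var \p_x^2 B_\alpha - \frac{1}{\vr^\var+\vrf}\p_y^2 B_\alpha = \mathcal{R}_\alpha,
\end{equation*}
where $\mathcal{R}_\alpha$ collects: (i) first-order $y$-derivative terms coming from the commutator between $\p_y^2$ and the weight, which can be absorbed via the bound $\|v^\var \xw^{-1}\|_{L^\infty}\lesssim \|\p_x w^\var\|_{H^1_1}\lesssim 1+\e$ from \eqref{u2-est}, (ii) commutators $[\p^\alpha,u^\var]\p_x w^\var$ and $[\p^\alpha,v^\var]\p_y w^\var$, and (iii) the contributions from $\p^\alpha\bigl(\frac{1}{\vr^\var+\vrf}\p_y w^\var\bigr)$ and from commuting $\p_y^2$ with the density factor. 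All terms in $\mathcal{R}_\alpha$ are of either linear type $h_1 B_\beta$ with $\beta\le\alpha$ or inhomogeneous type $h_2$, with coefficients $h_1,h_2$ that Sobolev-embed into $L^\infty$ and are controlled by polynomial powers of $\e$ (using \eqref{Lowerb} to bound $\frac{1}{\vr^\var+\vrf}$ and $\frac{1}{w^\var}$, and the embedding $H^2\hookrightarrow L^\infty$ so the weighted $H^6$ norm in $\e$ dominates the coefficients).

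Multiplying by $B_\alpha$ and summing over $|\beta|\le\alpha$, I obtain a pointwise inequality of the schematic form
\begin{equation*}
\p_t I + u^\var \p_x I + v^\var \p_y I - \var \p_x^2 I - \frac{1}{\vr^\var+\vrf}\p_y^2 I \le C_{\sg,\kappa}(1+\e^2) I + C_{\kappa}(1+\e^4),
\end{equation*}
where the negative diffusion term $-\frac{2}{\vr^\var+\vrf}|\p_y B_\alpha|^2$ arising from $B_\alpha\cdot\frac{1}{\vr^\var+\vrf}\p_y^2 B_\alpha$ is simply discarded (it has a favorable sign). On the boundary $y=0$, the condition $\p_y w^\var|_{y=0}=0$ together with the formulas in \eqref{bl-redu} ensure that $\p_y I|_{y=0}\le 0$ when relevant (or vanishes), so the version of the maximum principle in Appendix A (Lemmas 2.7--2.8) applies to the operator $\p_t+u^\var\p_x+v^\var\p_y-\var\p_x^2-\frac{1}{\vr^\var+\vrf}\p_y^2$.

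Applying the maximum principle to $I$ on $[0,t]\times\mathbb{T}\times\mathbb{R}^+$ then reduces the problem to a scalar ODE comparison $\frac{d}{dt}M \le C_{\sg,\kappa}(1+\e^2)M + C_\kappa(1+\e^4)$ with $M(0)=\|I(0)\|_{L^\infty}$, whose solution by Gronwall is exactly
\begin{equation*}
\|I(t)\|_{L^\infty} \le \bigl(\|I(0)\|_{L^\infty(\mathbb{T}\times\mathbb{R}^+)} + C_\kappa t(1+\e^4)\bigr)\, e^{C_{\sg,\kappa}(1+\e^2)t},
\end{equation*}
which is the claimed bound.

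The main obstacle is the bookkeeping in producing $\mathcal{R}_\alpha$ in a form where every coefficient is manifestly in $L^\infty$ and polynomially controlled by $\e$: the term $\p^\alpha(\frac{1}{\vr^\var+\vrf}\p_y w^\var)$ for $|\alpha|=2$ produces up to three $y$-derivatives of $w^\var$ (encoded through the weighted $L^\infty$ norms that are themselves part of $\e$), and the commutator with the weight $\xw^{\sg+\alpha_2}$ in the diffusion term yields a first-order term $\frac{-2(\sg+\alpha_2)}{\vr^\var+\vrf}\xw^{-1}\p_y B_\alpha$ which must be re-expressed as a linear function of $B_\beta$ with $|\beta|\le|\alpha|$ (after expanding $\p_y B_\alpha$ in terms of $B_{\alpha+e_2}$ and a lower-order weight-commutator piece) so that no un-absorbable gradient of $I$ remains. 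Once this reduction is done, the rest is a direct application of the comparison principle and Gronwall.
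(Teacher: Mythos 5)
Your derivation of the parabolic equation for $B_\alpha$ and the absorption of the $\p_y B_\alpha$-cross terms into the discarded diffusion $-\tfrac{2}{\vr^\var+\vrf}|\p_y B_\alpha|^2$ via Young's inequality are the same as the paper's, but two things go wrong afterward. First, the paper's final pointwise inequality for $I$ is \emph{linear}, $\{\p_t+u^\var\p_x+v^\var\p_y-\var\p_x^2-\tfrac{1}{\vr^\var+\vrf}\p_y^2\}I\le C_{\sg,\kappa}(1+\e^2)I$, with no forcing term: every piece of $S_\alpha$ involves $B_\gamma$ or $\p_y B_\gamma$ with $|\gamma|\le 2$, so everything either sits under $I$ or is eaten by the diffusion. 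Your inhomogeneous term $C_\kappa(1+\e^4)$ does not arise, and the stated maximum principle, Lemma \ref{max}, is formulated for the homogeneous case and does not yield an ODE comparison with forcing.

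The more serious gap is the boundary. Lemma \ref{max} bounds $\sup H(t)$ by the larger of the initial sup and $\max_{\tau\le t}\|H(\tau)|_{y=0}\|_{L^\infty(\mathbb{T})}$; the Dirichlet data on $y=0$ must be supplied, not deduced from a sign condition on $\p_y I$. Your claim that $\p_y I|_{y=0}\le 0$ is false. Since $\p_y\xw=1$ and $\p_y w^\var|_{y=0}=0$, the $\beta=(0,0)$ term gives $B_{(0,0)}\p_y B_{(0,0)}|_{y=0}=\sg (w^\var)^2|_{y=0}>0$, and the $\beta=(0,2)$ term produces, via $\eqref{bl-redu}_1$,
\begin{equation*}
B_{(0,2)}\p_y B_{(0,2)}\big|_{y=0}=\p_y^2 w^\var\Big[(\vr^\var+\vrf)w^\var\p_x w^\var+\tfrac{2\p_y\vr^\var}{\vr^\var+\vrf}\p_y^2 w^\var+(\sg+2)\p_y^2 w^\var\Big]\Big|_{y=0},
\end{equation*}
which is sign-indefinite. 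So no Neumann-type maximum principle is available. What the paper actually does, and what your proof omits entirely, is to control the boundary data by a first-order Taylor expansion in time, $\|I(t)|_{y=0}\|_{L^\infty(\mathbb{T})}\le\|I(0)|_{y=0}\|_{L^\infty(\mathbb{T})}+t\,\|\p_t I|_{y=0}\|_{L^\infty}$, and then bound $\p_t I|_{y=0}$ using the vorticity equation $\eqref{Prandtl-02}_1$ together with $\p_y w^\var|_{y=0}=0$. That Taylor-in-time step is precisely the source of the additive term $C_\kappa t(1+\e^4)$ in the statement; it does not come from a forcing term or from Gronwall. As written, your proof has no control of $I|_{y=0}(\tau)$ for $\tau>0$, and the conclusion does not follow.
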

	
	\begin{proof}
		By a direct computation, the quantity $B_\alpha$ satisfies
		\beq\label{2602}
		(\p_t+u^\var \p_x+v^\var \p_y-\var\p_x^2-\frac{1}{\vr^\var+\vrf}\p_y^2)B_\alpha
		=Q_\alpha \p_y B_\alpha +R_\alpha B_\alpha+S_\alpha,
		\deq
		where $Q_\alpha, R_\alpha$ and $S_\alpha$ are defined as follows:
		\beqq
		\begin{aligned}
			Q_\alpha\overset{def}{=}&\p_y(\frac{1}{\vr^\var+\vrf})
			-\frac{2(\sg+\alpha_2)}{\xw(\vr^\var+\vrf)};\\
			R_\alpha\overset{def}{=}&(\sg+\alpha_2) \xw^{-1} v^\var
			+\frac{(\sg+\alpha_2)(\sg+\alpha_2+1)}{\xw^2 (\vr^\var+\vrf)}
			+\frac{(\sg+\alpha_2)\p_y \vr^\var}{\xw (\vr^\var+\vrf)^2}.\\
		\end{aligned}
		\deqq
		If $|\alpha|=0$, then $S_\alpha\overset{def}{=}0$. If $|\alpha|>0$, then
		\beqq
		\begin{aligned}
			S_\alpha\overset{def}{=}&
			\sum_{0<\beta\le \alpha}C_\alpha^\beta\p^{\beta}
			(\frac{1}{\vr^\var+\vrf})\xw^{\beta_2-1} \p_y B_{\alpha-\beta+e_2}
			+\sum_{0<\beta\le \alpha}C_\alpha^\beta\p^{\beta+e_2}
			(\frac{1}{\vr^\var+\vrf})\xw^{\beta_2-1}B_{\alpha-\beta+e_2}\\
			&-(\sg+\alpha_2-\beta_2+1)\sum_{0<\beta\le \alpha}
			C_\alpha^\beta\p^{\beta}(\frac{1}{\vr^\var+\vrf})\xw^{\beta_2-2}
			B_{\alpha-\beta+e_2}\\
			&-\sum_{0<\beta\le \alpha}C_\alpha^\beta \p^\beta u^\var \xw^{\beta_2}
			B_{\alpha-\beta+e_1}
			-\sum_{0<\beta\le \alpha}C_\alpha^\beta
			\p^\beta v^\var \xw^{\beta_2-1}B_{\alpha-\beta+e_2}.
		\end{aligned}
		\deqq
		Obviously, it is easy to check that
		\beq\label{2603}
		\begin{aligned}
			|Q_\alpha|\le &\;C_{\sg,\kappa}(1+\|\p_y \vr^\var\|_{H^2_0}),\quad
			|R_\alpha|\le C_{\sg,\kappa}(1+\|w^\var\|_{H^2_1}+\|\p_y \vr^\var\|_{H^2_0}),\\
			|S_\alpha| \le &\; C_{\sg, \kappa}(1+\|\vr^\var\|_{H^5_1}^3+\|w^\var\|_{H^4_1})
			\sum_{0<\beta\le \alpha}(|B_{\alpha-\beta+e_2}|+|B_{\alpha-\beta+e_1}|)\\
			&\;+C_{\sg, \kappa}(1+\|\vr^\var\|_{H^4_1}^2)
			\sum_{0<\beta\le \alpha}|\p_y B_{\alpha-\beta+e_2}|.
		\end{aligned}
		\deq		
		According to the equation \eqref{2602}, quantity $I$ will satisfy
		\beqq
		\begin{aligned}
			&\;(\p_t+u^\var \p_x+v^\var \p_y-\var \p_x^2-\frac{1}{\vr^\var+\vrf}\p_y^2)I\\
			=&\;-2\sum_{|\alpha|\le 2}(\var |\p_x B_\alpha|^2
			+\frac{1}{\vr^\var+\vrf}|\p_y B_\alpha|^2)
			+2\sum_{|\alpha|\le 2}  Q_\alpha B_\alpha \p_y B_\alpha +R_\alpha |B_\alpha|^2
			+S_\alpha B_\alpha,\\
			\le &\;C_{\sg,\kappa}(1+\|\vr^\var\|_{H^5_1}^4+\|w^\var\|_{H^4_1}^4)I,
		\end{aligned}
		\deqq
		where we have used the estimate \eqref{2603} in the last inequality.
		Applying the classical maximum principle inequality \eqref{max-est}
		in Lemma \ref{max} for parabolic equation to the quantity $I$ under condition
		of $\vr^\var + \vrf$ in \eqref{Lowerb},
		we have
		\beq\label{2605}
		\begin{aligned}
			\|I(t)\|_{L^\infty(\mathbb{T}\times \mathbb{R}^+)}
			\le &\max \left\{e^{C_{\sg,\kappa}(1+\|\vr^\var\|_{H^5_1}^4+\|w^\var\|_{H^4_1}^4)t}
			\|I(0)\|_{L^\infty(\mathbb{T}\times \mathbb{R}^+)},\right.\\
			&\quad \quad \quad \left.\underset{\tau\in [0, t]}{\max}\{e^{C_{\sg,\kappa}
				(1+\|\vr^\var\|_{H^5_1}^4+\|w^\var\|_{H^4_1}^4)(t-\tau)}
			\|I(\tau)|_{y=0}\|_{L^\infty(\mathbb{T})}\}\right\}.
		\end{aligned}
		\deq
		Finally, let us estimate the term $\|I(t)|_{y=0}\|_{L^\infty(\mathbb{T})}$.
		Due to the fact that
		\beqq
		\|I(t)|_{y=0}\|_{L^\infty(\mathbb{T})}
		\le \|I(0)|_{y=0}\|_{L^\infty(\mathbb{T})}
		+t\|\p_t I(t)|_{y=0}\|_{L^\infty(\mathbb{T})},
		\deqq
		and
		\beqq
		\p_t (\p^\alpha w^\var)
		=\p^\alpha\left\{\var\p_x^2 w^\var+\p_y(\frac{1}{\vr^\var+\vrf} \p_y w^\var)\right\}
		-\p^\alpha(u^\var \p_x w^\var+v^\var \p_y w^\var).
		\deqq
		For all $|\alpha|\le 2$, it is easy to check that
		\beqq
		(\p^\alpha\{\var\p_x^2 w^\var+\p_y(\frac{1}{\vr^\var+\vrf} \p_y w^\var)\})_{y=0}
		\le C_\kappa(1+\|\vr^\var\|_{H^4_0(\mathbb{R}^+)}^4 )\| \p_y w^\var\|_{H^4_1(\mathbb{R}^+)},
		\deqq
		and
		\beqq
		(\p^\alpha\{u^\var \p_x w^\var+v^\var \p_y w^\var\})_{y=0}
		\le C(1+\|w^\var\|_{H^4_1(\mathbb{R}^+)}^2).
		\deqq
		Thus, we have
		\beqq
		\|\p_t I(t)|_{y=0}\|_{L^\infty(\mathbb{T})}
		\le C_{\kappa}(1+\|\vr^\var\|_{H^5_0}^8+\|(w^\var, \p_y w^\var)\|_{H^5_1}^2),
		\deqq
		and hence, it holds
		\beq\label{2606}
		\|I(t)|_{y=0}\|_{L^\infty(\mathbb{T})}
		\le \|I(0)|_{y=0}\|_{L^\infty(\mathbb{T})}
		+C_{\kappa} t(1+\|\vr^\var\|_{H^5_0}^8+\|(w^\var, \p_y w^\var)\|_{H^5_1}^2).
		\deq
		Thus, substituting the estimate \eqref{2606} into \eqref{2605},
		then we have
		\beqq
		\|I(t)\|_{L^\infty(\mathbb{T}\times \mathbb{R}^+)}
		\le \left\{\|I(0)\|_{L^\infty(\mathbb{T}\times \mathbb{R}^+)}
		+C_{\kappa} t(1+\|\vr^\var\|_{H^5_0}^8+\|(w^\var, \p_y w^\var)\|_{H^5_1}^4)\right\}
		e^{C_{\sg,\kappa}(1+\|\vr^\var\|_{H^5_1}^4+\|w^\var\|_{H^4_1}^4)t}.
		\deqq
		Therefore, we complete the proof of this lemma.
	\end{proof}
	
	Next, we will establish estimates for the lower bound of vorticity
	and the bound of density respectively.
	\begin{lemm}\label{lemma27}
		Under the condition of \eqref{Lowerb},
		the solution $(u^\var, v^\var, w^\var)$ of regularized system
		\eqref{Prandtl-01} will satisfy
		\beq\label{2701}
		\underset{\mathbb{T}\times\mathbb{R}^+}{\min}\xw^{\sg}w (t)
		\ge\left\{1-T\ \underset{t\in[0, T]}{\max}(1+\e)
		e^{t \ \underset{t \in [0, T]}{\max}(1+\e)}\right\}
		\left\{\underset{\mathbb{T}\times\mathbb{R}^+}{\min} \xw^{\sg} w_0
		-C_\kappa T(1+\underset{t\in[0, T]}{\max}\e^2)\right\},
		\deq
		and
		\beq\label{2702}
		\rho_0
		-CT(1+\underset{ t \in [0,T]}{\max}\e)
		\le \vr^\var+\vrf
		\le \rho_0
		+CT(1+\underset{ t \in [0,T]}{\max}\e).
		\deq
	\end{lemm}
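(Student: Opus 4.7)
The plan is to handle the two bounds separately via maximum/minimum principle techniques, in the spirit of Lemma \ref{lemma26}.

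For the lower bound on $\xw^{\sg} w^\var$, I would set $W \overset{def}{=} \xw^{\sg} w^\var$, which is exactly $B_0$ from the proof of Lemma \ref{lemma26} (so $S_0 \equiv 0$). Then $W$ satisfies the linear parabolic equation
$$(\p_t + u^\var \p_x + v^\var \p_y - \var \p_x^2 - \tfrac{1}{\vr^\var+\vrf}\p_y^2)W = Q_0 \p_y W + R_0 W,$$
with $Q_0, R_0$ as defined in Lemma \ref{lemma26}. Using \eqref{u2-est} and Sobolev embedding of $\p_y \vr^\var$, one obtains $\|R_0\|_{L^\infty} \le C_{\sg,\kappa}(1+\e)$. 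Applying the minimum principle (Lemma \ref{min}) then bounds $\min W(t)$ from below by the minimum of $W$ on the parabolic boundary, multiplied by an exponential factor of the form $e^{-CT}$; linearizing via $e^{-x}\ge 1-x$ produces the factor $1 - T \max(1+\e) e^{t\max(1+\e)}$ appearing in \eqref{2701}.

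The remaining task is to control $W|_{y=0} = w^\var|_{y=0}$ propagated in time. Since $u^\var|_{y=0} = v^\var|_{y=0} = 0$ and $\p_y w^\var|_{y=0} = 0$, the vorticity equation reduces on the boundary to
$$\p_t w^\var|_{y=0} = \var \p_x^2 w^\var|_{y=0} + \tfrac{\p_y^2 w^\var}{\vr^\var+\vrf}\Big|_{y=0}.$$
The trace inequality \eqref{trace} applied to $\p_x^2 w^\var$ and $\p_y^2 w^\var/(\vr^\var+\vrf)$ yields $\|\p_t w^\var|_{y=0}\|_{L^\infty(\mathbb{T})} \le C_\kappa(1+\e^2)$. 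Integrating in time gives $\min W(t)|_{y=0} \ge \min W_0|_{y=0} - C_\kappa T(1+\max_{[0,T]}\e^2)$, which combined with the minimum principle estimate produces \eqref{2701}.

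For the density bounds \eqref{2702}, I would integrate the density equation $\eqref{Prandtl-01}_1$ in time to write
$$(\vr^\var+\vrf)(t,x,y) - \rho_0(x,y) = -\int_0^t \bigl(u^\var \p_x \vr^\var + v^\var \p_y \vr^\var - \var \p_x^2 \vr^\var\bigr)(\tau,x,y)\, d\tau,$$
and estimate each term of the integrand in $L^\infty$. Using Sobolev embedding, Hardy inequality \eqref{Hardy1}, and \eqref{u2-est}, one has $\|u^\var-u_\infty\|_{L^\infty}+\|v^\var\xw^{-1}\|_{L^\infty} \le C(1+\e^{1/2})$, and $\|(\p_x\vr^\var, \p_y \vr^\var, \p_x^2\vr^\var)\|_{L^\infty}\le C(1+\e^{1/2})$ via Sobolev embedding at the $H^6$ level. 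Hence $\|\p_t \vr^\var\|_{L^\infty}\le C(1+\e)$ uniformly in $\var$, and integration yields \eqref{2702}.

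The main obstacle is the boundary treatment in the vorticity estimate: unlike in Lemma \ref{lemma26}, where a maximum principle directly handled $|B_\alpha|^2$, we need a genuine lower bound, so the quantity $w^\var|_{y=0}$ itself (which has no Dirichlet condition) must be tracked in time through its equation. This forces control of $\p_y^2 w^\var|_{y=0}$ via a trace inequality on the highest allowable tangential order, explaining the $\e^2$ power appearing in the second factor of \eqref{2701}.
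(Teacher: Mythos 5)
Your approach is essentially the paper's: apply the minimum principle (Lemma \ref{min}) to $w^\var\xw^{\sg}$, propagate the boundary trace $w^\var|_{y=0}$ in time by integrating $\p_t w^\var|_{y=0}$ from the vorticity equation restricted to the boundary, and for \eqref{2702} integrate $\p_t \vr^\var$ directly from the density equation. The one formal adjustment needed is that Lemma \ref{min} takes the equation in the form $(\p_t+b_1\p_x+b_2\p_y-\var\p_x^2-\frac{1}{\vr^\var+\vrf}\p_y^2)H=fH$ with no separate first-order source term, so your $Q_0\p_y W$ must be absorbed into the $y$-drift $b_2$ (as the paper does, writing $b_2=v^\var+\frac{2\sg\xw^{-1}}{\vr^\var+\vrf}+\frac{\p_y\vr^\var}{(\vr^\var+\vrf)^2}$); this is harmless since the enlarged drift still satisfies $\|b_2\xw^{-1}\|_{L^\infty}<\infty$.
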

	\begin{proof}
		Let us define $B_{(0, 0)}\overset{def}{=}w^\var \xw^{\sg}$,
		this quantity will satisfy
		\beqq
		\p_t B_{(0, 0)}+u^\var \p_x B_{(0, 0)}
		+\left(v^\var+\frac{2\sg \xw^{-1}}{\vr^\var+\vrf}
		+\frac{\p_y \vr^\var}{(\vr^\var+\vrf)^2}\right)\p_y B_{(0, 0)}
		-\frac{1}{\vr^\var+\vrf}\p_y^2 B_{(0, 0)}
		=Q_8 B_{(0, 0)},
		\deqq
		where $Q_8$ is defined as follows
		\beqq
		Q_8\overset{def}{=}
		\sg \xw^{-1} v^\var+\frac{\sg(\sg+1)\xw^{-2}}{\vr^\var+\vrf}
		+\frac{\sg\xw^{-1}\p_y \vr^\var}{(\vr^\var+\vrf)^2}.
		\deqq
		Then, using the classical minimum principle \eqref{min-est}
		in Lemma \ref{min}, we obtain
		\beq\label{2704}
		\begin{aligned}
			\underset{\mathbb{T}\times\mathbb{R}^+}{\min}\xw^{\sg}w^\var (t)
			\ge&(1-t\|Q_8\|_{L^\infty([0, T]\times\mathbb{T}\times\mathbb{R}^+)}
			e^{t\|Q_8\|_{L^\infty([0, T]\times\mathbb{T}\times\mathbb{R}^+)}})
			\min \left\{\underset{\mathbb{T}\times\mathbb{R}^+}{\min} \xw^{\sg} w_0,
			\underset{[0, T]\times \mathbb{T}}{\min}w^\var|_{y=0}\right\}.
		\end{aligned}
		\deq
		Obviously, it is easy to check that
		\beq\label{2705}
		\|Q_8\|_{L^\infty([0, T]\times\mathbb{T}\times\mathbb{R}^+)}
		\le \underset{[0, T]}{\max}\left\{1+\|w^\var\|_{H^2_1}
		+\|\p_y \vr^\var\|_{H^2_0}\right\}.
		\deq
		On the other hand, we have
		\beq\label{2706}
		\underset{[0, T]\times \mathbb{T}}{\min}w^\var|_{y=0}
		\ge \underset{\mathbb{T}}{\min}w_0|_{y=0}
		-T\|\p_t w^\var|_{y=0}\|_{L^\infty([0, T]\times \mathbb{T})}.
		\deq
		Using the vorticity equation $\eqref{Prandtl-02}_1$, we can obtain
		\beq\label{2709}
		\|\p_t w^\var|_{y=0}\|_{L^\infty([0, T]\times \mathbb{T})}
		=\|\p_y(\frac{\p_y w^\var}{\vr^\var+\vrf})
		|_{y=0}\|_{L^\infty([0, T]\times \mathbb{T})}.
		\deq
		Due to the fact that
		\beqq
		|(\p_y(\frac{1}{\vr^\var+\vrf})\p_y w^\var)_{y=0}|
		\le  \|\p_y^2(\frac{1}{\vr^\var+\vrf})\|_{L^2(\mathbb{R}^+)}
		\|\p_y w^\var\|_{L^2(\mathbb{R}^+)}
		+\|\p_y (\frac{1}{\vr^\var+\vrf})\|_{L^2(\mathbb{R}^+)}
		\|\p_y^2 w^\var\|_{L^2(\mathbb{R}^+)},
		\deqq
		and
		\beqq
		|(\frac{1}{\vr^\var+\vrf}\p_y^2 w^\var)_{y=0}|
		\le C_\kappa(\|\p_y \vr^\var\|_{L^2(\mathbb{R}^+)}\|\p_y w^\var\|_{L^2(\mathbb{R}^+)}
		+\|\p_y^3 w^\var\|_{L^2(\mathbb{R}^+)}),
		\deqq
		it holds
		\beq\label{2707}
		\|\p_y(\frac{\p_y w^\var}{\vr^\var+\vrf})|_{y=0}
		\|_{L^\infty([0, T]\times \mathbb{T})}
		\le C_{\kappa}( 1+\underset{[0, T]}{\max}(\|\vr^\var \|_{H^3_1}^3+\|w^\var\|_{H^4_1}^3)).
		\deq
		Substituting the estimates \eqref{2709} and \eqref{2707}
		into \eqref{2706}, we have
		\beqq
		\underset{[0, T]\times \mathbb{T}}{\min}w^\var|_{y=0}
		\ge \underset{\mathbb{T}}{\min}w_0|_{y=0}
		-C_{\kappa}T(1+\underset{[0, T]}{\max}(\|\vr^\var \|_{H^3_1}^3+\|w^\var\|_{H^4_1}^3)),
		\deqq
		which, together with estimates \eqref{2704} and \eqref{2705}, yields directly
		\beqq
		\begin{aligned}
			\underset{\mathbb{T}\times\mathbb{R}^+}{\min}\xw^{\sg}w^\var (t)
			\ge&\left\{1-T\ \underset{[0, T]}{\max}(1+\|w^\var\|_{H^2_1}+\|\p_y \vr^\var\|_{H^2_0})
			e^{t \ \underset{[0, T]}{\max}(1+\|w^\var\|_{H^2_1}
				+\|\p_y \vr^\var\|_{H^2_0})}\right\}\\
			&\times \left\{\underset{\mathbb{T}\times\mathbb{R}^+}{\min} \xw^{\sg} w_0
			-C_{\kappa}T(1+\underset{[0, T]}{\max}(\|\vr^\var \|_{H^3_1}^3+\|w^\var\|_{H^4_1}^3))\right\}.
		\end{aligned}
		\deqq
		Finally, we hope to establish the  bound for density.
		Obviously, it is easy to check that
		\beq\label{2708}
		(\vr^\var+\vrf)|_{t=0}-t\|\p_t \vr^\var\|_{L^\infty} \le \vr^\var+\vrf \le (\vr^\var+\vrf)|_{t=0}+t\|\p_t \vr^\var\|_{L^\infty} .
		\deq
		Due to the density equation $\eqref{Prandtl-01}_1$, we can obtain
		\beq\label{2710}
		\begin{aligned}
			\|\p_t \vr^\var\|_{L^\infty}
			&\le \var\|\p_x^2 \vr^\var\|_{L^\infty}+\|u^\var \p_x \vr^\var\|_{L^\infty}
			+\|v^\var \p_y \vr^\var\|_{L^\infty}\\
			&\le C\|\p_x^2 \vr^\var\|_{H^2_0}+C(1+\|w^\var\|_{H^1_1})\|\vr^\var\|_{H^3_0}\\
			&\le C(1+\|(\vr^\var, w^\var)\|_{H^4_1}^2).
		\end{aligned}
		\deq
		The combination of \eqref{2708} and \eqref{2710} yields directly
		\beqq
		\rho_0
		-CT(1+\underset{t\in[0, T]}{\max}\|(\vr^\var, w^\var)(t)\|_{H^4_1}^2)
		\le \vr^\var+\vrf \le \rho_0
		+CT(1+\underset{t\in[0, T]}{\max}\|(\vr^\var, w^\var)(t)\|_{H^4_1}^2).
		\deqq
		Therefore, we complete the proof of this lemma.
	\end{proof}

	\subsection{Proof of Proposition \ref{Main-Pro}}
	In this subsection, we will give the proof of Proposition \ref{Main-Pro}.
	Indeed, due to the the upper bound of $\vr^\var+\vrf$
	and smallness of $\nu_i(i=1,2,3)$, the combination of estimates obtained in Lemmas \ref{lemma22}-\ref{lemma26} yields directly
	\beqq
	\begin{aligned}
		\e
		+ \int_0^t \mathcal{D}_1 (\tau) d\tau
		\le  \mathcal{E}_1(0)
		+C_{\bar{\kappa},\kappa, \gm, \dl,\sg} \int_0^t (1+\e^{20}) d\tau
		+\left\{\|I(0)\|_{L^\infty(\mathbb{T}\times \mathbb{R}^+)}
		+C_{\kappa} t(1+\e^4)\right\}
		e^{C_{\sg,\kappa}(1+\e^2)t}.
	\end{aligned}
	\deqq
	Therefore, we complete the proof of Proposition \ref{Main-Pro}.
	
	\section{Existence and uniqueness of the original unsteady  Prandtl equations }\label{well-posedness}
	In this section, we will establish the local-in-time existence and uniqueness
	stated in the Theorem \ref{main-result-un} for the original system
	\eqref{Prandtl}.
	Indeed, with the help of uniform estimate \eqref{uiform-estimate}
	in Proposition \ref{Main-Pro} and estimates \eqref{2701}
	and \eqref{2702} in Lemma \ref{lemma27}, we can obtain
	the uniform life-span existence time and the uniform estimate.
	Then, one can obtain the solution  of original Prandtl equation \eqref{Prandtl}
	from the approximated system \eqref{Prandtl-01} as $\var$ tends to $0^+$.
	
	\subsection{Local-in-time existence of the original system}
	\textbf{Step 1: Uniform estimate and life-span time.}
	For the parameters $R_0$, $\dl$, $\kappa$ and $\bar{\kappa}$, which will be defined later, we define
	\beq\label{criterion}
	\begin{aligned}
		T_*^\var \overset{\text{def}}{=}
		&\sup\left\{T\in (0, 1]|  \e \le R_0, w^\var \xw^{\sg} \ge \dl,
		\kappa \le \vr^\var + \vrf \le \bar{\kappa},
		(t, x, y) \in [0, T] \times \mathbb{T} \times \mathbb{R}^+ \right\}.
	\end{aligned}
	\deq
	Then, from the Proposition \ref{Main-Pro}, we conclude
	for all $T \le T_*^\var$  that
	\beq\label{estimate-E-1-1}
	\begin{aligned}
		&\;E_1(T)+\int_0^{T} \mathcal{D}_1 (\tau) d\tau \\
		\le &\; \mathcal{E}_1(0)
		+C_{\bar{\kappa},\kappa,\dl,\gm, \sg} (1+E_1(T)^{20})T
		+ (\|I(0)\|_{L^\infty(\mathbb{T}\times \mathbb{R}^+)}
		+C_{\kappa} T (1+E_1(T)^4))
		e^{C_{\sg,\kappa} (1+E_1(T)^2)T_0}.
	\end{aligned}
	\deq
	Choose  $(\kappa, \bar\kappa, \dl)\overset{def}{=}
	(2\kappa_1, \f{\kappa_2}{2}, \frac{\bde}{2})$,
	then \eqref{estimate-E-1-1} yields directly
	\beq\label{estimate-E-1}
	\begin{aligned}
		&\; E_1(T)+\int_0^{T} \mathcal{D}_1 (\tau) d\tau \\
		\le & \; \mathcal{E}_1(0)
		+C_{\kappa_1,\kappa_2,\bde,\gm, \sg} (1+E_1(T)^{20})T
		+ (\|I(0)\|_{L^\infty(\mathbb{T}\times \mathbb{R}^+)}
		+C_{\kappa_1} T (1+E_1(T)^4))
		e^{C_{{\sg},\kappa_1} (1+E_1(T)^2)T_0}.
	\end{aligned}
	\deq
	Let us choose
	$T_1 \overset{def}{=} \min \left\{1,
	\frac{\mathcal{E}_1(0) }{2 C_{\kappa_1,\kappa_2, \bde,\gm, \sg}
		(1+(4 \mathcal{E}_1(0))^{20})},
	\frac{ \|I(0)\|_{L^\infty(\mathbb{T}\times \mathbb{R}^+)} }{ C_{\kappa_1}(1+(4 \mathcal{E}_1(0))^{4})},
	\frac{1}{C_{\sg,\kappa_1}(1+(4 \mathcal{E}_1(0))^{2})}
	\ln(\frac{\mathcal{E}_1(0)}{4 \|I(0)\|_{L^\infty(\mathbb{T}\times \mathbb{R}^+)}}) \right\}$ and $R_0 \overset{def}{=} 4 \mathcal{E}_1(0)$,
	then the inequality \eqref{estimate-E-1} yields directly
	\beq \label{E-2}
	E_1(T)+\int_0^{T} \mathcal{D}_1 (\tau) d\tau \le 2 \mathcal{E}_1(0) =\f {R_0}{2},
	\deq
	for all $T\le \min \left\{T_1, T_*^\var \right\}$.
	Choose  $T_2 \overset{def}{=} \min \left\{T_1,
	\frac{1}{6(1+4 \mathcal{E}_1(0))},
	\frac{\bde }{2C_{\kappa_1} (1+(4 \mathcal{E}_1(0))^{2})},
	\frac{ \ln 2 }{ 1+4 \mathcal{E}_1(0)}\right\}$,
	according to estimate \eqref{2701}, we have
	\beq \label{estimate-w}
	\begin{aligned}
		\underset{\mathbb{T}\times\mathbb{R}^+}{\min}\xw^{\sg}w (t)
		&\ge\left\{1-T(1+E_1(T))
		e^{T (1+E_1(T))} \right\}
		\left\{\underset{\mathbb{T}\times\mathbb{R}^+}{\min} \xw^{\sg} w_0
		-C_{\kappa_1} T(1+E_1(T)^2)\right\}
		\ge \bde=2\dl,
	\end{aligned}
	\deq
	for all $T\le \min \left\{T_2, T_*^\var \right\}$.
	Choose
	$T_3 \overset{def}{=} \min \left\{T_2,
	\frac{ \kappa_1}{ C (1+4 \mathcal{E}_1(0))},
	\frac{ \kappa_2}{ 2C (1+4 \mathcal{E}_1(0))} \right\}$,
	then we have
	\beq \label{estimate-rho}
	\vr^\var+\vrf  \ge \rho_0
	-CT(1+E_1(T)) \ge  \kappa_1 =\f{\kappa}{2},
	\deq
	and
	\beq \label{estimate-rho-1}
	\vr^\var+\vrf \le \rho_0 +CT(1+E_1(T)) \le \kappa_2
	=2\bar{\kappa},
	\deq
	for all $T\le \min \left\{T_3, T_*^\var \right\}$.	This yields $T_3 \le T_*^\var$.
	Indeed otherwise, our criterion about the continuation of the solution
	would contradict the definition of $T_*^\var$ in \eqref{criterion}.
	Let us define $T_{0}\overset{def}{=} T_3$, then we find the
	uniform existence time $T_{0}$(independent of $\var$)
	such that the estimates \eqref{E-2}, \eqref{estimate-w},\eqref{estimate-rho}
	and \eqref{estimate-rho-1} hold.
	Therefore, estimate \eqref{E-2} and lower semi-continuity of norm yield
	the estimate \eqref{estimate-X} in Theorem \ref{main-result-un}.	\\
	\textbf{Step 2: Local-in-time existence.}
	Using the estimates \eqref{E-2}, we have
	\beqq
	\sup_{0\leq t \leq T_0}  ( \|w^\var\|_{H^{6}_{loc}} +\|\vr^\var\|_{H^{6}_{loc}} +\|u^\var-\uf \|_{H^{6}_{loc}} )
	\le \sqrt{2}\mathcal{E}_1(0)^{\f12}.
	\deqq
	One can also find that
	$\p_t (w^\var, \vr^\var)$ and $\p_t u^\var$ are uniformly (in $\var$) in $L^2(0, T_0; H^{4}_{loc})$ and $L^2(0, T_0; H^{4}_{loc})$ respectively.
	Then it follows from a strong compactness argument(see Lemma 4 in \cite{Simon1990}) that
	$(w^\var, \vr^\var, u^\var-\uf)$ is compact in $C(0, T_0; H^s_{loc})(s<6
	)$.
	In particular, there exists a sequence $\var_n \rightarrow 0^+$
	and $w, \vr, u-\uf \in L^\infty([0,T_0]; H^{6}_{loc})\cap C(0, T_0; H^s_{loc})$ such that
	\begin{equation} \label{e:converge w,u-uf,vr}
		\left\{\begin{aligned}
			w^{\var_n} &\rightarrow w &&\text{ in } C([0,T_0]; H^{s}_{loc});\\
			\vr^{\var_n} &\rightarrow \vr &&\text{ in } C([0,T_0]; H^{s}_{loc});\\
			u^{\var_n} -\uf &\rightarrow u-\uf &&\text{ in } C([0,T_0]; H^{s}_{loc}).
		\end{aligned}\right.
	\end{equation}
	Then, we can define $\rho\overset{def}{=}\vr+\vrf$.
	Using the local uniform convergence of $\p_x u^{\var_n}$, we also have the
	pointwise convergence of $v^{\var_n}$: as $\var_n \to 0^+$,
	\begin{equation}  \label{e:converge v}
		v^{\var_n} = - \int^{y}_0 \p_x u^{\var_n}\, dy \rightarrow - \int^{y}_0 \p_x u \, dy \overset{def}{=} v.
	\end{equation}
	Therefore, combining \eqref{e:converge w,u-uf,vr}-\eqref{e:converge v}, one
	may justify the pointwise convergences of all terms in the regularized Prandtl equations
	\eqref{Prandtl-01}. Thus, passing to the limit $\var_n \rightarrow 0^+$ in
	\eqref{Prandtl-01},
	we know that the limit $(u,v,\rho)$ solves the Prandtl equations \eqref{Prandtl}
	in the classical sense. Furthermore, it is easy to check that
	\beqq
	w(t,x,y) \xw^{\sg} \ge \bde, \quad  \quad \kappa_1 \leq \rho(t,x,y) \le \kappa_2,
	\deqq
	for all $(t,x,y) \in [0,T_0]\times \mathbb{T}\times \mathbb{R}^+$, which implies the estimate \eqref{estimate-w,rho} in Theorem \ref{main-result-un}.
	Therefore, we complete the proof of the local-in-time existence of solution
	and estimates \eqref{estimate-w,rho} and \eqref{estimate-X}
	in Theorem \ref{main-result-un}.

	\subsection{Uniqueness of solution of the original system}
	In this subsection, we will give the proof for the uniqueness of
	inhomogeneous unsteady Prandtl equation \eqref{Prandtl}.
	Let $(u_1, v_1, \rho_1)$ and $(u_2, v_2, \rho_2)$ be two solutions of the original system \eqref{Prandtl} satisfying \eqref{estimate-w,rho} and \eqref{estimate-X} on the existence time $T_0$, constructed in the previous subsection, with the same initial data $(u_0, \rho_0)$.
	Let us denote $(\tilde{u}, \tilde{v}, \tilde{\rho}) \overset{def}{=} (u_1, v_1, \rho_1) - (u_2, v_2, \rho_2)$, $\tilde{w}\overset{def}{=} \p_y u_1- \p_y u_2=w_1-w_2$,
	$a_2 \overset{def}{=} \frac{\p_y w_2}{w_2}$, $b_2\overset{def}{=}\frac{\p_y \rho_2}{w_2}$. Then one may check that $\tilde{g} \overset{def}{=} \tilde{w}
	-a_2 \tilde{u} = w_2 \p_y \left(\frac{\tilde{u}}{w_2} \right)$, $\tilde{f} \overset{def}{=}  \tilde{\rho} - b_2 \tilde{u}$ and they satisfy
	\beq \label{e:tilde}
	\begin{aligned}
		(\p_t+u_1 \p_x + v_1 \p_y - \frac{\p_y^2}{\rho_1}) \tilde{g}
		= & -\frac{2}{\rho_1} \tilde{w} \p_y a_2
		+ \frac{\tilde{\rho}}{\rho_1 \rho_2} \p_y w_2 a_2
		+ (\frac{1}{\rho_1}-\frac{1}{\rho_2})\p_y^2 w_2 + \p_y w_1 \p_y(\frac{1}{\rho_1})- \p_y w_2 \p_y(\frac{1}{\rho_2})\\
		&- \tilde{u} \left(\tilde{u} \p_x a_2 + \tilde{v} \p_y a_2
		+ \f{2}{\rho_1} a_2 \p_y a_2
		-\frac{2 \p_y^2 w_2}{w_2} \cdot \frac{\p_y \rho_2}{\rho_2}
		-a_2 \frac{\p_y^2 \rho_2}{\rho_2^2}
		+ 2 a_2 \frac{(\p_y \rho_2)^2}{\rho_2^3} \right)\\
		&- \tilde{u} \left(
		a_2^2 \frac{\p_y \rho_2}{\rho_2^2}
		+\frac{\rho_2 -\rho_1}{\rho_1 \rho_2}(\frac{ \p_y^3 w_2}{w_2}-a_2 \frac{ \p_y^2 w_2}{w_2})
		\right),\\
		(\p_t+u_1 \p_x + v_1 \p_y) \tilde{f}
		= & -(\frac{\p_y \tilde{w}}{\rho_1} - \frac{\tilde{\rho}}{\rho_1 \rho_2}  \p_y w_2) \frac{\p_y \rho_2}{w_2}
		- \tilde{u} \left( \tilde{u} \p_x b_2 + \tilde{v} \p_y b_2
		-  \frac{\p_y \rho_2 }{w_2^2} \cdot \frac{\p_y^2 w_2}{\rho_2}
		\right).
	\end{aligned}
	\deq
	To derive the $L^2$ estimates on $\tilde{g}$ and $\tilde{f}$, let us first define the cutoff function
	$\chi_R (y) \overset{def}{=} \chi (\frac{y}{R} )$ for any $R \ge 1$, where $\chi \in C^\infty_c
	([0, +\infty))$ satisfies the following properties:
	\beqq
	0\leq \chi \leq 1, \quad \chi|_{[0,1]} \equiv 1, \quad \text{supp} \chi \subset [0,2], \quad -2 \leq \chi' \leq 0.
	\deqq
	Then $\chi_R$ has
	the following pointwise properties: as $R \to + \infty$,
	\beqq
	\chi_R \to \mathbf{1}_{\mathbb{R}^+}, \quad |\chi'_R| \le \frac{2}{R} \to 0^+ \quad \text{ and }\quad
	|\chi''_R| \le O(\frac{1}{R^2}) \to 0^+.
	\deqq
	For any $t \in (0,T_0]$, multiplying equation $\eqref{e:tilde}_1$ by $2 \chi_R
	\tilde{g}$, and then integrating over $[0,t] \times \mathbb{T}\times \mathbb{R}^+$, we obtain, via
	integration by parts,
	\beq \label{e:chi tilde g}
	\begin{aligned}
		&  \int \chi_R \tilde{g}^2 (t) \, dx dy - \int \chi_R \tilde{g}^2 |_{t=0}\,
		dx dy \notag\\
		= &  -2 \int^t_0 \int \frac{\chi_R}{\rho_1} |\p_y \tilde{g} |^2 dx dy d\tau - 2 \int^t_0 \int_{\mathbb{T}}
		\frac{\tilde{g} \p_y \tilde{g}}{\rho_1} |_{y=0} dx d\tau
		+ \int^t_0 \int_{\mathbb{T}} \frac{\p_y \rho_1}{\rho_1^2} \tilde{g}^2|_{y=0} dx d\tau
		- 4 \int^t_0 \int \chi_R \tilde{g}
		\frac{\p_y a_2 \tilde{w}}{\rho_1}dx dy d\tau \\
		& + 2 \int^t_0 \int \chi_R \tilde{g} \left( \frac{\tilde{\rho}}{\rho_1 \rho_2} \p_y w_2 a_2
		+ (\frac{1}{\rho_1}-\frac{1}{\rho_2})\p_y^2 w_2 + \p_y w_1 \p_y(\frac{1}{\rho_1})- \p_y w_2 \p_y(\frac{1}{\rho_2}) \right) dx dy d\tau\\
		&  -2 \int^t_0 \int \chi_R \tilde{g} \tilde{u}
		\Big( \tilde{u} \p_x a_2 + \tilde{v} \p_y a_2
		+ \f{2}{\rho_1} a_2 \p_y a_2
		-\frac{2 \p_y^2 w_2}{w_2} \cdot \frac{\p_y \rho_2}{\rho_2}
		-a_2 \frac{\p_y^2 \rho_2}{\rho_2^2}
		+ 2 a_2 \frac{(\p_y \rho_2)^2}{\rho_2^3}+
		a_2^2 \frac{\p_y \rho_2}{\rho_2^2} \\
		&+\frac{\rho_2 -\rho_1}{\rho_1 \rho_2}(\frac{ \p_y^3 w_2}{w_2}-a_2 \frac{ \p_y^2 w_2}{w_2})
		\Big) 	dx dy d\tau
		+\int^t_0 \int \chi'_R v_1 \tilde{g}^2 dx dy d\tau
		+\int^t_0 \int (\p_y(\frac{\chi'_R}{\rho_1})-\chi'_R\frac{ \p_y \rho_1 }{\rho_1^2}) \tilde{g}^2 dx dy d\tau\\
		&+\int^t_0 \int \chi_R \p_y\{\frac{\p_y \rho_1 }{\rho_1^2}\} \tilde{g}^2 dx dy d\tau  \\
		\overset{def}{=}& -2 \int^t_0 \int \frac{\chi_R}{\rho_1} |\p_y \tilde{g} |^2 dx dy d\tau + \sum_{i=1}^{8 }R_i.  \\
	\end{aligned}
	\deq
	Since  $\p_y \tilde{g}|_{y=0} =0 $, we can get that $R_1=0$ and we only need to deal with the boundary integral $R_2$.
	\beqq
	\begin{aligned}
		|R_2| &\le C \left( \int^t_0  \int^1_0 \int_{\mathbb{T}} \left|\frac{\p_y \rho_1}{\rho_1^2} \tilde{g}^2 \right| dx dy d\tau
		+\int^t_0  \int^1_0 \int_{\mathbb{T}} \left|\p_y\left(\frac{\p_y \rho_1}{\rho_1^2} \tilde{g}^2\right)\right| dx dy d\tau \right) \\
		& \le  \frac{1}{4} \int^t_0 \int \frac{\chi_R}{\rho_1} |\p_y\tilde{g}|^2 dx dy d\tau
		+ C_{\kappa_1} \left( \|\frac{\p_y \rho_1}{\rho_1^2}\|_{L^\infty} + \|\p_y(\frac{\p_y \rho_1}{\rho_1^2})\|_{L^\infty}
		+ \| \frac{\p_y \rho_1}{\rho_1^{\frac{3}{2}}} \|^2_{L^\infty} \right)
		\int^t_0 \int \tilde{g}^2 dx dy d\tau\\
		&\le \frac{1}{4} \int^t_0 \int \frac{\chi_R}{\rho_1} |\p_y\tilde{g}|^2 dx dy d\tau
		+ C_{ \kappa_1}(1+\e)\int^t_0 \|\tilde{g}\|_{L^2}^2 d\tau,\\
	\end{aligned}
	\deqq
	where we have used the simple trace estimate:
	\beqq
	\int_{\mathbb{T}} |f| \; dx \Big|_{y=0} \le C \left(\int^1_0 \int_{\mathbb{T}} |f| dx dy
	+ \int^1_0 \int_{\mathbb{T}} |\p_y f|  dx dy\right)	.	
	\deqq
	It's easy to check that
	\beqq
	\begin{aligned}
		&\; \|\xw
		\Big(\tilde{u} \p_x a_2 + \tilde{v} \p_y a_2
		+ 2 a_2 \p_y a_2
		-\frac{2 \p_y^2 w_2}{w_2} \cdot \frac{\p_y \rho_2}{\rho_2}
		-a_2 \frac{\p_y^2 \rho_2}{\rho_2^2}
		+ 2 a_2 \frac{(\p_y \rho_2)^2}{\rho_2^3}\\
		&\; + a_2^2 \frac{\p_y \rho_2}{\rho_2^2}
		+\frac{\rho_2 -\rho_1}{\rho_1 \rho_2}(\frac{ \p_y^3 w_2}{w_2}-a_2 \frac{ \p_y^2 w_2}{w_2})
		\Big) \|_{L^\infty}\\
		\le &\; \|\p_x a_2 \xw  \|_{L^\infty} \|\tilde{u}\|_{L^\infty}
		+\|\tilde{v}\xw^{-1}\|_{L^\infty} \|\p_y^2 a_2 \xw^2  \|_{L^\infty}
		+\|a_2\|_{L^\infty} \|\p_y a_2 \xw\|_{L^\infty} \\
		&\;+ C_{\bde,\kappa_1} \Big( \|\p_y^2 w_2 \xw^{\sg+2}  \|_{L^\infty} \|\p_y \rho_2 \xw  \|_{L^\infty}
		+\| a_2 \xw\|_{L^\infty}(\|\p_y^2 \rho_2 \|_{L^\infty}+ \|\p_y \rho_2 \|_{L^\infty}^2)\\
		&\;+\| a_2 \xw\|_{L^\infty}^2 \|\p_y \rho_2 \|_{L^\infty}
		+(\| \rho_1 \|_{L^\infty}+\| \rho_2 \|_{L^\infty})
		(\|\p_y^3 w_2 \xw^{\sg+1} \|_{L^\infty}+\|a_2 \xw \|_{L^\infty}\|\p_y^2 w_2 \|_{L^\infty}) \Big)\\
		\le &\; C_{\bde,\kappa_1}(1+\e^2).
	\end{aligned}
	\deqq
	Therefore, it's easy to check that
	\beqq
	|R_5| \le C_{\bde,\kappa_1}(1+\e^2) \int^t_0 \|\tilde{g}\|_{L^2}
	\ \left\| \tilde{u} \xw^{-1}\right\|_{L^2}
	d\tau,
	\deqq
	and
	\beqq
	\begin{aligned}
		|R_3|&\le  C_{\bde,\kappa_1} (1+\e) \int^t_0 \|\tilde{g}\|_{L^2}  \|\tilde{w}\|_{L^2} d\tau;\\
		|R_8| &\le  C_{\kappa_1} (1+\e) \int^t_0 \|\tilde{g}\|_{L^2}^2 d\tau.
	\end{aligned}
	\deqq
	Next, we deal with the term $R_4$ and decompose this term as follows:
	\beqq
	R_4 \overset{def}{=} R_{4,1}+R_{4,2},
	\deqq
	where
	\beqq
	\begin{aligned}
		R_{4,1}&\overset{def}{=} 2 \int^t_0 \int \chi_R \tilde{g} \left( \frac{\tilde{\rho}}{\rho_1 \rho_2} \p_y w_2 a_2 \right) dx dy d\tau; \\
		R_{4,2}&\overset{def}{=} 2 \int^t_0 \int \chi_R \tilde{g} \left(  \p_y w_1 \p_y(\frac{1}{\rho_1})- \p_y w_2 \p_y(\frac{1}{\rho_2}) + (\frac{1}{\rho_1}-\frac{1}{\rho_2})\p_y^2 w_2 \right) dx dy d\tau.
	\end{aligned}
	\deqq
	It's easy to check that
	\beqq
	|R_{4,1}| \le C_{\bde,\kappa_1} \int^t_0 \|\tilde{g}\|_{L^2}  \|\tilde{\rho}\|_{L^2} d\tau.
	\deqq
	Integrating by parts and the relation $\p_y \tilde{w}= \p_y \tilde{g} + \p_y (a_2 \tilde{u})=\p_y \tilde{g} + \p_y a_2 \tilde{u}+ a_2 \tilde{w}$, we can obtain that
	\beqq
	\begin{aligned}
		|R_{4,2}|
		= &\; 2\left| \int^t_0 \int \chi_R \tilde{g} \left(
		\p_y\tilde{w} \p_y(\frac{1}{\rho_1})+ \p_y w_2 \p_y(\frac{1}{\rho_1}-\frac{1}{\rho_2}  ) +(\frac{1}{\rho_1}-\frac{1}{\rho_2})\p_y^2 w_2\right) dx dy d\tau \right|\\
		=&\;2 \left|\int^t_0 \int \left(
		\chi_R \tilde{g} \p_y\tilde{w} \p_y(\frac{1}{\rho_1})
		- \p_y w_2 (\frac{1}{\rho_1}-\frac{1}{\rho_2}) \chi_R \p_y \tilde{g}
		- \p_y w_2 (\frac{1}{\rho_1}-\frac{1}{\rho_2}) \chi'_R  \tilde{g} \right)dx dy d\tau  \right|\\
		=&\;2\left| \int^t_0 \int \left(
		\chi_R \tilde{g}  \p_y(\frac{1}{\rho_1})
		(\p_y \tilde{g} + \p_y a_2 \tilde{u}+ a_2 \tilde{w})
		- \p_y w_2 \frac{\tilde{\rho}}{\rho_2\rho_1} \chi_R  \p_y \tilde{g}
		- \p_y w_2 \frac{\tilde{\rho}}{\rho_2\rho_1} \chi'_R  \tilde{g} \right) dx dy d\tau  \right|\\
		\le&\;C_{\kappa_1} \|\p_y \rho_1 \|_{L^\infty} \|\tilde{g} \|_{L^2}
		( \| \sqrt{\chi_R} \frac{\p_y \tilde{g}}{\sqrt{\rho_1}} \|_{L^2}
		+ \|\p_y a_2 \xw \|_{L^\infty} \| \tilde{u} \xw^{-1} \|_{L^2}
		+ \|a_2 \|_{L^\infty} \| \tilde{w} \|_{L^2})\\
		&\;+ C_{\kappa_1} \|\p_y w_2 \|_{L^\infty} \|\tilde{\rho} \|_{L^2} (\|\sqrt{\chi_R} \frac{\p_y \tilde{g}}{\sqrt{\rho_1}} \|_{L^2} +\|\tilde{g} \|_{L^2} )\\
		\le &\; \frac{1}{4} \int^t_0 \int \frac{\chi_R}{\rho_1} |\p_y\tilde{g}|^2 dx dy d\tau
		+ C_{ \bde, \kappa_1}(1+\e)\int^t_0 \|(\tilde{g}, \tilde{\rho},\tilde{w}, \tilde{u} \xw^{-1}) \|_{L^2}^2 d\tau.
	\end{aligned}	
	\deqq
	Combining the estimate of $R_{4,1}$ and $R_{4,2}$, it holds
	\beqq
	|R_4| \le  \frac{1}{4} \int^t_0 \int \frac{\chi_R}{\rho_1} |\p_y\tilde{g}|^2 dx dy d\tau
	+ C_{ \bde, \kappa_1}(1+\e)\int^t_0 \|(\tilde{g}, \tilde{\rho},\tilde{w}, \tilde{u} \xw^{-1})\|_{L^2}^2 d\tau.
	\deqq
	Then combining the estimates from  $R_{1}$ to $R_{8}$, one has
	\beq\label{all-tilde-g}
	\begin{aligned}
		& \int \chi_R \tilde{g}^2 (t) \, dx dy - \int \chi_R \tilde{g}^2 |_{t=0}\,
		dx dy \\
		\le &- \frac{5}{4} \int^t_0 \int \frac{\chi_R}{\rho_1} |\p_y \tilde{g} |^2 dx dy d\tau
		+ C_{ \bde, \kappa_1}(1+\e^2)\int^t_0 \|(\tilde{g}, \tilde{\rho},\tilde{w}, \tilde{u}\xw^{-1}) \|_{L^2}^2 d\tau
		+R_6 +R_7.
	\end{aligned}
	\deq

	Next, we deal with the estimate of $\tilde{f}$. Multiplying equation $\eqref{e:tilde}_2$ by $2 \chi_R
	\tilde{f}$, and then integrating over $[0,t] \times \mathbb{T}\times \mathbb{R}^+$, we obtain,
	\beqq
	\begin{aligned}
		&  \int \chi_R \tilde{f}^2 (t) \, dx dy - \int \chi_R \tilde{f}^2 |_{t=0}\,
		dx dy \notag\\
		= &  -2 \int^t_0 \int (\frac{\p_y  \tilde{w}}{\rho_1}-\frac{\tilde{\rho}}{\rho_1 \rho_2}\p_y w_2 )  \chi_R \tilde{f} \, dx dy d\tau
		- 2 \int^t_0 \int
		(\tilde{u} \p_x b_2 + \tilde{v} \p_y b_2 - \frac{ \p_y \rho_2}{\rho_2} \cdot \frac{\p_y^2 w_2}{w_2} )\xw  \cdot	\tilde{f}  \tilde{u} \xw^{-1} dx dy d\tau\\
		&+\int^t_0 \int \chi'_R v_1 \tilde{f}^2 \, dx dy d\tau \\
		\overset{def}{=} & R_9+R_{10}+R_{11}.\\
	\end{aligned}
	\deqq
	Similarly, we can get the following estimate
	\beq \label{all-tilde-f}
	\begin{aligned}
		&\;  \int \chi_R \tilde{f}^2 (t) \, dx dy- \int \chi_R \tilde{f}^2 |_{t=0}\,
		dx dy \\
		\le &\; \frac{1}{4} \int^t_0 \int \frac{\chi_R}{\rho_1} |\p_y \tilde{g} |^2 dx dy d\tau
		+ C_{ \bde, \kappa_1}(1+\e)\int^t_0 \|(\tilde{f}, \tilde{\rho},\tilde{w}, \tilde{u}\xw^{-1} )\|_{L^2}^2 d\tau
		+R_{11}.
	\end{aligned}
	\deq
	We emphasize that $\tilde{w}, \tilde{u}\xw^{-1}$ can be controlled by $\tilde{g}$, and $\tilde{\rho}$ can be controlled by $\tilde{g}$ and $\tilde{f}$, that is
	\beqq
	\|( \tilde{w},\tilde{u}\xw^{-1} )\|_{L^2} \le C_{\bde} \| \tilde{g} \|_{L^2},
	\deqq
	and
	\beqq
	\| \tilde{\rho}\|_{L^2} \le C_{\bde} \| (\tilde{g}, \tilde{f}) \|_{L^2}.
	\deqq
	Combining the estimates of \eqref{all-tilde-g} with \eqref{all-tilde-f},
	we can obtain that
	\beq \label{all-tilde-R}
	\begin{aligned}
		&  \int \chi_R (\tilde{g}^2+\tilde{f}^2) (t,x,y) \, dx dy - \int \chi_R (\tilde{g}^2 + \tilde{f}^2 )|_{t=0}\,
		dx dy +  \int^t_0 \int \frac{\chi_R}{\rho_1} |\p_y \tilde{g}|^2 dx dy d\tau \\
		\le &  C_{ \bde, \kappa_1}(1+\e^2)\int^t_0 \|(\tilde{f}, \tilde{g})\|_{L^2}^2 d\tau + R_6+R_7+R_{11}.
	\end{aligned}
	\deq
	Finally both integrands of $R_6$, $R_7$ can be controlled by a multiple of $\tilde{g}^2$ and $R_{11}$ can be controlled by a multiple of $\tilde{f}^2$, which belongs to
	$L^1 ([0,T]; \mathbb{T} \times \mathbb{R}^+)$, so applying Lebesgue's dominated convergence theorem, we
	have
	\begin{equation} \label{e:limit Ri}
		\lim_{R \to + \infty} R_i = 0,  \qquad \text{ for } i=6,7,11.
	\end{equation}
	Using monotone convergence theorem and \eqref{e:limit Ri}, we can pass to the limit $R \to + \infty$ in
	\eqref{all-tilde-R} to obtain:
	\beq \label{all-tilde}
	\begin{aligned}
		&   \|(\tilde{f}(t,x,y), \tilde{g}(t,x,y))\|_{L^2}^2
		+  \int^t_0   \|\frac{1}{\sqrt{\rho_1}} \p_y \tilde{g}\|_{L^2}^2 d\tau\\
		\le & \;\|(\tilde{f}(0,x,y), \tilde{g}(0,x,y))\|_{L^2}^2 + C_{ \bde, \kappa_1}(1+\e^2)\int^t_0 \|(\tilde{f}, \tilde{g})\|_{L^2}^2 d\tau.
	\end{aligned}
	\deq
	Applying Gronwall's lemma to \eqref{all-tilde}, we obtain
	\beqq
	\|(\tilde{f}(t,x,y), \tilde{g}(t,x,y))\|_{L^2}^2
	\le \|(\tilde{f}(0,x,y), \tilde{g}(0,x,y))\|_{L^2}^2 e^{C_{ \bde, \kappa_1}(1+\e^2)t},
	\deqq
	which implies $\tilde{f}(t)= \tilde{g}(t) \equiv 0$
	since $\tilde{f}(0)= \tilde{g}(0) =0$. Since $\tilde{g}=w_2 \p_2 \left( \frac{u_1-u_2}{w_2} \right)  \equiv 0$,
	we have
	\begin{equation}  \label{e:u1-u2}
		u_1 - u_2 = q w_2,
	\end{equation}
	for some function $q \overset{def}{=} q(t,x)$. Using the assumption $w_2 > 0$
	and Dirichlet boundary condition $(u_1, v_1)|_{y=0} \equiv 0$,
	we know via \eqref{e:u1-u2} that $q \equiv 0$, and hence, $u_1(t,x,y) \equiv u_2(t,x,y)$.
	Since $v_1, v_2$ can be uniquely determined by $u_1$ and $u_2$ respectively, we also have $v_1(t,x,y) \equiv
	v_2(t,x,y)$.
	From $\tilde{f}(t,x,y)\equiv 0$ and $\tilde{u}(t,x,y)\equiv 0$, we can obtain $\tilde{\rho}(t,x,y)\equiv 0$, that is  $\rho_1(t,x,y) \equiv \rho_2(t,x,y)$. Therefore we complete the uniqueness of the Prandtl system \eqref{Prandtl}.

	\section{A priori estimate for the steady Prandtl equations}\label{sec:priori-steady}
	In this section, we will establish a priori estimate for the inhomogeneous steady Prandtl equations \eqref{s-Prandtl}, which
	will play an important role in the local-in-$x$ existence result
	in Section \ref{sec:well-posedness-steady}.
    Under the assumption of \eqref{inital-pyu}, one can give
    the equivalent relation of $u$ near the boundary.
    Indeed, due to the conditions $u|_{y=0}=0, \p_y u(x,0) \ge \lambda>0,
		u>0$ and $\underset{y\rightarrow +\infty}{\lim}u=u_\infty$,
		then there exist two small positive constants $\delta_0$ and $\xi_0$, for any $0 < \sde \le \delta_0$, one can obtain that
		\beq\label{equ-u}
		u(x, y) \ge \frac{ \lambda}{2} y, \quad
		\forall (x, y)\in [0, L]\times [0, \sde],
		\deq
		and
		\beq\label{lower-u}
		u(x, y) \ge \xi_0, \quad \forall (x, y)
		\in [0, L]\times [\f{\sde}{2}, +\infty).
		\deq
		It should be pointed out that $\xi_0$ may depend on $\sde$.
	Now, we state a priori estimate for the system \eqref{s-Prandtl} as follows.
	\begin{prop}\label{prop: prior estimate}
		Assume $m \ge 3, \ssg \ge 2$ and
		 the initial data satisfies $\mathcal{X}(0)<+\infty$.
		For any positive constants $\lambda$, $\eta$ and $L$, suppose
		\beq \label{inital-pyu}
		\p_y u(x, 0) \ge \lambda  , \quad \forall x \in [0, L],
		\deq
		and
		\beq \label{inital-u,rho}
		u(x, y)>0, \quad \rho(x, y) \geq \eta,
		\quad \forall (x, y)\in [0, L] \times (0, +\infty),
		\deq
		hold.
		Then, the smooth solution $(\rho, u ,v)$ of system \eqref{s-Prandtl}
		will satisfy
		\beq\label{steady-priori}
		\f{d}{dx} \se+ \sy+C\left(|\p^{\al}\p_y (\f{v}{u})|^2  \p_y u\right)_{y=0}   \le C_{\lambda,\xi_0,\eta,\ssg} (1+ P(\se))+C_{\lambda,\xi_0,\eta,\ssg} \sde^{\f12} (1+ P(\se)) \sy,
		\deq
		where $P(\se)$ is the polynomial of
		$\se$  satisfying $P(0)=0$.
	\end{prop}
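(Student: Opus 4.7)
The plan is to differentiate each summand of $\se$ with respect to $x$ and substitute using the reformulated system \eqref{e:rewrite}, exploiting the time-like role of $x$. First, for the density piece $\sum_{|\al|\le m}\|\p^\al\br\|_{L_y^2}^2$, I would apply $\p^\al$ to $\eqref{e:rewrite}_1$, multiply by $\p^\al\br$ and integrate in $y$. The principal transport term, after one integration by parts in $y$, produces $\tfrac12\int\p_y q\,(\p^\al\br)^2\,dy$; the boundary contribution at $y=0$ vanishes because $q|_{y=0}=v/u|_{y=0}=0$ (noting $v(x,0)=0$, $\p_y v|_{y=0}=-\p_x u|_{y=0}=0$, and $\p_y u(x,0)\ge\lambda>0$). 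The commutator $[\p^\al,q]\p_y\rho$ is handled by the Morse-type product inequality, with Hardy applied to the factor $q=v/u$. For the velocity components $\|\bu\|_{L_y^2}^2$ and $\|\p_y\bu\|_{L_y^2}^2$, I would use $\eqref{e:rewrite}_3$ as an evolution for $u$ in $x$ and integrate by parts in $y$; the resulting boundary terms vanish because $u|_{y=0}=0$ and $\p_y(uq)|_{y=0}=0$.

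The central and most delicate step is the quotient estimate $\sum_{|\al|\le m}\|\sqrt{\rho}\,u\,\p^\al\p_y(\f{v}{u})\,\xw^{\ssg}\|_{L_y^2}^2$, for which I would apply $\p^\al$ to $\eqref{e:rewrite}_2$ and multiply by $\p^\al\p_y q\cdot\xw^{2\ssg}$ before integrating in $y$. The left-hand side yields $\tfrac12\tfrac{d}{dx}\|\sqrt{\rho}\,u\,\p^\al\p_y q\,\xw^{\ssg}\|_{L_y^2}^2+\tfrac12\int\p_x(\rho u^2)(\p^\al\p_y q)^2\xw^{2\ssg}dy$, with the latter rewritten via the equations as a $P(\se)$-term. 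After expanding $\p_y^3(uq)=u\p_y^3 q+3\p_y u\,\p_y^2 q+3\p_y^2 u\,\p_y q+\p_y^3 u\cdot q$ and commuting $\p^\al$ through, the principal-order piece $\int u\,\p_y^3\p^\al q\cdot\p^\al\p_y q\cdot\xw^{2\ssg}\,dy$ integrates by parts twice (all $y=0$ boundary contributions vanish since $u|_{y=0}=0$) to yield $-\|\sqrt{u}\,\p_y^2\p^\al q\,\xw^{\ssg}\|_{L_y^2}^2=-\sy$. The next piece $3\int\p_y u\,\p_y^2\p^\al q\cdot\p^\al\p_y q\cdot\xw^{2\ssg}\,dy=\tfrac32\int\p_y u\,\xw^{2\ssg}\,\p_y(|\p^\al\p_y q|^2)\,dy$ integrates by parts to produce precisely the boundary contribution $-\tfrac32\p_y u(x,0)|\p^\al\p_y q(x,0)|^2$, which, transferred to the left-hand side, is exactly the stated positive term $C(|\p^\al\p_y(\f{v}{u})|^2\,\p_y u)_{y=0}$ (positive since $\p_y u(x,0)\ge\lambda>0$).

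The main obstacle is controlling the remaining commutators and subprincipal terms, each of the form $\int (\text{products of derivatives of }u,\rho)\cdot(\p_y^k\p^\al q)\cdot(\p^\al\p_y q)\,\xw^{2\ssg}\,dy$ with $k\in\{0,1,2\}$, uniformly through the degeneracy $u(x,0)=0$. My strategy is to split each $y$-integral as $\int_0^{\sde}+\int_{\sde}^{+\infty}$: in the interior $[\sde,+\infty)$, \eqref{lower-u} gives $u\ge\xi_0>0$ so $u^{-1/2}\le\xi_0^{-1/2}$ is bounded, the Morse-type inequality \eqref{morse} distributes high-order derivatives, and Sobolev embedding (using $m\ge 3$) places the lower-order factor in $L^\infty$, producing the $P(\se)$-type bounds. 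In the near-boundary strip $[0,\sde]$, I would use \eqref{equ-u}, i.e.\ $u\ge\tfrac{\lambda}{2}y$, together with Hardy to trade powers of $q=v/u$ for $\p_y q$ and then factorize each dangerous integral as $\int_0^{\sde}|\cdot|\,dy\le\sde^{1/2}\bigl(\int_0^{\sde}|\cdot|^2\,dy\bigr)^{1/2}$, with the remaining $L^2$-factor bounded via $\|\sqrt{u}\,\p_y^2\p^\al q\,\xw^{\ssg}\|_{L_y^2}\le\sqrt{\sy}$. This extracts the small factor $\sde^{1/2}$ multiplying $\sy$, giving the $\sde^{1/2}(1+P(\se))\sy$ term on the right-hand side of \eqref{steady-priori} that will be absorbed into the left in the later well-posedness argument by choosing $\sde$ sufficiently small.
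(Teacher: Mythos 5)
Your proposal is essentially the paper's own proof: you identify the same three energy pieces (density via $\eqref{e:rewrite}_1$, horizontal velocity via the divergence-free condition, and the quotient via $\eqref{e:rewrite}_2$ weighted by $\xw^{2\ssg}$), extract the dissipation $\sy$ from integrating the principal term $\int u\,\p_y^3\p^\al q\cdot\p^\al\p_y q\,\xw^{2\ssg}dy$ by parts twice, and obtain the signed boundary term from the $\p_y u\cdot\p_y^2\p^\al q\cdot\p^\al\p_y q$ piece exactly as the paper does. The one place your write-up is looser than the paper: the paper does not literally apply Cauchy-Schwarz $\int_0^\sde|\cdot|\,dy\le\sde^{1/2}\|\cdot\|_{L^2(0,\sde)}$; instead it proves the weighted Hardy-type inequality (Lemma 5.2, estimate $\eqref{e:v/u}_2$) showing $\|\p^\al\p_y(v/u)\xw^{\ssg}\|_{L_y^2}^2\le C_{\xi_0,\eta}\se+\sde\,C_\lambda\sy$ by bounding $y^2\chi^2\le\sde\cdot(2/\lambda)u\chi^2$ in the near-boundary strip, and the $\sde^{1/2}$ in the final inequality arises when this bound enters subprincipal estimates through a square root. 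Your version would need the same lemma-level machinery to make the ``$\sde^{1/2}$'' rigorous, but the mechanism (cut off near the boundary, trade $u^{-1}$ for $y^{-1}$ via $u\gtrsim\lambda y$, and trade $q$ for $\p_y q$ via Hardy) is the same.
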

	
	\subsection{A priori estimate}
	In this subsection, we will establish some estimates for us to give the proof of Proposition \ref{prop: prior estimate}.
	First of all, we establish  estimate for the good unknown quantity $\p_y (\f{v}{u})$ and tangential velocity $u$, which will be used frequently.
	\begin{lemm}\label{Lemma:help}
		Under the condition of \eqref{inital-pyu} and \eqref{inital-u,rho}, for any smooth  solution $(\rho,u,v)$ of system
		\eqref{s-Prandtl}, it holds
		\beq \label{e:v/u}
		\begin{aligned}
			&\|\p^\alpha\p_y (\f{v}{u}) \xw^{\ssg}\|_{L_y^2}^2
			\le C_{\lambda, \xi_0, \eta}\se, & &|\al| \le m-1;\\
			&\|\p^\alpha\p_y (\f{v}{u}) \xw^{\ssg}\|_{L_y^2}^2
			\le  C_{\xi_0,\eta} \se+\sde C_{\lambda} \sy, & &|\al| = m ;\\
		\end{aligned}	
		\deq
		and
		\beq \label{e:u}
		\begin{aligned}	
			& \| u \|_{L_y^{\infty}}^2 \le C (1+\se);\\
			& \| \p_y^3 u \xw^{\ssg} \|_{L_y^{\infty}}^2 \le C_{\lambda, \xi_0, \eta}(1+\se^6);\\
			&\| \f{\p_x u}{u} \|_{L_y^{\infty}}^2  \le C_{\lambda,\xi_0,\eta}( 1+\se^7);\\
			&\|\p_y u\|_{H_{y,0}^{m}}^2 \le  C_{\lambda, \xi_0,\eta} (1+P(\se));\\	
			&\|\p_x u\|_{H_{y,0}^{m}}^2 \le   C_{\lambda, \xi_0,\eta} (1+P(\se))+ C_{\lambda, \xi_0,\eta} \sde (1+P(\se)) \sy;\\
			&\|\p_y^2 u\|_{H_{y,\ssg}^{m}}^2 \le C_{\lambda, \xi_0,\eta} (1+P(\se)).\\
		\end{aligned}	
		\deq
	\end{lemm}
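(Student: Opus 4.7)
The plan is to control $\p^\alpha \p_y(v/u)$ by exploiting the weight $\sqrt{\rho}\,u$ in $\se$ together with the pointwise bounds \eqref{equ-u} and \eqref{lower-u} on $u$, splitting $\int_0^\infty = \int_0^{\sde} + \int_{\sde}^\infty$. On the outer region $y \ge \sde$, the direct estimates $u \ge \xi_0$ and $\rho \ge \eta$ give
\[
\int_{\sde}^\infty |\p^\alpha \p_y(v/u)|^2 \xw^{2\ssg}\, dy \le (\eta\xi_0^2)^{-1}\, \se
\]
for every $|\alpha| \le m$. On the inner region $[0, \sde]$, where $u \ge \tfrac{\lambda}{2} y$, the key observation is that $H_\alpha := u\,\p^\alpha \p_y(v/u)$ vanishes at $y = 0$ since $u(x,0) = 0$, so the one-dimensional Hardy inequality combined with $u^2 \ge \lambda^2 y^2/4$ yields
\[
\int_0^{\sde} |\p^\alpha \p_y(v/u)|^2\, dy \le \frac{16}{\lambda^2}\int_0^{\sde} (\p_y H_\alpha)^2\, dy + \frac{8\,H_\alpha(\sde)^2}{\lambda^2 \sde}.
\]
Expanding $\p_y H_\alpha = w\,\p^\alpha \p_y(v/u) + u\,\p^\alpha \p_y^2(v/u)$, the second piece $\int_0^{\sde} u^2 |\p^\alpha \p_y^2(v/u)|^2\,dy$ is bounded by $\se$ when $|\alpha| \le m-1$ (because $|\alpha+e_2| \le m$), and when $|\alpha| = m$ by $C\sde\,\sy$: using the Taylor bound $u \le Cy \le C\sde$ near $y = 0$ gives $u^2 \le C\sde\, u$, whence $\int_0^{\sde} u^2 |\p^\alpha \p_y^2(v/u)|^2\,dy \le C\sde \int_0^{\sde} u |\p^\alpha \p_y^2(v/u)|^2\,dy \le C\sde\,\sy$. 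The trace at $y = \sde$ is controlled by the outer estimate above.

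For the bounds in \eqref{e:u}, the $L^\infty$ control on $u$ follows from the embedding $H^1_y \hookrightarrow L^\infty_y$ applied to $\bar u = u - u_\infty$, since $\|\bar u\|_{L^2_y}^2$ and $\|\p_y \bar u\|_{L^2_y}^2$ are both contained in $\se$. The bound on $\p_y^3 u\,\xw^\ssg$ is obtained by differentiating the Prandtl equation $\p_y^2 u = \rho u\,\p_x u + \rho v\,\p_y u$ once in $y$ to express $\p_y^3 u$ as a product of lower-order factors, then applying Sobolev embedding and the Morse inequality. For $\|\p_x u / u\|_{L^\infty_y}$, differentiating $u(x, 0) = 0$ in $x$ yields $\p_x u(x, 0) = 0$, so Taylor's theorem gives $|\p_x u| \le Cy$ on $[0, \sde]$, which combined with $u \ge \tfrac{\lambda}{2} y$ produces boundedness on the inner region; on $[\sde/2, \infty)$ the bound $u \ge \xi_0$ is used directly. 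The Sobolev bounds $\|\p_y u\|_{H^m_{y,0}}$ and $\|\p_y^2 u\|_{H^m_{y,\ssg}}$ are obtained by iteratively solving the Prandtl equation for $\p_y^k u$ in terms of lower-order products. Finally, $\|\p_x u\|_{H^m_{y,0}}$ is controlled via the divergence-free identity $\p_x u = -\p_y v = -w\,q - u\,\p_y q$ with $q = v/u$: after applying $\p^\alpha$, invoking Part~1 on $\p^\alpha \p_y q$ produces the claimed $\sde\,\sy$ contribution exactly when $|\alpha| = m$.

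The main obstacle is the Hardy step on the inner region, specifically absorbing the self-referential term $\int_0^{\sde} w^2 |\p^\alpha \p_y(v/u)|^2\,dy$ produced when expanding $(\p_y H_\alpha)^2$. My plan to handle it is to use the identity
\[
u\,\p^\alpha \p_y(v/u) = -\p^\alpha \p_x u - \p^\alpha(w q) - \sum_{\beta < \alpha} C_\alpha^\beta\, \p^{\alpha-\beta} u\,\p^\beta \p_y(v/u),
\]
obtained by multiplying $\p_y(v/u) = -\p_x u/u - w q/u$ by $u$ and applying $\p^\alpha$. This rewrites $H_\alpha$ in terms of quantities controlled by Sobolev bounds on $u$ and by lower-order $\p^\beta \p_y(v/u)$ with $|\beta| < |\alpha|$, which permits a proof by induction on $|\alpha|$; combined with the freedom to choose $\sde \le \delta_0$ small enough to absorb any remaining self-referential contribution, the estimates in \eqref{e:v/u} and \eqref{e:u} close.
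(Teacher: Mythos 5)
Your overall strategy---splitting into an inner region $y\lesssim\sde$ and an outer region $y\gtrsim\sde/2$ via the pointwise lower bounds \eqref{equ-u}, \eqref{lower-u} on $u$, and trading a $y^2$ weight for $\sde\,u$ when $|\alpha|=m$---matches the paper's, and most of your treatment of \eqref{e:u} is in the same spirit. But your inner-region argument for \eqref{e:v/u} has a genuine gap. Applying Hardy's inequality to $H_\alpha=u\,\p^\alpha\p_y(\tfrac{v}{u})$ forces you to control $\p_y H_\alpha=w\,\p^\alpha\p_y(\tfrac{v}{u})+u\,\p^\alpha\p_y^2(\tfrac{v}{u})$. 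The piece $w\,\p^\alpha\p_y(\tfrac{v}{u})$ is the quantity you are trying to estimate, multiplied by $w=\p_y u$, and $w$ is bounded \emph{below} by $\lambda$ near $y=0$ by hypothesis \eqref{inital-pyu}. Tracking constants, Hardy plus $u\ge\tfrac{\lambda}{2}y$ gives $\int_0^{\sde}|\p^\alpha\p_y(\tfrac{v}{u})|^2\,dy \le \tfrac{16}{\lambda^2}\int_0^{\sde}(\p_y H_\alpha)^2\,dy+\cdots$, and the self-referential contribution carries a coefficient of order $16 w^2/\lambda^2\ge 16$ near the boundary. This cannot be absorbed; shrinking $\sde$ does not help because $w$ stays of order $\lambda$ throughout $[0,\sde]$. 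Your proposed repair via the divergence-free identity $u\,\p^\alpha\p_y(\tfrac{v}{u})=-\p^\alpha\p_x u-\p^\alpha(wq)-\sum_{\beta<\alpha}\cdots$ introduces a circularity: $\p^\alpha\p_x u$ at $|\alpha|=m$ involves $m+1$ derivatives of $u$ and is exactly the quantity $\eqref{e:u}_5$ controls, but the paper derives $\eqref{e:u}_5$ \emph{from} \eqref{e:v/u}; and the term $w\,\p^\alpha q$ inside $\p^\alpha(wq)$, after Hardy on $q$, again returns $\p^\alpha\p_y(\tfrac{v}{u})$ with a non-small factor $w$. Induction on $|\alpha|$ does not break this loop because the problematic top-order term does not drop in order.

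The paper avoids the issue by choosing the vanishing weight $y$ rather than $u$: it integrates by parts as $\int|\p^\alpha\p_y(\tfrac{v}{u})|^2\chi^2\,dy=-\int y\,\p_y\{|\p^\alpha\p_y(\tfrac{v}{u})|^2\chi^2\}\,dy$, producing $\int 2y\,\p^\alpha\p_y(\tfrac{v}{u})\,\p^\alpha\p_y^2(\tfrac{v}{u})\chi^2\,dy+\int 2y|\p^\alpha\p_y(\tfrac{v}{u})|^2\chi\chi'\,dy$. After Cauchy--Schwarz the self-referential piece appears with coefficient $\tfrac12$ (absorbable), and the remaining term carries the weight $y$---not $w$---so it is $\|y\,\p^\alpha\p_y^2(\tfrac{v}{u})\chi\|_{L^2_y}^2$, the desired higher-derivative quantity. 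The lower bound $u\ge\tfrac{\lambda}{2}y$ then converts $y^2\le C_\lambda u^2$ (giving $\se$ for $|\alpha|\le m-1$) or $y^2\le\sde\,y\le C_\lambda\sde\,u$ (giving $\sde\,\sy$ for $|\alpha|=m$); the $\chi'$ piece lives on $[\sde/2,\sde]$ where \eqref{lower-u} applies. Replacing your Hardy step by this direct integration by parts repairs the argument without any circular dependence on \eqref{e:u}.
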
	
	
	\begin{proof}
		\textbf{Step 1: proof of estimate \eqref{e:v/u}.}
		Let us define $\chi(y) \in C^\infty_c	([0, +\infty))$ which satisfies the following properties:
		\beqq
		0\leq \chi \leq 1, \quad \chi|_{[0,\f{\sde}{2}]} \equiv 1, \quad \text{supp} \chi \subset [0,\sde], \quad -2 \leq \chi' \leq 0.
		\deqq
		For all $|\al| \leq m $, we decompose
		the term $\j |\p^\alpha\p_y (\f{v}{u})|^2 \xw^{2\ssg} dy$
		as follows
		\beqq
		\begin{aligned}
			\j |\p^\alpha\p_y (\f{v}{u})|^2 \xw^{2\ssg} dy
			=&\j |\p^\alpha\p_y (\f{v}{u})|^2 \xw^{2\ssg}(\chi+1-\chi)^2 dy\\
			\le &\j |\p^\alpha\p_y (\f{v}{u})|^2 \xw^{2\ssg}(1-\chi)^2 dy
			+ \j |\p^\alpha\p_y (\f{v}{u})|^2 \xw^{2\ssg}\chi^2 dy
			\\
			\overset{def}{=} & K_{1,1}+K_{1,2}.
		\end{aligned}
		\deqq
		Due to the lower bound of $u$ away from the boundary  \eqref{lower-u}, we have
		\beqq
		|K_{1,1}| \le  C_{\xi_0} \j  {u^2} |\p^\alpha\p_y (\f{v}{u})|^2 \xw^{2\ssg}dy.
		\deqq
		Integrating by part, it holds
		\beqq
		\begin{aligned}
			|K_{1,2}|&\le  \left| \j |\p^\alpha\p_y (\f{v}{u})|^2 \chi^2 dy \right|
			=  \left| \left(y |\p^\alpha\p_y (\f{v}{u} )|^2 \chi^2\right)^{+\infty}_0 -\j y \p_y \left\{|\p^\alpha\p_y (\f{v}{u} )|^2 \chi^2\right\}dy \right|\\
			&\le \left| \j 2 y \p^\alpha\p_y (\f{v}{u}) \p^\alpha\p_y^2 (\f{v}{u}) \chi^2
			+ 2 y |\p^\alpha\p_y (\f{v}{u} )|^2 \chi \chi'
			dy \right|\\
			&\le C \|\p^\alpha\p_y (\f{v}{u}) \chi\|_{L_y^2}
			( \| y \p^\alpha\p_y^2 (\f{v}{u}) \chi\|_{L_y^2}
			+ \| y \p^\alpha\p_y (\f{v}{u}) \chi'\|_{L_y^2}
			)\\
			&\le   \f12 \|\p^\alpha\p_y (\f{v}{u}) \chi\|_{L_y^2} ^2
			+ C \| y \p^\alpha\p_y^2 (\f{v}{u}) \chi\|_{L_y^2} ^2
			+ C \| y \p^\alpha\p_y (\f{v}{u}) \chi'\|_{L_y^2}^2.
		\end{aligned}
		\deqq
		Combining the estimate of $K_{1,1}$ and $K_{1,2}$,  we can get
		\beqq
		\begin{aligned}
			&\j |\p^\alpha\p_y (\f{v}{u})|^2 \xw^{2\ssg} dy\\
			\le &\;\f12 \j |\p^\alpha\p_y (\f{v}{u})|^2 \chi^2 dy
			+ C \j y^2 |\p^\alpha\p_y^2 (\f{v}{u})|^2 \chi^2 dy \\
			&\; + C \j y^2| \p^\alpha\p_y (\f{v}{u})|^2 \chi'^2 dy
			+C_{\xi_0} \j  {u^2} |\p^\alpha\p_y (\f{v}{u})|^2 \xw^{2\ssg} dy.
		\end{aligned}
		\deqq
		Using the lower bound of $u$  near the boundary \eqref{equ-u}, we have for $|\al| \le m-1$,
		\beqq
		\begin{aligned}
			\j |\p^\alpha\p_y (\f{v}{u})|^2 \xw^{2\ssg} dy
			\le   C_\lambda \j u^2 |\p^\alpha\p_y^2 (\f{v}{u})|^2 \chi^2 dy
			+ C_{\xi_0} \j u^2 |\p^\alpha\p_y (\f{v}{u})|^2 \xw^{2\ssg} dy
			\le C_{\lambda, \xi_0,\eta}\se,
		\end{aligned}
		\deqq
		and for $|\al| = m$,
		\beqq
		\begin{aligned}
			\j |\p^\alpha\p_y (\f{v}{u})|^2 \xw^{2\ssg} dy
			&\le
			\sde C_\lambda \j u |\p^\alpha\p_y^2 (\f{v}{u})|^2 \chi^2 dy
			+C_{\xi_0} \j u^2 |\p^\alpha\p_y (\f{v}{u})|^2 \xw^{2\ssg} dy\\
			&\le  \sde C_\lambda \sy+C_{\xi_0,\eta} \se,
		\end{aligned}
		\deqq
		which implies the estimate $\eqref{e:v/u}_1$ and $\eqref{e:v/u}_2$.
		
		
		\textbf{Step 2: proof of estimates $\eqref{e:u}_1$-$\eqref{e:u}_3$. } Due to the condition $\underset{y\rightarrow +\infty}{\lim}\bar{u}=0$,
		it holds
		\beqq
		\| u\|_{L_y^{\infty}}^2 \le  C(1+ \|  \bar{u} \|_{L_y^{\infty}}^2 )\le C (1+ \|  (\bar{u}, \p_y \bar{u}) \|_{L_y^2}^2) \le C (1+ \se),
		\deqq
		which implies the estimate $\eqref{e:u}_1$.
		 Using the equation $\eqref{s-Prandtl}_2$, we have
		\beqq
		\begin{aligned}
			\| \p_y^2 u \xw^2  \|_{L_y^2}^2 &= \| \rho u^2 \p_{y}(\f{v}{u})  \xw^2  \|_{L_y^2}^2
			\le C  (1+ \|\rho\|_{L_y^{\infty}}^2)\| u\|_{L_y^{\infty}}^2  \| \sqrt{\rho} u \p_{y}(\f{v}{u}) \xw^2 \|_{L_y^2}^2 \le C (1+\se^3)
			,
		\end{aligned}
		\deqq
	   which yields that
		\beqq
		\begin{aligned}
			\| \p_y^3 u \xw^{\ssg}  \|_{L_y^{\infty}}^2 =& \| \p_{y}\{ \rho u^2 \p_{y}(\f{v}{u}) \} \xw^{\ssg}  \|_{L_y^{\infty}}^2 \\
			\le & \| \p_{y}\rho \|_{L_y^{\infty}}^2 \| u \|_{L_y^{\infty}}^4
			\| \p_{y}(\f{v}{u}) \xw^{\ssg}   \|_{L_y^{\infty}}^2
			+ \| \rho \|_{L_y^{\infty}}^2 \| u \|_{L_y^{\infty}}^4
			\| \p_{y}^2(\f{v}{u}) \xw^{\ssg}   \|_{L_y^{\infty}}^2
			\\   	
			& + \| \rho \|_{L_y^{\infty}}^2 \| u \|_{L_y^{\infty}}^2
			\| \p_{y} u   \|_{L_y^{\infty}}^2
			\| \p_{y}(\f{v}{u}) \xw^{\ssg}   \|_{L_y^{\infty}}^2\\
			\le &  C (\| \p_{y}\rho \|_{H_y^1}^2
			\| u \|_{L_y^{\infty}}^4
			\| \p_{y}(\f{v}{u}) \xw^{\ssg}   \|_{H_y^1}^2
			+ \| \rho \|_{H_y^1}^2 \| u \|_{L_y^{\infty}}^4
			\| \p_{y}^2(\f{v}{u}) \xw^{\ssg}   \|_{H_y^1}^2
			\\   	
			& + \| \rho \|_{H_y^1}^2 \| u \|_{L_y^{\infty}}^2
			\| \p_{y}^2 u \xw   \|_{L_y^2}^2
			\| \p_{y}(\f{v}{u}) \xw^{\ssg}   \|_{H_y^1}^2 )	\\
			\le & C_{\lambda, \xi_0, \eta}(1+\se^6).
		\end{aligned}
		\deqq
		Therefore we establish the estimate $\eqref{e:u}_2$.
		
		Finally, we estimate the term $\f{\p_x u}{u}$.
		\beq\label{p_x u}
		\begin{aligned}
			\|\f{\p_x u}{u}\|_{L_y^{\infty}}^2&=\| \f{\p_x u}{u} (\chi+ 1-\chi)\|_{L_y^{\infty}}^2
			\le \| \f{\p_x u}{u} \chi\|_{L_y^{\infty}}^2 + \|\f{\p_x u}{u} (1-\chi)\|_{L_y^{\infty}}^2\\
			&\le C_{\lambda,\xi_0} (\| \f{\p_x u}{y}\|_{L_y^{\infty}}^2 + \|\p_x u\|_{L_y^{\infty}}^2).
		\end{aligned}
		\deq
		We first deal with $\|\p_x u\|_{L_y^{\infty}}$.
		\beq \label{p_x u1}
		\begin{aligned}
			\|\p_x u\|_{L_y^{\infty}}& =  \|\p_y v\|_{L_y^{\infty}}
			=\|\p_y (\f{v}{u} u)\|_{L_y^{\infty}}\\
			&\le \|u\|_{L_y^{\infty}} \|\p_y (\f{v}{u}) \|_{L_y^{\infty}} +
			\|\p_y u\|_{L_y^{\infty}} \|\f{v}{u} \|_{L_y^{\infty}}\\
			&\le  \| \p_y^2 u \xw^2  \|_{L^2_y} \| (\p_y (\f{v}{u}),\p_y^2 (\f{v}{u})) \xw  \|_{L^2_y}.\\
		\end{aligned}
		\deq
		Then we deal with $\| \f{\p_x u}{y}\|_{L_y^{\infty}}$.
		\beq \label{p_x u2}
		\begin{aligned}
			\| \f{\p_x u}{y}\|_{L_y^{\infty}} &=\| \f{1}{y} \int_0^y \p_{y'} \p_x u dy' \|_{L_y^{\infty}} \le \|\p_y^2 v \|_{L_y^{\infty}}\\
			&\le C (\|\p_y^2 (\f{v}{u}) \|_{L_y^{\infty}}  \|u \|_{L_y^{\infty}}  + \|\p_y (\f{v}{u}) \|_{L_y^{\infty}}  \|\p_y u \|_{L_y^{\infty}} +\|\f{v}{u} \|_{L_y^{\infty}}    \|\p_y^2 u \|_{L_y^{\infty}})\\
			&\le C (\|\p_y^2 (\f{v}{u}) \|_{H_y^1}  \|u \|_{L_y^{\infty}}
			+ \|\p_y (\f{v}{u}) \|_{H_y^1}  \| \p_y^2 u \xw  \|_{L_y^2} +\|\p_y(\f{v}{u}) \xw\|_{L_y^{2}}    \|\p_y^3 u \xw \|_{L_y^{2}}).  \\
		\end{aligned}
		\deq
		Combining the estimates \eqref{p_x u1} and \eqref{p_x u2} into \eqref{p_x u} and using the estimates $\eqref{e:v/u}_1$, $\eqref{e:u}_1$, $\eqref{e:u}_2$, we can obtain that
		\beqq
		\begin{aligned}
			\|\f{\p_x u}{u}\|_{L_y^{\infty}}^2
			&\le C_{\lambda,\xi_0,\eta}( 1+\se^7),
		\end{aligned}
		\deqq
		which implies the estimate $\eqref{e:u}_3$.
		
		\textbf{Step 3: proof of estimates $\eqref{e:u}_4$-$\eqref{e:u}_6$.}
		First we estimate the term $\|\p_y^{\al_2} \p_y u \|_{L_y^2}^2, |\al_2| \le m+1$ by induction.
		If $\al_2=0$, then we have
		$$
		\|\p_y u \|_{L_y^2}^2 \le \se.
		$$
		Assume for $0 \le k \le l \le m$, it holds
		\beq\label{assum-u}
		\|\p_y^{k} \p_y u \|_{L_y^2}^2 \le C_{\lambda, \xi_0, \eta} (1+P(\se)).
		\deq
		Then
		\beq \label{e:K1,3-4}
		\begin{aligned}
			\p_y^{l+1} \p_y u  &= \p_y^l \p_y^2 u =\p_y^l \{-\rho u^2 \p_y(\f{v}{u})\}\\
			&= -\rho u^2 \p_y^l \p_y (\f{v}{u}) -\sum_{0< j \le l} C_l^j \p_y^{j}(\rho u^2) \cdot  \p_y^{l-j} \p_y(\f{v}{u}) \\
			&\overset{def}{=} K_{1,3}+K_{1,4}.
		\end{aligned}
		\deq
		Using H\"{o}lder inequality and the estimate $\eqref{e:u}_1$, it is easy to check that
		\beq\label{e:K1,3}
		\|K_{1,3}\|_{L^2_y}^2 \le C (1+\|\br\|_{H_y^1}^2) \|u\|_{L_y^{\infty}}^2 \|\sqrt{\rho} u \p_y^l \p_y (\f{v}{u})\|_{L^2_y}^2 \le C(1+\se^3).
		\deq
		Using the estimate $\eqref{e:v/u}_1$, it holds
		\beqq
		\bal
		\|K_{1,4}\|_{L^2_y}^2
		&\;\le \|\sum_{0< j \le l} C_l^j \p_y^{j-1} \p_y(\rho u^2) \cdot \p_y^{l-j} \p_y (\f{v}{u}) \|_{L^2_y}^2
		\le C \|\p_y (\rho u^2)\|_{\bar{H}_{y,0}^{l-1}}^2 \|\p_y (\f{v}{u})\|_{\bar{H}_{y,0}^{l-1}}^2\\
		&\; \le C_{\lambda, \xi_0, \eta} \se \|\p_y (\rho u^2)\|_{\bar{H}_{y,0}^{l-1}}^2.
		\dal
		\deqq
		According to the assumption \eqref{assum-u}, we have
		\beqq
		\begin{aligned}
			&\|\bu\|_{\bar{H}_{y,0}^{l}}^2 \le C (\|\bu \|_{L_y^2}^2 +\| \p_y{\bu}\|_{\bar{H}_{y,0}^{l-1}}^2) \le C_{\lambda, \xi_0, \eta} (1+P(\se));\\	
			&\|\bu^2\|_{\bar{H}_{y,0}^{l}}^2 \le  C \|\bu\|_{\bar{H}_{y,0}^{l}}^4 \le C_{\lambda, \xi_0, \eta} (1+P(\se));\\
			&	\|\br \bu\|_{\bar{H}_{y,0}^{l}}^2 \le C \|\br\|_{\bar{H}_{y,0}^{l}}^2 \|\bu\|_{\bar{H}_{y,0}^{l}}^2 \le C_{\lambda, \xi_0, \eta} (1+P(\se));\\
			&	\|\br \bu^2\|_{\bar{H}_{y,0}^{l}}^2 \le  C \|\br\|_{\bar{H}_{y,0}^{l}}^2 \|\bu^2\|_{\bar{H}_{y,0}^{l}}^2 \le C_{\lambda, \xi_0, \eta} (1+P(\se)),\\
		\end{aligned}
		\deqq
		which yields directly
		\beq\label{e:K1,4}
		\begin{aligned}
			\|K_{1,4}\|_{L^2_y}^2 &\le  C_{\lambda, \xi_0,\eta} \se \|\p_y (\rho u^2)\|_{\bar{H}_{y,0}^{l-1}}^2\\
			&\le  C_{\lambda, \xi_0,\eta} \se \|\p_y\{(\br+\varrho_{\infty})(\bu+u_{\infty})^2\}\|_{\bar{H}_{y,0}^{l-1}}^2\\
			&\le  C_{\lambda, \xi_0,\eta} \se (\|\br \bu^2\|_{\bar{H}_{y,0}^{l}}^2 +
			\|\br \bu\|_{\bar{H}_{y,0}^{l}}^2+
			\|\br \|_{\bar{H}_{y,0}^{l}}^2+
			\|\bu\|_{\bar{H}_{y,0}^{l}}^2+\|\bu^2\|_{\bar{H}_{y,0}^{l}}^2
			)\\
			&\le
			C_{\lambda, \xi_0,\eta} (1+P(\se)).
		\end{aligned}
		\deq
		Combining the estimates \eqref{e:K1,3} and \eqref{e:K1,4} into \eqref{e:K1,3-4},
		we have
		\beqq
		\begin{aligned}
			\|\p_y^{l+1} \p_y u \|_{L^2_y}^2 \le C_{\lambda, \xi_0, \eta} (1+P(\se)),
		\end{aligned}
		\deqq
		which yields that
		\beq\label{e:all-py-u}
		\|\p_y^{\al_2} \p_y u \|_{L_y^2}^2 \le C_{\lambda, \xi_0,\eta} (1+P(\se)) ,\quad |\al_2| \le m+1.
		\deq
		
		Next, we estimate the term $\|\p^{\al} \p_y v \|_{L_y^2}^2, |\al| \le m$ by induction.
		
		When $|\al|=0$, using the relation $\p_y v =\p_y(\f{v}{u} u) =u \p_y(\f{v}{u}) + \f{v}{u} \p_y u $ and  the boundary condition $\f{v}{u} |_{y=0}=0$, we have
		\beqq
		\begin{aligned}
			\|\p_y v \|_{L_y^2}^2 &\le \|u \p_y(\f{v}{u}) \|_{L_y^2}^2 + \|\f{v}{u} \|_{L_y^{\infty}}^2 \| \p_y u \|_{L_y^2}^2\\
			&\le C (\|u \p_y(\f{v}{u}) \|_{L_y^2}^2 + \|\p_y (\f{v}{u}) \xw\|_{L_y^{2}}^2 \| \p_y \bu \|_{L_y^2}^2)\\
			& \le C_{\lambda, \xi_0,\eta} (1+\se^2).
		\end{aligned}
		\deqq
		Assume that for $k \le m-1$,
		\beqq
		\|\p^{\al} \p_y v \|_{L_y^2}^2 \le
		C_{\lambda, \xi_0,\eta} (1+P(\se)) ,\quad |\al| \le k,
		\deqq
		which yields that
		\beq \label{e:m-order-u}
		\| \bu  \|_{H_{y,0}^{m}}^2
		\le C (\| \bu \|_{L_y^2}^2 +
		\| \p_y^{\al_2} u \|_{L_y^2}^2 + \| \p_y v\|_{H_{y,0}^{m-1}}^2) \le
		C_{\lambda, \xi_0,\eta} (1+P(\se)), \quad 1 \le |\al_2| \le m,
		\deq
		and for $i=1,2$,
		\beq \label{e:high-order-u}
		\begin{aligned}
			&\; \| \p^{\al}\p_y^2 u \xw^{\ssg} \|_{L_y^2}^2 = \| \p^{\al}\{\rho u^2 \p_y (\f{v}{u})\}\xw^{\ssg} \|_{L_y^2}^2\\
			\le &\; \|\rho u^2 \p^{\al} \p_y (\f{v}{u})\xw^{\ssg} \|_{L_y^2}^2
			+ \| \sum_{0<\beta\le \alpha} C_{\al}^{\beta} \p^{\beta}(\rho u^2) \p^{\al-\beta}\p_y (\f{v}{u})\xw^{\ssg}\|_{L_y^2}^2\\
			\le &\; (1+\|\br\|_{H_y^1}^2) \|u\|_{L_y^{\infty}}^2 \|\sqrt{\rho} u \p^{\al} \p_y (\f{v}{u}) \xw^{\ssg} \|_{L^2_y}^2
			+ \| \sum_{0<\beta\le \alpha} C_{\al}^{\beta} \p^{\beta-e_i} \p^{e_i}(\rho u^2) \p^{\al-\beta}\p_y (\f{v}{u})) \xw^{\ssg} \|_{L_y^2}^2\\
			\le &\; 1+ \se^3 + C  (1+\|\br\|_{H_{y,0}^{m-1}}^4 + \|\bu\|_{H_{y,0}^{m-1}}^8) \|\p_y(\f{v}{u})\|_{H_{y,\ssg}^{m-2}}^2\\
			\le &\;  C_{\lambda, \xi_0,\eta} (1+P(\se)).
		\end{aligned}
		\deq	
		We first deal with the term $\|\p^{\al+e_2} \p_y v \|_{L_y^2}^2$.
		\beqq
		\begin{aligned}
			&\;\|\p^{\al+e_2} \p_y v \|_{L_y^2}^2 =  \|\p^{\al} \p_y^2 (u \f{v}{u} ) \|_{L_y^2}^2\\
			\le  &\; \|\p^{\al}  \{\f{v}{u} \p_y^2 u \} \|_{L_y^2}^2
			+ \|\p^{\al}  \{\p_y u \p_y(\f{v}{u})\} \|_{L_y^2}^2
			+\|\p^{\al}  \{u \p_y^2(\f{v}{u})\} \|_{L_y^2}^2\\
			\overset{def}{=} &\; K_{1,5}+K_{1,6}+K_{1,7}.
		\end{aligned}
		\deqq
		Using the estimate $\eqref{e:v/u}_1$ and \eqref{e:high-order-u}, we can obtain that
		\beqq
		\begin{aligned}
			K_{1,5} =
			&\;\| \sum_{0 \le \beta\le \alpha} C_{\al}^{\beta} \p^{\beta} ( \f{v}{u}) \p^{\al-\beta}\p_y^2 u   \|_{L_y^2}^2\\
			\le &\; C
			\sum_{0 \le \beta \le \alpha}
			\|\p^{\beta} (\f{v}{u}) \|_{L^{\infty}_y}^2
			\|\p^{\al-\beta} \p_y^2 u \|_{L_y^2}^2\\
			\le &\; C
			\sum_{0 \le \beta \le \alpha}
			\|\p^{\beta} \p_y (\f{v}{u}) \xw \|_{L^{2}_y}^2
			\|\p^{\al-\beta} \p_y^2 u \|_{L_y^2}^2\\
			\le &\;  C_{\lambda, \xi_0,\eta} (1+P(\se)).
		\end{aligned}
		\deqq
		Using the estimate $\eqref{e:v/u}_1$ and \eqref{e:m-order-u}, we can obtain that
		\beqq
		\begin{aligned}
			K_{1,6} =
			&\;\| \sum_{0 \le \beta\le \alpha} C_{\al}^{\beta} \p^{\beta} \p_y( \f{v}{u}) \p^{\al-\beta}\p_y u   \|_{L_y^2}^2\\
			\le &\;  C
			\sum_{0 \le \beta \le \alpha}
			\|\p_{y} (\f{v}{u}) \|_{H_{y,0}^{m-1}}^2
			\|\p_y u \|_{H_{y,0}^{m-1}}^2\\
			\le &\;  C_{\lambda, \xi_0,\eta} (1+P(\se)).
		\end{aligned}
		\deqq
		Using the Hardy inequality, Sobolev embedding, the estimate $\eqref{e:v/u}_1$ and \eqref{e:m-order-u}, we can obtain that for $i=1,2$,
		\beqq
		\begin{aligned}
			K_{1,7} \le
			& \;\|u  \p^{\al}\p_y^2 ( \f{v}{u})   \|_{L_y^2}^2 +
			\| \sum_{0 < \beta\le \alpha} C_{\al}^{\beta} \p^{\beta-e_i} \p^{e_i} u  \p^{\al-\beta}\p_y^2 ( \f{v}{u} )  \|_{L_y^2}^2\\
			\le &\; C
			(\|u  \p^{\al}\p_y^2 ( \f{v}{u})   \|_{L_y^2}^2
			+\|\p^{e_i} u \|_{H_{y,0}^{m-2}}^2  \|\p_{y}^2 (\f{v}{u}) \|_{H_{y,0}^{m-2}}^2)\\
			\le &\; C_{\lambda, \xi_0,\eta} (1+P(\se)).
		\end{aligned}
		\deqq
		Thus, the combination of estimates of terms from $K_{1,5}$ to $K_{1,7}$ yields directly
		\beq \label{e:(al+e2)-v}
		\begin{aligned}
			&\|\p^{\al+e_2} \p_y v \|_{L_y^2}^2  \le C_{\lambda, \xi_0,\eta} (1+P(\se)).
		\end{aligned}
		\deq
		Next, we deal with the term $\|\p^{\al+e_1} \p_y v \|_{L_y^2}^2 $. Applying the divergence-free condition as well as the estimates $\eqref{e:v/u}$ and \eqref{e:(al+e2)-v}, we have
		\beqq
		\begin{aligned}
			&\;\|\p^{\al+e_1} \p_y v \|_{L_y^2}^2 \\
			\le  &\; \|\p^{\al} \{u\cdot \p_{xy}(\f{v}{u})\} \|_{L_y^2}^2 + \|\p^{\al} \{\p_y v \cdot \p_{y}(\f{v}{u})\}\|_{L_y^2}^2\\
			&\;+\|\p^{\al} \{\p_y^2 v \cdot \f{v}{u}\} \|_{L_y^2}^2
			+\|\p^{\al} \{\p_y u \cdot \p_{x}(\f{v}{u})\} \|_{L_y^2}^2\\
			\le &\; \| u\p^{\al} \p_{xy}(\f{v}{u}) \|_{L_y^2}^2 + \| \p^{e_i} u\|_{H_{y,0}^{m-1}}^2  \|\p_{y} (\f{v}{u}) \|_{H_{y,0}^{m-1}}^2
			+ \| \p_{y} v \|_{H_{y,0}^{m-1}}^2  \|\p_{y} (\f{v}{u}) \|_{H_{y,0}^{m-1}}^2\\
			&\;+ \| \p_{y}^2 v \|_{H_{y,0}^{m-1}}^2  \|\f{v}{u} \|_{H_{y,0}^{m-1}}^2
			+ \| \p_{y} u \|_{H_{y,0}^{m-1}}^2 \|\p_x(\f{v}{u}) \|_{H_{y,0}^{m-1}}^2 \\
			\le &\; C_{\lambda, \xi_0,\eta} (1+P(\se))+ C_{\lambda, \xi_0,\eta} \sde (1+P(\se)) \sy.\\
		\end{aligned}
		\deqq
		Therefore we prove that
		\beq \label{e:(al+py)-v}
		\|\p^{\al} \p_y v \|_{L_y^2}^2 \le
		\left\{\begin{aligned}
			&C_{\lambda, \xi_0,\eta} (1+P(\se)) ,\quad |\al| \le m, \al_1 \le m-1; \\
			& C_{\lambda, \xi_0,\eta} (1+P(\se))+ C_{\lambda, \xi_0,\eta} \sde (1+P(\se)) \sy, \quad |\al| = m, \al_1 = m.
		\end{aligned}  \right.
		\deq
		With \eqref{e:all-py-u} and \eqref{e:(al+py)-v} at hand, we are ready to estimate $\eqref{e:u}_4$-$\eqref{e:u}_6$.
		
		If $\al_1=0$, applying the estimate \eqref{e:all-py-u}, we have
		\beqq
		\|\p^{\al} \p_y u\|_{L_y^2}^2= \|\p_y^{\al_2} \p_y u\|_{L_y^2}^2
		\le C_{\lambda, \xi_0,\eta} (1+P(\se)),
		\deqq
		and if $\al_1 \ge 1$ and $\al_2 \le m-1$, using the divergence-free condition and the estimate \eqref{e:(al+py)-v}, it holds
		\beqq
		\| \p^{\al} \p_y u \|_{L_y^2}^2
		=\| \p_{x}^{\al_1-1} \p_{y}^{\al_2+1} \p_y v \|_{L_y^2}^2 \le C_{\lambda, \xi_0,\eta} (1+P(\se)),
		\deqq
		which yields the estimate $\eqref{e:u}_4$.
		Similar to the above estimate,
		\beqq
		\| \p^{\al} \p_x u \|_{L_y^2}^2
		=\| \p^{\al} \p_y v \|_{L_y^2}^2 \le C_{\lambda, \xi_0,\eta} (1+P(\se))+ C_{\lambda, \xi_0,\eta} \sde (1+P(\se)) \sy,
		\deqq
		which yields the estimate $\eqref{e:u}_5$.
		Similar to the estimate \eqref{e:high-order-u}, for $|\al| \le m$,
		\beqq
		\begin{aligned}
			\| \p^{\al}\p_y^2 u \xw^{\ssg} \|_{L_y^2}^2
			\le &\;  (1+\|\br\|_{H_y^1}^2) \|u\|_{L_y^{\infty}}^2 \|\sqrt{\rho} u \p^{\al} \p_y (\f{v}{u}) \xw^{\ssg}\|_{L^2_y}^2+
			C(1+\|\br\|_{H_{y,0}^{m}}^4+ \|\bu\|_{H_{y,0}^{m}}^8) \|\p_y(\f{v}{u})\|_{H_{y,\ssg}^{m-1}}^2\\
			\le &\; C_{\lambda, \xi_0,\eta} (1+P(\se)) ,
		\end{aligned}
		\deqq	
		which yields the estimate $\eqref{e:u}_6$.
		Therefore, we complete the proof of this lemma.
	\end{proof}
	
	Now, we establish the estimate for the derivatives of density.
	\begin{lemm}
		Under the condition of \eqref{inital-pyu} and \eqref{inital-u,rho}, for any smooth  solution $(\rho,u,v)$ of system
		\eqref{s-Prandtl}, it holds
		\beqq
		\begin{aligned}
			&\frac{d}{dx}\sum_{|\alpha|\le m}
			\|\p^\alpha \br \|_{L_y^2}^2
			\le C_{\lambda,\xi_0,\eta}(1 + \se^2 )+ \sde  \sy.
		\end{aligned}
		\deqq
	\end{lemm}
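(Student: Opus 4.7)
The strategy is to use the transport structure of the density equation $\eqref{s-Prandtl}_1$. Since $u>0$ on $\{y>0\}$ by \eqref{inital-u,rho}, that equation rewrites as $\p_x\br+q\,\p_y\br=0$ with $q\overset{def}{=}v/u$. For $|\alpha|\le m$, commuting $\p^\alpha$ past $\p_x$ and pairing with $\p^\alpha\br$ in $L_y^2$ gives
\beqq
\f{1}{2}\f{d}{dx}\|\p^\alpha\br\|_{L_y^2}^2
=-\j q\cdot\p^\alpha\p_y\br\cdot\p^\alpha\br\,dy
-\sum_{0<\beta\le\alpha}C_\alpha^\beta\j\p^\beta q\cdot\p^{\alpha-\beta}\p_y\br\cdot\p^\alpha\br\,dy.
\deqq
After summing over $|\alpha|\le m$ it suffices to dominate both kinds of integrals on the right by $C_{\lambda,\xi_0,\eta}(1+\se^2)+\sde\,\sy$.

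In the leading (non-commutator) term I integrate by parts in $y$. The boundary contribution at $y=0$ vanishes because $v(x,0)=0$ and, by \eqref{inital-pyu} together with $u(x,0)=0$, the quotient $q$ itself vanishes there; the contribution at $y=+\infty$ vanishes by the decay of $\br$ and its derivatives encoded in $\se$. This reduces the term to $\f{1}{2}\j\p_y q\cdot|\p^\alpha\br|^2\,dy$. Since $m\ge 3$ and $\ssg\ge 2$, Sobolev embedding in $y$ combined with estimate $\eqref{e:v/u}_1$ gives $\|\p_y q\|_{L^\infty_y}\le C_{\lambda,\xi_0,\eta}(1+\sqrt{\se})$, so this piece is bounded by $C_{\lambda,\xi_0,\eta}(1+\se^2)$.

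For the commutator sum I split according to whether the high derivatives hit $q$ or $\br$. When $|\beta|\le m-1$, estimate $\eqref{e:v/u}_1$ (together with Hardy's inequality, which applies because $q(x,0)=0$ and the same holds for every pure $x$-derivative of $q$) controls $\p^\beta q$ in a weighted $L^2_y$ norm; choosing the Sobolev splitting so that either $\p^\beta q\in L^\infty_y$ (when $|\beta|\le\lfloor m/2\rfloor$) or $\p^{\alpha-\beta}\p_y\br\in L^\infty_y$ (when $|\beta|>\lfloor m/2\rfloor$), every such term contributes at most $C_{\lambda,\xi_0,\eta}(1+\se^2)$.

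The main obstacle is the endpoint case $|\beta|=|\alpha|=m$, where only the weaker bound $\eqref{e:v/u}_2$ is available; together with Hardy it yields $\|\p^\alpha q\|_{L^2_y}\le C_{\xi_0,\eta}\sqrt{\se}+C_\lambda\sqrt{\sde}\sqrt{\sy}$. Pairing by Cauchy--Schwarz against $\|\p_y\br\|_{L^\infty_y}\|\p^\alpha\br\|_{L^2_y}$, which is $\le C(1+\se)$ since $m\ge 3$, and applying Young's inequality to the resulting product $\se\cdot\sqrt{\sde\,\sy}$ absorbs the $\sy$-term with a factor of $\sde$ and produces the split $C_{\lambda,\xi_0,\eta}(1+\se^2)+\sde\,\sy$. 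The appearance of $\sqrt{\sde}$ in $\eqref{e:v/u}_2$ is crucial: it comes from restricting to the cut-off region near $y=0$ where $u\sim\lambda y$ degenerates, and the resulting smallness in $\sde$ is exactly what will allow the $\sy$-contribution here to be absorbed by the dissipation when Proposition~\ref{prop: prior estimate} is closed.
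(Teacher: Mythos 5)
Your decomposition into a leading transport term plus a commutator sum, the integration by parts in $y$ for the leading term (with boundary contributions killed by $q(x,0)=0$ and decay of $\p^\alpha\br$), and the Young absorption of the $\sde\,\sy$ contribution at the endpoint all reproduce the paper's argument, so the overall route is the same.

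There is one slip worth flagging. At the endpoint $|\beta|=|\alpha|=m$ (and, with your Sobolev splitting, also for intermediate $|\beta|>\lfloor m/2\rfloor$) you pair by Cauchy--Schwarz as $\|\p^\beta q\|_{L_y^2}\,\|\p^{\alpha-\beta}\p_y\br\|_{L_y^\infty}\,\|\p^\alpha\br\|_{L_y^2}$ and claim that Hardy together with $\eqref{e:v/u}$ controls $\|\p^\beta q\|_{L_y^2}$. For pure $x$-derivatives ($\beta_2=0$) this does not follow: the Hardy inequality $\eqref{Hardy1}$ requires $\p^\beta q\to 0$ as $y\to+\infty$, but $v(x,y)$ has a generically nonzero limit as $y\to+\infty$, so $\p_x^{\beta_1}(v/u)$ tends to a nonzero constant and is not in $L^2_y$ at all. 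The paper sidesteps this by keeping $\p^\beta(\f{v}{u})$ in $L^\infty_y$ for every $\beta$: since every pure $x$-derivative of $v/u$ vanishes at $y=0$, the fundamental theorem of calculus together with the weight $\xw^{\ssg}$ ($\ssg\ge 2$) gives $\|\p^\beta(\f{v}{u})\|_{L_y^\infty}\le C_{\ssg}\|\p^\beta\p_y(\f{v}{u})\xw^{\ssg}\|_{L_y^2}$, which is exactly what $\eqref{e:v/u}$ controls, and then pairs this with $\|\p^{\alpha-\beta}\p_y\br\|_{L_y^2}\,\|\p^\alpha\br\|_{L_y^2}$. Swapping which factor sits in $L_y^\infty$ versus $L_y^2$ is a one-line repair; apart from that, your proof is the paper's.
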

	\begin{proof}
		For any $|\alpha|\le m$,
		applying the $\p^\al$ differential operator to the
		equation $\eqref{s-Prandtl}_1$, multiplying by $\p^{\al} \br$ and integrating over $\mathbb{R}^+$, we have
		\beqq
		\f{d}{dt} \f12 \j |\p^{\al} \br|^2  dy+ \underbrace{\j \p^{\al} (\f{v}{u} \p_y \br) \cdot \p^{\al} \br dy}_{K_2}=0.
		\deqq
		Denote $K_2$ as follows.
		\beqq
		K_2=\j \f{v}{u} \p^{\al} \p_y \br \cdot \p^{\al} \br dy
		+
		\sum_{0<\beta \le \alpha} \j \p^{\beta}(\f{v}{u}) \p^{\al-\beta} \p_y \br \cdot \p^{\al} \br dy\overset{def}{=}K_{2,1}+K_{2,2}.
		\deqq
		Integrating by part and using  the estimate $\eqref{e:v/u}_1$, we have
		\beqq
		\begin{aligned}
			|K_{2,1}|&=|\f12 \j \f{v}{u} \p_y |\p^{\al} \br|^2 dy| \\
			& =\left|\f12 \left(\f{v}{u}  |\p^{\al} \br|^2 \right)^{+\infty}_0 -\f12\j \p_y(\f{v}{u})  |\p^{\al} \br|^2  dy\right|\\
			&\le C  \|\p_y(\f{v}{u})\|_{L_y^{\infty}} \|\p^{\al} \br \|_{L^2_y}^2 \\
			&\le  C_{\lambda,\xi_0,\eta} (1+\se^2).
		\end{aligned}	
		\deqq
		Using the estimate $\eqref{e:v/u}_2$ and the H\"{o}lder inequality, the term $K_{2,2} $ can be estimated as follows
		\beqq
		|K_{2,2}| \le C \|\p^{\beta}(\f{v}{u}) \|_{L^{\infty}_y}  \|\p^{\al-\beta} \p_y \br \|_{L^2_y} \|\p^{\al} \br \|_{L^2_y}
		\le  (C_{\xi_0,\eta} \se^{\f12}+\sde^{\f12} C_{\lambda} \sy^{\f12}) \se.
		\deqq
		Combining the estimates of $K_{2,1}$ and $K_{2,2}$, we complete the proof of this lemma.
	\end{proof}
	\begin{lemm}
		Under the condition of \eqref{inital-pyu} and \eqref{inital-u,rho}, for any smooth  solution $(\rho,u,v)$ of system
		\eqref{s-Prandtl}, it holds
		\beqq
		\begin{aligned}
			&\; \frac{d}{dx}\sum_{|\alpha|\le m}
			\|\sqrt{\rho} u \p^{\al}\p_y (\f{v}{u}) \xw^{\ssg}\|_{L_y^2}^2 +
			\|\sqrt{u}\p_y^2 \p^\alpha(\f{v}{u}) \xw^{\ssg}\|_{L_y^2}^2
			+C\left(|\p^{\al}\p_y (\f{v}{u})|^2  \p_y u\right)_{y=0}\\
			\le &\;  \mu \sy + C_{\mu,\lambda,\xi_0,\eta,\ssg} (1+ P(\se))+C_{\lambda,\xi_0,\eta,\ssg} \sde^{\f12} (1+ P(\se)) \sy,
		\end{aligned}
		\deqq
		where $\mu$ is the small constant that will be chosen later.
	\end{lemm}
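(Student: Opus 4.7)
The plan is to perform a weighted energy estimate on $\p^{\alpha}\p_y q$ (with $q=v/u$) using the second equation in \eqref{e:rewrite}, namely $\p_x(\rho u^2 \p_y q)=\p_y^3(uq)$. Applying $\p^{\alpha}$, multiplying by $\p^{\alpha}\p_y q\,\xw^{2\ssg}$ and integrating over $y\in\mathbb{R}^+$, the left-hand side splits via $\p^{\alpha}\p_x(\rho u^2 \p_y q)=\p_x(\rho u^2 \p^{\alpha}\p_y q)+[\p^{\alpha},\rho u^2]\p_y q$, and the identity $\p_x(AB)\cdot B=\f12\p_x(AB^2)+\f12\p_x A\cdot B^2$ turns the principal piece into $\f{d}{dx}(\f12\|\sqrt{\rho}u\,\p^{\alpha}\p_y q\,\xw^{\ssg}\|_{L^2_y}^2)$ plus the drift $\f12\int \p_x(\rho u^2)|\p^{\alpha}\p_y q|^2\xw^{2\ssg}\,dy$ and the commutator.

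For the right-hand side I expand $\p^{\alpha}\p_y^3(uq)=\p_y^3(u\p^{\alpha}q)+\sum_{0<\beta\le\alpha}C_{\alpha}^{\beta}\p_y^3(\p^{\beta}u\cdot\p^{\alpha-\beta}q)$ and further $\p_y^3(u\p^{\alpha}q)=u\p^{\alpha}\p_y^3 q+3\p_y u\,\p^{\alpha}\p_y^2 q+3\p_y^2 u\,\p^{\alpha}\p_y q+\p_y^3 u\,\p^{\alpha}q$. Integration by parts in $y$ on the leading term $\int u\,\p^{\alpha}\p_y^3 q\cdot \p^{\alpha}\p_y q\,\xw^{2\ssg}dy$ is boundary-free at $y=0$ (because $u|_{y=0}=0$) and produces $-\|\sqrt{u}\,\p^{\alpha}\p_y^2 q\,\xw^{\ssg}\|_{L^2_y}^2$ together with a cross term $-\int \p_y u\,\p^{\alpha}\p_y^2 q\,\p^{\alpha}\p_y q\,\xw^{2\ssg}dy$ and a $\ssg$ weight correction. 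Adding this cross term to the contribution $3\int \p_y u\,\p^{\alpha}\p_y^2 q\,\p^{\alpha}\p_y q\,\xw^{2\ssg}dy$ yields $\int \p_y u\,\p_y(|\p^{\alpha}\p_y q|^2)\,\xw^{2\ssg}dy$, on which a second integration by parts generates the trace $-\p_y u(x,0)|\p^{\alpha}\p_y q(x,0)|^2$. Since $\p_y u(x,0)\ge \lambda/2>0$ by \eqref{inital-pyu}, this moves to the left-hand side with a favorable sign and becomes the stated boundary quantity.

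The remaining task is to bound four classes of error terms: the LHS commutator $[\p^{\alpha},\rho u^2]\p_y q$ and the drift $\f12 \p_x(\rho u^2)|\p^{\alpha}\p_y q|^2$; the subleading RHS pieces $3\p_y^2 u\,\p^{\alpha}\p_y q$ and $\p_y^3 u\,\p^{\alpha}q$; the weight corrections of order $\ssg\xw^{-1}$ from integrating by parts against $\xw^{2\ssg}$; and the mixed commutator $\sum_{0<\beta\le\alpha}\p_y^3(\p^{\beta}u\cdot\p^{\alpha-\beta}q)$. Each is handled via the Morse-type product rule, Hardy's inequality, and the a priori bounds in Lemma \ref{Lemma:help}, especially \eqref{e:v/u} and \eqref{e:u}. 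Terms that retain a $\sqrt{u}$ factor are absorbed into the dissipation by Young's inequality, yielding the $\mu\,\sy$ slack; the rest reduce to polynomials in $\se$.

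The main obstacle lies in the top-order case $|\alpha|=m$, where both $\p_y^3(\p^{\alpha}u\cdot q)$ in the commutator sum and the subleading $\p_y^3 u\cdot \p^{\alpha}q$ contain a derivative of $u$ or $q$ one order above what $\se$ directly controls. The strategy is to trade $\p_x u$ for $-\p_y v=-\p_y(u\cdot v/u)$ via the divergence-free condition, shifting the top derivative onto $q=v/u$, and then to invoke the near-boundary cutoff splitting $\eqref{e:v/u}_2$: using the cutoff $\chi(y/\sde)$ together with $u\gtrsim \lambda y$ on $[0,\sde]$ (from \eqref{equ-u}) and $u\gtrsim \xi_0$ on $[\sde/2,\infty)$ (from \eqref{lower-u}), the top-order $\|\p^{\alpha}\p_y q\,\xw^{\ssg}\|_{L^2_y}^2$ is majorized by $\sde\,\sy$ in the boundary layer plus a piece bounded by $\se$ in the bulk. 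This is precisely what produces the $C_{\lambda,\xi_0,\eta,\ssg}\sde^{1/2}(1+P(\se))\,\sy$ term in \eqref{steady-priori}, and $\sde$ will later be chosen small enough to absorb it into $\sy$ in the Gronwall step.
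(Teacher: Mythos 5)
Your proposal is correct and follows essentially the same route as the paper: energy-test the equation $\p_x(\rho u^2\p_y q)=\p_y^3(uq)$ with $\p^{\alpha}\p_y q\,\xw^{2\ssg}$, extract the $\frac{d}{dx}$ term from the transport part, integrate by parts in $y$ to produce the dissipation $\|\sqrt{u}\,\p^{\alpha}\p_y^2 q\,\xw^{\ssg}\|^2$ and the signed boundary trace $(\p_y u|\p^{\alpha}\p_y q|^2)_{y=0}$, and close the remaining error terms with the Morse/Hardy machinery and the cutoff-splitting estimate $\eqref{e:v/u}_2$ (plus $\eqref{e:u}_5$) that converts top-order $x$-derivatives into the $\sde^{1/2}(1+P(\se))\sy$ slack. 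One small imprecision worth flagging: your LHS decomposition should read $\p_x\p^{\alpha}(\rho u^2\p_y q)=\p_x(\rho u^2\p^{\alpha}\p_y q)+\p_x\bigl([\p^{\alpha},\rho u^2]\p_y q\bigr)$; you dropped the outer $\p_x$ on the commutator, and that $\p_x$ is precisely what creates terms with $m$-th order $x$-derivatives of $q$ and the top-order $\p^{\alpha}\p_x\rho$ piece (which the paper dispatches by substituting the density transport equation and integrating by parts). Likewise, the term $q\,\p_y^3\p^{\alpha}u$ with $\beta=0$ in the commutator sum is not just pushed around by divergence-free; the paper additionally uses $\p_y^2 u=-\rho u^2\p_y q$ to rewrite $\p_y^3\p^{\alpha}u$ before estimating. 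Neither point changes your overall strategy, but they are where the real bookkeeping lives.
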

	\begin{proof}
		Applying the $\p_x$ differential operator to the
		equation $\eqref{s-Prandtl}_2$, using the divergence-free condition, we have
		\beq \label{e: pyy-u}
		\p_x \{\rho u^2  \p_y (\f{v}{u})\} -\p_y^3 v=0.
		\deq
		For any $|\al| \le m$, applying the $\p^{\al}$ differential operator to the
		equation \eqref{e: pyy-u} by $\p^{\al} \p_y (\f{v}{u}) \xw^{2 \ssg}$ and integrating over $\mathbb{R}^+$, we have 	
		\beqq
		\begin{aligned}
			&\underbrace{\j \p_x \p^{\al}\{\rho u^2  \p_y (\f{v}{u})\} \cdot \p^{\al} \p_y (\f{v}{u}) \xw^{2 \ssg } dy}_{K_{3,1}}
			-\underbrace{\j \p_y^3 \p^{\al} v \cdot \p^{\al} \p_y (\f{v}{u}) \xw^{2 \ssg } dy}_{K_{3,2}}=0.
		\end{aligned}
		\deqq
		\textbf{Deal with the term $K_{3,1}$}.
		\beqq
		\begin{aligned}
			K_{3,1}
			=& \j \p_x (\rho  u^2) |\p^{\al} \p_y (\f{v}{u})|^2 \xw^{2 \ssg}  dy
			+ \f12 \j \rho  u^2  \p_x\{|\p^{\al} \p_y (\f{v}{u})|^2\} \xw^{2 \ssg}  dy\\
			&+\j \sum_{0 < \beta\le \alpha} C_{\al}^{\beta} \p_x \left\{\p^{\beta}( \rho  u^2)  \p^{\al-\beta}\p_y (\f{v}{u})
			\right\}\cdot \p^{\al} \p_y (\f{v}{u}) \xw^{2 \ssg }  dy\\
			=& \f12 \f{d}{dx} \j \rho  u^2  |\p^{\al} \p_y (\f{v}{u})|^2 \xw^{2 \ssg }  dy +
			\f12\j \p_x (\rho  u^2) |\p^{\al} \p_y (\f{v}{u})|^2 \xw^{2 \ssg }  dy\\
			&+\j \sum_{0 < \beta\le \alpha} C_{\al}^{\beta} \p_x \left\{\p^{\beta}( \rho  u^2)  \p^{\al-\beta}\p_y (\f{v}{u})
			\right\}\cdot \p^{\al} \p_y (\f{v}{u}) \xw^{2 \ssg }  dy\\
			\overset{def}{=}& \f12 \f{d}{dx} \j \rho  u^2  |\p^{\al} \p_y (\f{v}{u})|^2 \xw^{2 \ssg}  dy +K_{3,1,1}+K_{3,1,2}.\\
		\end{aligned}
		\deqq
		Using  the estimate $\eqref{e:u}_3$, we can obtain that
		\beq \label{e:K311}
		\begin{aligned}
			|K_{3,1,1}| &\le \left|\f12\j (\p_x \rho  u^2 + 2\rho u \p_x u ) |\p^{\al} \p_y (\f{v}{u})|^2 \xw^{2 \ssg}  dy \right|\\
			&\le C \|\p_x \rho\|_{H_{y}^1} \| u \p^{\al}\p_y (\f{v}{u}) \xw^{\ssg}\|_{L_y^2}^2 + C \|\f{\p_x u}{u}\|_{L^{\infty}_y} \|\sqrt{\rho} u \p^{\al}\p_y (\f{v}{u}) \xw^{\ssg}\|_{L_y^2}^2\\
			&\le C_{\lambda,\xi_0,\eta} (1+\se^5).
		\end{aligned}
		\deq
		It is easy to check that
		\beqq
		\begin{aligned}
			K_{3,1,2}
			= &\sum_{1 \le \beta\le \alpha} C_{\al}^{\beta} \j  \p^{\beta}( \rho  u^2)  \p^{\al-\beta}\p_{xy} (\f{v}{u})
			\cdot \p^{\al} \p_y (\f{v}{u}) \xw^{2 \ssg }  dy\\
			&+ \sum_{1 \le \beta\le \alpha-1} C_{\al}^{\beta} \j  \p^{\beta} \p_x ( \rho  u^2)  \p^{\al-\beta}\p_{y} (\f{v}{u})
			\cdot \p^{\al} \p_y (\f{v}{u}) \xw^{2 \ssg}  dy\\
			&+2  \sum_{0 \le \beta\le \alpha} C_{\al}^{\beta}  \j  \p^{\beta}   \rho \p^{\al-\beta}( u \p_x u)
			\p_{y} (\f{v}{u})
			\cdot \p^{\al} \p_y (\f{v}{u}) \xw^{2 \ssg }  dy\\
			&+\j \left(\sum_{0 \le \beta\le \alpha-1} C_{\al}^{\beta}   \p^{\beta} \p_x  \rho \p^{\al-\beta}(u^2) + u^2   \p^{\al} \p_x  \rho \right)
			\p_{y} (\f{v}{u})
			\cdot \p^{\al} \p_y (\f{v}{u}) \xw^{2 \ssg }  dy\\
			\overset{def}{=} &\sum_{1\leq i \leq 5} K_{3,1,2,i}.
		\end{aligned}
		\deqq
		Using the density equation $\eqref{s-Prandtl}_1$ and integrating by part, the last term can be estimated as follows
		\beqq
		\begin{aligned}
			K_{3,1,2,5} = & - \j u^2  \left(\f{v}{u} \p^{\al} \p_y \rho +\sum_{1 \le \beta \le \alpha} C_{\al}^{\beta} \p^{\beta}(\f{v}{u} )   \p^{\al-\beta} \p_y \rho\right)
			\p_{y} (\f{v}{u})
			\cdot \p^{\al} \p_y (\f{v}{u}) \xw^{2 \ssg}  dy\\
			= &  \j   \p^{\al} \rho \cdot \p_y\left\{ u^2 \f{v}{u}
			\p_{y} (\f{v}{u})
			\cdot \p^{\al} \p_y (\f{v}{u}) \xw^{2 \ssg } \right\} dy + \left( \p^{\al} \rho \cdot u^2 \f{v}{u}
			\p_{y} (\f{v}{u})
			\cdot \p^{\al} \p_y (\f{v}{u}) \xw^{2 \ssg } \right)^{\infty}_0\\
			&- \sum_{1 \le \beta \le \alpha} C_{\al}^{\beta} \j u^2  \p^{\beta}(\f{v}{u} )   \p^{\al-\beta} \p_y \rho
			\p_{y} (\f{v}{u})
			\cdot \p^{\al} \p_y (\f{v}{u}) \xw^{2 \ssg }  dy\\
			= &  \j  \p^{\al} \rho \cdot \p_y\left\{ u^2 \f{v}{u}
			\p_{y} (\f{v}{u})
			\cdot \p^{\al} \p_y (\f{v}{u}) \xw^{2 \ssg } \right\} dy \\
			&- \sum_{1 \le \beta \le \alpha} C_{\al}^{\beta} \j u^2  \p^{\beta}(\f{v}{u} )   \p^{\al-\beta} \p_y \rho
			\p_{y} (\f{v}{u})
			\cdot \p^{\al} \p_y (\f{v}{u}) \xw^{2 \ssg }  dy.\\
		\end{aligned}
		\deqq
		Using the Hardy inequality, Sobolev embedding and the estimates in Lemma \ref{Lemma:help}, for $i=1,2$,
		\beq\label{e:K31251}
		\begin{aligned}
			&\;  \left|\sum_{1 \le \beta \le \alpha} C_{\al}^{\beta}\j u^2   \p^{\beta}(\f{v}{u} )   \p^{\al-\beta} \p_y \rho
			\p_{y} (\f{v}{u})
			\cdot \p^{\al} \p_y (\f{v}{u}) \xw^{2 \ssg}  dy\right|\\
			\le &\; C  \| u\|_{L^{\infty}_y} \|\p_y (\f{v}{u}) \xw^{\ssg}\|_{L^{\infty}_y} \| \p^{e_i} (\f{v}{u})\|_{H_{y,0}^{m-1}} \| \p_{y} \br\|_{H_{y,0}^{m-1}}  \| u \p^{\al}\p_y (\f{v}{u}) \xw^{\ssg}\|_{L_y^2}\\
			\le &\; C \| u\|_{L^{\infty}_y} \| \p_y (\f{v}{u}) \xw^{\ssg}\|_{H_{y}^1} \| \p_{y} (\f{v}{u}) \xw \|_{H_{y,0}^{m}} \| \br\|_{H_{y,0}^{m}}  \| u \p^{\al}\p_y (\f{v}{u}) \xw^{\ssg}\|_{L_y^2}\\
			\le &\;\sde \sy
			+ C_{\lambda, \xi_0,\eta} (1+\se^4).
		\end{aligned}
		\deq
		Similarly, we can obtain
		\beqq
		\begin{aligned}
			&\; \left|\j   \p^{\al} \rho \cdot \p_y\left\{ u^2 \f{v}{u}
			\p_{y} (\f{v}{u})
			\cdot \p^{\al} \p_y (\f{v}{u}) \xw^{2 \ssg } \right\} dy\right| 	\\
			\le &\;  \| \p^{\al} \br\|_{L_{y}^2} \| u \p^{\al}\p_y (\f{v}{u}) \xw^{\ssg}\|_{L_y^2}
			\Big(\| \p_y (\f{v}{u}) \xw^{\ssg}\|_{L^{\infty}_y}^2 \|  u\|_{L^{\infty}_y} + \| \p_y (\f{v}{u}) \xw^{\ssg}\|_{L^{\infty}_y}
			\|\f{v}{u}\|_{L^{\infty}_y} \| \p_y u\|_{L^{\infty}_y}
			\\
			&\; + \|\f{v}{u}\|_{L^{\infty}_y} \| u\|_{L^{\infty}_y}
			(\| \p_y^2 (\f{v}{u}) \xw^{\ssg}\|_{L^{\infty}_y}  +\| \p_y (\f{v}{u}) \xw^{\ssg-1}\|_{L^{\infty}_y})
			\Big) \\
			&\; +\| \p^{\al} \br\|_{L_{y}^2} \| \sqrt{u} \p^{\al}\p_y^2 (\f{v}{u}) \xw^{\ssg}\|_{L_y^2}
			\| \p_y (\f{v}{u}) \xw^{\ssg}\|_{L^{\infty}_y} \|\f{v}{u}\|_{L^{\infty}_y} \|  u\|_{L^{\infty}_y}^{\f32}\\
			\le &\;  \mu \sy+ C_{\mu, \lambda, \xi_0,\eta}(1+ \se^4),
		\end{aligned}
		\deqq
		which, together with \eqref{e:K31251}, yields
		\beqq
		|K_{3,1,2,5}|\le \sde \sy + \mu \sy +  C_{\mu, \lambda, \xi_0,\eta} (1+ \se^4).
		\deqq
		Deal with the term $K_{3,1,2,1}$. For $i=1,2$,
		\beqq
		\begin{aligned}
			|K_{3,1,2,1}| &= |\sum_{1 \le \beta\le \alpha} C_{\al}^{\beta} \j  \p^{\beta-e_i} \p^{e_i}\left\{ (\br+\varrho_{\infty} ) (\bu+u_{\infty})^2 \right\}  \p^{\al-\beta}\p_{xy} (\f{v}{u})
			\cdot \p^{\al} \p_y (\f{v}{u}) \xw^{2 \ssg }  dy|\\
			&\le  \|\p^{e_i} \left( (\br+\varrho_{\infty} ) (\bu+u_{\infty})^2 \right)  \|_{H_{y,0}^{m-1}}
			\|\p_{xy} (\f{v}{u}) \|_{H_{y,\ssg}^{m-1}}
			\|  \p^{\al}\p_y (\f{v}{u}) \xw^{\ssg}\|_{L_y^2} \\
			&\le  C (1+ \| \br \|_{H_{y,0}^{m}}^2 +  \| \bu \|_{H_{y,0}^{m}}^4)
			\|\p_{y} (\f{v}{u}) \|_{H_{y,\ssg}^{m}}
			\|  \p^{\al}\p_y (\f{v}{u}) \xw^{\ssg}\|_{L_y^2} \\
			&\le C_{\lambda, \xi_0,\eta} (1+P(\se))+ C_{\lambda, \xi_0,\eta} \sde (1+P(\se)) \sy.
		\end{aligned}
		\deqq
		Similarly, we arrive at
		\beqq
		\begin{aligned}
			|K_{3,1,2,2}| &\le
			\|\p_{x} \{(\br+\varrho_\infty)(\bu+u_\infty)^2\}\|_{H_{y,0}^{m-1}}
			\|\p^{e_i}\p_y (\f{v}{u}) \|_{H_{y,\ssg}^{m-1}}
			\|\p^{\al}\p_y (\f{v}{u}) \xw^{\ssg}\|_{L_y^2}\\
			&	\le   (1+ \| \br \|_{H_{y,0}^{m}}^2 +  \| \bu \|_{H_{y,0}^{m}}^4)
			\|\p_{y} (\f{v}{u}) \|_{H_{y,\ssg}^{m}}^2\\
			&	\le C_{\lambda, \xi_0,\eta} (1+P(\se))+ C_{\lambda, \xi_0,\eta} \sde (1+P(\se)) \sy;\\
			|K_{3,1,2,3}| &\le   2  \| \p^{\beta} (\br+\varrho_\infty) \p^{\al-\beta} \{(\bu+u_\infty) \p_x \bu\} \|_{L_y^2}
			\|\p_{y} (\f{v}{u}) \xw^{\ssg}\|_{L_{y}^{\infty}} \|\p^{\al}\p_y (\f{v}{u}) \xw^{\ssg}\|_{L_y^2}
			\\
			&\le C (1+ \| \br \|_{H_{y,0}^{m}})(1+\| (\bu,\p_x \bu) \|_{H_{y,0}^{m}}^2 )	\|\p_{y} (\f{v}{u}) \xw^{\ssg}\|_{H_y^1} \|\p^{\al}\p_y (\f{v}{u}) \xw^{\ssg}\|_{L_y^2}\\
			&\le C_{\lambda, \xi_0,\eta} (1+P(\se))+ C_{\lambda, \xi_0,\eta} \sde (1+P(\se)) \sy;\\    	
		\end{aligned}
		\deqq
		and
		\beqq
		\begin{aligned}
			|K_{3,1,2,4}| &\le  \| \p_x \br \|_{H_{y,0}^{m-1}}
			\| \p^{e_i} \{( \bu+u_{\infty})^2\} \|_{H_{y,0}^{m-1}}
			\|\p_{y} (\f{v}{u}) \xw^{\ssg}\|_{H_y^1} \|\p^{\al}\p_y (\f{v}{u}) \xw^{\ssg}\|_{L_y^2}\\\
			&\le  \|  \br \|_{H_{y,0}^{m}}
			( 1+ \| \bu \|_{H_{y,0}^{m}}^2) \|\p_{y} (\f{v}{u}) \xw^{\ssg}\|_{H_y^1} \|\p^{\al}\p_y (\f{v}{u}) \xw^{\ssg}\|_{L_y^2}\\
			&\le C_{\lambda, \xi_0,\eta}(1+P(\se)) + \sde \sy.
		\end{aligned}
		\deqq
		Combining the estimates from terms $K_{3,1,2,i}(i=1,\cdots,5)$, we have
		\beqq
		|K_{3,1,2}|\le \mu \sy + C_{\mu,\lambda, \xi_0,\eta} (1+P(\se))+ C_{\lambda, \xi_0,\eta} \sde (1+P(\se)) \sy,\\
		\deqq
		which, together with \eqref{e:K311}, yields that
		\beq\label{e:K31}
		K_{3,1}\ge \f12 \f{d}{dx} \j \rho  u^2  |\p^{\al} \p_y (\f{v}{u})|^2 \xw^{2 \ssg }  dy -\mu \sy - C_{\mu,\lambda, \xi_0,\eta} (1+P(\se))- C_{\lambda, \xi_0,\eta} \sde (1+P(\se)) \sy.\\
		\deq
		
		\textbf{Deal with the term $K_{3,2}$}.
		\beqq
		\begin{aligned}
			-K_{3,2} = &- \j \p_y^3 \p^{\al} (u \cdot \f{v}{u}) \cdot \p^{\al} \p_y (\f{v}{u}) \xw^{2 \ssg } dy \\
			=&	-\j \p_y^3 \left\{ u \p^{\al} (\f{v}{u}) + \sum_{0 \le \beta\le \alpha-1} C_{\al}^{\beta}\p^{\beta}(\f{v}{u}) \p^{\al-\beta} u \right\} \cdot \p^{\al} \p_y (\f{v}{u}) \xw^{2 \ssg } dy \\
			=&	-\j \p_y \left\{u \p_y^2 \p^{\al} (\f{v}{u}) + 2 \p_y u \p_y \p^{\al} (\f{v}{u}) + \p_y^2 u  \p^{\al} (\f{v}{u})
			\right\}
			\cdot \p^{\al} \p_y (\f{v}{u}) \xw^{2 \ssg } dy\\
			&-\j \p_y^3 \left\{ \sum_{0 \le \beta\le \alpha-1} C_{\al}^{\beta}\p^{\beta}(\f{v}{u}) \p^{\al-\beta} u \right\} \cdot \p^{\al} \p_y (\f{v}{u}) \xw^{2 \ssg } dy\\
			\overset{def}{=} &\sum_{1\leq i \leq 4} K_{3,2,i}.
		\end{aligned}
		\deqq
		Integrating by part, we have
		\beqq
		\begin{aligned}
			K_{3,2,1}= &\j u \p_y^2 \p^{\al} (\f{v}{u})\cdot\p_y  \{\p^{\al} \p_y (\f{v}{u}) \xw^{2 \ssg}\} dy - \left(u \p_y^2 \p^{\al}(\f{v}{u}) \cdot\p^{\al} \p_y (\f{v}{u}) \xw^{2 \ssg } \right)_{0}^{\infty}\\
			= &\j u |\p_y^2 \p^{\al} (\f{v}{u})|^2 \xw^{2 \ssg } dy
			+ 2 \ssg  \j u \p_y^2 \p^{\al} (\f{v}{u})\cdot \p^{\al} \p_y (\f{v}{u}) \xw^{2 \ssg -1} dy.
		\end{aligned}
		\deqq
		The last term can be estimated as follows:
		\beqq
		\begin{aligned}
			&\; 2 \ssg  |\j u \p_y^2 \p^{\al} (\f{v}{u})\cdot \p^{\al} \p_y (\f{v}{u}) \xw^{2 \ssg -1} dy |\\
			\le &\; C_{\ssg} (1+\|u\|_{L^{\infty}_y})
			\|\sqrt{u} \p^{\al}\p_y^2 (\f{v}{u}) \xw^{\ssg}\|_{L_y^2}
			\|\p^{\al}\p_y (\f{v}{u}) \xw^{\ssg}\|_{L_y^2}\\
			\le &\;  \mu \sy + C_{\mu,\xi_0,\eta,\ssg} (1+ \se^2)+C_{\gamma,\ssg} \sde^{\f12}(1+\se^{\f12} ) \sy,
		\end{aligned}
		\deqq
		which yields directly that
		\beqq
		\begin{aligned}
			K_{3,2,1} \ge &\j u |\p_y^2 \p^{\al} (\f{v}{u})|^2 \xw^{2 \ssg } dy
			-\mu \sy - C_{\mu,\xi_0,\eta,\ssg} (1+ \se^2)-C_{\gamma,\ssg} \sde^{\f12}(1+ \se^{\f12}) \sy.
		\end{aligned}
		\deqq
		Integrating by part, we can obtain that
		\beqq
		\begin{aligned}
			K_{3,2,2}+K_{3,2,3}=&-\j \left( 3 \p_y^2 u \p_y \p^{\al} (\f{v}{u}) + \p_y^3 u  \p^{\al} (\f{v}{u})
			\right)
			\cdot \p^{\al} \p_y (\f{v}{u}) \xw^{2 \ssg }
			dy\\
			&-\j 2 \p_y u \p_y^2 \p^{\al} (\f{v}{u})
			\cdot \p^{\al} \p_y (\f{v}{u}) \xw^{2 \ssg }
			dy\\
			=&-\j \left( 3 \p_y^2 u \p_y \p^{\al} (\f{v}{u}) + \p_y^3 u  \p^{\al} (\f{v}{u})
			\right)
			\cdot \p^{\al} \p_y (\f{v}{u}) \xw^{2 \ssg }
			dy\\
			&+\j |\p_y \p^{\al} (\f{v}{u})|^2
			\cdot  \p_y (\p_y u \xw^{2 \ssg +2 \al_2})dy
			-\left( |\p_y \p^{\al} (\f{v}{u})|^2 \p_y u \xw^{2 \ssg}
			\right)_{0}^{\infty}\\
			= &-\j \left( 2 \p_y^2 u \p_y \p^{\al} (\f{v}{u}) + \p_y^3 u  \p^{\al} (\f{v}{u})
			\right)
			\cdot \p^{\al} \p_y (\f{v}{u}) \xw^{2 \ssg }
			dy\\
			&+2 \ssg \j |\p_y \p^{\al} (\f{v}{u})|^2
			\cdot  \p_y u \xw^{2 \ssg -1}dy
			+\left( |\p_y \p^{\al} (\f{v}{u})|^2 \p_y u
			\right)_{y=0}.\\
		\end{aligned}
		\deqq
		Using the estimates in Lemma \ref{Lemma:help} and the Hardy inequality, we have
		\beqq
		\begin{aligned}
			&\; \Bigg|-\j \left( 2 \p_y^2 u \p_y \p^{\al} (\f{v}{u}) + \p_y^3 u  \p^{\al} (\f{v}{u})
			\right)
			\cdot \p^{\al} \p_y (\f{v}{u}) \xw^{2 \ssg }
			dy
			+2 \ssg  \j |\p_y \p^{\al} (\f{v}{u})|^2
			\cdot  \p_y u \xw^{2 \ssg -1}dy \Bigg| \\
			\le  &\; C_{\ssg}
			(\|\p_y^2 u\|_{L^{\infty}_y}+\|\p_y u\|_{L^{\infty}_y})\|\p^{\al}\p_y (\f{v}{u}) \xw^{\ssg}\|_{L_y^2}^2
			+ \|\p_y^3 u \xw \|_{L^{\infty}_y}
			\|\p^{\al} (\f{v}{u}) \xw^{\ssg-1} \|_{L_y^2}
			\|\p^{\al}\p_y (\f{v}{u}) \xw^{\ssg}\|_{L_y^2}\\
			\le &\;  C_{\lambda,\xi_0,\eta,\ssg} (1+ \se^4)+C_{\gamma,\lambda, \xi_0,\eta,\ssg} \sde(1+ \se^3) \sy.
		\end{aligned}
		\deqq	
		Therefore,
		\beqq
		K_{3,2,2}+K_{3,2,3} \ge
		\left( |\p_y \p^{\al} (\f{v}{u})|^2 \p_y u
		\right)_{y=0}
		- C_{\lambda,\xi_0,\eta,\ssg} (1+ \se^4)-C_{\gamma,\lambda, \xi_0,\eta,\ssg} \sde(1+ \se^3) \sy.
		\deqq
		Now deal with the term $K_{3,2,4}$.
		\beq\label{e=K324}
		\begin{aligned}
			K_{3,2,4}=&-\sum_{0 \le \beta\le \alpha-1} C_{\al}^{\beta} \j \left(\p_y^3\p^{\beta}(\f{v}{u}) \p^{\al-\beta} u + 3 \p_y^2\p^{\beta}(\f{v}{u}) \p_y \p^{\al-\beta} u+3 \p_y\p^{\beta}(\f{v}{u}) \p_y^2 \p^{\al-\beta} u+\p^{\beta}(\f{v}{u}) \p_y^3\p^{\al-\beta} u
			\right)\\
			& \times \p^{\al} \p_y (\f{v}{u}) \xw^{2 \ssg } dy \\
			\overset{def}{=} & \sum_{0 \le i \le 4}  K_{3,2,4,i}.
		\end{aligned}
		\deq
		Using the estimates in Lemma \ref{Lemma:help} and the Hardy inequality, we have
		\beq\label{e:K3242-3}
		\begin{aligned}
			|K_{3,2,4,2}|&\;\le C \|\p_y^2(\f{v}{u}) \|_{H^{m-1}_{y,\sg}}
			\|\p_y \p^{e_i} u \|_{H^{m-1}_{y,0}} \|\p^{\al}\p_y (\f{v}{u}) \xw^{\ssg}\|_{L_y^2} \\
			&\;\le
			C_{\lambda, \xi_0,\eta} (1+P(\se))+ C_{\lambda, \xi_0,\eta} \sde (1+P(\se)) \sy;
			\\
			|K_{3,2,4,3}|& \;\le C \|\p_y(\f{v}{u}) \|_{H^{m}_{y,\sg}}
			\|\p_y^2 u \|_{H^{m}_{y,0}} \|\p^{\al}\p_y (\f{v}{u}) \xw^{\ssg}\|_{L_y^2} \\
			&\;\le C_{\lambda, \xi_0,\eta} (1+P(\se))+ C_{\lambda, \xi_0,\eta} \sde (1+P(\se)) \sy.
		\end{aligned}
		\deq
		Then we deal with the term $K_{3,2,4,1}$.
		If $|\al| \le m-1$, for $i=1,2$,
		\beqq
		|K_{3,2,4,1}|\le C \|\p_y^3(\f{v}{u}) \|_{H^{m-2}_{y,\ssg}}
		\|\p^{e_i} u \|_{H^{m-2}_{y,0}}
		\|\p^{\al}\p_y (\f{v}{u}) \xw^{\ssg}\|_{L_y^2} \le C_{\lambda, \xi_0,\eta} (1+P(\se))+ \sde \sy.
		\deqq
		If $|\al| = m$ and  $|\beta| \le m-2$, for $|\gamma|=|(\gamma_1, \gamma_2)|=2$,
		\beqq
		\begin{aligned}
			&\; \sum_{|\al| = m, |\beta| \le m-2} C_{\al}^{\beta} |\int \p_y^3\p^{\beta}(\f{v}{u}) \p^{\al-\beta} u \cdot \p^{\al} \p_y (\f{v}{u}) \xw^{2 \ssg} dy|\\
			\le &\;C\|\p_y^3 (\f{v}{u}) \|_{H_{y,\ssg}^{m-2}}
			\|\p^{\gamma} u \|_{H_{y,0}^{m-2}}
			\|\p^{\al}\p_y (\f{v}{u}) \xw^{\ssg}\|_{L_y^2} \\
			\le &\;
			C_{\lambda, \xi_0,\eta} (1+P(\se))+ C_{\lambda, \xi_0,\eta} \sde (1+P(\se)) \sy.
		\end{aligned}
		\deqq
		If $|\al| = m$ and  $\beta=\al-e_2$, integrating by part, we have
		\beqq
		\begin{aligned}
			&\;  \j\p_y \{\p^{\al}\p_y (\f{v}{u})\} \p_y u \cdot \p^{\al} \p_y (\f{v}{u}) \xw^{2 \ssg } dy\\
			=&\; \f12  \j |\p^{\al}\p_y (\f{v}{u})|^2 \p_y( \p_y u \xw^{2 \ssg }) dy-\f12 \left(|\p^{\al}\p_y (\f{v}{u})|^2  \p_y u \xw^{2 \ssg} \right)_{0}^{\infty}  \\
			\ge &\;  \f12 \left(|\p^{\al}\p_y (\f{v}{u})|^2  \p_y u \right)_{y=0} - C(\|\p_y^{2} u \|_{L^{\infty}_y} +\|\p_y u \xw^{-1} \|_{L^{\infty}_y} )\|\p^{\al}\p_y (\f{v}{u}) \xw^{\ssg}\|_{L_y^2}^2\\
			\ge &\;  \f12 \left(|\p^{\al}\p_y (\f{v}{u})|^2  \p_y u\right)_{y=0}-
			C_{\lambda, \xi_0,\eta} (1+\se^5)- C_{\lambda, \xi_0,\eta} \sde (1+\se^4) \sy.
		\end{aligned}
		\deqq
		If $|\al| = m$ and  $\beta=\al-e_1$, we have
		\beqq
		\begin{aligned}
			&| \j\p^{\beta + e_2}\p_y^2 (\f{v}{u}) \p_x u \cdot \p^{\al} \p_y (\f{v}{u}) \xw^{2 \ssg } dy|\\
			\le &\;  (1+\|u \|_{L^{\infty}_y}) \|\f{\p_x u}{u} \|_{L^{\infty}_y}
			\|\sqrt{u} \p^{\beta + e_2}\p_y^2  (\f{v}{u}) \xw^{\ssg}\|_{L_y^2}
			\|\p^{\al}\p_y (\f{v}{u}) \xw^{\ssg}\|_{L_y^2}\\
			\le &\;  \mu \sy + C_{\mu,\lambda,\xi_0,\eta} (1+\se^{9})+C_{\lambda,\xi_0,\eta} \sde^{\f12}(1+ \se^4) \sy.
		\end{aligned}
		\deqq
		Thus, combining the above estimates of $K_{3,2,4,1}$, we have
		\beq\label{e:K3241}
		\begin{aligned}
			K_{3,2,4,1}
			\ge  C\left(|\p^{\al}\p_y (\f{v}{u})|^2  \p_y u\right)_{y=0} -\mu \sy - C_{\mu,\lambda,\xi_0,\eta} (1+ P(\se))-C_{\lambda,\xi_0,\eta} \sde^{\f12}(1+ P(\se)) \sy.
		\end{aligned}
		\deq
		Finally, let us deal with the term $K_{3,2,4,4}$.
		If $|\beta| = 0$, due to the relation $\p_y^2 u =-\rho
		u^2 \p_y(\f{v}{u})$  and $\rho=\br+\varrho_{\infty}, u=\bu+u_{\infty}$, integrating by part, for $i=1,2$, we have
		\beqq
		\begin{aligned}
			&\;  \j \f{v}{u} \p_y\p^{\al} \{\rho u^2 \p_y(\f{v}{u})\}
			\cdot \p^{\al} \p_y (\f{v}{u}) \xw^{2 \ssg} dy\\
			=&\;    \j\f{v}{u} \p^{\alpha}\p_y (\rho u^2) \p_y (\f{v}{u})
			\cdot \p^{\al} \p_y (\f{v}{u}) \xw^{2 \ssg} dy\\
			&\;
			+
			\sum_{0 \le \beta < \alpha} C_{\al}^{\beta} \j\f{v}{u} \p^{\beta}\p_y (\rho u^2) \p^{\al-\beta}\p_y (\f{v}{u})
			\cdot \p^{\al} \p_y (\f{v}{u}) \xw^{2 \ssg} dy\\
			&\; +\sum_{1 \le \beta\le \alpha} C_{\al}^{\beta} \j \f{v}{u} \p^{\beta}(\rho u^2) \p^{\al-\beta}\p_y^2 (\f{v}{u})
			\cdot \p^{\al} \p_y (\f{v}{u}) \xw^{2 \ssg } dy \\
			&\; +\j \f{v}{u} \rho u^2 \p^{\al}\p_y^2 (\f{v}{u})
			\cdot \p^{\al} \p_y (\f{v}{u}) \xw^{2 \ssg } dy\\
			\overset{def}{=} &\;\sum_{i=1}^4 K_{3,2,4,4,i}.
		\end{aligned}
		\deqq
		It is easy to check that
		\beqq
		\bal
		|\sum_{i=2}^4 K_{3,2,4,4,i}| \le &\; C
		\Big(
		\|\p_y (\rho u^2)\|_{H_{y,0}^{m-1}} \|\p^{e_i} \p_y (\f{v}{u})\|_{H_{y,\ssg}^{m-1}}
		+\|\p^{e_i} (\rho u^2)\|_{H_{y,0}^{m-1}} \|\p_y^2 (\f{v}{u})\|_{H_{y,\ssg}^{m-1}}\\
		&\;  +
		(1+\|u \|_{L^{\infty}_y}^2)
		\|\rho \sqrt{u} \p^{\al}\p_y^2  (\f{v}{u}) \xw^{\ssg}\|_{L_y^2}
		\Big) \|\f{v}{u} \|_{L^{\infty}_y}\|\p^{\al}\p_y (\f{v}{u}) \xw^{\ssg}\|_{L_y^2}\\
		\le  &\;  \mu \sy + C_{\mu,\lambda,\xi_0,\eta} (1+ P(\se))+C_{\lambda,\xi_0,\eta} \sde^{\f12} (1+ P(\se)) \sy.
		\dal
		\deqq
		As for the term  $K_{3,2,4,4,1}$,
		\beqq
		\bal
		| K_{3,2,4,4,1}|
		= &\; |\j u^2 \f{v}{u} \p^{\alpha}\p_y \rho  \p_y (\f{v}{u})
		\cdot \p^{\al} \p_y (\f{v}{u}) \xw^{2 \ssg} dy\\ &\;
		+\sum_{0 \le \beta\le \alpha-1} C_{\al}^{\beta} \j \f{v}{u} \p^{\beta}\p_y \rho \p^{\alpha-\beta}(u^2) \p_y (\f{v}{u})
		\cdot \p^{\al} \p_y (\f{v}{u}) \xw^{2 \ssg} dy\\ &\;
		+\j\f{v}{u} \p^{\alpha} \{\rho \p_y(u^2)\} \p_y (\f{v}{u})
		\cdot \p^{\al} \p_y (\f{v}{u}) \xw^{2 \ssg} dy|\\
		\le &\; \mu \sy + C_{\mu,\lambda,\xi_0,\eta} (1+ P(\se))+C_{\lambda,\xi_0,\eta} \sde^{\f12} (1+ P(\se)) \sy,
		\dal
		\deqq
		where the estimate of the first term can refer to the estimate of $K_{3,1,2,5}$. Therefore, when $|\beta|=0$,
		\beqq
		\bal
		| \j \f{v}{u} \p_y^3\p^{\al} u
		\cdot \p^{\al} \p_y (\f{v}{u}) \xw^{2 \ssg} dy | \le \mu \sy + C_{\mu,\lambda,\xi_0,\eta} (1+ P(\se))+C_{\lambda,\xi_0,\eta} \sde^{\f12} (1+ P(\se)) \sy.
		\dal
		\deqq
		If $|\beta| \ge 1$, we have
		\beqq
		\begin{aligned}
			&\; | \j \p^{\beta}(\f{v}{u}) \p_y^3\p^{\al-\beta} u
			\cdot \p^{\al} \p_y (\f{v}{u}) \xw^{2 \ssg} dy|\\
			\le &\;  \|\p_y (\f{v}{u}) \|_{H_{y,\ssg}^{m}}
			\|\p_y^2 u \|_{H_{y,1}^{m}}
			\|\p^{\al}\p_y (\f{v}{u}) \xw^{\ssg}\|_{L_y^2} \\
			\le  &\;  C_{\lambda,\xi_0,\eta} (1+ P(\se))+C_{\lambda,\xi_0,\eta} \sde (1+ P(\se)) \sy.
		\end{aligned}
		\deqq
		Thus, combining the above estimates of $K_{3,2,4,4}$, we have
		\beq \label{e:K3244}
		\begin{aligned}
			|K_{3,2,4,4}|
			\le  \mu \sy + C_{\mu,\lambda,\xi_0,\eta} (1+ P(\se))+C_{\lambda,\xi_0,\eta} \sde^{\f12} (1+ P(\se)) \sy.
		\end{aligned}
		\deq
		Combining the estimates of  \eqref{e:K3242-3} \eqref{e:K3241} and \eqref{e:K3244} into \eqref{e=K324},  we can obtain
		\beqq
		\begin{aligned}
			K_{3,2,4}
			\ge  C\left(|\p^{\al}\p_y (\f{v}{u})|^2  \p_y u\right)_{y=0}- \mu \sy - C_{\mu,\lambda,\xi_0,\eta} (1+ P(\se))-C_{\lambda,\xi_0,\eta} \sde^{\f12} (1+ P(\se)) \sy,
		\end{aligned}
		\deqq
		which, together with the estimates of $K_{3,2,i}(i=1,2,3)$, yields
		\beq\label{e:K32}
		\begin{aligned}
			-K_{3,2} \ge &\;\j u |\p_y^2 \p^{\al} (\f{v}{u})|^2 \xw^{2 \ssg } dy
			+ C\left(|\p^{\al}\p_y (\f{v}{u})|^2  \p_y u\right)_{y=0}\\
			&\;-\mu \sy - C_{\mu,\lambda,\xi_0,\eta,\ssg} (1+ P(\se))-C_{\lambda,\xi_0,\eta,\ssg} \sde^{\f12} (1+ P(\se)) \sy.
		\end{aligned}
		\deq
		Combining the estimates of \eqref{e:K31} and \eqref{e:K32}, we complete the proof of this lemma.
	\end{proof}
	Finally, we will establish the estimate for the lower order derivatives of velocity.
	\begin{lemm}\label{lemma:u-pyu}
		Under the condition of \eqref{inital-pyu} and \eqref{inital-u,rho}, for any smooth  solution $(\rho,u,v)$ of system
		\eqref{s-Prandtl}, it holds
		\beqq
		\begin{aligned}
			&\frac{d}{dx}
			\| (\bar{u}, \p_y \bar{u})\|_{L_y^2}^2
			\le C_{\lambda,\xi_0,\eta} (1+ \se^2) .
		\end{aligned}
		\deqq
	\end{lemm}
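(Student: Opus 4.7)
The plan is to use the divergence-free relation $\p_x u=-\p_y v$ as an $x$-evolution for $u$, combine it with the momentum equation, and reduce every resulting term to quantities already controlled by $\se$ through Lemma~\ref{Lemma:help}. I will treat $\|\bu\|_{L^2_y}^2$ and $\|\p_y \bu\|_{L^2_y}^2$ separately.

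For $\|\bu\|_{L^2_y}^2$, multiplying $\p_x u=-\p_y v$ by $\bu$ and integrating in $y$ gives, after an integration by parts that uses $v|_{y=0}=0$ and the decay of $\bu$ at infinity,
\beqq
\tfrac12\tfrac{d}{dx}\|\bu\|_{L^2_y}^2 = \j \p_y u\cdot v\, dy.
\deqq
Writing $v=uq$ with $q\overset{def}{=}v/u$, decomposing $u\,\p_y u=\tfrac12\p_y(\bu^2)+u_\infty\p_y\bu$, and integrating by parts once more, the boundary terms vanish because $q(x,0)=0$ (which follows from $v|_{y=0}=\p_y v|_{y=0}=0$ combined with $u\sim\tfrac{\lambda}{2} y$ near the wall by \eqref{equ-u}). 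The resulting expression $-\tfrac12\j\bu^2\p_y q\,dy-u_\infty\j\bu\,\p_y q\,dy$ is bounded by $C_{\lambda,\xi_0,\eta}(1+\se^2)$ via H\"older together with $\|\bu\|_{L^2_y}^2\le\se$ and $\|\p_y q\|_{L^\infty_y}+\|\p_y q\|_{L^2_y}\le C_{\lambda,\xi_0,\eta}\se^{1/2}$ (the latter from $\eqref{e:v/u}_1$ and one-dimensional Sobolev embedding, since $m\ge 3$).

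For $\|\p_y\bu\|_{L^2_y}^2$, differentiating the divergence-free condition yields $\p_x\p_y u=-\p_y^2 v$, and an integration by parts in $y$ gives
\beqq
\tfrac12\tfrac{d}{dx}\|\p_y u\|_{L^2_y}^2=\j\p_y^2 u\cdot\p_y v\,dy,
\deqq
where the boundary contribution vanishes thanks to $\p_y v|_{y=0}=-\p_x u|_{y=0}=0$ (a consequence of $u|_{y=0}\equiv 0$). Substituting $\p_y^2 u=\rho u\p_x u+\rho v\p_y u$ from $\eqref{s-Prandtl}_2$ and $\p_x u=-\p_y v$ produces $-\j\rho u(\p_x u)^2 dy+\j\rho v\,\p_y u\,\p_y v\,dy$. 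The first integral is nonpositive, hence can be discarded. Expanding $v=uq$ and $\p_y v=u\p_y q+q\p_y u$ in the second gives $\j\rho u^2 q\,\p_y u\,\p_y q\,dy+\j\rho u q^2(\p_y u)^2 dy$, and both pieces are controlled by $C_{\lambda,\xi_0,\eta}(1+\se^2)$ via H\"older, the $L^\infty$-bounds on $u$ and $\rho$ from Lemma~\ref{Lemma:help}, and the pointwise estimate $\|q\|_{L^\infty_y}\le C_{\ssg}\|\p_y q\,\xw^{\ssg}\|_{L^2_y}$, valid since $\ssg\ge 2$ and $q(x,0)=0$.

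The main technical point is extracting the favorable dissipative sign in Step~2: after one integration by parts, $\p_y^2 u$ must be rewritten through the momentum equation so that the leading term $-\j\rho u(\p_x u)^2 dy\le 0$ appears and absorbs the top-order contribution. A parallel delicacy in Step~1 is ensuring the vanishing of the boundary term at $y=0$, which rests on the behavior $q(x,0)=0$ enforced by the Dirichlet condition on $u$. Summing the two steps yields the claimed inequality.
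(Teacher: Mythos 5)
Your Step 1 essentially reproduces the paper's treatment of $K_{4,1}$: integrate the divergence relation against $\bu$, write $v=u\cdot\tfrac{v}{u}$, and close with H\"older against the quotient norms of Lemma~\ref{Lemma:help}. The extra integration by parts on $\tfrac12\j \tfrac{v}{u}\,\p_y(\bu^2)\,dy$ is a cosmetic variant (the paper instead bounds $\j \tfrac{v}{u}\,\p_y\bu\cdot\bu\,dy$ directly by $\|\tfrac{v}{u}\|_{L^\infty_y}\|\p_y\bu\|_{L^2_y}\|\bu\|_{L^2_y}$). Your careful identification of $\tfrac{v}{u}(x,0)=0$ via $v|_{y=0}=\p_y v|_{y=0}=0$ together with $u\sim\tfrac{\lambda}{2}y$ is exactly the point that must be checked, and you got it right.

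Step 2 is a genuinely different route from the paper's. The paper never invokes the momentum equation in this lemma: it expands $\p_y^2 v=\p_y^2(u\,\tfrac{v}{u})=\tfrac{v}{u}\p_y^2 u+2\p_y(\tfrac{v}{u})\p_y u+u\p_y^2(\tfrac{v}{u})$ inside $K_{4,2}$, integrates by parts only the top-order piece $\j \tfrac{v}{u}\,\p_y^2\bu\cdot\p_y\bu\,dy$, and closes with $\|\p_y(\tfrac{v}{u})\|_{L^\infty_y}$ and $\|\p_y^2(\tfrac{v}{u})\|_{L^2_y}$. You instead move the $y$-derivative the other way to reach $\j\p_y v\,\p_y^2 u\,dy$, substitute $\p_y^2 u=\rho u\p_x u+\rho v\p_y u$ from $\eqref{s-Prandtl}_2$, and exploit $\p_y v=-\p_x u$ to produce the nonpositive term $-\j\rho u(\p_x u)^2\,dy$. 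Observing and discarding this sign is in fact \emph{essential} for your route, since $\p_x u$ is not controlled by $\se$ alone (cf.\ estimate $\eqref{e:u}_5$, which requires the dissipation $\sy$); had the sign been wrong, the argument would not close in terms of $\se$ only. The paper's route never encounters $\p_x u$, so it never needs this observation. Both arguments are valid; yours trades the simplicity of the quotient-only expansion for a structural dissipation identity.

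One small imprecision worth recording: your residual terms in Step 2, $\j\rho u^2\,\tfrac{v}{u}\,\p_y u\,\p_y(\tfrac{v}{u})\,dy$ and $\j\rho u\,(\tfrac{v}{u})^2(\p_y u)^2\,dy$, carry extra $L^\infty_y$ factors of $\rho$ and $u$ compared with the paper's decomposition. With $\|\rho\|_{L^\infty_y},\|u\|_{L^\infty_y}\le C(1+\se^{1/2})$, $\|\tfrac{v}{u}\|_{L^\infty_y},\|\p_y(\tfrac{v}{u})\|_{L^2_y}\le C\se^{1/2}$ and $\|\p_y\bu\|_{L^2_y}\le\se^{1/2}$, the second integral in particular is of order $(1+\se)\,\se^2\sim\se^3$, so the honest outcome of your argument is $C(1+\se^3)$ rather than the stated $C(1+\se^2)$. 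This is harmless downstream (Proposition~\ref{prop: prior estimate} only requires a polynomial $P(\se)$), but the exponent $2$ asserted in the lemma, and matched by the paper's shorter decomposition, is not reached by the steps you describe.
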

	\begin{proof}
		Multiplying $\eqref{s-Prandtl}_3$ by $\bar{u} $ and integrating over $\mathbb{R}^+$, we have
		\beqq
		\begin{aligned}
			&\j \p_x \bar{u} \cdot \bar{u} dy
			+\underbrace{\j \p_y v \cdot \bar{u} dy}_{K_{4,1}}=0.
		\end{aligned}
		\deqq
		Using H\"{o}lder inequality and the estimate \eqref{e:v/u}, we have
		\beqq
		\begin{aligned}
			|K_{4,1}| &= |\j \p_y (u \f{v}{u}) \cdot \bar{u} dy |\\
			&= |\f12 \j \f{v}{u} \p_y \bar{u} \cdot \bar{u} dy + \j  (u_{\infty}+\bar{u}) \p_y(\f{v}{u}) \cdot \bar{u} dy |\\
			&\le C (\| \bar{u}\|_{L_y^2}   \|\p_y  \bar{u}\|_{L_y^2} \| \p_y (\f{v}{u}) \|_{L_y^{\infty}} + \| \bar{u}\|_{L_y^2}^2 \| \p_y (\f{v}{u}) \|_{L_y^{\infty}} +\| \bar{u}\|_{L_y^2} \| \p_y (\f{v}{u}) \|_{L_y^{2}}),
		\end{aligned}
		\deqq
		which yields that
		\beq\label{e:K41}
		\begin{aligned}
			\frac{d}{dx}
			\| \bar{u}\|_{L_y^2}^2
			\le  C_{\lambda,\xi_0,\eta} (1+ \se^2).
		\end{aligned}
		\deq
		Applying the $\p_y$ differential operator to $\eqref{s-Prandtl}_3$, 	multiplying by $\p_y \bar{u} $ and integrating over $\mathbb{R}^+$, we have
		\beqq
		\begin{aligned}
			& \j \p_y \p_x \bar{u} \cdot \p_y \bar{u} dy + \underbrace{\j \p_y^2 v \cdot \p_y \bar{u} dy}_{K_{4,2}}=0.
		\end{aligned}
		\deqq
		Integrating by part, we have
		\beqq
		\begin{aligned}
			|K_{4,2}|&=|\j \f{v}{u} \p_y^2 \bar{u} \cdot \p_y \bar{u} + 2\p_y (\f{v}{u}) |\p_y \bar{u}|^2 + \bar{u}  \p_y^2(\f{v}{u})  \cdot \p_y \bar{u} dy|\\	
			&=|\f32\j \p_y (\f{v}{u}) |\p_y \bar{u}|^2 + \bar{u}  \p_y^2(\f{v}{u})  \cdot \p_y \bar{u} dy|\\
			& \le C (\| \p_y \bar{u}\|_{L_y^2}^2 \| \p_y (\f{v}{u}) \|_{L_y^{\infty}} +\| \bu \|_{L_y^{\infty}}\| \p_y \bar{u}\|_{L_y^2} \| \p_y^2 (\f{v}{u}) \|_{L_y^{2}}),
		\end{aligned}
		\deqq
		which yields that
		\beq\label{e:K42}
		\begin{aligned}
			\frac{d}{dx}
			\| \p_y \bar{u}\|_{L_y^2}^2
			\le C_{\lambda,\xi_0,\eta} (1+ \se^2).
		\end{aligned}
		\deq
		Combining the estimates of \eqref{e:K41} and \eqref{e:K42}, we complete the proof of this lemma.
	\end{proof}
	
	\subsection{Proof of Proposition \ref{prop: prior estimate}}
	Combining the estimates from Lemma \ref{Lemma:help} to Lemma \ref{lemma:u-pyu} and choosing $\mu$ small enough,  we have
	\beqq
	\f{d}{dx} \se + \sy + C \left(|\p^{\al}\p_y (\f{v}{u})|^2  \p_y u\right)_{y=0}
	\le C_{\lambda,\xi_0,\eta,\ssg} (1+ P(\se))+C_{\lambda,\xi_0,\eta,\ssg} \sde^{\f12} (1+ P(\se)) \sy.
	\deqq
	Therefore we finish the proof of Proposition \ref{prop: prior estimate}.

	\section{Existence and uniqueness of the steady Prandtl equations}\label{sec:well-posedness-steady}
	In this section, we will establish the local existence and uniqueness of solution stated in Theorem \ref{main-result-steady} for the steady Prandtl system \eqref{s-Prandtl}.
	Define $(\rho^0, u^0)\overset{def}{=}(\rho_0, u_0)$,
	and for any $\theta>0$, let us consider the following approximated system
	for all $k \ge 1$:
	\beq\label{e:approximate}
	\left\{
	\bal
	&\p_x \rho^k + q^{k-1} \p_y \rho^k =0,\\
	&\p_x\{ \rho^k (u^{k-1}+\theta)^2 \p_y q^k\} -\p_y^3 \{u^{k-1}q^k\} =0,\\
	&\p_x u^k + \p_y (u^{k-1} q^k) =0,\\
	&(q^k,u^k)|_{y=0}=(0,0),\\
	&\lim_{y \rightarrow+\infty}(\rho^k,u^k, \p_y q^k )
	=(\varrho_{\infty}, u_{\infty}, 0),\\
	&(\rho^k, u^k, q^k)|_{x=0}=(\rho_0, u_0, q_0).
	\dal\right.
	\deq
	Define $q_0(y) \overset{def}{=}-\int_{0}^{y} \f{\p_y^2 u_0}{\rho_0 u_0^2} dy'$.
    For any fixed $\theta>0$, it is easy to apply the Galerkin method
    to construct the local  solution $(\rho^k, q^k, u^k)$
    on $[0, L_{\theta, k}]$(cf.\cite{Guo-Iyer-2021}).
    This life-span $L_{\theta, k}$ may tend to zero
    as $\theta \rightarrow 0^+$ and $k \rightarrow +\infty$.
    Thus, we will establish some uniform estimate
    and uniform life-span $[0, L_a]$ for the solution $(\rho^k, q^k, u^k)$.
    Therefore, in section \ref{sec:uniform}, for the fixed constant $\theta$, we will
	establish some uniform estimates with respect to the parameter $k$ for the approximated system \eqref{e:approximate}. Then the local existence and uniqueness of original steady Prandtl system are to be investigated in sections \ref{sec:existence} and \ref{sec:unique} respectively.
	
	\subsection{Uniform estimate of approximated equations}\label{sec:uniform}
	In this subsection, for the fixed constant $\theta$, we will derive the uniform estimate with respect to $k$
	for the approximated system  \eqref{e:approximate}.
	From the equations $\eqref{e:approximate}_2$ and $\eqref{e:approximate}_3$, we have
	\beqq
	\p_x\{ \rho^k (u^{k-1}+\theta)^2 \p_y q^k + \p_y^2 u^k\} =0,
	\deqq
	which yields directly
	\beqq
	\rho^k (u^{k-1}+\theta)^2 \p_y q^k + \p_y^2 u^k = \rho_0 (2 u_0 \theta +\theta^2) \p_y q_0 \overset{def}{=} r_0.
	\deqq
	For all $k \ge 1$, define the energy norm and dissipation norm for the solution ($\rho^{k}, q^{k}, u^k$) as following:
	\beqq
	\bal
	\x^k(x) \overset{def}{=} & \sum_{|\alpha|\le m}
    \|\p^\alpha(\rho^k-\varrho_\infty) \|_{L_y^2}^2
	+\sum_{|\alpha|\le m}
     \|\sqrt{\rho^k} (u^{k-1}+\theta) \p^{\al}\p_y q^k \xw^{\ssg}\|_{L_y^2}^2
	+\|(u^k-u_\infty) \|_{L_y^2}^2+\|\p_y u^k\|_{L_y^2}^2,
	\dal
	\deqq
and
	\beqq
	\bal
	\y^k(x) \overset{def}{=}
	&\sum_{|\alpha|\le m}\|\sqrt{u^{k-1}} \p^\alpha \p_y^2 q^k \xw^{\ssg}\|_{L_y^2}^2.
	\dal
	\deqq
	Define
	$\lambda_0\overset{def}{=}\f{ u_0'(0)}{4}. $
Due to the assumption condition of initial data $u_0(y)$, there exist
two small positive constants $\delta_0$ and $\xi_0$, for any $0 < \sde \le \delta_0$, one can obtain that
\beq\label{u-i}
u_0(y)\ge  2\lambda_0 y, \quad \forall y \in [0, \sde],
\deq
and
\beq\label{u-ii}
u_0(y) \ge 2\xi_0, \quad \forall y \in [\frac{\sde}{2}, +\infty),
\deq
{where $\xi_0$ is a positive constant only depending on $\sde$.}
Now, we will state the following uniform estimate.
	\begin{prop}[Uniform estimate]
		\label{prop-suniform}
		Let $k\ge 1$ be an integer
		and $(\rho^k, q^k, u^k)$ be sufficiently smooth solution of the approximated system \eqref{e:approximate}. Suppose $(\rho_0, u_0)$ satisfy the conditions \eqref{stherom-intial-u}-\eqref{density-assumption}.
For any $L > 0$, assume
		\beq \label{prop-puyk}
		u^{k-1}(x, y)>0, \quad
        \p_y u^{k-1} (x, 0)\ge \lambda_0,
        \quad \forall (x,y) \in [0, L] \times (0,+\infty),
		\deq
hold for all $k \ge 1$.
		Then, there exists a time $L_a$ independent of $k$ and $\theta$ such that the following a priori estimate holds true for all $x \in [0, L_a]$:
		\beq \label{uniform-s}
		\sup_{0 \le x \le L_a}  \x^k(x) + \int_{0}^{L_a} \y^k (x) dx \le 2 C_{\rho_0, u_0},
		\deq
		and
		\beq\label{prop-re-urhok}
		\bal
		\p_y u^k (x,0) & \ge 2{\lambda_0}, \quad \rho^{k} (x, y)  \geq \kappa_3;\\
		u^k(x,y) &\ge \left\{
		\bal
		&\lambda_0  y, &&\quad  \forall y \in [0,\sde];\\
		&\xi_0 , &&\quad \forall y \in  [\f{\sde}{2}, +\infty),
		\dal \right.\\	
		\dal
		\deq
		for all $k \ge 1$.
	\end{prop}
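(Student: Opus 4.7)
The strategy is a uniform-in-$k$ continuation argument that mimics Proposition \ref{prop: prior estimate}, combined with induction to propagate the pointwise bounds on $u^k$, $\p_y u^k|_{y=0}$ and $\rho^k$ from step $k-1$ to step $k$. Fix $\theta>0$. For each $k\ge 1$ define
\beqq
L_*^k \overset{def}{=} \sup\Big\{L'\in(0,L]\,\Big|\, \x^k(x)\le 2C_{\rho_0,u_0},\ \p_y u^k(x,0)\ge 2\lambda_0,\ \rho^k(x,y)\ge \kappa_3,\ u^k \text{ obeys }\eqref{prop-re-urhok}_3,\ \forall x\in[0,L']\Big\}.
\deqq
The compatibility conditions at the corner $(0,0)$ together with \eqref{density-assumption} and \eqref{u-i}--\eqref{u-ii} give $L_*^k>0$. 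The task is to exhibit a universal $L_a\in(0,L]$ with $L_a\le L_*^k$ for every $k$.

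\textbf{Step 1: a priori energy estimate on $[0,L_*^k]$.} Because $\eqref{e:approximate}_1$--$\eqref{e:approximate}_3$ share the same structural cancellations as \eqref{e:rewrite} (the transport velocity in the density equation is $q^{k-1}$, the principal quadratic coefficient in the $q^k$ equation is $\rho^k(u^{k-1}+\theta)^2$, and the divergence-free relation still holds for $u^k$ with the vertical component $u^{k-1}q^k$), I will rerun the four lemmas of Section \ref{sec:priori-steady} line by line on the iterate. Lemma \ref{Lemma:help} gives, by the inductive hypothesis \eqref{prop-puyk}, the equivalences $u^{k-1}\ge \lambda_0 y$ near the wall and $u^{k-1}\ge \xi_0$ away from it, which are exactly what was used in \eqref{equ-u}--\eqref{lower-u}; the same cut-off $\chi_R$ argument and Hardy-Sobolev interpolation therefore deliver the analogue of \eqref{e:v/u}--\eqref{e:u} for $(q^k,u^k,\rho^k)$. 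The estimates on $\p^\alpha(\rho^k-\vrf)$, on $\sqrt{\rho^k}(u^{k-1}+\theta)\p^\alpha\p_y q^k$, and on $(u^k-u_\infty,\p_y u^k)$ go through verbatim, yielding
\beq\label{e:plan-abs}
\frac{d}{dx}\x^k + \y^k + C\bigl(|\p^\alpha\p_y q^k|^2\p_y u^{k-1}\bigr)_{y=0}\le C_{\lambda_0,\xi_0,\kappa_3,\ssg}(1+P(\x^k))+ C_{\lambda_0,\xi_0,\kappa_3,\ssg}\,\sde^{1/2}(1+P(\x^k))\,\y^k.
\deq
Choosing $\sde$ small so that $C_{\lambda_0,\xi_0,\kappa_3,\ssg}\sde^{1/2}(1+P(2C_{\rho_0,u_0}))\le\tfrac12$ absorbs the last term into the left side. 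A standard Gronwall-type ODE comparison for $\x^k$ then yields a time $L_1(\rho_0,u_0,\kappa_3,\lambda_0,\xi_0,\ssg,m)>0$, independent of $k$ and $\theta$, such that $\x^k(x)\le 2C_{\rho_0,u_0}$ and $\int_0^{L_1}\y^k\le 2C_{\rho_0,u_0}$ for all $x\le \min\{L_1,L_*^k\}$.

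\textbf{Step 2: propagating the pointwise bounds.} For $\rho^k$, the equation $\eqref{e:approximate}_1$ is a pure transport along the characteristics of $q^{k-1}$, so $\rho^k(x,y)=\rho_0(Y(0;x,y))$ with the flow $Y$ well defined by the inductive $H^m$-control on $q^{k-1}$; hence $\rho^k\ge \kappa_3$ holds automatically on all of $[0,L_*^k]$. For $\p_y u^k|_{y=0}$, differentiating $\eqref{e:approximate}_3$ in $y$, evaluating at $y=0$ and using $u^{k-1}(x,0)=q^k(x,0)=0$ gives $\p_x(\p_y u^k)|_{y=0}=-(\p_y u^{k-1}\p_y q^k+u^{k-1}\p_y^2 q^k)|_{y=0}=-\p_y u^{k-1}(x,0)\p_y q^k(x,0)$, and the right-hand side is controlled by $\sqrt{\x^k}\sqrt{\x^{k-1}}$ via Sobolev trace; integrating in $x$ shows
\beqq
\p_y u^k(x,0)\ge \p_y u_0(0)-C_{\rho_0,u_0}\,x = 4\lambda_0-C_{\rho_0,u_0}\,x \ge 2\lambda_0
\deqq
for $x\le L_2\overset{def}{=}2\lambda_0/C_{\rho_0,u_0}$. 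For the lower envelopes on $u^k$, Taylor expansion in $y$ about the wall combined with $u^k(x,0)=0$ and $\p_y u^k(x,0)\ge 2\lambda_0$ yields $u^k(x,y)\ge 2\lambda_0 y-\tfrac12\|\p_y^2 u^k\|_{L^\infty_y}y^2$; since $\|\p_y^2 u^k\|_{L^\infty_y}\le C\sqrt{\x^k}\le C_{\rho_0,u_0}$, the choice $\sde\le \lambda_0/C_{\rho_0,u_0}$ gives $u^k(x,y)\ge \lambda_0 y$ for $y\in[0,\sde]$. For $y\ge \sde/2$, the identity $u^k(x,y)=u_0(y)+\int_0^x\p_x u^k\,ds$ combined with $u_0(y)\ge 2\xi_0$ and $\|\p_x u^k\|_{L^\infty_y}\le C\sqrt{\x^k}$ gives $u^k\ge \xi_0$ for $x\le L_3\overset{def}{=}\xi_0/(C\sqrt{2C_{\rho_0,u_0}})$.

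\textbf{Step 3: closing the continuation and induction.} Setting $L_a\overset{def}{=}\min\{L_1,L_2,L_3,L\}$, all four defining inequalities of $L_*^k$ hold as strict inequalities on $[0,L_a]$, so by continuity the maximal existence time satisfies $L_*^k\ge L_a$. This closes the induction from $k-1$ to $k$ and proves \eqref{uniform-s}--\eqref{prop-re-urhok}. The main obstacle I anticipate is Step 1: verifying that every integration by parts, Hardy inequality and Sobolev embedding used in Section \ref{sec:priori-steady} remains valid when the convective and diffusive coefficients are evaluated at the previous iterate $u^{k-1}$ while the differentiated unknown is $u^k$ or $q^k$. The potentially dangerous mismatch terms arise from $K_{3,1,2,j}$-type commutators and from the boundary contribution $(|\p^\alpha\p_y q^k|^2\p_y u^{k-1})_{y=0}$; both are controlled by the inductive lower bound $\p_y u^{k-1}(x,0)\ge 2\lambda_0$ and the uniform $\x^{k-1}\le 2C_{\rho_0,u_0}$, so no new smallness beyond $\sde$ is required.
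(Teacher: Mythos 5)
Your proposal takes essentially the same route as the paper: a continuation argument built from a uniform energy estimate patterned on Proposition~\ref{prop: prior estimate}, propagation of $\rho^k\ge\kappa_3$ by transport along the characteristics of $q^{k-1}$, and integration of the divergence-free relation $\eqref{e:approximate}_3$ for the wall vorticity and the two-scale lower envelope of $u^k$. The only structural difference is organizational: the paper aggregates all iterates $1\le j\le K$ into $X^K,Y^K,Z^K$ and fixes one continuation time $L^*_{\theta,K}$, whereas you run an induction in $k$ — the two are equivalent once one observes (as you do) that the inductive energy bound on $(\rho^{k-1},u^{k-1},q^{k-1})$ keeps the constants in your \eqref{e:plan-abs} independent of $k$. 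Two small imprecisions worth fixing: in the wall-vorticity calculation the product rule gives $\p_x(\p_y u^k)|_{y=0}=-2\p_y u^{k-1}(x,0)\p_y q^k(x,0)$ (you dropped the factor $2$, which is harmless); and the claim $\|\p_y^2 u^k\|_{L^\infty_y}\le C\sqrt{\x^k}$ is not literally part of $\x^k$ — you should obtain it from the conservation law $\p_y^2 u^k=r_0-\rho^k(u^{k-1}+\theta)^2\p_y q^k$ combined with Sobolev embedding, which makes the bound depend also on $\x^{k-1}$ (uniform under the induction). The paper sidesteps this by using the time-integral representation $u^k(x,y)=u_0(y)-\int_0^x\int_0^y\p_\eta^2\{u^{k-1}q^k\}\,d\xi\,d\eta$, which routes the $L^\infty$-control through the product $u^{k-1}q^k$, but your Taylor-in-$y$ expansion is a fine alternative once the $L^\infty$ bound is justified.
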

	\begin{proof}
		Let $K\geq 1$ be a fixed large integer, and let us introduce the auxiliary functions $X^K(x)$ , $Y^K(x)$ and $Z^K(x)$:
		\beqq
		\bal
		X^K(x) \overset{def}{=} &\max_{1\le j \le K} \x^j(x);\\
		Y^K(x) \overset{def}{=} &\max_{1\le j \le K} \y^j(x);\\
        Z^K(x) \overset{def}{=} &\max_{1\le j \le K} \p_y u^j(x,0).
		\dal
		\deqq	
		For the parameter $R$, which will be defined later, we have
		\beq\label{l-definition}
	     L_{\theta, K}^* \overset{def}{=}
         \sup \{L\in (0,1]| X^{K}(x) \leq R,
         Z^K(x) \ge \lambda_0 , x \in (0, L]  \}.
		\deq	
        For all $1\le k \le K$, let us define the characteristic line
        \begin{equation*}
        \left\{
        \begin{aligned}
        &\frac{d}{dx}X^{k-1}(x, \alpha)=q^{k-1}(x, \alpha),\\
        &X^{k-1}(x, \alpha)|_{x=0}=\alpha.
        \end{aligned}
        \right.
        \end{equation*}
        Thus, on the characteristic line the density equation
        $\eqref{e:approximate}_1$ has the following form
        \beqq
        \frac{d}{dx}\rho^k(x, X^{k-1}(x, \alpha))=0,
        \deqq
        which, together with condition \eqref{density-assumption}, yields directly
        \beqq
        \rho^k(x, X^{k-1}(x, \alpha))=
        \rho^k(x, \alpha)|_{x=0}=\rho_0(\alpha)\ge \kappa_3.
        \deqq
        This implies directly the uniform estimate
        \beq
        \rho^{k} (x, y)  \geq \kappa_3.
        \deq
		Similar to the analysis of a priori estimate \eqref{steady-priori}
        of the steady Prandtl equations \eqref{s-Prandtl}, we can obtain for all
        $x \le L_{\theta, K}^*$,
		\beqq
		X^K(x)+ \int_{0}^{x} Y^K(\xi) d\xi \le C_{\rho_0, u_0} + C_{6} \sde^{\f12}\int_{0}^{x}(1+P(X^K)) Y^K d\xi + C_{7} \int_{0}^{x}(1+P(X^K))d\xi,
		\deqq
		where  the constants  $C_{6}$ and $C_{7}$  are only dependent of $\rho_0, u_0, \lambda_0, \xi_0, \kappa_3$ and  $\ssg$.
Similar to the control of the compatibility condition hold at the corner $(0,0)$ (see Appendix \ref{appendix-com} in detail), we can check that there exists a constant only depending on the initial data $\rho_0$, $u_0$ such that $X^k(0) \le C_{\rho_0, u_0}$.	
Choosing $R \overset{def}{=} 4C_{\rho_0, u_0}$,
$L_1 \overset{def}{=} \min\{ \f{C_{\rho_0, u_0}}{C_{7} (1+P(4C_{\rho_0, u_0})}, 1\}$ and  $\sde\overset{def}{=} \min\{  \frac{1}{ 4 C_{6}^2 (1+P(4C_{\rho_0, u_0}))^2}, \delta_0\}$,
then we have for all $x\le L_1$
\beq\label{uniform-E_2}
X^K(x)+\frac12 \int_0^{x} Y^K d\xi \le 2 C_{\rho_0, u_0} = \f{R}{2}.
\deq
Using the equation $\eqref{e:approximate}_3$ and estimate \eqref{uniform-E_2}, we have
		\beqq
		\bal
		\p_y u^k(x,0) =u_0'(0)
     -\int_{0}^{x} (\p_y^2 \{u^{k-1} q^k\})_{y=0} d\xi,
		\dal
		\deqq
and
\beqq
		\bal
		& \; \int_{0}^{x}| (\p_y^2 \{u^{k-1} q^k\})_{y=0}| d\xi \\
		\le &\; x \| \p_y u^{k-1} |_{y=0} \|_{L_x^{\infty}}  \| \p_y q^{k} |_{y=0} \|_{L_x^{\infty}}  \\
		\le &\;  C x(\| \p_y u^{k-1} \|_{L_x^{\infty}L_y^{\infty}} + \| \p_y u^{k-1} \|_{L_x^{\infty}L_y^{2}}^{\f12} \| \p_y^2 u^{k-1} \|_{L_x^{\infty}L_y^{2}}^{\f12})
		(\| \p_y q^{k} \|_{L_x^{\infty}L_y^{\infty}} + \| \p_y q^{k} \|_{L_x^{\infty}L_y^{2}}^{\f12} \| \p_y^2 q^{k} \|_{L_x^{\infty}L_y^{2}}^{\f12})
		\\
		\le &\; C_{8} C_{\rho_0, u_0} x,
		\dal
		\deqq
		where the constant $C_8$ and the following mentioned constants $C_{9}$, $C_{10}$ are only dependent of $\lambda_0, \xi_0, \kappa_3$.
 Thus, choose $L_2 \overset{def}{=}\min\{
              \frac{u_0'(0)}{2 C_{8} C_{\rho_0, u_0}}, L_1\}
              =\min\{\frac{u_0'(0)}{2 C_{8} C_{\rho_0, u_0}},
              \f{C_{\rho_0, u_0}}{C_{7} (1+P(4C_{\rho_0, u_0}))}, 1\}$,
               we have
\beq \label{lower-pyuK}
		\bal
		\p_y u^k(x,0)
       \ge u_0'(0)-\f{ u_0'(0)}{2}\ge \f{ u_0'(0)}{2}=2 \lambda_0.
		\dal
		\deq
    Due to the equation $\eqref{e:approximate}_3$, one can obtain
		\beqq
		u^k(x,y) = u_0(y)-\int_{0}^{x} \p_y \{u^{k-1} q^k\} d \xi,
		\deqq
        which, together with the boundary conditions
		of $q^{k}$ and $u^{k-1}$, yields directly
		\beq\label{u-exp}
        u^k(x,y) = u_0(y)-\int_{0}^{x} \p_y \{u^{k-1} q^k\} d\xi
		=u_0(y)- \int_{0}^{x} \int_{0}^{y} \p_{\eta}^2 \{u^{k-1} q^k\} d \xi d \eta.
		\deq
Thus, the combination of \eqref{u-exp} and \eqref{u-i} yields
		\beqq
		\bal
		u^k(x,y)  &\ge u_0(y) -\int_{0}^{x} \int_{0}^{y} |\p_y^2 \{u^{k-1} q^k\}| dx \ge \frac{u_0'(0)}{2} y - xy \|\p_y^2 \{u^{k-1} q^k\}\|_{L^\infty}\\
		&\ge \frac{u_0'(0)}{2} y- C_{9} C_{\rho_0, u_0} xy,
		\dal
		\deqq
for all $y\in [0,\sde]$.
		Choose  $L_{3}\overset{def}{=}
\min\{\f{u_0'(0)}{4 C_{9}  C_{\rho_0, u_0}}, L_{2}\}
=\min\{\f{u_0'(0)}{4 C_{9}  C_{\rho_0, u_0}},
\frac{u_0'(0)}{2 C_{8} C_{\rho_0, u_0}},
              \f{C_{\rho_0, u_0}}{C_{7} (1+P(4C_{\rho_0, u_0}))}, 1\}$,
then we have
		\beq\label{u-decompose-1}
		u^k(x,y)  \ge \frac{u_0'(0)}{2} y - \frac{u_0'(0)}{4} y
=\frac{u_0'(0)}{4} y
= {\lambda_0} y, \quad  \forall (x,y) \in [0,L_3] \times [0,\sde].
		\deq	
		Similarly, choose $L_4 \overset{def}{=}\min\{L_{3}, \f{\xi_0}{ C_{10}  C_{\rho_0, u_0}}\} $,
then the combination of \eqref{u-ii} and \eqref{u-exp} yields
		\beq\label{u-decompose}
		u^k(x,y) \ge 2\xi_0 - x \|\p_y^2 \{u^{k-1} q^k\}\|_{L_x^\infty L_y^2}
		\ge   2\xi_0 -C_{10}  C_{\rho_0, u_0} x
= {\xi_0}, \quad \forall (x,y) \in [0,L_4] \times [\f{\sde}{2}, +\infty).
		\deq
Then, the uniform estimates \eqref{uniform-E_2} and \eqref{lower-pyuK}
yield $L_4 \le L_{\theta, K}^*$.
Indeed  otherwise, our criterion about the continuation of the solution
would contradict the definition of $L_{\theta, K}^*$ in \eqref{l-definition}.
Let us define $L_a\overset{def}{=}L_4$, then we find the uniform
existence time $L_a$ (independent of $K$ and $\theta$) such that the estimates \eqref{uniform-E_2}, \eqref{lower-pyuK}, \eqref{u-decompose-1}
 and \eqref{u-decompose} hold, which implies the estimates \eqref{uniform-s} and \eqref{prop-re-urhok}. Thus we finish the proof of Proposition \ref{prop-suniform}.
	\end{proof}

	\subsection{Local existence of the original system }\label{sec:existence}
	In this subsection, we will show the solution $(\rho^k, u^k, q^k)$ to the approximate system \eqref{e:approximate} converges to a solution to the original system \eqref{s-Prandtl}
	as $k$ tends to infinity and $\theta$ tends to zero.
	To prove this, we define
	$\hr^{k+1}\overset{def}{=}\rho^{k+1}-\rho^{k}$,
	$\hu^{k+1}\overset{def}{=}u^{k+1}-u^{k}$ and $\hq^{k+1}\overset{def}{=}q^{k+1}-q^{k}$.
	Then, the quantity $(\hr^{k+1}, \hu^{k+1},\hq^{k+1})$ satisfies the following system:
	\beq\label{e:linear}
	\left\{
	\bal
	&\p_x \hr^{k+1}+q^k \p_y \hr^{k+1}+\hq^k \p_y \rho^{k}=0,\\
	&\p_x \{\rho^{k+1}(u^{k}+\theta)^2 \p_y{\hq^{k+1}}\} +
	\p_x \{\hr^{k+1}(u^{k}+\theta)^2 \p_y{q^{k}}\}\\
	&\quad +\p_x \{\rho^{k} \hu^k (u^{k}+u^{k-1}+2\theta) \p_y{q^{k}}\}
	-\p_y^3 (u^k \hq^{k+1})-\p_y^3 (\hu^k q^{k})=0,\\
	&\p_x \hu^{k+1}=-(\p_y u^{k-1} \hq^{k+1} +  u^{k-1} \p_y \hq^{k+1} + \p_y \hu^{k} q^{k+1} + \hu^{k-1} \p_y q^{k+1}),\\
	&(\hq^{k+1},\hu^{k+1})|_{y=0}=(0,0),\\
	&\lim_{y \rightarrow+\infty}(\hr^{k+1},\hu^{k+1},\p_y \hq^{k+1})
	=(0, 0, 0),\\
	&(\hr^{k+1}, \hu^{k+1}, \hq^{k+1})|_{x=0}=(0, 0, 0),
	\dal\right.
	\deq
	and
	\beq\label{e:linear-py2}
	\p_y^2{\hu^{k+1}}=\hr^{k+1}(u^{k}+\theta)^2 \p_y{q^{k+1}} + \rho^{k} \hu^k (u^{k}+u^{k-1}+2\theta) \p_y{q^{k+1}} + \rho^{k}(u^{k-1}+\theta)^2 \p_y{\hq^{k+1}}.
	\deq
	\textbf{Step 1:} Under the uniform estimate \eqref{uniform-s} and the lower bound \eqref{prop-re-urhok},
	multiplying the equation $\eqref{e:linear}_2$ by $\p_y
	\hq^{k+1} \xw^{2\ssg} $ and integrating over $\mathbb{R^+}$, we have
	\beqq
	\bal
	&\f12 \f{d}{dx} \j \rho^{k+1} (u^k+\theta)^2 |\p_y \hq^{k+1}|^2 \xw^{2\ssg}dy
	+ \underbrace{\f12  \j \p_x\{\rho^{k+1} (u^k+\theta)^2 \}|\p_y \hq^{k+1}|^2 \xw^{2\ssg} dy}_{K_{5,1}}\\
	&+\underbrace{ \j \p_x\{\hr^{k+1} (u^k+\theta)^2 \p_y q^k\}\p_y \hq^{k+1} \xw^{2\ssg} dy}_{K_{5,2}}
	+\underbrace{\j \p_x \{\rho^{k} \hu^k (u^{k}+u^{k-1}+2\theta) \p_y{q^{k}}\} \p_y  \hq^{k+1} \xw^{2\ssg} dy }_{K_{5,3}}\\
	&-\underbrace{\j \p_y^3 (u^k \hq^{k+1}) \p_y \hq^{k+1} \xw^{2\ssg} dy}_{K_{5,4}}
	-\underbrace{\j \p_y^3 (\hu^k q^{k}) \p_y \hq^{k+1} \xw^{2\ssg} dy}_{K_{5,5}}=0.
	\dal
	\deqq
	
	\textbf{Deal with the term $K_{5,1}$.} It is easy to check that
	\beqq
	\bal
	|K_{5,1}| & \le \f12  |\j \p_x \rho^{k+1} (u^k+\theta)^2 |\p_y \hq^{k+1}|^2 \xw^{2\ssg} dy |+ |\j \rho^{k+1} (u^k+\theta)^2 \f{\p_x (u^k+\theta)}{u^k+\theta} |\p_y \hq^{k+1}|^2 \xw^{2\ssg} dy |\\
	& \le C \|(u^k+\theta)  \p_y \hq^{k+1} \xw^{\ssg}\|_{L^{2}_y}^2 (\|\p_x \rho^{k+1} \|_{L^{\infty}_y} + \|\rho^{k+1} \|_{L^{\infty}_y} \|\f{\p_x (u^k+\theta)}{u^k+\theta}\|_{L^{\infty}_y}).
	\dal
	\deqq
	
	\textbf{Deal with the term $K_{5,2}$.} Integrating by part and using the equation $\eqref{e:linear}_1$, we have
	\beqq
	\bal
	K_{5,2}=&\;\j \p_x \hr^{k+1} (u^k+\theta)^2 \p_y q^k\p_y \hq^{k+1} \xw^{2\ssg}  dy +\j  \hr^{k+1} \p_x\{(u^k+\theta)^2 \p_y q^k\} \p_y \hq^{k+1} \xw^{2\ssg}  dy \\
	=&\; \j \hr^{k+1} q^k (u^k+\theta)^2 \p_y q^k \hq^{k+1} \xw^{2\ssg}
	+ \hr^{k+1} \p_y \{ q^k (u^k+\theta)^2 \p_y q^k \xw^{2\ssg} \} \p_y^2 \hq^{k+1} dy\\
	&\; + \j \hq^{k} \p_y \rho^k (u^k+\theta)^2 \p_y q^k\p_y \hq^{k+1} \xw^{2\ssg}  dy  +\j  \hr^{k+1} \p_x\{(u^k+\theta)^2 \p_y q^k\} \p_y \hq^{k+1} \xw^{2\ssg}  dy.
	\dal
	\deqq
	Due to the estimate of $u^{k}$ in \eqref{prop-re-urhok}, we can obtain
	\beqq
	\| \f{q^k}{u^k}\|_{L_y^{\infty}} \le   \| \f{q^k}{u^k} \chi \|_{L_y^{\infty}} +\| \f{q^k}{u^k}(1-\chi)\|_{L_y^{\infty}}\\
	\le  C_{\lambda_0} \| \f{q^k}{y} \chi \|_{L_y^{\infty}} + C_{\xi_0} \| q^k\|_{L_y^{\infty}} \\
	\le C_{\lambda_0, \xi_0} (\| \p_y {q^k} \|_{L_y^{\infty}} + \| q^k\|_{L_y^{\infty}}), \\
	\deqq
	which yields
	\beqq
	\bal
	|K_{5,2}|
	\le &\; C \| \hr^{k+1} \|_{L^2_y}
	((1+\|u^k\|_{L_y^{\infty}}^3)
	\| \f{q^k}{u^k}\|_{L_y^{\infty}}
	\| \sqrt{u^k} \p_y^2 \hq^{k+1} \xw^{\ssg}\|_{L^2_y}
	+ \| (u^k+\theta) \p_y \hq^{k+1} \xw^{\ssg}\|_{L^2_y} )\\
	&\; + (1+\|u^k\|_{L_y^{\infty}})
	\| (u^k+\theta) \p_y \hq^{k+1} \xw^{\ssg}\|_{L^2_y} \|\hq^k\|_{L_y^{\infty}} \|\p_y \rho^k\|_{L_y^{\infty}}
	\|\p_y q^k \xw^{\ssg}\|_{L_y^{\infty}}\\
	&\; +\| \hr^{k+1} \|_{L_y^2}
	\| (u^k+\theta) \p_y \hq^{k+1} \xw^{\ssg}\|_{L^2_y}
	( \|\p_x u^k\|_{L_y^{\infty}}
	\|\p_y q^k \xw^{\ssg}\|_{L_y^{\infty}}
	+ \| u^k +\theta\|_{L_y^{\infty}}
	\|\p_{xy} q^k \xw^{\ssg}\|_{L_y^{\infty}}
	)\\
	\le &\; \mu \| \sqrt{u^k} \p_y^2 \hq^{k+1} \xw^{\ssg}\|_{L^2_y}^2
	+ C_{\mu,\lambda_0, \xi_0,\kappa_3} \| ((u^k+\theta) \p_y \hq^{k+1} \xw^{\ssg}, \p_y \hq^{k}, \hr^{k+1})\|_{L^2_y}^2.
	\dal
	\deqq
	
	\textbf{Deal with the term $K_{5,3}$.} Using the equation $\eqref{e:linear}_3$ and integrating by part, we have
	\beqq
	\bal
	K_{5,3}=&\;-\j (\p_y u^{k-1} \hq^k + u^{k-1} \p_y \hq^k +\p_y \hu^{k-1} q^k + \hu^{k-1} \p_y q^k )(\rho^{k}(u^{k}+u^{k-1}+2\theta) \p_y{q^{k}})
	\p_y \hq^{k+1} \xw^{2 \ssg} dy\\
	&\; +\j \p_x \{\rho^{k} (u^{k}+u^{k-1}+2\theta) \p_y{q^{k}} \} \hu^k \p_y  \hq^{k+1} \xw^{2\ssg} dy\\
	=&\;\j \hu^{k-1} \p_y  \{q^k \rho^k
	(u^{k}+u^{k-1}+2\theta) \p_y{q^{k}}
	\xw^{2 \ssg} \} \p_y \hq^{k+1} dy \\
	&\; +\j \hu^{k-1} q^k \rho^k
	(u^{k}+u^{k-1}+2\theta) \p_y{q^{k}}
	\xw^{2 \ssg}  \p_y^2 \hq^{k+1} dy \\
	&\; -\j (\p_y u^{k-1} \hq^k + u^{k-1} \p_y \hq^k +  \hu^{k-1} \p_y q^k )(\rho^{k}(u^{k}+u^{k-1}+2\theta) \p_y{q^{k}})
	\p_y \hq^{k+1} \xw^{2 \ssg} dy\\
	&+\j \p_x \{\rho^{k} (u^{k}+u^{k-1}+2\theta) \p_y{q^{k}} \} \hu^k \p_y  \hq^{k+1} \xw^{2\ssg} dy.
	\dal
	\deqq
	Similar to the estimate of $K_{5,2}$, we have
	\beqq
	\bal
	|K_{5,3}| \le &\;C_{\rho_0,u_0,\xi_0, \lambda_0}  \|\hu^{k-1}\|_{L_y^2}
	(
	\|\p_y \hq^{k+1} \xw^{\ssg}\|_{L_y^2}
	+ \| \sqrt{u^k} \p_y^2 \hq^{k+1} \xw^{\ssg}\|_{L^2_y}
	)\\
	&\; +C_{\rho_0,u_0} (\|\p_y \hq^{k+1} \xw^{\ssg}\|_{L_y^2}(
	\| \hq^{k}\|_{L^{\infty}_y} +\| \p_y \hq^{k}\|_{L^{2}_y} + \| \hu^{k-1}\|_{L^{2}_y}
	)
	+ \| \hu^{k}\|_{L^{2}_y} \|\p_y \hq^{k+1} \xw^{\ssg}\|_{L_y^2})\\
	\le &\;\mu \| \sqrt{u^k} \p_y^2 \hq^{k+1} \xw^{\ssg}\|_{L^2_y}^2
	+ C_{\mu,\lambda_0, \xi_0, \kappa_3} (\| (\p_y \hq^{k+1} \xw^{\ssg},  \hr^{k+1})\|_{L^2_y}^2+ \| (\hu^{k}, \p_y \hq^{k} \xw)\|_{L^2_y}^2 +\|\hu^{k-1}\|_{L^2_y}^2).
	\dal
	\deqq
	
	\textbf{Deal with the term $K_{5,4}$.}
	Using the equation \eqref{e:linear-py2} and integrating by part, we have
	\beqq
	\bal
	-K_{5,4}=&\;-\j \p_y\{\p_y^2 u^{k} {\hq^{k+1}} + 2\p_y u^{k} \p_y \hq^{k+1} +  u^{k} \p_y^2 \hq^{k+1}\} \p_y \hq^{k+1} \xw^{2\ssg} dy\\
	=&\; -\j (\p_y^3 u^{k} \hq^{k+1} + 3\p_y^2 u^{k} \p_y \hq^{k+1} + 2 \p_y u^{k} \p_y^2 \hq^{k+1} ) \p_y \hq^{k+1} \xw^{2\ssg} dy\\
	&\;+ \j u^k |\p_y \hq^{k+1}|^2 \xw^{2 \ssg} dy
	+ \j u^k \p_y \hq^{k+1} \p_y^2 \hq^{k+1}\xw^{2 \ssg} dy\\
	=&\; \j u^k |\p_y \hq^{k+1}|^2 \xw^{2 \ssg} dy
	+ (\p_y u^k |\p_y \hq^{k+1}|^2)_{y=0}
	-\j (\p_y^3 u^{k} \hq^{k+1} + 3\p_y^2 u^{k} \p_y \hq^{k+1} ) \p_y \hq^{k+1} \xw^{2\ssg} dy\\
	&\; + 2 \ssg \j \p_y u^k |\p_y \hq^{k+1}|^2 \xw^{2 \ssg-1}
	+\j u^k \p_y \hq^{k+1} \p_y^2 \hq^{k+1}\xw^{2 \ssg} dy.
	\dal
	\deqq
	Therefore, it holds
	\beqq
	\bal
	- K_{5,4} \geq &\;  \j u^k |\p_y \hq^{k+1}|^2 \xw^{2 \ssg} dy
	+ (\p_y u^k |\p_y \hq^{k+1}|^2)_{y=0}
	+C_{\ssg} (\| \p_y^3 u^k \xw^{\ssg} \|_{L_y^2} \| \hq^{k+1}\|_{L^{\infty}_y} \| \p_y \hq^{k+1} \xw^{\ssg}  \|_{L^2_y}  \\
	&\; +(\| \p_y^2 u^k\|_{L_y^{\infty}}  +\| \p_y u^k\|_{L_y^{\infty}} )\| \p_y \hq^{k+1} \xw^{\ssg}  \|_{L^2_y}^2 +\| u^k\|_{L_y^{\infty}}
	\| \sqrt{u^k} \p_y^2 \hq^{k+1}\|_{L_y^{2}}
	\| \p_y \hq^{k+1} \xw^{\ssg}  \|_{L^2_y}
	)\\
	\geq &\;  \j u^k |\p_y \hq^{k+1}|^2 \xw^{2 \ssg} dy
	+ (\p_y u^k |\p_y \hq^{k+1}|^2)_{y=0}  - C_{\ssg,\lambda_0, \xi_0, \kappa_3} \| \p_y \hq^{k+1} \xw^{\ssg}  \|_{L^2_y}^2.
	\dal
	\deqq
	
	\textbf{Deal with the term $K_{5,5}$.} Integrating by part, we have
	\beqq
	\bal
	K_{5,5}=&\;\j \p_y\{\p_y^2 \hu^{k} {q^{k}} + 2\p_y \hu^{k} \p_y q^{k} +  \hu^{k} \p_y^2 q^{k}\} \p_y \hq^{k+1} \xw^{2\ssg} dy\\
	=&\; -\j \p_y^2 \hu^k ( q^k \p_y^2 \hq^{k+1} \xw^{2\ssg} + 2 \ssg q^k \p_y \hq^{k+1} \xw^{2\ssg-1} +2 \p_y q^k \p_y \hq^{k+1} \xw^{2\ssg}) dy\\
	&\; +3 \j \p_y \hu^k  \p_y^2 q^k \p_y \hq^{k+1} \xw^{2\ssg} dy + \j \hu^k  \p_y^3 q^k \p_y \hq^{k+1} \xw^{2\ssg} dy.
	\dal
	\deqq
	Then we can obtain that
	\beqq
	\bal
	|K_{5,5}|
	\le &\; C_{\lambda_0, \xi_0, \kappa_3,\ssg} \| \p_y^2 \hu^{k} \xw^{\ssg}\|_{L^2_y}
	(\| \sqrt{u^k} \p_y^2 \hq^{k+1} \xw^{\ssg}\|_{L_y^{2}} +
	\| \p_y \hq^{k+1} \xw^{\ssg}  \|_{L^2_y})\\
	&+C_{\lambda_0, \xi_0, \kappa_3 }\| (\hu^{k}, \p_y \hu^{k})\|_{L^2_y}
	\| \p_y \hq^{k+1} \xw^{\ssg}  \|_{L^2_y} \\
	\le &\;\mu \| \sqrt{u^k} \p_y^2 \hq^{k+1} \xw^{\ssg}\|_{L^2_y}^2
	+ C_{\mu, \lambda_0, \xi_0, \kappa_3,\ssg} (\| \p_y \hq^{k+1} \xw^{\ssg}\|_{L^2_y}^2+ \| (\hr^{k}, \hu^{k}, \p_y \hu^{k}, \p_y \hq^{k} \xw^{\ssg})\|_{L^2_y}^2 +\|\hu^{k-1}\|_{L^2_y}^2),
	\dal
	\deqq
	where we have used the following estimate according to the equation \eqref{e:linear-py2}
	\beqq
	\bal
	\| \p_y^2 \hu^{k} \xw^{\ssg}\|_{L^2_y}
	=&\;  \| (\hr^{k}(u^{k-1}+\theta)^2 \p_y{q^{k}} + \rho^{k-1} \hu^{k-1} (u^{k-1}+u^{k-2}+2\theta) \p_y{q^{k}} + \rho^{k-1}(u^{k-2}+\theta)^2 \p_y{\hq^{k}}) \xw^{\ssg}\|_{L^2_y} \\
	\le &\; C_{\lambda_0, \xi_0, \kappa_3} (\| \hr^{k} \|_{L^2_y} +  \| \hu^{k-1} \|_{L^2_y}  + \| \p_y{\hq^{k}} \xw^{\ssg} \|_{L^2_y}).
	\dal
	\deqq
	Combining the estimate from $K_{5,1}$ to $K_{5,5}$ and choosing $\mu$ small enough, we have
	\beq\label{ie:hq-k+1}
	\bal
	& \f12 \f{d}{dx} \j \rho^{k+1} (u^k+\theta)^2 |\p_y \hq^{k+1}|^2 \xw^{2\ssg}dy + \j u^k |\p_y \hq^{k+1}|^2 \xw^{2 \ssg} dy
	+ (\p_y u^k |\p_y \hq^{k+1}|^2)_{y=0}  \\
	\le &\; C_{\lambda_0, \xi_0, \kappa_3,\ssg} (\| \p_y \hq^{k+1} \xw^{\ssg}\|_{L^2_y}^2+ \| (\hr^{k}, \hu^{k}, \p_y \hu^{k}, \p_y \hq^{k} \xw^{\ssg})\|_{L^2_y}^2 +\|\hu^{k-1}\|_{L^2_y}^2).
	\dal
	\deq
	\textbf{Step 2:}
	Multiplying the equation $\eqref{e:linear}_1$ by $
	\hr^{k+1} $ and integrating over $\mathbb{R^+}$, we have
	\beqq
	\bal
	\f12 \f{d}{dx} \j |\hr^{k+1}|^2 dy + \underbrace{\j q^k \p_y (\f{|\hr^{k+1}|^2}{2}) dy }_{K_{6,1}}+ \underbrace{\j \hq^k \p_y \rho_k \cdot \hr_{k+1} dy}_{K_{6,2}}=0.
	\dal
	\deqq
	Integrating by part, we have
	\beqq
	\bal
	|K_{6,1}| & =|\f12 \j \p_y q^k |\hr^{k+1}|^2 dy| \le C_{\lambda_0, \xi_0, \kappa_3}  \|\hr^{k+1} \|_{L^2_y}^2.
	\dal
	\deqq
	It is easy to obtain
	\beqq
	\bal
	|K_{6,2}| \le \| \hq^k\|_{L^{\infty}_y}   \|\hr^{k+1} \|_{L^2_y} \|\p_y \rho^{k} \|_{L^2_y}
	\le C_{\lambda_0, \xi_0, \kappa_3} \| \p_y \hq^k \xw\|_{L^2_{y}}   \|\hr^{k+1} \|_{L^2_y}.
	\dal
	\deqq
	Therefore we have
	\beq \label{ie:hr-k+1}
	\bal
	\f12 \f{d}{dx} \j |\hr^{k+1}|^2 dy \le C_{\lambda_0, \xi_0, \kappa_3} \| (\p_y \hq^k \xw, \hr^{k+1}) \|_{L^2_y}^2.
	\dal
	\deq
	
	Multiplying the equation $\eqref{e:linear}_3$ by $
	\hu^{k+1} $ and integrating over $\mathbb{R^+}$, we have
	\beq\label{ie:hu-k+1}
	\bal
	\f12 \f{d}{dx} \j |\hu^{k+1}|^2 dy  =&\; - \j  (\p_y u^{k-1} \hq^{k+1} +  u^{k-1} \p_y \hq^{k+1} + \p_y \hu^{k} q^{k+1} + \hu^{k-1} \p_y q^{k+1}) \cdot \hu^{k+1} dy\\
	\le &\; C_{\lambda_0, \xi_0, \kappa_3} (\| \hq^{k+1}\|_{L^{\infty}_y} +
	\| \p_y \hq^{k+1}\|_{L^2_{y}} + \| \hu^{k} \|_{L^2_y}+ \|\p_y \hu^{k} \|_{L^2_y}
	) \|\hu^{k+1} \|_{L^2_y}.
	\dal
	\deq
	Applying the $\p_y$ operator to $\eqref{e:linear}_3$,
	multiplying the equation by $
	\p_y \hu^{k+1} $ and integrating over $\mathbb{R^+}$, we have
	\beq\label{ie:pyhu-k+1}
	\bal
	& \f12 \f{d}{dx} \j |\p_y \hu^{k+1}|^2 dy \\
	= &\; - \p_y (\p_y u^{k-1} \hq^{k+1} +  u^{k-1} \p_y \hq^{k+1} + \p_y \hu^{k} q^{k+1} + \hu^{k-1} \p_y q^{k+1}) \p_y \hu^{k+1} dy\\
	= &\; (\p_y u^{k-1} \hq^{k+1} +  u^{k-1} \p_y \hq^{k+1} + \p_y \hu^{k} q^{k+1} + \hu^{k-1} \p_y q^{k+1}) \p_y^2 \hu^{k+1} dy\\
	\le &\; C_{\lambda_0, \xi_0, \kappa_3} (\| \hq^{k+1}\|_{L^{\infty}_y} +
	\| \p_y \hq^{k+1}\|_{L^2_{y}} + \| \hu^{k} \|_{L^2_y}+ \|\p_y \hu^{k} \|_{L^2_y}
	)  (\| \hr^{k+1} \|_{L^2_y} +  \| \hu^{k} \|_{L^2_y}  + \| \p_y{\hq^{k+1}} \xw^{\ssg} \|_{L^2_y})\\
	\le &\;  C_{\lambda_0, \xi_0, \kappa_3} (\| \p_y \hq^{k+1} \xw, \hr^{k+1}\|_{L^2_y}^2+ \| (\hu^{k}, \p_y \hu^{k})\|_{L^2_y}^2).
	\dal
	\deq
	Finally, we have
	\beqq
	\p_x \hu^k= -(\p_y u^{k-2} \hq^{k} +  u^{k-2} \p_y \hq^{k} + \p_y \hu^{k-1} q^{k} + \hu^{k-2} \p_y q^{k}).
	\deqq
	Multiplying the equation by $
	\hu^{k} $ and integrating over $\mathbb{R^+}$, we have
	\beq\label{ie:hu-k}
	\bal
	\f12 \f{d}{dx} \j |\hu^{k}|^2 dy & \;= -\j (\p_y u^{k-2} \hq^{k} +  u^{k-2} \p_y \hq^{k} + \p_y \hu^{k-1} q^{k} + \hu^{k-2} \p_y q^{k}) \hu^{k} dy  \\
	&\; = -\j (\p_y u^{k-2} \hq^{k} +  u^{k-2} \p_y \hq^{k}
	+ \hu^{k-2} \p_y q^{k}) \hu^{k} dy\\
	&\quad + \j  \hu^{k-1} \p_y \{ \p_y q^{k} \hu^k +  q^{k} \p_y \hu^k \} dy\\
	&\; \le  C_{\lambda_0, \xi_0, \kappa_3} (\| (\hu^{k}, \p_y \hu^{k}, \p_y \hq^{k} \xw)\|_{L^2_y}^2 +\| \hu^{k-1}\|_{L^2_y}^2).
	\dal
	\deq
	Combining the estimates \eqref{ie:hr-k+1}-\eqref{ie:hu-k}, we have
	\beqq
	\bal
	&\;\f{d}{dx} \j |\hr^{k+1}|^2+|\hu^{k+1}|^2+|\p_y\hu^{k+1}|^2+|\hu^{k}|^2 dy   \\
	\le &\; C_{\lambda_0, \xi_0, \kappa_3} (\| (\hr^{k+1},  \hu^{k+1}, \p_y \hq^{k+1} \xw)\|_{L^2_y}^2+ \| (\hu^{k}, \p_y \hu^{k}, \p_y \hq^{k} \xw)\|_{L^2_y}^2 +\| \hu^{k-1}\|_{L^2_y}^2),
	\dal
	\deqq
	which, together with \eqref{ie:hq-k+1}, yields that
	\beqq
	\bal
	& \f{d}{dx} \j \rho^{k+1} (u^k+\theta)^2 |\p_y \hq^{k+1}|^2 \xw^{2\ssg}+ |\hr^{k+1}|^2+|\hu^{k+1}|^2+|\p_y\hu^{k+1}|^2+|\hu^{k}|^2 dy \\
	&\;+ \j u^k |\p_y \hq^{k+1}|^2 \xw^{2 \ssg} dy
	+ C (\p_y u^k |\p_y \hq^{k+1}|^2)_{y=0}  \\
	\le &\; C_{\lambda_0, \xi_0, \kappa_3,\ssg}(\| (\hr^{k+1},  \hu^{k+1},  \p_y \hq^{k+1} \xw^{\ssg})\|_{L^2_y}^2+ \| (\hr^{k}, \hu^{k}, \p_y \hu^{k}, \p_y \hq^{k} \xw^{\ssg})\|_{L^2_y}^2 +\|\hu^{k-1}\|_{L^2_y}^2).
	\dal
	\deqq
	\textbf{Step 3:}
	Define
	\beqq
	\phi^{k+1}(x)\overset{def}{=} \j (\rho^{k+1} (u^k+\theta)^2 |\p_y \hq^{k+1}|^2 \xw^{2\ssg}+ |\hr^{k+1}|^2+|\hu^{k+1}|^2+|\p_y\hu^{k+1}|^2+|\hu^{k}|^2)dy,
	\deqq
	then we have
	\beqq
	\f{d}{dx} \phi^{k+1}(x) \le C_{\lambda_0,\xi_0,\kappa_3,\ssg}(\phi^{k+1}(x) + \phi^{k}(x)).
	\deqq
	Choosing $x^*>0$ such that $e^{C_{\lambda_0,\xi_0,\kappa_3,\ssg}
		x} \le \f32$ for all $0\le x \le x^*$, we can deduce that for all $K \geq 1$,
	\beqq
	\sum_{k=1}^K \sup_{0\le x \le x^*} \phi^{k+1}(x) < +\infty.
	\deqq
	Recall that $\theta >0$ and $\rho^{k+1} \geq \kappa_3 > 0$  and the uniform estimate of $\x^{k+1}$, we conclude that there exists $\rho_{\theta}, u_{\theta}, \p_y q_{\theta} \in L^{\infty}(0,L_a; L^2_y)$ such that
	\beqq
	\left\{
	\bal
	\rho^{k+1} &\to \rho_{\theta}~ &&\text{in}~ L^{\infty}(0,L_a; L^2_y);\\
	u^{k+1} &\to u_{\theta} ~ &&\text{in}~ L^{\infty}(0,L_a; L^2_y);\\
	\p_y q^{k+1} &\to \p_y q_{\theta} ~&&\text{in} ~L^{\infty}(0,L_a; L^2_y).
	\dal\right.
	\deqq
	It can be checked that $(\rho_{\theta}, u_{\theta},  q_{\theta})$ solves the following equations \eqref{e:theta} for the fixed constant $\theta >0$.
	\beq \label{e:theta}
	\left\{
	\bal
	&\p_x \rho_{\theta} + q_{\theta} \p_y \rho_{\theta} =0,\\
	& \rho_{\theta} (u_{\theta}+\theta)^2 \p_y q_{\theta} +\p_y^2 u_{\theta}  = r_0,\\
	&\p_x u_{\theta} + \p_y\{u_{\theta} q_{\theta}\} =0,\\
	&(\rho_{\theta},u_{\theta})|_{y=0}=(0,0),\\
	&\lim_{y \rightarrow+\infty}(\rho_{\theta},u_{\theta},\p_y q_{\theta})=(\varrho_{\infty}, u_{\infty},0),\\
	&(\rho_{\theta},u_{\theta},q_{\theta})|_{x=0}=(\rho_0,u_0,q_0).\\
	\dal\right.
	\deq
	Define the energy norm and dissipation norm for the solution ($\rho_{\theta}, q_{\theta}, u_{\theta}$) as following,
	\beqq
	\bal
	\x_{\theta}(x) \overset{def}{=} & \sum_{|\alpha|\le m}\|\p^\alpha \br_{\theta} \|_{L_y^2}^2
	+\sum_{|\alpha|\le m}\|\sqrt{\rho_{\theta}} (u_{\theta}+\theta) \p^{\al}\p_y q_{\theta} \xw^{\ssg}\|_{L_y^2}^2
	+\|\bu_{\theta} \|_{L_y^2}^2+\|\p_y \bu_{\theta} \|_{L_y^2}^2;\\
	\y_{\theta}(x) \overset{def}{=}
	&\sum_{|\alpha|\le m}\|\sqrt{u_{\theta}} \p^\alpha \p_y^2 q_{\theta} \xw^{\ssg}\|_{L_y^2}^2.
	\dal
	\deqq
	Then, by virtue of semi-continuity of norm, we deduce from the uniform bound \eqref{uniform-s} and the lower bound \eqref{prop-re-urhok} that $(\rho_{\theta}, u_{\theta},  q_{\theta})$ satisfies the estimates
	\beq \label{estimate-xtheta}
	\sup_{0 \le x \le L_a}  \x_{\theta}(x) + \int_{0}^{L_a} \y_{\theta} (x) dx \le 2 C_{\rho_0, u_0},
	\deq
	and
	\beq \label{estimate-rhotheta}
	u_{\theta}(x,y)>0, \quad \rho_{\theta}(x,y) \geq \kappa_3, \quad \p_y u_{\theta}(x,0) \ge \f{ u_0'(0)}{2}, \forall (x,y) \in [0, L_a] \times (0,+\infty).
	\deq
Similar to the analysis of a priori estimate of the steady Prandtl equations \eqref{s-Prandtl} in Section \ref{sec:priori-steady}, we can establish a priori estimate and derive the uniform estimate independent of $\theta$ for the equation \eqref{e:theta}. Here we omit the proof for the sake of simplicity.
Taking $\theta \to 0$, we conclude that there exists solution $(\rho, u, q)$
satisfies the following system
	\beqq
	\left\{
	\bal
	&\p_x \rho + q \p_y \rho =0,\\
	& \rho u^2 \p_y q + \p_y^2 u =0,\\
	&\p_x u + \p_y\{u q\} =0,\\
	&(q,u)|_{y=0}=(0,0),\\
	&\lim_{y \rightarrow+\infty}(\rho,u,\p_y q)=(\varrho_{\infty}, u_{\infty},0),\\
	&(\rho,u,q)|_{x=0}=(\rho_0, u_0,q_0).\\
	\dal\right.
	\deqq
	Let us define $v\overset{def}{=}uq$. Therefore $(\rho,u,v)$ satisfies the following system:
	\beqq
	\left\{
	\bal
	& \p_x \rho + \f{v}{u} \p_y \rho =0,\\
	& \rho u^2 \p_y (\f{v}{u})  + \p_y^2 u =0,\\
	&\p_x u + \p_y v =0,\\
	&(v,u)|_{y=0}=(0,0),\\
	&\lim_{y \rightarrow+\infty}(\rho,u)=(\varrho_{\infty}, u_{\infty}),\\
	&(\rho,u)|_{x=0}=(\rho_0,u_0),
	\dal\right.
	\deqq
	which is equivalent to the system \eqref{s-Prandtl}.
	At the same time, due to the uniform estimate \eqref{estimate-xtheta}, the lower bound \eqref{estimate-rhotheta} and semi-continuity of norm, we can obtain
	\beqq
	\sup_{0 \le x \le L_a}  \x(x) + \int_{0}^{L_a} \y (x) dx \le 2 C_{\rho_0, u_0},
	\deqq	
	and
	\beqq
	u(x,y)>0, \quad \rho(x,y) \ge \kappa_3, \quad \p_y u(x,0) \ge \f{ u_0'(0)}{2}, \quad \forall (x,y) \in [0, L_a] \times (0,+\infty),
	\deqq
	which yields the estimate \eqref{estimate-s-rho} and
	\eqref{estimate-uX} in Theorem \ref{main-result-steady}.
	Therefore, we complete the proof of local-in-time existence  in Theorem \ref{main-result-steady}.
	
	\subsection{Uniqueness of solution of the original system}\label{sec:unique}
	In this subsection, we will show the uniqueness of solution to the original system \eqref{s-Prandtl}.
	Let $(u_1, v_1, \rho_1)$ and $(u_2, v_2, \rho_2)$ be two solutions in the 
existence life-span $[0, L_a]$, constructed in the previous subsection, with the same initial data $(u_{0},  \rho_{0})$.
	Let us denote $(\tilde{u}, \tilde{v}, \tilde{\rho}) \overset{def}{=} (u_1, v_1, \rho_1) - (u_2, v_2, \rho_2)$,
	$\tilde{q} \overset{def}{=} \frac{\tilde{v}}{u_2}$. They satisfy
	\beq \label{equ-tilde}
	\left\{
	\begin{aligned}
		&\tilde{u} \p_x \rho_1 +u_2 \p_x \tilde{\rho} + \tilde{v} \p_y \rho_1 + v_2 \p_y \tilde{\rho}=0;\\
		& \tilde{\rho} (u_2 \p_x u_2 + v_2 \p_y u_2 )+\rho_1 (\tilde{u} \p_x u_1 + v_1 \p_y \tilde{u} -u_2^2 \p_y \tilde{q}) -\p_y^2 \tilde{u}=0,
	\end{aligned}\right.
	\deq	
	with the zero boundary condition and initial data
	\beqq
	(\tu, \tv)|_{y=0}=(0,0), \quad \lim_{y \rightarrow+\infty}(\tr,\tu)=(0,0),
	\quad (\tr, \tu)|_{x=0}=(0,0).
	\deqq
	Taking $\p_x$-operator to the above equation $\eqref{equ-tilde}_2$ and
	using the divergence-free condition, we have
	\beq\label{equation-d}
	\p_x \{\rho_1 u_2^2 \p_y \tq\}-\p_y^3 \tv
	=\p_x\{\tilde{\rho} (u_2 \p_x u_2 + v_2 \p_y u_2 )+\rho_1 (\tilde{u} \p_x u_1 + v_1 \p_y \tilde{u})\}.
	\deq
	
	Next, we will establish the following estimate, which is useful to prove the uniqueness of the system \eqref{s-Prandtl}.
	\begin{prop}\label{lemma-uniq}
		Let $(u_1, v_1,\rho_1)$ and $(u_2, v_2,\rho_2)$ be two solutions of the equation \eqref{s-Prandtl}
		with the same initial data $(u_0, \rho_0)$
		satisfying the estimate	\eqref{estimate-s-rho}  and \eqref{estimate-uX}.
		Then, the quantity $(\tu, \tv, \tr, \tq)$ satisfies
		\beq\label{stability}
		\bal
		&\; \|( \sqrt{\rho_1}u_2\p_y \tq \xw^{\ssg}, \tr)(x,y) \|_{L^2_y}^2
		+\int_0^x \|(\sqrt{u_2} \p_y^2 \tq \xw^{\ssg})(\tau,y)\|_{L^2_y}^2 d\tau \\
		\le &\; C_{\lambda_0,\xi_0,\kappa_3,\ssg,L_a} \int_0^x  \|(\sqrt{\rho_1}u_2\p_y \tq \xw^{\ssg},  \tr )(\tau,y) \|_{L^2_y}^2 d\tau,
		\dal
		\deq
		for any $x\in (0, L_a]$.
	\end{prop}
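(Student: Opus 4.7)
The plan is to mimic the structure of the higher-order estimates in Section \ref{sec:priori-steady}, but now applied to the difference variables $(\tu,\tv,\tr,\tq)$ at the lowest order ($|\alpha|=0$). The natural test functions are $\p_y \tq \xw^{2\ssg}$ for the $v$-equation \eqref{equation-d} and $\tr$ for the density equation $\eqref{equ-tilde}_1$. Before starting the energy estimates, I would record the two auxiliary identities that couple $(\tu,\tv)$ back to $(\tq,\tr)$: first, $\tv = u_2 \tq$, which together with Hardy's inequality and the lower bound \eqref{estimate-s-rho} gives $\|\tv \xw^{-1}\|_{L^2_y} + \|\tq\|_{L^\infty_y} \le C_{\lambda_0,\xi_0}\|\sqrt{\rho_1}u_2\p_y\tq\xw^{\ssg}\|_{L^2_y}$ (after using the quantitative equivalence $u_2 \gtrsim y$ near the boundary and $u_2 \gtrsim \xi_0$ away from it, exactly as in the proof of Lemma \ref{Lemma:help}); second, integrating $\p_x \tu = -\p_y \tv$ from $x=0$ with $\tu|_{x=0}=0$ yields $\|\tu\|_{L^2_y}(x) \le \int_0^x \|\p_y(u_2 \tq)\|_{L^2_y} d\tau$, which closes the loop on $\tu$.

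Step one is the energy estimate for $\p_y \tq$. Multiplying \eqref{equation-d} by $\p_y \tq \xw^{2\ssg}$ and integrating over $\mathbb{R}^+$, the left-hand side produces
\beqq
\bal
\f12 \f{d}{dx}\|\sqrt{\rho_1} u_2 \p_y \tq \xw^{\ssg}\|_{L^2_y}^2 + \|\sqrt{u_2}\p_y^2 \tq \xw^{\ssg}\|_{L^2_y}^2 + C\bigl(|\p_y \tq|^2 \p_y u_2\bigr)_{y=0}
\dal
\deqq
by the same integration-by-parts scheme that produced $K_{3,1}$ and $K_{3,2}$ in Lemma \ref{Lemma:help}'s sequel, using the boundary conditions $\tv|_{y=0}=\tq|_{y=0}=0$. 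The boundary term is non-negative thanks to \eqref{estimate-s-rho}. The residual terms involve $\p_x(\rho_1 u_2^2)$, $\p_x(\rho_1 \p_x u_1)$ and $\p_y u_2$, all of which lie in $L^\infty_x L^\infty_y$ by \eqref{estimate-uX} through the Sobolev embedding used in Lemma \ref{Lemma:help}. They can be absorbed after a Cauchy--Schwarz into $\mu \|\sqrt{u_2}\p_y^2 \tq \xw^{\ssg}\|_{L^2_y}^2 + C\,\|\sqrt{\rho_1} u_2 \p_y \tq \xw^{\ssg}\|_{L^2_y}^2$ plus terms involving $(\tr,\tu,\tv)$ that will be disposed of with the auxiliary identities above.

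Step two is the $L^2$ estimate for $\tr$. Multiplying $\eqref{equ-tilde}_1$ by $\tr$ and integrating, the term $\int v_2 \p_y \tr \cdot \tr \, dy$ becomes $-\frac12 \int \p_y v_2 \, |\tr|^2 dy$ after integration by parts (no boundary contribution since $v_2|_{y=0}=0$ and decay at infinity); the remaining forcing terms $\tu \p_x \rho_1$ and $\tv \p_y \rho_1$ are estimated by $\|\tu\|_{L^2_y}\|\p_x \rho_1\|_{L^\infty_y}\|\tr\|_{L^2_y}$ and $\|\tv\xw^{-1}\|_{L^2_y}\|\p_y \rho_1 \xw\|_{L^\infty_y}\|\tr\|_{L^2_y}$. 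Using the auxiliary identities, $\|\tv \xw^{-1}\|_{L^2_y}$ is absorbed into the $\tq$-energy and $\|\tu\|_{L^2_y}$ produces an $\int_0^x$ of the same energy. This delivers $\frac{d}{dx}\|\tr\|_{L^2_y}^2 \le C\bigl(\|\tr\|_{L^2_y}^2 + \|\sqrt{\rho_1} u_2 \p_y \tq \xw^{\ssg}\|_{L^2_y}^2\bigr) + C\int_0^x \|\sqrt{\rho_1} u_2 \p_y \tq \xw^{\ssg}\|_{L^2_y}^2 d\tau$.

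Step three combines the two estimates, discards the nonnegative boundary term, and applies Gronwall's inequality (first absorbing the integrated-in-$x$ contribution from $\tu$ into the Gronwall constant via Fubini, or equivalently bounding $\int_0^x\int_0^\tau \le L_a \int_0^x$). The main technical obstacle I anticipate is the forcing term $\p_x\bigl\{\rho_1(\tu \p_x u_1 + v_1 \p_y \tu)\bigr\}$ on the right-hand side of \eqref{equation-d}: after distributing the $\p_x$ one encounters $\rho_1 v_1 \p_y \p_x \tu$, which cannot be controlled by energy of $(\tq,\tr)$ directly. The resolution is to rewrite $\p_x \tu = -\p_y \tv = -\p_y(u_2 \tq)$ and integrate by parts in $y$ against the test function $\p_y \tq \xw^{2\ssg}$, converting the problematic term into quantities of the form $u_2 \p_y \tq$, $\p_y u_2 \cdot \tq$ and a contribution that can be absorbed into the dissipation $\|\sqrt{u_2}\p_y^2 \tq \xw^{\ssg}\|_{L^2_y}^2$ with a small constant, after which the remaining coefficients are controlled by the higher-regularity bound \eqref{estimate-uX} and Hardy's inequality.
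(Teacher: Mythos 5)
Your high-level framework matches the paper's: test \eqref{equation-d} against $\p_y\tq\xw^{2\ssg}$ to get the $\tq$-energy and dissipation with the sign-definite boundary term $(\p_y u_2 |\p_y\tq|^2)_{y=0}$, test the density equation against $\tr$, convert the problematic $\rho_1 v_1\p_{xy}\tu$ via $\p_x\tu=-\p_y\tv$, and use a Fubini/double-integral trick for the $\tu$ contribution. But there are three genuine gaps.

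\textbf{(1) The $\tr$-estimate does not close as written.} You multiply $\eqref{equ-tilde}_1$ \emph{directly} by $\tr$. Then the term $u_2\p_x\tr\cdot\tr$, which you do not mention, integrates to $\f12\f{d}{dx}\|\sqrt{u_2}\tr\|_{L^2_y}^2-\f12\int\p_x u_2\,|\tr|^2\,dy$, so the evolution is for $\|\sqrt{u_2}\tr\|_{L^2_y}$, not $\|\tr\|_{L^2_y}$. Since $u_2(x,0)=0$ these two norms are not comparable near $y=0$, so you cannot recover $\|\tr\|_{L^2_y}$ from this energy, and the closure of Gronwall in the form \eqref{stability} is lost (elsewhere in the argument $\|\tr\|_{L^2_y}$ itself is the quantity that must be absorbed). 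The paper first divides $\eqref{equ-tilde}_1$ by $u_2$, turning it into a transport equation $\p_x\tr+\f{v_2}{u_2}\p_y\tr=-\f{\tu}{u_2}\p_x\rho_1-\f{\tv}{u_2}\p_y\rho_1$, and only then multiplies by $\tr$; this produces $\f{d}{dx}\|\tr\|_{L^2_y}^2$ with the advection coefficient $\f{v_2}{u_2}=q_2$, which is bounded, and the forcing $\f{\tu}{u_2}$, which is handled via the near/far decomposition $\chi_{\sde}$ and Hardy.

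\textbf{(2) Your first auxiliary inequality is false.} You assert $\|\tq\|_{L^\infty_y}\le C_{\lambda_0,\xi_0}\|\sqrt{\rho_1}u_2\p_y\tq\xw^{\ssg}\|_{L^2_y}$. Near the boundary $u_2\sim y$, so $\|u_2\p_y\tq\xw^{\ssg}\|_{L^2}$ is essentially $\|y\,\p_y\tq\|_{L^2(0,1)}$, and there is no Hardy-type control of $\|\p_y\tq\|_{L^1(0,1)}$ (hence of $\|\tq\|_{L^\infty}$) by this quantity alone, because the weight $y^{-1}$ is not square-integrable at the origin. The correct statement, exactly as in Lemma \ref{Lemma:help} and as derived at the end of the paper's proof, necessarily picks up the dissipation: $\|\p_y\tq\xw^{\ssg}\|_{L^2_y}\le C_{\xi_0,\kappa_3}\|\sqrt{\rho_1}u_2\p_y\tq\xw^{\ssg}\|_{L^2_y}+\sde^{1/2}C_{\lambda_0}\|\sqrt{u_2}\p_y^2\tq\xw^{\ssg}\|_{L^2_y}$. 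The smallness of $\sde$ is what lets the dissipation piece be absorbed into the left-hand side, and several of your later ``absorption'' steps implicitly rely on a bound of the wrong form.

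\textbf{(3) The $\p_x\tr$ contribution from the $II_6$-type term is unaddressed.} The right-hand side of \eqref{equation-d} contains $\p_x\{\tr(u_2\p_x u_2+v_2\p_y u_2)\}=\p_x\{\tr\,u_2^2\p_y(\f{v_2}{u_2})\}$, and distributing $\p_x$ produces $u_2^2\,\p_x\tr\,\p_y(\f{v_2}{u_2})\,\p_y\tq\,\xw^{2\ssg}$. Neither the energy nor the dissipation controls $\p_x\tr$. The paper's resolution is to substitute $u_2\p_x\tr=-\tu\p_x\rho_1-\tv\p_y\rho_1-v_2\p_y\tr$ from the difference density equation, and then, because $\p_y\tr$ is also uncontrolled, to integrate by parts in $y$ on the $v_2\p_y\tr$ piece, putting the extra $\p_y$ onto $v_2\p_y(\f{v_2}{u_2})u_2\p_y\tq\xw^{2\ssg}$; this generates a $\sqrt{u_2}\p_y^2\tq$ factor that is absorbed into the dissipation with a small constant $\mu$. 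You flag the $v_1\p_{xy}\tu$ term in $II_7$ as the main difficulty (and your fix there is correct and is what the paper does), but you do not notice that $II_6$ poses an analogous $\p_x\tr$ problem needing a genuinely different fix.
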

	\begin{proof}[\textbf{Proof of Uniqueness.}]
		Applying the Gronwall's lemma to the estimate \eqref{stability},
		we obtain
		\beqq
		(\sqrt{\rho_1}u_2\p_y \tq \xw^{\ssg},\tr)(x, y)\equiv 0
		.
		\deqq
		 Then we have $\rho_1(x,y)\equiv\rho_2(x,y)$.
		Due to the fact $\rho_1>0$ and $u_2>0$ in the fluid domain, then we have
		\beqq
		v_2-v_1=u_2 w,
		\deqq
		for some function $w \overset{def}{=}w(x)$.
		Using the assumption $u_2>0$  in the fluid domain
		and boundary condition $\tv|_{y=0}=0$,
		we know that $w(x,y) \equiv 0$ and hence $v_1(x,y) \equiv v_2(x,y)$.
		Using the divergence-free condition, it holds
		\beqq
		u_2(x, y)-u_1(x, y)
		=\int_0^x \partial_\tau\{u_2(\tau, y)-u_1(\tau, y)\}d\tau
		=-\int_0^x \partial_y \{v_2(\tau, y)-v_1(\tau, y)\}d\tau
		=0.
		\deqq
		Then, we have $u_2(x, y)\equiv u_1(x, y)$.
		This proves the uniqueness of solution in Theorem \ref{main-result-steady}.
	\end{proof}
	In the rest of this subsection, we will give the proof
	of Proposition \ref{lemma-uniq} as follows.
	\begin{proof}[\textbf{Proof of Proposition \ref{lemma-uniq}}]
		Multiplying the equation \eqref{equation-d}
		by $\p_y \tq \xw^{2 \ssg}$, integrating over $[0, x]\times[0,+\infty)$, $x\in (0, L_a]$
		and integrating by part, we have
		\beqq
		\bal
		&\f12\j \!(\rho_1 u_2^2 |\p_y \tq|^2 \xw^{2\ssg}) (x, y)dy
		+\!\int_0^x \!\!\j \! \!u_2 |\p_y^2 \tq|^2 \xw^{2\ssg} dy d\tau
		+\!\int_0^x \!\! (\p_y u_2 |\p_y \tq|^2)_{y=0} d\tau\\
		=&-\f12 \int_0^x \j \p_x (\rho_1 u_2^2 ) |\p_y \tq|^2 \xw^{2\ssg} dy d\tau
		-2\ssg \int_0^x \j  u_2\p_y^2 \tq  \cdot \p_y \tq  \xw^{2\ssg-1} dy d\tau\\
		&+2\int_0^x \j  \p_y^2 u_2 |\p_y \tq|^2 \xw^{2\ssg} dy d\tau
		-2 \ssg \int_0^x \j  \p_y u_2 |\p_y \tq|^2 \xw^{2\ssg-1} dy d\tau \\
		&+\int_0^x \j \p_y^3 u_2 \tq \cdot \p_y \tq  \xw^{2\ssg} dy d\tau \int_0^x \j \p_x\{\tilde{\rho} (u_2 \p_x u_2 + v_2 \p_y u_2 )\} \p_y \tq \xw^{2\ssg} dy  d\tau\\
		&+ \int_0^x \j \p_x\{\rho_1 (\tilde{u} \p_x u_1 + v_1 \p_y \tilde{u})\} \p_y \tq \xw^{2\ssg}dy  d\tau\\
		\overset{def}{=}&\sum_{i=1}^{7}II_{i}.
		\dal
		\deqq
		It's easy to obtain that
		\beqq
		\bal
		|II_1| &\le C(\| \f{\p_x u_2}{u_2} \|_{L^{\infty}} \| \sqrt{\rho_1} u_2 \p_y \tq  \xw^{\ssg} \|_{L^{2}}^2
		+  \| \p_x \rho_1 \|_{L^{\infty}} \| u_2 \p_y \tq  \xw^{\ssg} \|_{L^{2}}^2) ;\\
		|II_2| &\le C_{\ssg} \| u_2 \p_y^2 \tq  \xw^{\ssg} \|_{L^{2}} \| \p_y \tq  \xw^{\ssg-1} \|_{L^{2}};\\
		|II_3| &\le C \| \p_y^2 u_2 \|_{L^{\infty}} \| \p_y \tq  \xw^{\ssg} \|_{L^{2}}^2;\\
		|II_4| &\le C_{\ssg} \| \p_y u_2 \|_{L^{\infty}} \| \p_y \tq  \xw^{\ssg} \|_{L^{2}}^2;\\
		|II_5| &\le C \| \p_y^3 u_2 \xw\|_{L_x^{\infty} L_y^2}  \| \tq  \xw^{\ssg-1}\|_{L_x^2 L_y^{\infty} } \| \p_y \tq  \xw^{\ssg} \|_{L^{2}} \le C \| \p_y^3 u_2 \xw\|_{L_x^{\infty} L_y^2}  \| \p_y \tq  \xw^{\ssg} \|_{L^{2}}^2.\\	
		\dal
		\deqq
		Then we deal with the term $II_{6}$.
		\beqq
		\bal
		II_6 =& \int_0^x \j \p_x \{\tr u_2^2 \p_y(\f{v_2}{u_2})\}  \p_y \tq  \xw^{2\ssg} dy  d\tau\\
		=& \int_0^x \j \left(u_2^2 \p_x \tr  \p_y(\f{v_2}{u_2}) +  2 \tr \p_x u_2 \p_y(\f{v_2}{u_2}) u_2  +  \tr u_2^2 \p_{xy}(\f{v_2}{u_2})  \right)
		\p_y \tq  \xw^{2\ssg} dy d\tau\\
		\overset{def}{=}&II_{6,1}+II_{6,2}+II_{6,3}.
		\dal
		\deqq
		Using the equation $\eqref{equ-tilde}_1$ and integrating by part, we have
		\beqq
		\bal
		II_{6,1}=&\int_0^x \j (\tilde{u} \p_x \rho_1  + \tilde{v} \p_y \rho_1 + v_2 \p_y \tilde{\rho})
		\p_y(\f{v_2}{u_2}) u_2 \p_y \tq  \xw^{2\ssg} dy d\tau\\
		=&\int_0^x \j (\tilde{u} \p_x \rho_1  + \tilde{v} \p_y \rho_1 )
		\p_y(\f{v_2}{u_2}) u_2 \p_y \tq   \xw^{2\ssg} dy d\tau
		- \int_0^x \j    \tilde{\rho} \p_y \left\{v_2 \p_y(\f{v_2}{u_2}) u_2 \p_y \tq  \xw^{2\ssg}\right\} dy d\tau \\
		\overset{def}{=}&II_{6,1,1}+II_{6,1,2}+II_{6,1,3}.
		\dal
		\deqq
		We can check that
		\beqq
		\bal
		|II_{6,1,1}|
		&\le \| \p_y(\f{v_2}{u_2})  \xw^{\ssg} \|_{L^{\infty}} \| \p_x \rho_1 \|_{L^{2}} \| \tu \xw^{\ssg-1} \|_{L^{\infty}}  \| u_2 \p_y \tq \xw \|_{L^{2}}\\
		&\le C_{L_a} \| \p_y(\f{v_2}{u_2}) \xw^{\ssg} \|_{L^{\infty}} \| \p_x \rho_1 \|_{L^{2}} \| \p_y^2 \tv  \xw^{\ssg}  \|_{L^{2}}  \| u_2 \p_y \tq \xw \|_{L^{2}};\\
		|II_{6,1,2}| 	&\le \| \p_y(\f{v_2}{u_2})  \xw^{\ssg} \|_{L^{\infty}} \| \p_y \rho_1 \|_{L^{\infty}} \| \tv  \|_{L^{2}}  \| u_2 \p_y \tq \xw^{\ssg} \|_{L^{2}}\\
		&\le C \| \p_y(\f{v_2}{u_2}) \xw^{\ssg} \|_{L^{\infty}} \| \p_y \rho_1 \|_{L^{\infty}} \| \p_y^2 \tv \xw^{2} \|_{L^{2}}  \| u_2 \p_y \tq  \xw^{\ssg}\|_{L^{2}}.
		\dal
		\deqq
		Then we deal with the term $II_{6,1,3}$.
		\beqq
		\bal
		|II_{6,1,3}| \le &\;\| \p_y(\f{v_2}{u_2}) \xw^{\ssg} \|_{L^{\infty}} (1+\|  u_2\|_{L^{\infty}}) \| v_2\|_{L^{\infty}} \| \tr \|_{L^{2}}  \| \sqrt{u_2} \p_y^2 \tq \xw^{\ssg} \|_{L^{2}}\\
		&\;+\| \tr \|_{L^{2}}  \| \p_y \tq \xw^{\ssg} \|_{L^{2}}
		\| \p_y(\f{v_2}{u_2})\xw^{\ssg} \|_{L^{\infty}} \| \p_y u_2\|_{L^{\infty}} \| v_2\|_{L^{\infty}}\\
		&\;+\| \tr \|_{L^{2}}  \| \p_y \tq \xw^{\ssg}\|_{L^{2}}
		(\| \p_y(\f{v_2}{u_2}) \xw^{\ssg} \|_{L^{\infty}} \|  u_2\|_{L^{\infty}} \| \p_y v_2\|_{L^{\infty}}
		+\| \p_y^2(\f{v_2}{u_2})\xw^{\ssg} \|_{L^{\infty}} \|  u_2\|_{L^{\infty}} \| v_2\|_{L^{\infty}}).
		\dal
		\deqq
		Thus we finish the estimate of the term $II_{6,1}$.
		Next we deal with the term $II_{6,2}$ and $II_{6,3}$.
		\beqq
		\bal
		|II_{6,2}| &\le C \| \p_{y}(\f{v_2}{u_2})\xw^{\ssg} \|_{L^{\infty}} \|  \p_x u_2\|_{L^{\infty}}
		\| \tr \|_{L^{2}}  \| u_2 \p_y \tq \xw^{\ssg}\|_{L^{2}};\\
		|II_{6,3}| &\le \| \p_{xy}(\f{v_2}{u_2})\xw^{\ssg} \|_{L^{\infty}} \|  u_2\|_{L^{\infty}}
		\| \tr \|_{L^{2}}  \|u_2 \p_y \tq \xw^{\ssg}\|_{L^{2}}.
		\dal
		\deqq
		Combining the estimate of $II_{6}$, we have
		\beqq
		\bal
		|II_{6}|	
		\le &\; C_{L_a} \| \p_y^2 \tv  \xw^{\ssg}  \|_{L^{2}}  \| u_2 \p_y \tq \xw \|_{L^{2}}
		(
		\| \p_y(\f{v_2}{u_2}) \xw^{\ssg} \|_{L^{\infty}} \| \p_x \rho_1 \|_{L^{2}}+
		\| \p_y(\f{v_2}{u_2}) \xw^{\ssg} \|_{L^{\infty}} \| \p_y \rho_1 \|_{L^{\infty}}
		)\\
		&\; + C\| \tr \|_{L^{2}}  \| \sqrt{u_2} \p_y^2 \tq \xw^{\ssg} \|_{L^{2}} \| \p_y(\f{v_2}{u_2}) \xw^{\ssg} \|_{L^{\infty}} (1+\|  u_2\|_{L^{\infty}}) \| v_2\|_{L^{\infty}} \\
		&\;+C \| \tr \|_{L^{2}}  \| \p_y \tq \xw^{\ssg} \|_{L^{2}}
		\| \p_y(\f{v_2}{u_2})\xw^{\ssg} \|_{L^{\infty}} \| (\p_y u_2, \p_y v_2, u_2, v_2)\|_{L^{\infty}}^2\\
		&\; + C \| \tr \|_{L^{2}}  \|u_2 \p_y \tq \xw^{\ssg}\|_{L^{2}}
		(
		\p_{y}(\f{v_2}{u_2})\xw^{\ssg} \|_{L^{\infty}} \|  \p_x u_2\|_{L^{\infty}}+\| \p_{xy}(\f{v_2}{u_2})\xw^{\ssg} \|_{L^{\infty}} \|  u_2\|_{L^{\infty}}
		).
		\dal
		\deqq
		Similar to the estimate of $II_{6,1}$,
		\beqq
		\bal
		|II_{7}| \le &\; \| \p_y \tq \xw^{\ssg}\|_{L^{2}}
		(\| \tu \xw^{\ssg-1} \|_{L^{\infty}} \|  \p_x u_1 \xw\|_{L^{2}} \| \p_x \rho_1\|_{L^{\infty}} +\| \p_y \tu \xw^{\ssg} \|_{L_x^{\infty} L_y^2} \|  v_1\|_{L_x^{2} L_y^{\infty} } \| \p_x \rho_1\|_{L^{\infty}} )\\
		&\;+\| \p_y \tq \xw^{\ssg} \|_{L^{2}} \| \rho_1\|_{L^{\infty}}
		(\|\p_x \tu\xw^{\ssg-1} \|_{L_x^2L_y^{\infty}} \|  \p_x u_1 \xw\|_{L_x^{\infty}L_y^2}
		+\| \tu \xw^{\ssg-1} \|_{L^{2}} \|  \p_x u_1 \xw \|_{L^{2}}\\
		&\;+\|  \p_y \tu \xw^{\ssg}\|_{L_x^{\infty} L_y^2} \|  \p_x v_1\|_{L_x^2 L_y^{\infty} }
		+\|  \p_x \p_y \tu \xw^{\ssg} \|_{L^2} \|  v_1\|_{L^{\infty} } )\\
		\le &\; C_{L_a} \| \p_y \tq \xw^{\ssg}\|_{L^{2}}
		\| \p_y^2 \tv \xw^{\ssg}\|_{L^{2}}(\|  \p_x u_1 \xw\|_{L^{2}} \| \p_x \rho_1\|_{L^{\infty}}+\|  v_1\|_{L_x^{2} L_y^{\infty} } \| \p_x \rho_1\|_{L^{\infty}}\\
		&\;+
		\| \rho_1\|_{L^{\infty}}\|  \p_x u_1 \xw\|_{L^{2}}
		+\| \rho_1\|_{L^{\infty}} \|  \p_x v_1\|_{L_x^2 L_y^{\infty}}
		+\| \rho_1\|_{L^{\infty}} \|   v_1\|_{ L^{\infty}} ).
		\dal	
		\deqq
		Using the equation $\eqref{equ-tilde}_1$,
		we have
		\beqq
		\f{\tilde{u}}{u_2} \p_x \rho_1 + \p_x \tilde{\rho} + \f{\tilde{v}}{u_2} \p_y \rho_1 + \f{v_2}{u_2}  \p_y \tilde{\rho}=0.
		\deqq
		Multiplying the above equation by $\tr$, integrating over $[0, x]\times[0,+\infty)$ and integrating by part, we have
		\beqq
		\bal
		\f12\j \! \tr^2 (x, y)dy&=
		-\f12 \!\int_0^x \!\!\j \p_y(\f{v_2}{u_2}) |\tr|^2 dy d\tau
		-\!\int_0^x \!\!\j \! \!  \f{\tu}{u_2} \p_x \rho_1 \tr  dy d\tau
		-\!\int_0^x \!\!\j \! \!  \f{\tv}{u_2} \p_y \rho_1 \tr  dy d\tau\\
		&\overset{def}{=}II_{8}+II_{9}+II_{10}.
		\dal
		\deqq
		It is easy to check that
		\beqq
		\bal
		|II_{8}| &\le C \|  \p_y(\f{v_2}{u_2})\|_{L^{\infty}}
		\| \tr \|_{L^2}^2;\\
		|II_{10}| &\le  \|  \p_y \rho_1 \|_{L_x^{\infty} L_y^2}
		\| \tq  \|_{L^2_x L^{\infty}_y}
		\| \tr \|_{L^2} \le C \|  \p_y \rho_1 \|_{L_x^{\infty} L_y^2}
		\| \p_y \tq \xw \|_{L^2}
		\| \tr \|_{L^2}.\\
		\dal
		\deqq
		Let us deal with the term $II_{9}$.
		\beqq
		\bal
		|II_{9}| &\le  \|  \f{\tu}{u_2} \p_x \rho_1  \|_{L^2}
		\| \tr \|_{L^2}\\
		& \le \|  \f{\tu}{u_2} \p_x \rho_1 \chi_{\sde} \|_{L^2}
		+\|  \f{\tu}{u_2} \p_x \rho_1 (1-\chi_{\sde}) \|_{L^2}\| \tr \|_{L^2}\\
		& \le C_{\lambda_0,\xi_0} (\|  \f{\tu}{y} \|_{L_x^{\infty} L_y^2}
		\|\p_x \rho_1  \|_{L_x^{2} L_y^{\infty}}
		+ \|  {\tu}  \|_{L_x^{\infty} L_y^{\infty}}
		\|\p_x \rho_1  \|_{L_x^{2} L_y^{2}})\| \tr \|_{L^2}\\
		&\le   C_{\lambda_0,\xi_0,L_a}(\|\p_x \rho_1  \|_{L_x^{2} L_y^{\infty}}+
		\|\p_x \rho_1  \|_{L^2})
		\|  \p_y^2 \tv \xw\|_{L^2}
		\| \tr \|_{L^2}.\\
		\dal
		\deqq
		Recall the definition $\tq\overset{def}{=}\f{\tv}{u_2}$,
		then it holds true that
		\beqq
		\bal
		&\;\|\p_y^2 \tv \xw^{\ssg}\|_{L^2}\\
		\le
		&\; \| \p_y u_2 \|_{L^{\infty}} \|\p_y \tq \xw^{\ssg}\|_{L^2}
		+ \| \p_y^2 u_2 \xw\|_{L_x^{\infty} L_y^2} \| \tq \xw^{\ssg-1} \|_{L^2_x L^{\infty}_y}
		+(1+ \|  u_2\|_{L^{\infty}} )
		\|\sqrt{u_2} \p_y^2 \tq \xw^{\ssg}\|_{L^2}\\
		\le &\;  (\| \p_y u_2\|_{L^{\infty}}
		+ \| \p_y^2 u_2 \xw\|_{L_x^{\infty} L_y^2})\|\p_y \tq \xw^{\ssg}\|_{L^2}
		+(1+ \|  u_2\|_{L^{\infty}} )
		\|\sqrt{u_2} \p_y^2 \tq \xw^{\ssg}\|_{L^2},
		\dal
		\deqq
		and similar to the estimate in Lemma \eqref{Lemma:help}, we can obtain for the constant $\xi_0$,
		\beqq
		\|\p_y \tq \xw^{\ssg} \|_{L^2}
		\le C_{\xi_0,\kappa_3}\|\sqrt{\rho_1}u_2 \p_y \tq \xw^{\ssg}\|_{L^2}
		+\sde^{\f12} C_{\lambda_0} \|\sqrt{u_2} \p_y^2 \tq \xw^{\ssg} \|_{L^2}.
		\deqq
		By virtue of the uniform estimate \eqref{estimate-uX} in Theorem \ref{main-result-steady} for the two solutions $(u_1, v_1,\rho_1)$ and $(u_2, v_2,\rho_2)$, combining the estimates of $II_{1}$  to $II_{10}$, using the smallness of $\sde$, we have
		\beqq
		\bal
		&\;\f12\j \! \tr^2 (x, y)dy+ \f12\j \!(\rho_1 u_2^2 |\p_y \tq |^2 \xw^{2 \ssg}) (x, y)dy
		+\!\int_0^x \!\!\j \! \!u_2 |\p_y^2 \tq |^2 \xw^{2\ssg}  dy d\tau
		+\!\int_0^x \!\! (\p_y u_2 |\p_y \tq|^2)_{y=0} d\tau\\
		\le &\; C_{\lambda_0,\xi_0,\kappa_3,L_a}\!\int_0^x \!\!\j (\tr^2, \rho_1 u_2^2 |\p_y \tq| \xw^{2\ssg}, |\p_y \tq|^2 \xw^{2\ssg} )dy d\tau  + \f14 \!\int_0^x \!\!\j u_2 |\p_y^2 \tq|^2 \xw^{2\ssg} dy d\tau\\
		\le &\;  C_{\lambda_0,\xi_0,\kappa_3,L_a} \!\int_0^x \!\!\j (\tr^2, \rho_1 u_2^2 |\p_y \tq|^2 \xw^{2\ssg}) dy d\tau +  \f12 \!\int_0^x \!\!\j u_2 |\p_y^2 \tq|^2 \xw^{2 \ssg}dy d\tau.
		\dal
		\deqq
		Therefore we have finished the proof of Proposition \ref{lemma-uniq}.
	\end{proof}

	\section*{Data availability}
	No data was used for the research described in the article.
	
	\section*{Acknowledgments}
	This research was partially supported by
	National Key Research and Development Program of China(2021YFA1002100, 2020YFA0712500),
	Guangdong Basic and Applied Basic Research Foundation(2022A1515011798, 2021B1515310003),
	National Natural Science Foundation of China(12126609),
	Guangzhou Science and technology project(2024A04J6410).
	
	\begin{appendices}
		
		\section{Some useful Sobolev inequalities and maximum principles}\label{appendix-a}
		First, we will state two elementary inequalities without proof (i.e., Lemma \ref{appendix-hardy} and Lemma  \ref{appendix-Sobolev} below), refer to Lemmas
 B.1 and B.2 in  \cite{Masmoudi}.
		Let us first state the Hardy type inequalities and the Sobolev type inequality.
		\begin{lemm}[Hardy Type Inequalities]
			\label{appendix-hardy}
			Let $f:\mathbb{T}\times\mathbb{R}^+ \rightarrow \mathbb{R}$. \\
			{\rm(i)}
			If $\lambda>-\frac{1}{2}$ and $\underset{y\rightarrow+\infty}{\lim}f(x, y)=0$, then
			\beq\label{Hardy1}
			\|f\xw^{\lambda}\|_{L^2(\mathbb{T}\times\mathbb{R}^+)}
			\le \frac{2}{2\lambda+1}\|\p_y f\xw^{\lambda+1}\|_{L^2(\mathbb{T}\times\mathbb{R}^+)};
			\deq
			{\rm(ii)}If $\lambda<-\frac{1}{2}$, then
			\beq\label{Hardy2}
			\|f\xw^{\lambda}\|_{L^2(\mathbb{T}\times\mathbb{R}^+)}
			\le \sqrt{-\frac{1}{2\lambda+1}}\|f|_{y=0}\|_{L^2(\mathbb{T})}
			-\frac{2}{2\lambda+1}\|\p_y f\xw^{\lambda+1}\|_{L^2(\mathbb{T}\times\mathbb{R}^+)}.
			\deq
		\end{lemm}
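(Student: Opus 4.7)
The plan is to derive both inequalities from a single pointwise-in-$x$ integration-by-parts identity and then pass to the $L^2(\mathbb{T}\times\mathbb{R}^+)$ norm. For each fixed $x \in \mathbb{T}$, assuming $f(x,\cdot)$ is smooth with adequate decay (to be justified by a density/cutoff argument), I differentiate the product $|f|^2 \langle y\rangle^{2\lambda+1}$ and integrate over $[0,+\infty)$ to obtain
\begin{equation*}
-|f(x,0)|^2 = 2\int_0^{+\infty} f\,\partial_y f \, \langle y\rangle^{2\lambda+1}\,dy + (2\lambda+1)\int_0^{+\infty} |f|^2 \langle y\rangle^{2\lambda}\,dy,
\end{equation*}
since $\langle y\rangle|_{y=0}=1$ and the boundary contribution at $y=+\infty$ vanishes.

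For part (i) with $\lambda > -\tfrac12$, I would first reduce to compactly supported $f$ so that the boundary contribution at infinity genuinely disappears. Then, using that the left-hand side is non-positive, I drop $-|f(x,0)|^2$ to get $(2\lambda+1)\int |f|^2 \langle y\rangle^{2\lambda}\,dy \le -2\int f\,\partial_y f\,\langle y\rangle^{2\lambda+1}\,dy$. Integrating in $x$ over $\mathbb{T}$, applying Cauchy–Schwarz, and dividing by $\|f\langle y\rangle^\lambda\|_{L^2(\mathbb{T}\times\mathbb{R}^+)}$ immediately yields the constant $\tfrac{2}{2\lambda+1}$.

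For part (ii) with $\lambda < -\tfrac12$, the weight $\langle y\rangle^{2\lambda+1}$ already decays at infinity, so the boundary term at $y=+\infty$ vanishes under only mild decay assumptions on $f$. Rearranging the identity and applying Cauchy–Schwarz pointwise in $x$, then integrating over $\mathbb{T}$ and using Cauchy–Schwarz again in $x$, produces a quadratic inequality
\begin{equation*}
-(2\lambda+1)\,A^2 \le C^2 + 2AB,
\end{equation*}
where $A := \|f\langle y\rangle^\lambda\|_{L^2(\mathbb{T}\times\mathbb{R}^+)}$, $B := \|\partial_y f\,\langle y\rangle^{\lambda+1}\|_{L^2(\mathbb{T}\times\mathbb{R}^+)}$, and $C := \|f|_{y=0}\|_{L^2(\mathbb{T})}$. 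Solving this quadratic for $A$ (note $-(2\lambda+1) > 0$) and invoking the elementary estimate $\sqrt{B^2 - (2\lambda+1) C^2} \le B + \sqrt{-(2\lambda+1)}\,C$ gives exactly the stated bound $A \le \sqrt{-1/(2\lambda+1)}\,C - \tfrac{2}{2\lambda+1}B$.

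The one non-routine point is the density/approximation step: in case (i) the argument requires $|f|^2\langle y\rangle^{2\lambda+1}\to 0$ as $y\to +\infty$, which is not implied by $\lim_{y\to\infty} f = 0$ alone when $\lambda > -\tfrac12$. I would handle this by a standard cutoff $\chi_R(y)$, proving the inequality for $\chi_R f$ first and then letting $R\to\infty$ via monotone convergence on the left and dominated convergence on the right — the only subtlety is showing the cutoff commutators contribute negligibly, which follows because $|\chi_R'| \lesssim R^{-1}$ localises on $[R,2R]$ where the relevant integrals vanish in the limit. Once this approximation is settled, the rest is pure algebra.
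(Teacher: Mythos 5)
Your integration-by-parts-plus-Cauchy--Schwarz strategy is the standard route for these weighted Hardy inequalities, and it matches the reference the paper leans on (Masmoudi--Wong, Lemmas B.1--B.2); the paper itself gives no proof, so there is no internal argument to compare against. Part (ii) is handled correctly: rearranging the pointwise identity, Cauchy--Schwarz in $y$ and then in $x$ gives $-(2\lambda+1)A^2\le C^2+2AB$, and solving the quadratic together with $\sqrt{B^2-(2\lambda+1)C^2}\le B+\sqrt{-(2\lambda+1)}\,C$ does reproduce the stated constants.

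There is, however, a genuine gap in part (i), and you spotted the right issue but proposed a fix that does not close it. You need the boundary contribution $|f(x,y)|^2\langle y\rangle^{2\lambda+1}$ at $y=+\infty$ to vanish. The cutoff argument requires the commutator term $\chi_R' f\langle y\rangle^{\lambda+1}$ to go to zero, but since $|\chi_R'|\lesssim R^{-1}$ and $\langle y\rangle\sim R$ on the support of $\chi_R'$, one only gets $\|\chi_R' f\langle y\rangle^{\lambda+1}\|_{L^2}^2\lesssim\int_R^{2R}|f|^2\langle y\rangle^{2\lambda}\,dy$, and this tail tends to zero only if one already knows $f\langle y\rangle^{\lambda}\in L^2$ --- which is exactly the quantity the inequality is meant to control. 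As written the argument is circular, and the claim that ``the relevant integrals vanish in the limit'' is precisely what is unproved.

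The clean fix dispenses with the cutoff. Assume without loss of generality the right-hand side is finite, so $\partial_y f(x,\cdot)\langle\cdot\rangle^{\lambda+1}\in L^2(\mathbb{R}^+)$ for a.e.\ $x$. Combined with $\lim_{y\to+\infty}f(x,y)=0$, the fundamental theorem of calculus and Cauchy--Schwarz give, for $\lambda>-\tfrac12$,
\[
|f(x,y)|^2
=\Bigl|\int_y^{+\infty}\partial_s f\,ds\Bigr|^2
\le\Bigl(\int_y^{+\infty}\langle s\rangle^{-2\lambda-2}\,ds\Bigr)\Bigl(\int_y^{+\infty}|\partial_s f|^2\langle s\rangle^{2\lambda+2}\,ds\Bigr)
=\frac{\langle y\rangle^{-2\lambda-1}}{2\lambda+1}\int_y^{+\infty}|\partial_s f|^2\langle s\rangle^{2\lambda+2}\,ds,
\]
so $|f(x,y)|^2\langle y\rangle^{2\lambda+1}\to 0$ as $y\to+\infty$ because the last integral is a vanishing tail of a convergent integral. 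This justifies the integration by parts directly, with no truncation needed, and the remainder of your argument for part (i) then goes through as stated.
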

		\begin{lemm}[Sobolev Type Inequality]
			\label{appendix-Sobolev}
			Let $f: \mathbb{T}\times \mathbb{R}^+\rightarrow \mathbb{R}$.
			Then there exists a universal constant $C>0$ such that
			\beq\label{inf}
			\|f\|_{L^\infty(\mathbb{T}\times \mathbb{R}^+)}
			\le C(\|f\|_{L^2(\mathbb{T}\times \mathbb{R}^+)}
			+\|\p_x f\|_{L^2(\mathbb{T}\times \mathbb{R}^+)}
			+\|\p_y^2 f\|_{L^2(\mathbb{T}\times \mathbb{R}^+)}).
			\deq
		\end{lemm}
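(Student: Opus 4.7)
The approach I would take is to reduce the two-dimensional inequality to one-dimensional estimates by expanding $f$ in Fourier series in the periodic variable $x$ and then applying a Gagliardo--Nirenberg-type interpolation in the half-line variable $y$ to each Fourier mode. The periodicity in $x$ is essential here: it provides a discrete spectrum $k\in\mathbb{Z}$ with a spectral gap at zero, and this discreteness is ultimately what lets one derivative in $x$ combine with two derivatives in $y$ to control the $L^\infty$ norm.

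Concretely, I would proceed in three steps. First, write $f(x,y)=\sum_{k\in\mathbb{Z}}\hat f_k(y)\,e^{ikx}$, so that $\|f\|_{L^\infty}\le\sum_k\|\hat f_k\|_{L^\infty_y}$; by Parseval one has $\|f\|_{L^2}^2\simeq\sum_k a_k^2$, $\|\p_x f\|_{L^2}^2\simeq\sum_k k^2 a_k^2$, and $\|\p_y^2 f\|_{L^2}^2\simeq\sum_k b_k^2$, where $a_k:=\|\hat f_k\|_{L^2_y}$ and $b_k:=\|\p_y^2\hat f_k\|_{L^2_y}$. Second, for each $k$ I would apply the one-dimensional Gagliardo--Nirenberg interpolation on $\mathbb{R}^+$, derived either by an even extension to $\mathbb{R}$ (so that $\|\phi\|_{L^\infty}^2\lesssim\|\phi\|_{L^2}\|\phi'\|_{L^2}$ and $\|\phi'\|_{L^2}^2\lesssim\|\phi\|_{L^2}\|\phi''\|_{L^2}$) or directly on the half-line with the boundary terms absorbed by density, to obtain
\[
\|\hat f_k\|_{L^\infty_y}\le C\,a_k^{3/4}\,b_k^{1/4}.
\]
Third, I would sum over $k$. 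The mode $k=0$ is handled directly by $C(\|f\|_{L^2}+\|\p_y^2 f\|_{L^2})$ via Young. For $k\ne 0$, I would factor $a_k^{3/4}=|k|^{-3/4}(|k|a_k)^{3/4}$ and apply a three-factor H\"older inequality in $k$ with exponents $(2,\,8/3,\,8)$, which satisfy $\tfrac12+\tfrac38+\tfrac18=1$:
\[
\sum_{k\ne 0}a_k^{3/4}b_k^{1/4}
\le\Bigl(\sum_{k\ne 0}|k|^{-3/2}\Bigr)^{1/2}\Bigl(\sum_{k}(|k|a_k)^{2}\Bigr)^{3/8}\Bigl(\sum_{k}b_k^{2}\Bigr)^{1/8}
\le C\,\|\p_x f\|_{L^2}^{3/4}\,\|\p_y^2 f\|_{L^2}^{1/4},
\]
where $\sum_{k\ne 0}|k|^{-3/2}<\infty$ is the key convergent series; a final Young's inequality then produces the bound $C(\|\p_x f\|_{L^2}+\|\p_y^2 f\|_{L^2})$, and assembling all pieces gives \eqref{inf}.

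The main obstacle is precisely this summation in $k$. A naive two-factor Cauchy--Schwarz with weight $|k|^{-1}$ closes on $(\sum k^{-2})^{1/2}(\sum k^{2}\cdots)^{1/2}$ and forces one of $\|\p_x\p_y f\|_{L^2}$ or $\|\p_x\p_y^2 f\|_{L^2}$ to appear on the right-hand side---neither of which is in our hypothesis. The same issue obstructs any purely iterated one-dimensional embedding argument using the Sobolev trace $H^1_x\hookrightarrow L^\infty_x$ followed by $H^1_y\hookrightarrow L^\infty_y$, since the latter ultimately requires controlling $\p_{xy}f$, which in turn needs either $\p_x^2 f$ or $\p_x\p_y^2 f$ by integration by parts. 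With only $\|f\|_{L^2}$, $\|\p_x f\|_{L^2}$, and $\|\p_y^2 f\|_{L^2}$ at one's disposal, the weight $|k|^{3/4}$ must be distributed between the $a_k$ and $b_k$ factors in the unique proportion dictated by the anisotropic scaling, and the three-factor H\"older exponents $(2,\,8/3,\,8)$ are precisely what matches the available data---verifying this bookkeeping is the crux of the proof.
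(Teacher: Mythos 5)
Your proof is correct. A small point worth noting: the paper itself does not prove this lemma --- it cites it without proof as Lemma B.2 of Masmoudi--Wong --- so there is no in-paper argument to compare against; your Fourier-series-in-$x$ plus anisotropic Gagliardo--Nirenberg-in-$y$ argument is a legitimate, self-contained route. I verified the bookkeeping: the 1D half-line bound $\|\hat f_k\|_{L^\infty_y}\lesssim a_k^{3/4}b_k^{1/4}$, the treatment of the $k=0$ mode by Young, the factorization $a_k^{3/4}=|k|^{-3/4}(|k|a_k)^{3/4}$, the three-factor H\"older with exponents $(2,8/3,8)$ (indeed $\tfrac12+\tfrac38+\tfrac18=1$), and the convergence of $\sum_{k\ne 0}|k|^{-3/2}$. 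Your diagnostic paragraph on why a naive iterated 1D embedding fails (it inevitably asks for $\p_{xy}f$ or $\p_x\p_y^2 f$) is also accurate and explains why the Fourier route is the natural one here.

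One minor imprecision that you should tighten if you write this out in full: the \emph{even} extension $\tilde\phi(y)=\phi(|y|)$ does \emph{not} preserve $H^2$, since $\tilde\phi'$ has a jump of size $2\phi'(0)$ at the origin and hence $\tilde\phi''$ acquires a Dirac mass unless $\phi'(0)=0$. The first-order step $\|\phi\|_{L^\infty}^2\lesssim\|\phi\|_{L^2}\|\phi'\|_{L^2}$ is fine via even extension, but for $\|\phi'\|_{L^2}^2\lesssim\|\phi\|_{L^2}\|\phi''\|_{L^2}$ on $\mathbb{R}^+$ you need either a higher-order reflection, e.g.\ $\tilde\phi(y)=3\phi(-y)-2\phi(-2y)$ for $y<0$, or a direct half-line argument: integrating by parts gives
\[
\|\phi'\|_{L^2(\mathbb{R}^+)}^2=-\phi(0)\phi'(0)-\int_0^\infty\phi\,\phi''\,dy,
\]
and bounding the boundary term by $|\phi(0)|\,|\phi'(0)|\le 2\|\phi\|_{L^2}^{1/2}\|\phi'\|_{L^2}\|\phi''\|_{L^2}^{1/2}$ produces a quadratic inequality in $\|\phi'\|_{L^2}$ that can be solved to give $\|\phi'\|_{L^2}\le(1+\sqrt 2)\|\phi\|_{L^2}^{1/2}\|\phi''\|_{L^2}^{1/2}$. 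Either repair is routine; the rest of your argument is sound.
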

		Then we will introduce some inequalities that will be used frequently in Section \ref{prior-estimate}.
		\begin{lemm}
			For proper functions $f, g$, the following inequalities holds\\
			{\rm(i)} $\underset{y\rightarrow+\infty}{\lim}fg(x, y)=0$, we have
			\beq\label{trace}
			\int_{\mathbb{T}}fg|_{y=0}dx
			\le \|\p_y f\|_{L^2}\|g\|_{L^2}+\|f\|_{L^2}\|\p_y g\|_{L^2}.
			\deq
			{\rm(ii)}For $\lambda \in \mathbb{R}$ and an integer $m\ge 3$,
			any $\alpha=(\alpha_1, \alpha_2),
			\tilde{\alpha}=(\tilde{\alpha}_1, \tilde{\alpha}_2)$
			with $|\alpha|+|\tilde{\alpha}|\le m$, it holds
			\beq\label{morse}
			\|\p^{\alpha}f \p^{\tilde{\alpha}}g\xw^{\lambda+\alpha_2+\tilde{\alpha}_2}\|_{L^2}
			\le C\|f\|_{H^m_{\lambda_1}}\|g\|_{H^m_{\lambda_2}},
			\deq
			with $\lambda_1+\lambda_2=\lambda$.\\
			\rm(iii) Assume $g\ge \kappa$ and $g$ satisfies:
			\beqq
			\underset{y \rightarrow +\infty}{\lim}\p^{e_i}g=0(i=1,2), \underset{y \rightarrow +\infty}{\lim}g=g_{\infty},
			\deqq
			where $\kappa$ and $g_{\infty}$ are the constants.
			Denote $\bar{g}\overset{def}{=} g-g_{\infty}$,
			we have for all $|\alpha|\ge 1$ that
			\beq\label{f02}
			\|\p^\al (\frac{1}{g})\xw^{\lambda+\al_2}\|_{L^2}
			\le C_\kappa (1+\| \bar{g}\|_{H^{|\al|}_{\lambda}}^{|\al|}),
			\deq
			and furthermore, if $\underset{y \rightarrow +\infty}{\lim}f=0$,
			we have for all $|\alpha|\ge 1$
			\beq\label{f03}
			\|\p^\al (\frac{f}{g})\xw^{\lambda+\al_2}\|_{L^2}
			\le C_\kappa (1+\|\bar{g} \|_{H^{\max\{|\al|, 2\}}_{{0}}}^{|\al|})
			\|f\|_{H^{|\al|}_{\lambda}}.
			\deq
		\end{lemm}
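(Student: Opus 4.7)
\medskip

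\noindent\textbf{Proof proposal.} The lemma packages four rather different estimates, and I would attack them in the order they are stated, saving the compound quotient estimate for last since it reduces to the others.

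\emph{The trace inequality \eqref{trace}.} Since $\lim_{y\to+\infty} fg=0$, I would write
\[
(fg)(x,0)=-\int_0^{+\infty}\partial_y(fg)\,dy=-\int_0^{+\infty}(\partial_y f\cdot g+f\cdot\partial_y g)\,dy,
\]
integrate in $x\in\mathbb{T}$, and apply Cauchy--Schwarz to each term. This is a one-line computation and requires no work beyond recognizing the telescoping identity.

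\emph{The Morse-type inequality \eqref{morse}.} My plan is the standard product-rule estimate in weighted Sobolev spaces. Split the pair $(\alpha,\tilde\alpha)$ according to whether $|\alpha|\le m/2$ or $|\tilde\alpha|\le m/2$; in the first case I put $\partial^\alpha f$ in $L^\infty$ via the Sobolev embedding \eqref{inf} and keep $\partial^{\tilde\alpha}g$ in $L^2$, and symmetrically in the second case. The weight splits as $\langle y\rangle^{\lambda_1+\alpha_2}\cdot\langle y\rangle^{\lambda_2+\tilde\alpha_2}$ so the two factors absorb the right powers. The only care needed is that when I pull a derivative into $L^\infty$ via \eqref{inf} I have enough regularity left, which is exactly why the condition $m\ge 3$ is imposed.

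\emph{The estimates \eqref{f02} and \eqref{f03} on quotients.} These are the substantive part, and I would derive \eqref{f03} first and recover \eqref{f02} by taking $f\equiv 1$ (treating the constant modulo the lower bound $g\ge\kappa$ so that $1/g$ makes sense). The key identity is the Fa\`a di Bruno formula applied to $h(g)=1/g$: for $|\alpha|\ge 1$,
\[
\partial^\alpha\!\left(\frac{f}{g}\right)=\sum_{k=0}^{|\alpha|}\sum\frac{c_{\alpha,\vec\beta,\gamma}}{g^{k+1}}\,\partial^\gamma f\,\prod_{j=1}^{k}\partial^{\beta_j}g,
\]
where the inner sum is over decompositions $\gamma+\beta_1+\cdots+\beta_k=\alpha$ with $|\beta_j|\ge 1$. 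On the right I write $\partial^{\beta_j}g=\partial^{\beta_j}\bar g$ (since $g_\infty$ is constant), use the uniform bound $g\ge\kappa$ to replace $g^{-(k+1)}$ by $C_\kappa$, and then apply the Morse-type estimate \eqref{morse} iteratively to the product of $k+1$ factors ($\partial^\gamma f$ together with $k$ factors of $\partial^{\beta_j}\bar g$), distributing the weight $\langle y\rangle^{\lambda+\alpha_2}$ among them according to the splitting $\lambda=\lambda+0+\cdots+0$ and $\alpha_2=\gamma_2+\sum\beta_{j,2}$. The number of factors $k$ of $\bar g$ ranges from $0$ to $|\alpha|$, producing the polynomial dependence $(1+\|\bar g\|_{H^{|\alpha|}_0})^{|\alpha|}$, and the single factor of $f$ gives the linear $\|f\|_{H^{|\alpha|}_\lambda}$.

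\emph{Main obstacle.} The delicate point in \eqref{f03} is bookkeeping the regularity budget when $|\alpha|=1$: in that case Fa\`a di Bruno gives only $\partial g/g^2$-type terms, so I need $\|\bar g\|_{H^2}$ (not merely $\|\bar g\|_{H^1}$) in order to place $\partial g$ in $L^\infty$ via \eqref{inf} when it is paired with $f$ in $L^2$, which is precisely why the exponent $\max\{|\alpha|,2\}$ appears rather than $|\alpha|$. For $|\alpha|\ge 2$ the exponent collapses to $|\alpha|$ and the iterated Morse estimate \eqref{morse} closes without this adjustment. No deeper analytic difficulty should arise; the work is essentially combinatorial.
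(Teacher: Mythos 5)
Your treatment of \eqref{trace} and \eqref{morse} is the standard argument; the paper itself does not prove these but cites the Appendix of Liu--Xie--Yang (2019), so there is no discrepancy to discuss there. For the quotient estimates your overall strategy (Fa\`a di Bruno on $g\mapsto 1/g$ plus Leibniz, then distributing the weight and applying the Morse inequality iteratively) is a legitimate alternative to what the paper actually does, namely prove \eqref{f02} first by induction on $|\alpha|$ (using the identity $0=\partial^{\tilde\alpha}(g\cdot g^{-1})$ to express $\partial^{\tilde\alpha}(1/g)$ in terms of lower-order derivatives of $1/g$) and then derive \eqref{f03} from \eqref{f02} by a Leibniz expansion for $|\alpha|\ge 3$, with direct hand estimates for $|\alpha|=1,2$. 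Your observation that the exponent $\max\{|\alpha|,2\}$ in \eqref{f03} comes from the Sobolev embedding \eqref{inf} needing two extra derivatives is exactly the reason the paper splits off the low-order cases, so that part of your reasoning is sound.

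However, there is a genuine gap in your plan to \emph{recover \eqref{f02} from \eqref{f03} by taking $f\equiv 1$}. This reduction fails for three independent reasons. First, the hypothesis of \eqref{f03} is $\lim_{y\to+\infty}f=0$, and $f\equiv 1$ does not satisfy it. Second, the right-hand side of \eqref{f03} contains the factor $\|f\|_{H^{|\alpha|}_\lambda}$, and for $f\equiv 1$ this norm is $\|\langle y\rangle^\lambda\|_{L^2(\mathbb{T}\times\mathbb{R}^+)}$, which is infinite whenever $\lambda\ge -\tfrac12$ (precisely the range of interest in this paper), so the substitution yields a vacuous bound. Third, and most structurally, the two estimates place the weight $\lambda$ on different quantities: in \eqref{f02} it sits on $\bar g$ (the bound is $\|\bar g\|_{H^{|\alpha|}_\lambda}^{|\alpha|}$), whereas in \eqref{f03} it sits on $f$ while $\bar g$ carries weight $0$; so even modulo the first two objections the bookkeeping would come out wrong. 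Replacing $f\equiv 1$ by $f=\bar g$ via the identity $\partial^\alpha(1/g)=-g_\infty^{-1}\partial^\alpha(\bar g/g)$ repairs the vanishing and finiteness issues but costs you extra regularity for $|\alpha|\le 2$ (you pick up $\|\bar g\|_{H^{\max\{|\alpha|,2\}}_0}$ rather than $\|\bar g\|_{H^{|\alpha|}_\lambda}$), so it still does not yield \eqref{f02} as stated. You should prove \eqref{f02} directly: either run the Fa\`a di Bruno expansion on $\partial^\alpha(1/g)$ itself, placing the weight $\lambda$ on one of the $\partial^{\beta_j}\bar g$ factors and weight $0$ on the others (using $\|\bar g\|_{H^{|\alpha|}_0}\le\|\bar g\|_{H^{|\alpha|}_\lambda}$ when $\lambda\ge 0$ to collapse the product into a single power), or follow the paper's induction. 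The logical dependency is $\eqref{f02}\Rightarrow\eqref{f03}$, not the reverse.
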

		\begin{proof}
			The proof of \eqref{trace} and \eqref{morse} can be found in the Appendix of \cite{Liu-Xie-Yang2019}.					
			As for the estimate \eqref{f02}, we will give the proof  by induction.
			First of all, if $|\alpha|=1$, then we have
			\beqq
			0=\p^\alpha\left\{\frac{1}{g}g\right\}
			=\p^\alpha(\frac{1}{g})g+\frac{1}{g}\p^\alpha g,
			\deqq
			which yields directly
			\beqq
			\p^\alpha(\frac{1}{g})=-\frac{1}{g^2}\p^\alpha g,
			\deqq
			Thus,  we have
			\beq\label{a102}
			\|\p^\alpha(\frac{1}{g})\xw^{\lambda+\al_2}\|_{L^2}
			\le C_\kappa\|\p^\alpha g\xw^{\lambda+\al_2}\|_{L^2}
			\deq
			Thus, the estimate \eqref{a102} implies that \eqref{f02}
			holds  for the case $|\alpha|=1$.
			Now, let us assume that \eqref{f02} holds  for $|\alpha|\ge 1$.
			Then, for $i,j=1,2$, we have for $|\tilde{\alpha}|=|\alpha|+1$
			\beqq
			0=\p^{\tilde{\alpha}}\left\{\frac{1}{g}g\right\}
			=\frac{1}{g}\p^{\tilde{\alpha}}g
			+\p^{\tilde{\alpha}}(\frac{1}{g})g
			+\sum_{0<\tilde{\beta}<\tilde{\alpha}}
			C_{\tilde{\alpha}}^{\tilde{\beta}}
			\p^{\tilde{\beta}-e_i}\p^{e_i}(\frac{1}{g})
			\p^{\tilde{\alpha}-\tilde{\beta}-e_j}\p^{e_j}g,
			\deqq
			which yields directly
			\beqq
			\p^{\tilde{\alpha}}(\frac{1}{g})
			=-\frac{1}{g^2}\p^{\tilde{\alpha}}g
			-\frac{1}{g}\sum_{0<\tilde{\beta}<\tilde{\alpha}}
			C_{\tilde{\alpha}}^{\tilde{\beta}}
			\p^{\tilde{\beta}-e_i}\p^{e_i}(\frac{1}{g})
			\p^{\tilde{\alpha}-\tilde{\beta}-e_j}\p^{e_j}g.
			\deqq
			Thus, using the Morse-type inequality \eqref{morse}, we have
			\beq\label{a103}
			\begin{aligned}
				&\;\|\p^{\tilde{\alpha}}(\frac{1}{g})\xw^{\lambda+\tilde{\alpha}_2}\|_{L^2}\\
				\le &\;C_\kappa\|\p^{\tilde{\alpha}}g\xw^{\lambda+\tilde{\alpha}_2}\|_{L^2}
				+C_\kappa\|\p^{e_i}(\frac{1}{g})\|_{H^{|\alpha|-1}_{i-1}}
				\|\p^{e_j}g\|_{H^{|\alpha|-1}_{\lambda+j-1}}\\
				\le &\;C_\kappa\|\p^{\tilde{\alpha}}g\xw^{\lambda+\tilde{\alpha}_2}\|_{L^2}
				+C_\kappa (1+\|\bar{g} \|_{H^{|\al|}_{0}}^{|\al|})
				\|\p^{e_j}g\|_{H^{|\alpha|-1}_{\lambda+j-1}}\\
				\le &\;C_\kappa (1+\|\bar {g}\|_{H^{|\tilde{\al}|}_{\lambda}}^{|\tilde{\al}|}).
			\end{aligned}
			\deq
			Thus, the estimate \eqref{a103} implies that \eqref{f02}
			holds for the case $|\tilde{\alpha}|=|\alpha|+1$.
			Therefore, the induction implies that the estimate \eqref{f02}
			for all $|\alpha|\ge 1$.
			
			Finally, let us give the proof for the estimate \eqref{f03}.
			If $|\al|=1$, it is easy to check that
			\beqq
			|\p^\al(\frac{f}{g})|\le C_\dl(|\p^\al f|+|f||\p^\al g|).
			\deqq
			Thus, we apply the Sobolev inequality to obtain for all $|\al|=1$
			\beq\label{a104}
			\begin{aligned}
				&\;\|\p^\al(\frac{f}{g})\xw^{\lambda+\alpha_2}\|_{L^2}\\
				\le &\;C_\kappa(\|f\xw^{\lambda}\|_{L^2}+\|f\xw^{\lambda}\|_{L^2}^{\frac{1}{2}}
				\|\p_x f\xw^{\lambda}\|_{L^2}^{\frac{1}{2}})\\
				&\;\times \|\p^\al g\xw^{\alpha_2}\|_{L^2}^{\frac{1}{2}}
				\|\p_y \p^\al g \xw^{\alpha_2}\|_{L^2}^{\frac{1}{2}}
				+C_\kappa \|\p^\al f \xw^{\lambda+\alpha_2}\|_{L^2}\\
				\le &\;C_\kappa(1+\|\bar{g} \|_{H^2_{0}})\| f\|_{H^1_\lambda}.
			\end{aligned}
			\deq
			If $|\al|=2$, $|\beta|=1$, it is easy to check that
			\beqq
			|\p^\al(\frac{f}{g})|
			\le C_{\kappa}\left\{ |\p^\al f|+|\p^\beta  f||\p^{\alpha-\beta}  g|
			+|f|(|\p^\al g|+|\p^{\beta} g||\p^{\alpha-\beta} g|)\right\},
			\deqq
			which yields directly
			\beq\label{a105}
			\begin{aligned}
				&\;\|\p^\al(\frac{f}{g})\xw^{\lambda+\alpha_2}\|_{L^2}\\
				\le &\;C_\kappa\|f\xw^{\lambda}\|_{L^\infty}
				(\|\p^\beta g \xw^{\beta_2}\|_{L^2}+\|\p^\beta g \xw^{\beta_2}\|_{L^2}^{\frac12}
				\|\p_x \p^\beta g \xw^{\beta_2}\|_{L^2}^{\frac12})\\
				&\;\times\|\p^{\alpha-\beta} g \xw^{\alpha_2-\beta_2}\|_{L^2}^{\frac12}
				\|\p_y\p^{\alpha-\beta} g \xw^{\alpha_2-\beta_2}\|_{L^2}^{\frac12}\\
				&\;+C_\kappa\|f\xw^{\lambda}\|_{L^\infty}\|\xw^{\alpha_2}\p^\al g\|_{L^2}
				+C_\kappa \|\p^\al f\xw^{\lambda+\alpha_2}\|_{L^2}\\
				&\;+C_\kappa(\|\p^\beta  f \xw^{\lambda+\beta_2}\|_{L^2}
				+\|\p^\beta  f\xw^{\lambda+\beta_2}\|_{L^2}^{\frac{1}{2}}
				\|\p_x \p^\beta  f \xw^{\lambda+\beta_2}\|_{L^2}^{\frac{1}{2}})\\
				&\;\times \|\p^{\alpha-\beta}  g \xw^{\alpha_2-\beta_2}\|_{L^2}^{\frac{1}{2}}
				\|\p_y \p^{\alpha-\beta}  g \xw^{\alpha_2-\beta_2}\|_{L^2}^{\frac{1}{2}}\\
				\le &\;C_\kappa (1+\|\bar {g}\|_{H^2_{0}}^2)\|f\|_{H^2_\lambda}.
			\end{aligned}
			\deq
			For all $|\alpha| \ge 3$, it is easy to check that for $i= 1,2$,
			\beqq
			\begin{aligned}
				\p^\al (\frac{f}{g})
				=\frac{1}{g}\p^\al f
				+\sum_{0<\beta \le \alpha}C_\alpha^\beta
				\p^{\beta-e_i}\p^{e_i}(\frac{1}{g})\p^{\alpha-\beta}f.
			\end{aligned}
			\deqq
			Thus, we have
			\beq\label{a106}
			\begin{aligned}
				&\;\|\p^\al (\frac{f}{g})\xw^{\lambda+\alpha_2}\|_{L^2}\\
				\le &\;C_\kappa\|\p^\al f\xw^{\lambda+\alpha_2}\|_{L^2}
				+C_{|\alpha|}\|\p^{e_i}(\frac{1}{g})\|_{H^{|\alpha|-1}_{i-1}}
				\|f\|_{H^{|\alpha|-1}_{\lambda}}\\
				\le &\;C_\kappa\|\p^\al f\xw^{\lambda+\alpha_2}\|_{L^2}
				+C_{|\alpha|, \kappa}(1+\|\bar {g}\|_{H^{|\al|}_{{0}}}^{|\al|})
				\|f\|_{H^{|\alpha|-1}_{\lambda}}\\
				\le &\;C_\kappa (1+\|\bar {g}\|_{H^{|\al|}_{{0}}}^{|\al|})
				\|f\|_{H^{|\al|}_{\lambda}}.
			\end{aligned}
			\deq
			Thus, the combination of estimate \eqref{a104},
			\eqref{a105} and \eqref{a106} yields \eqref{f03}.
			Therefore, we complete the proof of this lemma.
			
		\end{proof}
		Finally we will state two classical maximum principles that are useful in Section \ref{prior-estimate}, for parabolic equations, which can be found in \cite{Masmoudi}.
		\begin{lemm}[Maximum Principle for Parabolic Equations]\label{max}
			Let $\var \ge 0$.
			Under the condition \eqref{Lowerb} that $\vr^\var+\vrf$ has an upper bound, if $H\in C([0, T]; C^2(\mathbb{T}\times \mathbb{R}^+))
			\cap C^1([0, T]; C^0(\mathbb{T}\times \mathbb{R}^+))$
			is a bounded function that satisfies the differential inequality
			\beqq
			\{\p_t+b_1\p_x +b_2 \p_y -\var \p_x^2-\frac{1}{\vr^\var+\vrf}\p_y^2\}H\le fH,
			~\rm{in}~ [0, T]\times \mathbb{T} \times \mathbb{R}^+,
			\deqq
			where the coefficients $b_1, b_2$ and $f$ are continuous and satisfy
			\beq\label{a101}
			\|b_2 \xw^{-1}\|_{L^\infty([0, T]\times \mathbb{T} \times \mathbb{R}^+)}
			<+\infty ~{\rm and}~\|f\|_{L^\infty([0, T]\times \mathbb{T} \times \mathbb{R}^+)}
			\le \lambda,
			\deq
			then for any $t\in [0, T]$,
			\beq\label{max-est}
			\underset{\mathbb{T} \times \mathbb{R}^+}{\sup}H(t)
			\le \max\{e^{\lambda t}\|H(0)\|_{L^\infty(\mathbb{T} \times \mathbb{R}^+)},
			\underset{\tau \in [0, t]}{\max}\{e^{\lambda(t-\tau)}
			\|H(\tau)|_{y=0}\|_{L^\infty(\mathbb{T})}\}\}.
			\deq
		\end{lemm}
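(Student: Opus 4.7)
The plan is to adapt the classical perturbation trick for parabolic maximum principles to the unbounded half-plane $\mathbb{T} \times \mathbb{R}^+$. Denote the parabolic operator $\mathcal{L} \overset{def}{=} \p_t + b_1 \p_x + b_2 \p_y - \var \p_x^2 - \frac{1}{\vr^\var+\vrf}\p_y^2$, so that the hypothesis reads $\mathcal{L}H \le fH$. First I would eliminate the zero-order term by introducing $w \overset{def}{=} e^{-\lambda t}H$; a direct computation gives $\mathcal{L}w = -\lambda w + e^{-\lambda t}\mathcal{L}H \le (f-\lambda)w$, which is nonpositive at any point where $w \ge 0$, since $f \le \lambda$ by \eqref{a101}.

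To force the supremum to be attained on the unbounded domain, I would next introduce the perturbed comparison function $\tilde{w} \overset{def}{=} w - \eta t - \mu e^{At}\xw^2$ with parameters $\eta, \mu > 0$ small and $A > 0$ large, both to be chosen. The key computation is $\mathcal{L}(e^{At}\xw^2) = A e^{At}\xw^2 + 2 b_2 \xw e^{At} - \tfrac{2}{\vr^\var+\vrf} e^{At}$, which using $\xw \ge 1$, the bound $\|b_2\xw^{-1}\|_{L^\infty} < \infty$ from \eqref{a101}, and the lower bound $\vr^\var+\vrf \ge \kappa$ from \eqref{Lowerb} satisfies $\mathcal{L}(e^{At}\xw^2) \ge \bigl(A - 2\|b_2\xw^{-1}\|_{L^\infty} - \tfrac{2}{\kappa}\bigr) e^{At}\xw^2$. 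I would fix $A$ large enough (depending only on $\|b_2\xw^{-1}\|_{L^\infty}$ and $\kappa$) to render the bracketed constant nonnegative, whereupon the strict inequality $\mathcal{L}\tilde{w} \le (f-\lambda)w - \eta$ holds throughout $[0,T]\times\mathbb{T}\times\mathbb{R}^+$.

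With $\tilde{w}$ so constructed, the quadratic weight $\mu e^{At}\xw^2$ drives $\tilde{w}(t,x,y) \to -\infty$ as $y \to \infty$ uniformly in $(t,x)$, so by $x$-periodicity and the boundedness of $H$, the supremum of $\tilde w$ over $[0,t]\times \mathbb{T}\times\mathbb{R}^+$ is attained at some point $P^*=(t^*,x^*,y^*)$. A standard case analysis then closes the argument: if $t^*=0$ or $y^*=0$, then $\tilde{w}(P^*)$ is directly controlled by $\|H(0)\|_{L^\infty(\mathbb{T}\times\mathbb{R}^+)}$ or by $\max_{\tau\in[0,t]} e^{-\lambda\tau}\|H(\tau)|_{y=0}\|_{L^\infty(\mathbb{T})}$ respectively; otherwise the interior-maximum conditions $\p_t\tilde{w}\ge 0$, $\p_x\tilde w=\p_y\tilde w=0$, $\p_x^2\tilde w\le 0$, $\p_y^2\tilde w\le 0$ force $\mathcal{L}\tilde{w}(P^*)\ge 0$, which contradicts $\mathcal{L}\tilde{w}\le -\eta<0$ whenever $w(P^*)\ge 0$ (and if $w(P^*)<0$ then $\tilde{w}<0$ everywhere and no bound is needed). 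Evaluating the resulting estimate at $s=t$ and sending $\mu\to 0^+$ followed by $\eta\to 0^+$ yields \eqref{max-est}.

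The main obstacle is of course the unboundedness of the domain in the $y$-direction: without a coercive corrector, the supremum of $w$ need not be attained and the interior-maximum contradiction is unavailable. The choice of weight $\xw^2$ is dictated precisely by the structural assumption \eqref{a101}, namely that $\|b_2\xw^{-1}\|_{L^\infty}$ is finite; this is the coarsest control on $b_2$ under which the drift term $b_2\p_y(\xw^2)=2b_2\xw$ can be absorbed by $A\xw^2$, so a slower weight would be insufficient. The finite upper bound on $\vr^\var+\vrf$ in \eqref{Lowerb} is used only via $\vr^\var+\vrf \ge \kappa$ to control $\frac{1}{\vr^\var+\vrf}$ from above, while the signs of $\var\p_x^2$ and the $y$-diffusion are both favourable to the maximum principle, so no further structural assumption is required.
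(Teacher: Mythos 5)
Your proof is correct and is the standard barrier (comparison-function) argument for a parabolic maximum principle on an unbounded slab. The paper itself does not supply a proof of Lemma~\ref{max} -- it is stated as a citation of Masmoudi and Wong -- so there is no internal argument to compare against, but your construction is exactly the classical one: substitute $w=e^{-\lambda t}H$ to kill the zero-order term, subtract the coercive corrector $\eta t+\mu e^{At}\xw^{2}$ so that the boundedness of $H$ forces the supremum of $\tilde w$ to be attained, absorb the drift contribution $2b_{2}\xw$ via the structural bound $\|b_{2}\xw^{-1}\|_{L^{\infty}}<\infty$ by taking $A$ large, and obtain a contradiction at an interior (or terminal-time) maximum from the strict inequality $\mathcal{L}\tilde w\le -\eta<0$. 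The computation of $\mathcal{L}(e^{At}\xw^{2})$ is accurate, the case $w(P^{*})<0$ is correctly dispensed with since the right-hand side of \eqref{max-est} is nonnegative, and the final limits $\mu\to0^{+}$, $\eta\to0^{+}$ are taken in the right order. One small remark on wording: what your argument actually invokes from \eqref{Lowerb} is the positive lower bound $\vr^{\var}+\vrf\ge\kappa$ (giving $\frac{1}{\vr^{\var}+\vrf}\le\frac{1}{\kappa}$ and the favourable sign of the $y$-diffusion term), not the upper bound $\bar\kappa$; the lemma's phrasing is loose on this point, but your use of the hypothesis is the correct one.
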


		\begin{lemm}[Minimum Principle for Parabolic Equations]\label{min}
			Let $\var \ge 0$.
			Under the condition \eqref{Lowerb} that $\vr^\var+\vrf$ has an upper bound,
			if $H\in C([0, T]; C^2(\mathbb{T}\times \mathbb{R}^+))
			\cap C^1([0, T]; C^0(\mathbb{T}\times \mathbb{R}^+))$
			is a bounded function with
			\beqq
			\kappa(t)\overset{def}{=}
			\min\{\underset{\mathbb{T}\times\mathbb{R}^+}{H(0)},\quad
			\underset{[0, t]\times \mathbb{T}}{\min}H|_{y=0}\}\ge 0,
			\deqq
			and satisfies:
			\beqq
			\{\p_t+b_1\p_x +b_2 \p_y -\var \p_x^2-\frac{1}{\vr^\var+\vrf}\p_y^2\}H= fH,
			\deqq
			where the coefficients $b_1, b_2$ and $f$ are continuous and
			satisfy \eqref{a101}, then for any $t \in[0, T]$,
			\beq\label{min-est}
			\underset{\mathbb{T}\times \mathbb{R}^+}{\min H(t)}
			\ge (1-\lambda t e^{\lambda t})\kappa(t).
			\deq
			
		\end{lemm}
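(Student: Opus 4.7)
The plan is to construct an auxiliary function whose non-negativity encodes the desired lower bound, and then to conclude via a standard parabolic minimum principle with non-negative zeroth-order coefficient.

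\textbf{Auxiliary function and parabolic boundary data.} Fix $t \in (0, T]$ and regard $\kappa := \kappa(t) \ge 0$ as a constant on $[0, t]$. I would set
\begin{equation*}
\phi(s) := (1 - \lambda s e^{\lambda s})\kappa, \qquad \Phi := H - \phi, \qquad s \in [0, t],
\end{equation*}
so that the conclusion reduces to showing $\Phi \ge 0$ on $[0, t] \times \mathbb{T} \times \mathbb{R}^+$ (and specializing at $s = t$). First I would check the parabolic boundary: since $\kappa(\cdot)$ is non-increasing, $\Phi(0, \cdot, \cdot) = H(0) - \kappa(t) \ge \kappa(0) - \kappa(t) \ge 0$, while for $(s,x) \in [0,t] \times \mathbb{T}$ one has $\Phi(s, x, 0) \ge \kappa(s) - (1 - \lambda s e^{\lambda s})\kappa(t) \ge \lambda s e^{\lambda s}\kappa(t) \ge 0$, using $1 - \lambda s e^{\lambda s} \le 1$ and $\kappa(s) \ge \kappa(t)$.

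\textbf{Differential inequality for $\Phi$.} Because $\phi$ depends only on $s$ and $\mathcal{L}H = fH$, a direct computation gives
\begin{equation*}
\mathcal{L}\Phi - f\Phi \;=\; f\phi - \phi'(s) \;=\; \kappa\bigl[f(1 - \lambda s e^{\lambda s}) + \lambda(1 + \lambda s)e^{\lambda s}\bigr].
\end{equation*}
As a linear function of $f \in [-\lambda, \lambda]$ the right-hand side is minimized at $f = -\lambda\,\mathrm{sgn}(1 - \lambda s e^{\lambda s})$, yielding $\kappa\lambda\bigl[(1+\lambda s)e^{\lambda s} - |1 - \lambda s e^{\lambda s}|\bigr]$. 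A short case split (separating $\lambda s e^{\lambda s} \le 1$ and $\lambda s e^{\lambda s} > 1$) shows this is non-negative, so $\mathcal{L}\Phi \ge f\Phi$ on $(0, t] \times \mathbb{T} \times \mathbb{R}^+$.

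\textbf{Reduction and barrier argument.} To eliminate the unsigned zeroth-order coefficient $f$, I would set $\tilde\Phi := e^{-\lambda s}\Phi$, for which a direct computation yields
\begin{equation*}
\mathcal{L}\tilde\Phi + (\lambda - f)\tilde\Phi \;\ge\; 0, \qquad \lambda - f \ge 0,
\end{equation*}
with $\tilde\Phi \ge 0$ on the parabolic boundary. To make the minimum principle rigorous on the unbounded strip, I would add the standard barrier $\tilde\Phi_\varepsilon := \tilde\Phi + \varepsilon e^{As}(1 + y^2)$ with $\varepsilon > 0$ and $A$ chosen large enough in terms of $\|b_2 \xw^{-1}\|_{L^\infty}$, $\|(\vr^\var + \vrf)^{-1}\|_{L^\infty}$ and $\lambda$. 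Using \eqref{a101} and the upper bound on $1/(\vr^\var+\vrf)$ in \eqref{Lowerb},
\begin{equation*}
\mathcal{L}[e^{As}(1+y^2)] \;=\; e^{As}\Bigl[A(1+y^2) + 2 b_2 y - \tfrac{2}{\vr^\var + \vrf}\Bigr] \;\ge\; e^{As},
\end{equation*}
so $\mathcal{L}\tilde\Phi_\varepsilon + (\lambda - f)\tilde\Phi_\varepsilon \ge \varepsilon e^{As} > 0$. Since $\tilde\Phi_\varepsilon(s,x,y) \to +\infty$ as $y \to +\infty$ uniformly in $(s,x)$ and $\tilde\Phi_\varepsilon \ge 0$ on the parabolic boundary, any negative infimum would have to be attained at some interior point $(s^*, x^*, y^*)$ with $s^* > 0$ and $y^* > 0$; at such a point the usual first- and second-order conditions give $\mathcal{L}\tilde\Phi_\varepsilon \le 0$ and $(\lambda - f)\tilde\Phi_\varepsilon \le 0$, contradicting the strict positivity above. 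Hence $\tilde\Phi_\varepsilon \ge 0$, and sending $\varepsilon \to 0^+$ yields $\tilde\Phi \ge 0$, equivalently $\Phi \ge 0$, which is the claim.

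\textbf{Main obstacle.} The crux is the pointwise verification of $f\phi - \phi' \ge 0$ under the sharp constraint $|f| \le \lambda$; this computation is what pins down the precise form $(1 - \lambda s e^{\lambda s})\kappa$ of the auxiliary function (a naive linear ansatz such as $\kappa(1 - \lambda s)$ fails the case analysis). The secondary technicality is the design of the barrier on the unbounded $y$-strip: the hypothesis $\|b_2 \xw^{-1}\|_{L^\infty} < +\infty$ permits linear growth of $b_2$ in $y$, forcing the barrier to grow at least quadratically in $y$, while the multiplier $e^{As}$ with $A$ large is needed simultaneously to dominate the contributions from $b_2$, from $1/(\vr^\var+\vrf)$, and from the zeroth-order coefficient $\lambda - f$.
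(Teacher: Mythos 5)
Your argument is correct and complete. The paper does not prove Lemma \ref{min} itself --- it is stated and cited to Masmoudi--Wong --- so a line-by-line comparison is not possible, but your reconstruction is the natural one and it works. The three moving parts each check out: (i) the auxiliary function $\phi(s)=(1-\lambda s e^{\lambda s})\kappa(t)$ is precisely the solution of the extremal ODE $\phi'=-\lambda\phi-\lambda\kappa$ with $\phi(0)=\kappa$, hmm rather it is chosen so that $f\phi-\phi'=\kappa\bigl[f(1-\lambda s e^{\lambda s})+\lambda(1+\lambda s)e^{\lambda s}\bigr]\ge 0$ for every $|f|\le\lambda$, and the two-case computation (reducing to $e^{\lambda s}-1+2\lambda s e^{\lambda s}\ge 0$ when $\lambda s e^{\lambda s}\le 1$, and to $1+e^{\lambda s}\ge 0$ otherwise) is correct; (ii) the parabolic-boundary check $\Phi\ge 0$ at $s=0$ and $y=0$ follows since $\phi(s)\le\kappa(t)\le H$ there, which uses $\kappa(t)\ge 0$; (iii) the conjugation $\tilde\Phi=e^{-\lambda s}\Phi$ gives $\mathcal{L}\tilde\Phi+(\lambda-f)\tilde\Phi\ge 0$ with a nonnegative zeroth-order coefficient, and the barrier $\varepsilon e^{As}(1+y^2)$ handles the unbounded $y$-strip once $A$ is chosen large in terms of $\|b_2\xw^{-1}\|_{L^\infty}$ and $\|1/(\vr^\var+\vrf)\|_{L^\infty}$ (the latter requiring the \emph{lower} bound $\kappa\le\vr^\var+\vrf$ from \eqref{Lowerb}, which you correctly invoke even though the paper's lemma statement sloppily mentions only the upper bound). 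One cosmetic remark: in the boundary estimate at $y=0$ it is cleaner to bound $H(s,x,0)\ge\kappa(t)$ directly rather than detour through $\kappa(s)$ and monotonicity of $\kappa(\cdot)$, but both are valid.
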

		
		\section{Boundary reduction of vorticity}\label{appendix-b}
		
		In this section, we will give the proof for the boundary reduction
		representation \eqref{bl-redu}.
		
		\textbf{Step 1:
		Proof of the boundary reduction representation $\eqref{bl-redu}_1$.}
		Indeed, from the vorticity equation $\eqref{Prandtl-02}_1$, we have
		\beqq
		\frac{1}{\vr^\var+\vrf}\p_y^3 w
		=\p_t \p_y w^\var +\p_y(u^\var \p_x w^\var+ v^\var \p_y w^\var)
		-\var \p_x^2 \p_y w^\var-\p_y^2(\frac{1}{\vr^\var+\vrf}) \p_y w^\var
		-2\p_y(\frac{1}{\vr^\var+\vrf})\p_y^2 w^\var,
		\deqq
		which yields on the boundary $y=0$ that
		\beq\label{a201}
		\frac{1}{\vr^\var+\vrf}\p_y^3 w^\var
		=w^\var \p_x w^\var+\frac{2\p_y \vr^\var}{(\vr^\var+\vrf)^2}\p_y^2 w^\var.
		\deq
		Thus, \eqref{a201} implies the boundary reduction relation $\eqref{bl-redu}_1$.\\
		
		\textbf{Step 2:
		Proof of the boundary reduction representation $\eqref{bl-redu}_2$.}
		Indeed, the vorticity equation $\eqref{Prandtl-02}_1$ yields directly
		\beqq
		\frac{1}{\vr^\var+\vrf}\p_y^5 w^\var
		=\p_t \p_y^3 w^\var+\p_y^3(u^\var \p_x w^\var +v^\var \p_y w^\var)-\var \p_x^2 \p_y^3 w^\var
		-\p_y^4 (\frac{1}{\vr^\var+\vrf})\p_y w^\var
		-\sum_{1\le k \le 3}C_4^k \p_y^k(\frac{1}{\vr^\var+\vrf})\p_y^{5-k}w^\var,
		\deqq
		which, on the boundary $y=0$, yields
		\beqq
		\frac{1}{\vr^\var+\vrf}\p_y^5 w^\var
		=\p_t \p_y^3 w^\var+\p_y^3(u^\var \p_x w^\var +v^\var \p_y w^\var)-\var \p_x^2 \p_y^3 w^\var
		-\sum_{1\le k \le 3}C_4^k \p_y^k(\frac{1}{\vr^\var+\vrf})\p_y^{5-k}w^\var.
		\deqq
		Indeed, on the boundary $y=0$, it holds
		\beq\label{a203}
		\p_y^3(u^\var \p_x w^\var+v^\var \p_y w^\var)=3w^\var \p_x \p_y^2 w^\var-2\p_x w^\var \p_y^2 w^\var,
		\deq
		The application of \eqref{a201}, on the boundary $y=0$, yields directly
		\beqq
		\var \p_x^2 \p_y^3 w^\var
		=\var \p_x^2\{(\vr^\var+\vrf) w^\var \p_x w^\var\}
		+2\var \p_x^2(\frac{\p_y \vr^\var \p_y^2 w^\var }{\vr^\var+\vrf}).
		\deqq
		Due to the boundary condition $u^\var|_{y=0}=v^\var|_{y=0}=0$, we have
		\beq\label{a204}
		\begin{aligned}
			&\p_t \vr^\var|_{y=0}=-(u \p_x \vr^\var+v^\var \p_y \vr^\var-\var \p_x^2 \vr^\var)_{y=0}=\var \p_x^2 \vr^\var|_{y=0},\\
			&\p_t \p_y \vr^\var|_{y=0}=-\p_y\{u^\var \p_x \vr^\var+v^\var \p_y \vr^\var-\var \p_x^2 \vr^\var\}|_{y=0}=(-w^\var\p_x \vr^\var + \var \p_x^2 \p_y \vr^\var)_{y=0}.
		\end{aligned}
		\deq
		and hence, on the boundary $y=0$, we have
		\beq\label{a205}
		\begin{aligned}
			\frac{1}{\vr^\var+\vrf}\p_t \p_y^3 w^\var
			=&\p_t(w^\var \p_x w^\var)-2\p_t\{\p_y(\frac{1}{\vr^\var+\vrf})\p_y^2 w^\var\}-\p_t(\frac{1}{\vr^\var+\vrf})\p_y^3 w^\var\\
			=&\frac{\p_x(\p_y^2 w^\var  w^\var)}{\vr^\var+\vrf} -\frac{{\p_x \vr^\var w^\var}\p_y^2 w^\var}{(\vr^\var+\vrf)^2}
			+ 2\p_y^2 w^\var \left(\frac{-w^\var\p_x \vr^\var +\var \p_x^2 \p_y \vr^\var}{(\vr^\var+\vrf)^2}-\frac{2 \var  \p_y \vr^\var \p_x^2 \vr^\var  }{(\vr^\var+\vrf)^3}\right)\\
			&+2\frac{\p_y \vr^\var}{(\vr^\var+\vrf)^2}\left(\var \p_x^2\p_y^2 w^\var +\p_y^3(\frac{\p_y w^\var}{\vr^\var+\vrf})\right)-\frac{\var \p_x^2 \vr^\var}{(\vr^\var+\vrf)^2} \p_y^3 w^\var.
		\end{aligned}
		\deq
		Thus, on the boundary $y=0$, it holds
		\beq\label{a206}
		\begin{aligned}
			\p_y^5 w^\var
			=&(\vr^\var+\vrf)\p_x(\p_y^2 w^\var  w^\var)
			-3{\p_x \vr^\var w^\var}\p_y^2 w^\var
			+2\p_y^2 w^\var\left(\var \p_x^2 \p_y \vr^\var- \frac{2 \var \p_y \vr^\var \p_x^2 \vr^\var  }{\vr^\var+\vrf}\right)\\
			&+{2\p_y \vr^\var}\p_y^3(\frac{\p_y w^\var}{\vr^\var+\vrf})
			+2 \var \p_y \vr^\var  \p_x^2\p_y^2 w^\var -\var \p_x^2 \vr^\var \p_y^3 w^\var-\sum_{1\le k \le 3}C_4^k (\vr^\var+\vrf)\p_y^k(\frac{1}{\vr^\var+\vrf})\p_y^{5-k}w^\var\\
			&
			+(\vr^\var+\vrf)\left(3w^\var \p_x \p_y^2 w^\var-2\p_x w^\var \p_y^2 w^\var
			-\var \p_x^2\{(\vr^\var+\vrf) w^\var \p_x w^\var\}
			-2\var \p_x^2(\frac{\p_y \vr^\var \p_y^2 w^\var }{\vr^\var+\vrf})\right)\\
			\overset{def}{=}&N_1.
		\end{aligned}
		\deq
		Thus, \eqref{a206} implies the boundary reduction relation $\eqref{bl-redu}_2$.\\

		\textbf{Step 3:
		Proof of the boundary reduction representation $\eqref{bl-redu}_3$.}
		Using the vorticity equation $\eqref{Prandtl-02}_1$, we have
		\beq\label{a207}
		\bal
		\frac{1}{\vr^\var+\vrf}\p_y^7 w^\var
		=&\;\p_t \p_y^5 w^\var+\p_y^5\{u^\var \p_x w^\var +v^\var \p_y w^\var\}-\var \p_x^2 \p_y^5 w^\var\\
		&\;-\p_y^6 (\frac{1}{\vr^\var+\vrf})\p_y w^\var
		-\sum_{1\le k \le 5}C_6^k \p_y^k(\frac{1}{\vr^\var+\vrf})\p_y^{7-k}w^\var.
		\dal
		\deq
		Then, we will give the estimate for $\p_t \p_y^5 w^\var|_{y=0}$ in the sequence.
		Using the vorticity equation $\eqref{Prandtl-02}_1$, we have
		\beqq
		\begin{aligned}
			\frac{1}{\vr^\var+\vrf}\p_t \p_y^5 w^\var
			=&-\p_t(\frac{1}{\vr^\var+\vrf})\p_y^5 w^\var
			+\p_t^2 \p_y^3 w^\var+\p_t \p_y^3\{u^\var \p_x w^\var +v^\var \p_y w^\var\}
			-\var \p_t \p_x^2 \p_y^3 w^\var\\
			&-\p_t \left\{\p_y^4 (\frac{1}{\vr^\var+\vrf})\p_y w^\var\right\}
			-\p_t \left\{\sum_{1\le k \le 3}C_4^k \p_y^k(\frac{1}{\vr^\var+\vrf})\p_y^{5-k}w^\var\right\},
		\end{aligned}
		\deqq
		which, together with \eqref{a204}, yields directly on the $y=0$ that
		\beq\label{a208}
		\begin{aligned}
			\frac{1}{\vr^\var+\vrf}\p_t \p_y^5 w^\var
			=& \frac{\var \p_x^2 \vr^\var}{(\vr^\var+\vrf)^2} \p_y^5 w^\var+
			\p_t^2 \p_y^3 w^\var+\p_t \p_y^3 \{u^\var \p_x w^\var +v^\var \p_y w^\var\}
			-\var \p_t \p_x^2 \p_y^3 w^\var\\
			&-\p_t \left\{\sum_{1\le k \le 3}C_4^k \p_y^k(\frac{1}{\vr^\var+\vrf})\p_y^{5-k}w^\var\right\}.
		\end{aligned}
		\deq
	First, we give the boundary reduction for $\p_t^2 \p_y^3 w^\var|_{y=0}$.
		Indeed, the vorticity equation $\eqref{Prandtl-02}_1$ yields
		\beqq
		\begin{aligned}
			\frac{1}{\vr^\var+\vrf}\p_t^2 \p_y^3 w^\var
			=&\p_t^2\left\{\p_t \p_y w^\var+\p_y(u^\var \p_x w^\var+v^\var \p_y w^\var)-\var \p_x^2 \p_y w^\var\right\}
			-\sum_{1\le k \le 2}C_2^k \p_t^k(\frac{1}{\vr^\var+\vrf})\p_t^{2-k}\p_y^3 w^\var\\
			&-\p_t^2 \left\{\p_y^2(\frac{1}{\vr^\var+\vrf})\p_y w^\var
			+2\p_y(\frac{1}{\vr^\var+\vrf})\p_y^2 w^\var\right\}.
		\end{aligned}
		\deqq
		Thus, on the boundary $y=0$, we have
		\beq\label{a209}
		\begin{aligned}
			\frac{1}{\vr^\var+\vrf}\p_t^2 \p_y^3 w^\var
			=&\p_t^2(w^\var \p_x w^\var)
			-2\p_t^2 \left\{\p_y(\frac{1}{\vr^\var+\vrf})\p_y^2 w^\var\right\}
			-\sum_{1\le k \le 2}C_2^k \p_t^k(\frac{1}{\vr^\var+\vrf})\p_t^{2-k}\p_y^3 w^\var
			.
		\end{aligned}
		\deq
		It is easy to check that
		\beq\label{a210}
		\p_t^2\{w^\var\p_x w^\var\}
		=\p_t^2 w^\var \p_x w^\var+2\p_t w^\var \p_t \p_x w^\var +w^\var\p_x \p_t^2 w^\var.
		\deq
		Obviously, on the boundary $y=0$, the vorticity equation
		$\eqref{Prandtl-02}_1$ yields
		\beq\label{a211}
		\p_t w^\var=\frac{1}{\vr^\var+\vrf}\p_y^2 w^\var +\var \p_x^2 w^\var.
		\deq
		Similarly, it is easy to check that
		\beqq
		\p_t^2 w^\var=\p_t \left\{\p_y(\frac{1}{\vr^\var+\vrf})\p_y w^\var\right\}
		+\p_t(\frac{1}{\vr^\var+\vrf})\p_y^2 w^\var
		+\frac{1}{\vr^\var+\vrf}\p_t \p_y^2 w^\var
		-\p_t\{u^\var \p_x w^\var+v^\var \p_y w^\var\}+\var\p_t \p_x^2 w^\var,
		\deqq
		which, on the boundary $y=0$, yields directly
		\beq\label{a212}
		\begin{aligned}
			\p_t^2 w^\var =& -\frac{\p_t \vr^\var}{(\vr^\var+\vrf)^2} \p_y^2 w^\var
			+\frac{1}{\vr^\var+\vrf}\p_t \p_y^2 w^\var +\var\p_t \p_x^2 w^\var\\
			=&-\frac{\var \p_x^2 \vr^\var}{(\vr^\var+\vrf)^2} \p_y^2 w^\var
			+ \frac{\var \p_x^2 \p_y^2 w^\var}{\vr^\var+\vrf}
			+\frac{1}{\vr^\var+\vrf}\p_y^3(\frac{1}{\vr^\var+\vrf}\p_y w^\var)+\var \p_x^2 \p_y\left\{\frac{1}{\vr^\var+\vrf}\p_y w^\var\right\} +\var^2 \p_x^4 w^\var,
		\end{aligned}
		\deq
		where we have used the condition
		\beq\label{a214}
		\begin{aligned}
			\p_t \p_y^2 w^\var|_{y=0}=(\p_y^3(\frac{1}{\vr^\var+\vrf}\p_y w^\var)+\var \p_x^2 \p_y^2 w^\var)_{y=0},\\
			\p_t \p_x^2 w^\var|_{y=0}= (\p_x^2 \p_y(\frac{1}{\vr^\var+\vrf}\p_y w^\var) + \var \p_x^4 w^\var)_{y=0}.
		\end{aligned}
		\deq
		Using the relations \eqref{a211} and \eqref{a212}, on the boundary $y=0$,
		the relation \eqref{a210} yields
		\beq\label{a213}
		\begin{aligned}
			&\;\p_t^2(w^\var\p_x w^\var)\\
			=&\;\p_x w^\var \left(-\frac{\var \p_x^2 \vr^\var}{(\vr^\var+\vrf)^2} \p_
			y^2 w^\var
			+ \frac{\var \p_x^2 \p_y^2 w^\var}{\vr^\var+\vrf}
			+\frac{1}{\vr^\var+\vrf}\p_y^3(\frac{1}{\vr^\var+\vrf}\p_y w^\var)+\var \p_x^2 \p_y(\frac{1}{\vr^\var+\vrf}\p_y w^\var) +\var^2 \p_x^4 w^\var\right)\\
			&\;+w^\var\p_x\left\{-\frac{\var \p_x^2 \vr^\var \p_
				y^2 w^\var}{(\vr^\var+\vrf)^2}
			+ \frac{\var \p_x^2 \p_y^2 w^\var}{\vr^\var+\vrf}
			+\frac{1}{\vr^\var+\vrf}\p_y^3(\frac{1}{\vr^\var+\vrf}\p_y w^\var)+\var \p_x^2 \p_y(\frac{1}{\vr^\var+\vrf}\p_y w^\var) +\var^2 \p_x^4 w^\var
			\right\}\\
			&\;+2(\frac{1}{\vr^\var+\vrf}\p_y^2 w^\var +\var \p_x^2 w^\var )\cdot \p_x\left\{\frac{1}{\vr^\var+\vrf}\p_y^2 w^\var+\var \p_x^2 w^\var\right\}.
		\end{aligned}
		\deq
		Finally, it is easy to check that
		\beq\label{a215}
		\begin{aligned}
			\p_t^2\left\{\p_y(\frac{1}{\vr^\var+\vrf})\p_y^2 w^\var\right\}
			=\p_t^2 \p_y(\frac{1}{\vr^\var+\vrf})\p_y^2 w^\var
			+2\p_t\p_y(\frac{1}{\vr^\var+\vrf})\p_t \p_y^2 w^\var
			+\p_y(\frac{1}{\vr^\var+\vrf}) \p_t^2  \p_y^2 w^\var.
		\end{aligned}
		\deq
		Using the density equation $\eqref{Prandtl-01}_1$, then we have
		\beqq
		\p_t\p_y (\frac{1}{\vr^\var+\vrf})
		=\p_y\left\{\frac{u^\var \p_x \vr^\var+v^\var \p_y \vr^\var-\var \p_x^2 \vr^\var}{(\vr^\var+\vrf)^2}\right\},
		\deqq
		and hence, on the boundary $y=0$, it holds
		\beq\label{a216}
		\p_t \p_y(\frac{1}{\vr^\var+\vrf})=\frac{2\var  \p_y \vr^\var \p_x^2 \vr^\var}{(\vr^\var+\vrf)^3}
		+\frac{w^\var\p_x \vr^\var-\var \p_x^2\p_y \vr^\var}{(\vr^\var+\vrf)^2},
		\deq
		and
		\beq\label{a217}
		\begin{aligned}
			&\;\p_t^2 \p_y (\frac{1}{\vr^\var+\vrf})\\
			=&\;\p_t \left\{\frac{2\var  \p_y \vr^\var \p_x^2 \vr^\var}{(\vr^\var+\vrf)^3}
			+\frac{w^\var\p_x \vr^\var-\var \p_x^2\p_y \vr^\var}{(\vr^\var+\vrf)^2}\right\}\\
			=&\;-\var \p_x^2 \vr^\var \left(\frac{6\var  \p_y \vr^\var \p_x^2 \vr^\var}{(\vr^\var+\vrf)^4}
			+2\frac{w^\var \p_x \vr^\var-\var \p_x^2\p_y \vr^\var}{(\vr^\var+\vrf)^3}\right)
			+\frac{2\var\p_x^2 \vr^\var (-w^\var \p_x \vr^\var + \var \p_x^2 \p_y \vr^\var) +2\var^2  \p_y \vr^\var \p_x^4 \vr^\var}{(\vr^\var+\vrf)^3}\\
			&\;+\frac{1}{(\vr^\var+\vrf)^2}\left(\var w^\var \p_x^3 \vr^\var + \p_x \vr^\var (\var \p_x^2 w^\var + \p_y(\frac{\p_y w^\var}{\vr^\var+\vrf}))+\var \p_x^2 \p_y (u^\var\p_x \vr^\var+v^\var \p_y \vr^\var -\var \p_x^2 \vr^\var)\right).
			\\
		\end{aligned}
		\deq
		Using the vorticity equation $\eqref{Prandtl-02}_1$,
		on the boundary $y=0$, it holds
		\beq\label{a218}
		\begin{aligned}
			&\;\p_t^2 \p_y^2 w^\var
			\\=&\;\p_t \left\{ \p_y^3(\frac{1}{\vr^\var+\vrf}\p_y w^\var) +\var \p_x^2\p_y^2 w^\var
			\right\}\\
			=&\;\p_y^3\left\{\frac{(u^\var \p_x \vr^\var+v^\var \p_y \vr^\var- \var \p_x^2 \vr^\var)\p_y w^\var}{(\vr^\var+\vrf)^2}
			+\frac{1}{\vr^\var+\vrf}\p_y^2(\frac{\p_y w^\var}{\vr^\var+\vrf})
			-\frac{\p_y(u^\var \p_x w^\var+v^\var \p_y w^\var)}{\vr^\var+\vrf}+\frac{\var \p_y \p_x^2 w^\var}{\vr^\var+\vrf}
			\right\}\\
			&- \p_x^2 \p_y^2\left\{u^\var \p_x w^\var+v^\var \p_y w^\var- \var \p_x^2 w^\var -\p_y(\frac{\p_y w^\var}{\vr^\var+\vrf})\right\}.
		\end{aligned}
		\deq
		Substituting the equalities \eqref{a214}, \eqref{a216}
		\eqref{a217} and \eqref{a218} into \eqref{a215},
		we have on the boundary $y=0$
		\beq\label{a219}
		\begin{aligned}
			&\;\p_t^2\left\{\p_y(\frac{1}{\vr^\var+\vrf})\p_y^2 w^\var\right\}\\
			=&\;\p_y(\frac{1}{\vr^\var+\vrf})\p_y^3\left\{\frac{(u^\var \p_x \vr^\var+v^\var \p_y \vr^\var- \var \p_x^2 \vr^\var)\p_y w^\var}{(\vr^\var+\vrf)^2}
			+\frac{1}{\vr^\var+\vrf}\p_y^2(\frac{\p_y w^\var}{\vr^\var+\vrf})
			-\frac{\p_y(u^\var \p_x w^\var+v^\var \p_y w^\var)}{\vr^\var+\vrf}
			\right\}\\
			&\;+\var\p_y(\frac{1}{\vr^\var+\vrf}) \p_y^3( \frac{ \p_y \p_x^2 w^\var}{\vr^\var+\vrf}) -\p_y(\frac{1}{\vr^\var+\vrf}) \p_x^2 \p_y^2\left\{u^\var \p_x w^\var+v^\var \p_y w^\var- \var \p_x^2 w^\var -\p_y(\frac{\p_y w^\var}{\vr^\var+\vrf})\right\}\\
			&\;-\var \p_x^2 \vr^\var \p_y^2 w^\var \left(\frac{6\var  \p_y \vr^\var \p_x^2 \vr^\var}{(\vr^\var+\vrf)^4}
			+2\frac{w^\var\p_x \vr^\var-\var \p_x^2\p_y \vr^\var}{(\vr^\var+\vrf)^3}\right)
			+\p_y^2 w^\var \frac{2\var\p_x^2 \vr^\var (-w^\var \p_x \vr^\var + \var \p_x^2 \p_y \vr^\var) +2\var^2  \p_y \vr^\var \p_x^4 \vr^\var}{(\vr^\var+\vrf)^3}\\
			&\;+\frac{\p_y^2 w^\var}{(\vr^\var+\vrf)^2}\left(\var w^\var \p_x^3 \vr^\var + \p_x \vr^\var (\var \p_x^2 w^\var + \p_y(\frac{\p_y w^\var}{\vr^\var+\vrf}))+\var \p_x^2 \p_y \{u^\var \p_x \vr^\var+v^\var \p_y \vr^\var -\var \p_x^2 \vr^\var\}\right)\\
			&\;+\left(\frac{2\var  \p_y \vr^\var \p_x^2 \vr^\var}{(\vr^\var+\vrf)^3}
			+\frac{w^\var\p_x \vr^\var-\var \p_x^2\p_y \vr^\var}{(\vr^\var+\vrf)^2}\right) \left(\p_y^3(\frac{1}{\vr^\var+\vrf}\p_y w^\var)+\var \p_x^2 \p_y^2 w^\var \right) .
		\end{aligned}
		\deq
		Substituting \eqref{a205}, \eqref{a213} and \eqref{a219} into \eqref{a209},
		on the boundary $y=0$, we have
		\beq\label{a220}
		\begin{aligned}
			&\frac{1}{\vr^\var+\vrf}\p_t^2 \p_y^3 w^\var\\
			=&-2\p_y(\frac{1}{\vr^\var+\vrf})\p_y^3\left\{\frac{(u^\var \p_x \vr^\var+v^\var \p_y \vr^\var- \var \p_x^2 \vr^\var)\p_y w^\var}{(\vr^\var+\vrf)^2}
			+\frac{1}{\vr^\var+\vrf}\p_y^2(\frac{\p_y w^\var}{\vr^\var+\vrf})
			-\frac{\p_y(u^\var \p_x w^\var+v^\var \p_y w^\var)}{\vr^\var+\vrf}
			\right\}\\
			&-2\var\p_y(\frac{1}{\vr^\var+\vrf})  \p_y^3 (\frac{ \p_y \p_x^2 w^\var}{\vr^\var+\vrf})
			+2 \p_y(\frac{1}{\vr^\var+\vrf}) \p_x^2 \p_y^2\left\{u^\var_1\p_x w^\var+u^\var_2 \p_y w^\var- \var \p_x^2 w^\var -\p_y(\frac{\p_y w^\var}{\vr^\var+\vrf})\right\}\\
			& +2 \var \p_x^2 \vr^\var \p_y^2 w^\var \left(\frac{6\var  \p_y \vr^\var \p_x^2 \vr^\var}{(\vr^\var+\vrf)^4}
			+2\frac{w^\var\p_x \vr^\var-\var \p_x^2\p_y \vr^\var}{(\vr^\var+\vrf)^3}\right)
			-2\p_y^2 w^\var \frac{2\var\p_x^2 \vr^\var (-w^\var \p_x \vr^\var + \var \p_x^2 \p_y \vr^\var) +2\var^2  \p_y \vr^\var \p_x^4 \vr^\var}{(\vr^\var+\vrf)^3}\\
			&-2\frac{\p_y^2 w^\var}{(\vr^\var+\vrf)^2}\left(\var w^\var \p_x^3 \vr^\var + \p_x \vr^\var (\var \p_x^2 w^\var + \p_y(\frac{\p_y w^\var}{\vr^\var+\vrf}))+\var \p_x^2 \p_y \{u^\var \p_x \vr^\var+v^\var \p_y \vr^\var -\var \p_x^2 \vr^\var\}\right)\\
			&-2\left(\frac{2\var  \p_y \vr^\var \p_x^2 \vr^\var}{(\vr^\var+\vrf)^3}
			+\frac{w^\var\p_x \vr^\var-\var \p_x^2\p_y \vr^\var}{(\vr^\var+\vrf)^2}\right) \left(\p_y^3(\frac{1}{\vr^\var+\vrf}\p_y w^\var)+\var \p_x^2 \p_y^2 w^\var \right)
			\\&+2(\frac{1}{\vr^\var+\vrf}\p_y^2 w^\var +\var \p_x^2 w^\var )\cdot \p_x\left\{\frac{1}{\vr^\var+\vrf}\p_y^2 w^\var+\var \p_x^2 w^\var\right\}\\
\end{aligned}
\deq
		\beqq
		\begin{aligned}
			&+\p_x w^\var \left(-\frac{\var \p_x^2 \vr^\var}{(\vr^\var+\vrf)^2} \p_
			y^2 w^\var
			+ \frac{\var \p_x^2 \p_y^2 w^\var}{\vr^\var+\vrf}
			+\frac{1}{\vr^\var+\vrf}\p_y^3(\frac{1}{\vr^\var+\vrf}\p_y w^\var)+\var \p_x^2 \p_y(\frac{1}{\vr^\var+\vrf}\p_y w^\var) +\var^2 \p_x^4 w^\var\right)\\
			&+w^\var\p_x\left\{-\frac{\var \p_x^2 \vr^\var}{(\vr^\var+\vrf)^2} \p_
			y^2 w^\var
			+ \frac{\var \p_x^2 \p_y^2 w^\var}{\vr^\var+\vrf}
			+\frac{1}{\vr^\var+\vrf}\p_y^3(\frac{1}{\vr^\var+\vrf}\p_y w^\var)+\var \p_x^2 \p_y(\frac{1}{\vr^\var+\vrf}\p_y w^\var) +\var^2 \p_x^4 w^\var
			\right\}\\
			&-2 \var^2 \p_y^3 w^\var\left(\frac{2(\p_x^2 \vr^\var)^2}{(\vr^\var+\vrf)^3} - \frac{\p_x^4 \vr^\var}{(\vr^\var+\vrf)^2}\right)
			+\frac{ \var \p_x^2 \vr^\var}{\vr^\var+\vrf}\Big(
			\frac{\p_x(\p_y^2 w^\var  w^\var)}{\vr^\var+\vrf} -\frac{{\p_x \vr^\var w^\var}\p_y^2 w^\var}{(\vr^\var+\vrf)^2}-\frac{\var \p_x^2 \vr^\var}{(\vr^\var+\vrf)^2} \p_y^3 w^\var\Big)\\
			&+ \frac{  2 \var \p_y^2 w^\var  \p_x^2 \vr^\var}{\vr^\var+\vrf}
			\left(\frac{-w^\var\p_x \vr^\var +\var \p_x^2 \p_y \vr^\var}{(\vr^\var+\vrf)^2}-\frac{2 \var  \p_y \vr^\var \p_x^2 \vr^\var  }{(\vr^\var+\vrf)^3}\right)
			+2\frac{\var \p_y \vr^\var \p_x^2 \vr^\var }{(\vr^\var+\vrf)^3}\left(\var \p_x^2\p_y^2 w^\var +\p_y^3(\frac{\p_y w^\var}{\vr^\var+\vrf})\right) .
		\end{aligned}
		\deqq
		Using the equation \eqref{a203}, we have on the boundary $y=0$ that
		\beq\label{a221}
		\begin{aligned}
			&\;\p_t \p_y^3\{u^\var \p_x w^\var+v^\var \p_y w^\var\}\\
			=&\;\p_t\{3w^\var \p_x \p_y^2 w^\var-2\p_x w^\var \p_y^2 w^\var\}\\
			=&\;3\p_x \p_y^2 w^\var \left(\p_y(\frac{\p_y w^\var}{\vr^\var+\vrf})+\var \p_x^2 w^\var\right)
			+3 w^\var \p_x \p_y^2\left\{\p_y(\frac{\p_y w^\var}{\vr^\var+\vrf})+\var \p_x^2 w^\var\right\}\\
			&\;-2\p_y^2 w^\var \p_x\left\{\p_y(\frac{\p_y w^\var}{\vr^\var+\vrf})+\var \p_x^2 w^\var\right\}
			-2\p_x w^\var \p_y^2\left\{\p_y(\frac{\p_y w^\var}{\vr^\var+\vrf})+\var \p_x^2 w^\var\right\}.
		\end{aligned}
		\deq
		Using the equation \eqref{a205}, we have on the boundary $y=0$ that
		\beq\label{a222}
		\begin{aligned}
			&\;\var\p_t \p_x^2 \p_y^3 w^\var\\
			=&\;\var\p_x^2\Bigg\{ \p_x(\p_y^2 w^\var  w^\var) -\frac{{\p_x \vr^\var w^\var}\p_y^2 w^\var}{\vr^\var+\vrf}
			+ 2\p_y^2 w^\var(\frac{-w^\var\p_x \vr^\var +\var \p_x^2 \p_y \vr^\var}{\vr^\var+\vrf}-\frac{2 \var \p_y \vr^\var \p_x^2 \vr^\var  }{(\vr^\var+\vrf)^2})\\
			&\;+2\frac{\p_y \vr^\var}{\vr^\var+\vrf}(\var \p_x^2\p_y^2 w^\var +\p_y^3(\frac{\p_y w^\var}{\vr^\var+\vrf}))-\frac{\var \p_x^2 \vr^\var}{\vr^\var+\vrf} \p_y^3 w^\var \Bigg\}.
		\end{aligned}
		\deq
		Using the density equation $\eqref{Prandtl-01}_1$
		and vorticity equation $\eqref{Prandtl-02}_1$, it is easy to check that
		\beq\label{a223}
		\begin{aligned}
			&\;\p_t \left\{\sum_{1\le k \le 3}C_4^k \p_y^k(\frac{1}{\vr^\var+\vrf})\p_y^{5-k}w^\var\right\}\\
			=&\;\sum_{1\le k \le 3}C_4^k \p_y^k(\frac{1}{\vr^\var+\vrf})
			\p_y^{5-k}\left\{\var \p_x^2 w^\var+\p_y(\frac{\p_y w^\var}{\vr^\var+\vrf})-(u^\var \p_x w^\var+v^\var \p_y w^\var)\right\}\\
			&\;+\sum_{1\le k \le 3}C_4^k \p_y^k
			\left\{\frac{u^\var \p_x \vr^\var+v^\var \p_y \vr^\var-\var \p_x ^2 \vr^\var}{(\vr^\var+\vrf)^2}\right\}\p_y^{5-k}w^\var.
		\end{aligned}
		\deq
		Substituting equations \eqref{a220}, \eqref{a221}, \eqref{a222},
		\eqref{a223} into \eqref{a208}, we have on the boundary $y=0$ that
		\beq\label{a225}
		\begin{aligned}
			&\frac{1}{\vr^{\var}+\vrf}\p_t \p_y^5 w^{\var}\\
			=&\frac{2\p_y \vr^\var}{\vr^\var+\vrf}\p_y^3\left\{\frac{(u^\var \p_x \vr^\var+v^\var \p_y \vr^\var- \var \p_x^2 \vr^\var)\p_y w^\var}{(\vr^\var+\vrf)^2}
			+\frac{\p_y^2(\frac{\p_y w^\var}{\vr^\var+\vrf})}{\vr^\var+\vrf}
			-\frac{\p_y(u^\var \p_x w^\var+v^\var \p_y w^\var)}{\vr^\var+\vrf}+\frac{\var \p_y \p_x^2 w^\var}{\vr^\var+\vrf}
			\right\}\\
			&-\frac{2\p_y \vr^\var}{\vr^\var+\vrf} \p_x^2 \p_y^2\left\{u^\var \p_x w^\var+v^\var \p_y w^\var- \var \p_x^2 w^\var -\p_y(\frac{\p_y w^\var}{\vr^\var+\vrf})\right\}\\
			& +2 \var \p_x^2 \vr^\var \p_y^2 w^\var \left(\frac{6\var  \p_y \vr^\var \p_x^2 \vr^\var}{(\vr^\var+\vrf)^3}
			+2\frac{w^\var\p_x \vr^\var-\var \p_x^2\p_y \vr^\var}{(\vr^\var+\vrf)^2}\right)
			-2\p_y^2 w^\var \frac{2\var\p_x^2 \vr^\var (-w^\var \p_x \vr^\var + \var \p_x^2 \p_y \vr^\var) +2\var^2  \p_y \vr^\var \p_x^4 \vr^\var}{(\vr^\var+\vrf)^2}\\
			&-2\frac{\p_y^2 w^\var}{\vr^\var+\vrf}\left(\var w^\var \p_x^3 \vr^\var + \p_x \vr^\var (\var \p_x^2 w^\var + \p_y(\frac{\p_y w^\var}{\vr^\var+\vrf}))+\var \p_x^2 \p_y (u^\var\p_x \vr^\var+v^\var \p_y \vr^\var -\var \p_x^2 \vr^\var)\right)\\
\end{aligned}
\deq
		\beqq
		\begin{aligned}
			&-2\left(\frac{2\var  \p_y \vr^\var \p_x^2 \vr^\var}{(\vr^\var+\vrf)^2}
			+\frac{w^\var\p_x \vr^\var-\var \p_x^2\p_y \vr^\var}{\vr^\var+\vrf}\right) \left(\p_y^3(\frac{1}{\vr^\var+\vrf}\p_y w^\var)+\var \p_x^2 \p_y^2 w^\var \right)
			\\&+2(\p_y^2 w^\var -\var (\vr^\var+\vrf)\p_x^2 w^\var )\cdot \p_x\left\{\frac{1}{\vr^\var+\vrf}\p_y^2 w^\var-\var \p_x^2 w^\var\right\}\\
			&+\p_x w^\var \left(-\frac{\var \p_x^2 \vr^\var}{\vr^\var+\vrf} \p_
			2^2 w^\var
			- \var \p_x^2 \p_y^2 w^\var
			+\p_y^3(\frac{1}{\vr^\var+\vrf}\p_y w^\var)-(\vr^\var+\vrf)(\var \p_x^2 \p_y(\frac{1}{\vr^\var+\vrf}\p_y w^\var) +\var^2 \p_x^4 w^\var)\right)\\
			&+w^\var(\vr^\var+\vrf) \p_x\left\{-\frac{\var \p_x^2 \vr^\var \p_
				2^2 w^\var}{(\vr^\var+\vrf)^2}
			- \frac{\var \p_x^2 \p_y^2 w^\var}{\vr^\var+\vrf}
			+\frac{1}{\vr^\var+\vrf}\p_y^3(\frac{1}{\vr^\var+\vrf}\p_y w^\var)-\var \p_x^2 \p_y(\frac{1}{\vr^\var+\vrf}\p_y w^\var) -\var^2 \p_x^4 w^\var
			\right\}\\
			&-2 \var^2 \p_y^3 w^\var\left(\frac{2(\p_x^2 \vr^\var)^2}{(\vr^\var+\vrf)^2} - \frac{\p_x^4 \vr^\var}{\vr^\var+\vrf}\right)
			+ \var \p_x^2 \vr^\var\Big(
			\frac{\p_x(\p_y^2 w^\var  w^\var)}{\vr^\var+\vrf} -\frac{{\p_x \vr^\var w^\var}\p_y^2 w^\var}{(\vr^\var+\vrf)^2}-\frac{\var \p_x^2 \vr^\var}{(\vr^\var+\vrf)^2} \p_y^3 w^\var\Big)\\
			&+   2 \var \p_y^2 w^\var  \p_x^2 \vr^\var
			\left(\frac{-w^\var\p_x \vr^\var +\var \p_x^2 \p_y \vr^\var}{(\vr^\var+\vrf)^2}-\frac{2 \var  \p_y \vr^\var \p_x^2 \vr^\var  }{(\vr^\var+\vrf)^3}\right)
			+2\frac{\var \p_y \vr^\var \p_x^2 \vr^\var }{(\vr^\var+\vrf)^2}\left(\var \p_x^2\p_y^2 w^\var +\p_y^3(\frac{\p_y w^\var}{\vr^\var+\vrf})\right)\\
			&+3\p_x \p_y^2 w^\var \left(\p_y(\frac{\p_y w^\var}{\vr^\var+\vrf})+\var \p_x^2 w^\var\right)
			+3 w^\var \p_x \p_y^2\left\{\p_y(\frac{\p_y w^\var}{\vr^\var+\vrf})+\var \p_x^2 w^\var\right\}\\
			&-2\p_y^2 w^\var \p_x\left\{\p_y(\frac{\p_y w^\var}{\vr^\var+\vrf})+\var \p_x^2 w^\var\right\}
			-2\p_x w^\var \p_y^2\left\{\p_y(\frac{\p_y w^\var}{\vr^\var+\vrf})+\var \p_x^2 w^\var\right\}\\
			&-\var\p_x^2\left\{ \p_x(\p_y^2 w^\var  w^\var) -\frac{{\p_x \vr^\var w^\var}\p_y^2 w^\var}{\vr^\var+\vrf}
			+ 2\p_y^2 w^\var(\frac{-w^\var\p_x \vr^\var +\var \p_x^2 \p_y \vr^\var}{\vr^\var+\vrf}-\frac{2 \var \p_x^2 \p_y \vr^\var  }{(\vr^\var+\vrf)^2})\right\}\\
			&-\var\p_x^2\left\{2\frac{\p_y \vr^\var}{\vr^\var+\vrf}(\var \p_x^2\p_y^2 w^\var +\p_y^3(\frac{\p_y w^\var}{\vr^\var+\vrf}))-\frac{\var \p_x^2 \vr^\var}{\vr^\var+\vrf} \p_y^3 w^\var \right\}
			\\
			&-\sum_{1\le k \le 3}C_4^k \p_y^k(\frac{1}{\vr^\var+\vrf})
			\p_y^{5-k}\left\{\var \p_x^2 w^\var+\p_y(\frac{\p_y w^\var}{\vr^\var+\vrf})-(u^\var \p_x w^\var+v^\var \p_y w^\var)\right\}\\
			&+\frac{\var \p_x^2 \vr^\var}{(\vr^\var+\vrf)^2} \p_y^5 w^\var
			-\sum_{1\le k \le 3}C_4^k \p_y^k
			\left\{\frac{u^\var \p_x \vr^\var+v^\var \p_y \vr^\var-\var \p_x ^2 \vr^\var}{(\vr^\var+\vrf)^2}\right\}\p_y^{5-k}w^\var\\
			\overset{def}{=}&N_2.
		\end{aligned}
		\deqq
		Substituting  \eqref{a206} and \eqref{a225} into \eqref{a207},
		on the boundary $y=0$, we have
		\beq\label{a226}
		\begin{aligned}
			\frac{1}{\vr^\var+\vrf}\p_y^7 w^\var
			=&(\vr^\var+\vrf)N_2-\var \p_x^2 N_1
			+\p_y^5(u^\var \p_x w^\var +v^\var \p_y w^\var)
			-\sum_{1\le k \le 5}C_6^k \p_y^k(\frac{1}{\vr^\var+\vrf})\p_y^{7-k}w^\var.
		\end{aligned}
		\deq
		Thus, \eqref{a226} implies the boundary reduction relation $\eqref{bl-redu}_3$.
		Therefore, we complete the proof of the boundary reduction relation \eqref{bl-redu}.

		\section{Good unknown equations}\label{good-unkonw}
		In this section, we will give the proof for the equations
		\eqref{206} and \eqref{2011}.
		
		\begin{proof}[{Proof of \eqref{206}}]
			Indeed, subtracting \eqref{205} from \eqref{201}, we obtain
			\beq\label{a301}
			\begin{aligned}
				&\;\p_t w_g^\var +u^\var \p_x w_g^\var +v^\var \p_y w_g^\var-\var \p_x^2 w_g^\var
				-\p_x^6\p_y(\frac{1}{\vr^\var+\vrf}\p_y w^\var)
				+\p_x^6(\frac{1}{\vr^\var+\vrf}\p_y w^\var)g_w^\var\\
				=&\;Q_1-Q_2\cdot g_w^\var-\p_x^6 u^\var(\p_t g_w^\var+u^\var \p_x g_w^\var+v^\var \p_y g_w^\var-\var \p_x^2 g_w^\var)
				+2\var\p_x^7 u^\var \p_x g_w^\var.
			\end{aligned}
			\deq
			It is easy to check that
			\beqq
			\begin{aligned}
				\frac{1}{\vr^\var+\vrf}\p_x^6 \p_y w^\var g_w^\var
				=&\p_y\left\{\frac{1}{\vr^\var+\vrf}\p_y\p_x^6 u^\var g_w^\var\right\}
				-\p_x^6 w^\var \p_y\left\{\frac{1}{\vr^\var+\vrf}g_w^\var\right\}\\
				=&\p_y\left\{\frac{1}{\vr^\var+\vrf}\p_y(\p_x^6 u^\var g_w^\var)\right\}
				-\p_y\left\{\frac{1}{\vr^\var+\vrf}\p_x^6 u^\var \p_y g_w^\var\right\}
				-\p_x^6 w^\var \p_y\left\{\frac{1}{\vr^\var+\vrf}g_w^\var\right\}.
			\end{aligned}
			\deqq
			Thus, we have
			\beq\label{a302}
			\begin{aligned}
				&-\p_x^6\p_y\{\frac{1}{\vr^\var+\vrf}\p_y w^\var\}
				+\p_x^6\{\frac{1}{\vr^\var+\vrf}\p_y w^\var\}g_w^\var\\
				=&-\p_y\left\{\frac{1}{\vr^\var+\vrf}\p_y \p_x^6 w^\var\right\}
				-\p_y \left\{\sum_{1\le k\le 6}C_6^k \p_x^k(\frac{1}{\vr^\var+\vrf})\p_x^{6-k}\p_y w^\var\right\}
				+\p_y\left\{\frac{1}{\vr^\var+\vrf}\p_y(\p_x^6 u^\var g_w^\var)\right\}\\
				&-\p_y\left\{\frac{1}{\vr^\var+\vrf}\p_x^6 u^\var \p_y g_w^\var\right\}
				-\p_x^6 w^\var \p_y\left\{\frac{1}{\vr^\var+\vrf}g_w^\var\right\}
				+\sum_{1\le k\le 6}C_6^k \p_x^k(\frac{1}{\vr^\var+\vrf})\p_x^{6-k}\p_y w^\var g_w^\var\\
				=&-\p_y\left\{\frac{1}{\vr^\var+\vrf}\p_y\{\p_x^6 w^\var-\p_x^6 u^\var g_w^\var\}\right\}+Q_3^*,
			\end{aligned}
			\deq
			where $Q_3^*$ is defined as follows
			\beqq
			\begin{aligned}
				Q_3^*=&-\p_y\left\{\sum_{1\le k \le 6}C_6^k \p_x^6(\frac{1}{\vr^\var+\vrf})\p_x^{6-k}\p_y w^\var\right\}
				-\p_y\left\{\frac{1}{\vr^\var+\vrf}\p_x^6 u^\var \p_y g_w^\var\right\}\\
				&-\p_x^6 w^\var \p_y\left\{\frac{1}{\vr^\var+\vrf}g_w^\var\right\}
				+\sum_{1\le k \le 6}C_6^k \p_x^k(\frac{1}{\vr^\var+\vrf})\p_x^{6-k}\p_y w^\var g_w^\var.
			\end{aligned}
			\deqq
			Using the vorticity equation, it is easy to check that
			\beq\label{a303}
			\begin{aligned}
				&\p_t g_w^\var+u^\var \p_x g_w^\var+v^\var \p_y g_w^\var-\var \p_x^2 g_w^\var\\
				=&\;\frac{1}{w^\var}\p_t \p_y w^\var-\frac{\p_y w^\var}{(w^\var)^2}\p_t w^\var
				+u^\var \p_x(\frac{\p_y w^\var}{w^\var})+v^\var \p_y (\frac{\p_y w^\var}{w^\var})-\var \p_x^2 (\frac{\p_y w^\var}{w^\var})\\
				=&\;  Q_4.
			\end{aligned}
			\deq
			Then, substituting the quantities \eqref{a302} and \eqref{a303}
			into \eqref{a301}, we have
			\beqq
			\p_t w_g^\var+u^\var \p_x w_g^\var +v^\var \p_y w_g^\var-\var \p_x^2 w_g^\var
			-\p_y\left\{\frac{1}{\vr^\var+\vrf}\p_y w_g^\var\right\}
			=Q_1-Q_2\cdot g_w^\var-Q_3^*-Q_4\cdot \p_x^6 u^\var
			+2\var\p_x^7 u^\var \p_x g_w^\var.
			\deqq
			Therefore, we complete the proof of equation \eqref{206}.
		\end{proof}
		
		\begin{proof}[Proof of \eqref{2011}]
			Indeed, subtracting \eqref{2010} from \eqref{208}, we obtain
			\beq\label{a304}
			\begin{aligned}
				&\p_t \vr_g^\var+u^\var \p_x \vr_g^\var+v^\var \p_y \vr_g^\var-\var\p_x^2 \vr_g^\var\\
				=&\;Q_5-Q_6\cdot g_\vr^\var
				-\p_x^6 u^\var(\p_t g_\vr^\var+u^\var \p_x g_\vr^\var+v^\var \p_y g_\vr^\var-\var \p_x^2 g_\vr^\var)
				+2\var\p_x^7 u^\var \p_x g_\vr^\var.
			\end{aligned}
			\deq
			where $Q_6$ is defined as follows
			\beqq
			Q_6\overset{def}{=}Q_2+\p_x^6 \left\{ \frac{1}{\vr^\var+\vrf}\p_y w^\var\right\}.
			\deqq
			Using the density and vorticity equation, we have
			\beq\label{a305}
			\p_t g_\vr^\var+u^\var \p_x g_\vr^\var+v^\var \p_y g_\vr^\var-\var \p_x^2 g_\vr^\var
			=\frac{1}{w}\p_t \p_y \vr^\var-\frac{\p_y \vr^\var}{(w^\var)^2}\p_t w^\var
			+u^\var \p_x (\frac{\p_y \vr^\var}{w^\var})+v^\var \p_y (\frac{\p_y \vr^\var}{w^\var})-\var \p_x^2 (\frac{\p_y \vr^\var}{w^\var})
			\overset{def}{=}Q_7,
			\deq
			where $Q_7$ is defined as follows
			\beqq
			\begin{aligned}
				Q_7\overset{def}{=}&-\frac{\p_y(u^\var \p_x \vr^\var+v^\var \p_y \vr^\var-\var \p_x^2 \vr^\var)}{w^\var}
				-\frac{\p_y \vr^\var}{(w^\var)^2}
				\left(-u^\var \p_x w^\var-v^\var \p_y w^\var+\p_y(\frac{1}{\vr^\var+\vrf}\p_y w^\var)+\var \p_x^2 w^\var\right)\\
				&+u^\var \left(\frac{\p_{xy}\vr^\var}{w^\var}-\frac{\p_y \vr^\var \p_x w^\var}{(w^\var)^2}\right)
				+v^\var\left(\frac{\p_y^2 \vr^\var}{w^\var}-\frac{\p_y \vr^\var \p_y w^\var}{(w^\var)^2}\right)
				-\var \p_x^2 (\frac{\p_y \vr^\var}{w^\var}).
			\end{aligned}
			\deqq
			Therefore, substituting the quantity \eqref{a305} into \eqref{a304},
			we complete the proof of \eqref{2011}.
		\end{proof}

		\section{Compatibility of initial data}\label{appendix-com}
		
		In this section, we will give the control of initial data involved vertical
		velocity that is not given initial data.
		Indeed, we will introduce the compatibility of initial data at the
		corner $(0, 0)$. This can explain the reason why initial data required
		in Theorem \ref{main-result-steady}.
		First of all, from the equation $\eqref{s-Prandtl}_2$, we have
		\beq\label{c001}
		{\rho} u \p_{y}(\f{v}{u})|_{x=0}
		=-\frac{\p_y^2 u}{u}|_{x=0}
		=-\frac{\p_y^2 u_0}{u_0}.
		\deq
		In order to make sure that $\frac{\p_y^2 u_0}{u_0}\in L^2(\mathbb{R}^+)$,
		we should at least require
		\beq\label{c01}
		\p_y^2 u_0|_{y=0}=0.
		\deq
		On the other hand, by evaluating the equation $\eqref{s-Prandtl}_2$
		at the boundary $y=0$, it is easy to check that
		\beqq
		\p_y^2 u |_{y=0}=0.
		\deqq
		Thus, we have $\underset{x \rightarrow 0^+}{\lim}\p_y^2 u |_{y=0}=0$.
		Therefore, the initial data condition \eqref{c01} essentially
		implies the initial compatibility condition at the corner $(0,0)$.
		
		Secondly, from the equation $\eqref{s-Prandtl}$, we have
		\beqq
		\rho u \p_{xy}(\frac{v}{u})
		=-\p_x \rho u \p_y (\frac{v}{u})
		-2 \rho \p_x u \p_y (\frac{v}{u})
		+\frac{\p_y^3 v}{u}
		=\frac{v}{u}\p_y \rho u \p_y (\frac{v}{u})
		+2 \rho \p_y v \p_y (\frac{v}{u})
		+\frac{\p_y^3 v}{u},
		\deqq
		which yields directly
		\beqq
		\rho u \p_{xy}(\frac{v}{u})|_{x=0}
		=\left(\frac{v}{u}\p_y \rho u \p_y (\frac{v}{u})
		+2 \rho \p_y v \p_y (\frac{v}{u})\right)_{x=0}
		+\frac{\p_y^3 v}{u}|_{x=0}.
		\deqq
		From the equation \eqref{s-Prandtl}, we have
		\beq\label{v-ex}
		v=-u \int_0^y \frac{\p_y^2 u}{\rho u^2}d\tau,
		\deq
		and hence, we define
		\beqq
		v_0(y)\overset{def}{=}-u_0 \int_0^y \frac{\p_y^2 u_0}{\rho_0 u_0^2}d\tau.
		\deqq
		Thus, we have
		\beqq
		\rho u \p_{xy}(\frac{v}{u})|_{x=0}
		=\left\{\frac{v}{u}\p_y \rho u \p_y (\frac{v}{u})
		+2 \rho \p_y v \p_y (\frac{v}{u})\right\}|_{x=0}
		+\frac{\p_y^3 v_0}{u_0}.
		\deqq
		In order to make sure that
		$\sqrt{\rho} u \p_{xy}(\frac{v}{u})|_{x=0}\in L^2(\mathbb{R}^+)$
		and avoid the singularity at $y=0$,
		we should at least require
		\beq\label{c02}
		\p_y^3 v_0(y)|_{y=0}=0.
		\deq
		On the other hand,  from the equation $\eqref{s-Prandtl}_2$
		and boundary condition $\eqref{s-Prandtl}_4$, one may check that
		\beqq
		0=\p_x (\rho u \p_x u+\rho v \p_y u-\p_y^2 u)|_{y=0}
		=\p_y^3 v|_{y=0},
		\deqq
		which implies
		$\underset{x \rightarrow 0^+}{\lim}\p_y^3 v|_{y=0}=0$.
		Therefore, the initial data condition \eqref{c02} essentially
		implies the initial compatibility condition at the corner $(0,0)$.
		In our paper, the initial compatibility  data \eqref{c02}
		is called the first order generic compatibility condition
		at the corner $(0, 0)$.
		Similarly, taking $\p_x^m(m \ge 2)$ operator
        to the equation $\eqref{s-Prandtl}_2$, we have
		\beqq
		\rho u \p_x^m \p_{y} (\frac{v}{u})=
		-\frac{1}{u}\sum_{1\le k \le m}C^m_k \p_x^k (\rho u^2)
		\p_x^{m-k}\p_{y}(\frac{v}{u})+\frac{\p_x^{m-1} \p_y^3 v}{u}.
		\deqq
		In order to make sure that
		$\sqrt{\rho} u \p_x^m \p_{y}(\frac{v}{u})|_{x=0}\in L^2(\mathbb{R}^+)$
		and avoid the singularity at $y=0$,
		we should at least require
		\beq\label{c0m}
		\left(\p_x^{m-1}\p_y^3 v(x,y)|_{x=0}\right)_{y=0}=0.
		\deq
		On the other hand, we have
		\beqq
		0=\p_x^m(\rho u \p_x u+\rho v \p_y u-\p_y^2 u)|_{y=0}
		=\p_x^{m-1}\p_y^3 v|_{y=0},
		\deqq
		which implies
		$\underset{x \rightarrow 0^+}{\lim}\p_x^{m-1} \p_y^3 v|_{y=0}=0$.
		Therefore, the initial data condition \eqref{c02} essentially
		implies the initial compatibility condition at the corner $(0,0)$.
		In our paper, the initial compatibility  data \eqref{c0m}
		is called the m-th order generic compatibility condition
		at the corner $(0, 0)$.
		At the same time, we will explain the initial data $u_0$ satisfying
		this compatibility condition.
		From the equation \eqref{v-ex}, we have
		\beqq
		\begin{aligned}
			\p_y^3 v
			=&-\p_y^3 u \int_0^y \frac{\p_{y'}^2 u}{\rho u^2}d y'
			-\frac{2 (\p_{y}^2 u)^2}{\rho u^2}
			-\p_y u \left( \p_y(\frac{1}{\rho})\frac{\p_y^2 u}{u^2}
			+\frac{1}{\rho}(\frac{\p_y^3 u}{u^2}-\frac{2\p_y^2 u \p_y u}{u^3})\right)\\
			&-\p_y^2(\frac{1}{\rho})\frac{\p_y^2 u}{u}
			-2\p_y(\frac{1}{\rho})\left(\frac{\p_y^2 u}{u}-\frac{\p_y^2 u \p_y u}{u^2}\right)
			-\frac{1}{\rho}\left(\frac{\p_y^4 u}{u}-\frac{2 \p_y^3 u \p_y u}{u^2}
			-\frac{(\p_y^2 u)^2}{u^2}+\frac{2\p_y^2 u (\p_y u)^2}{u^3}\right).
		\end{aligned}
		\deqq
		In order to make sure that $\p_y^3 v_0(y)|_{y=0}=0$,
		we require the initial data of $u_0$ to satisfy
		\beqq
		\p_y^3 u_0|_{y=0}=\p_y^4 u_0|_{y=0}=\p_y^5 u_0|_{y=0}=0.
		\deqq
		Now, let us consider the initial data
		$\|\sqrt{\rho}u \p_{y}^2(\frac{v}{u}) \xw^{\ssg}|_{x=0}\|_{L^2}$.
		Indeed, from the equation $\eqref{s-Prandtl}_2$, we have
		\beq\label{c021}
		\rho u \p_y^2(\frac{v}{u})|_{x=0}=
		-\left(\p_y \rho u \p_y(\frac{v}{u})
		+2\rho \p_y u \p_y(\frac{v}{u})\right)_{x=0}-\frac{\p_y^3 u_0}{u_0},
		\deq
		and
		\beq\label{c022}
		0=\p_y(\rho u \p_x u+\rho v \p_y u-\p_y^2 u)|_{y=0}
		=-\p_y^3 u|_{y=0}.
		\deq
		In order to avoid the singularity of $y$ in \eqref{c021}, we require the initial data
		of $u_0$ satisfying
		\beq\label{c03}
		\p_y^3 u_0(y)|_{y=0}=0.
		\deq
		The equation \eqref{c022} implies directly
		$\underset{x \rightarrow 0^+}{\lim}\p_y^3 u|_{y=0}=0$.
		Therefore, the initial data condition \eqref{c03} essentially
		implies the initial compatibility condition at the corner $(0,0)$.
		Finally, let us control the initial data $\p_x \rho$.
		Indeed, from the equation $\eqref{s-Prandtl}_1$, it is easy to check that
		\beqq
		\|\p_x \rho|_{x=0}\|_{L^2}
		\le \|(\frac{v}{u})|_{x=0}\p_y \rho_0 \|_{L^2}
		\le \|\p_y(\frac{v}{u})\xw|_{x=0}\|_{L^2}\|\p_y \rho_0 \|_{L^2}
		\deqq
		The conditions \eqref{c001} and  \eqref{c03} implies directly
		$\p_x \rho|_{x=0} \in L^2(\mathbb{R}^+)$.
		Therefore, the initial data $\mathcal{X}(0)$
can be controlled of  in terms of $\rho_0$ and $u_0$.

		\section{Derivative of inhomogeneous Prandtl equations}\label{appendix-derivation}
		
		In this section, we will give a derivation of the Prandtl equations \eqref{Prandtl}.
		We consider the two-dimensional inhomogeneous incompressible Navier-Stokes equations with a small viscosity coefficient $\mu$ in a period domain $\{(x, y)|(x, y) \in \mathbb{T} \times \mathbb{R}^{+} \}:$
		\beq\label{0-NS}
		\left\{\begin{aligned}
			&\p_t \rho^{\mu}+ {\rm div}(\rho^{\mu} \mathbf{u^{\mu}})=0,\\
			&\rho^{\mu} \p_t \mathbf{u^{\mu}}+ \rho^{\mu} (\mathbf{u^{\mu}} \cdot \nabla)\mathbf{u^{\mu}}+\nabla p^{\mu}= \mu \triangle \mathbf{u^{\mu}},\\
			&{\rm div} \mathbf{u^{\mu}}=0,
		\end{aligned}\right.
		\deq
		where $t>0$,~$\mathbf{u^{\mu}}=(u^{\mu},v^{\mu})$ denotes the velocity, and $p^{\mu}$ represents the pressure of fluid. To complete the system \eqref{0-NS}, the initial data is given by
		$$
		\mathbf{u^{\mu}}(0, x, y)= \mathbf{u}^{\mu}_0(x, y).
		$$
		The boundary conditions are given by
		\beq\label{boundary}
		u^{\mu}(t, x, 0) = v^{\mu}(t, x, 0).
		\deq
		
		As the viscosity coefficient $\mu$ tends to zero, we obtain the following system formally.
		\beqq
		\left\{\begin{aligned}
			&\p_t \rho^0+ {\rm div}(\rho^0 \mathbf{u^{0}})=0,\\
			&\rho^{0} \p_t \mathbf{u^{0}}+ \rho^{0} (\mathbf{u^{0}} \cdot \nabla)\mathbf{u^{0}}+\nabla p^{0}=0,\\
			&{\rm div} \mathbf{u^{0}}=0.
		\end{aligned}\right.
		\deqq
		To find out the terms in \eqref{0-NS} whose contributions are essential for the boundary layer, we use the following scale transformation:
		$$ t=t,x=x,\tilde{y}=\mu^{-\f12}y,$$
		and set
		\beqq
		\begin{aligned}
			\rho(t,x,\tilde{y})&=\rho^{\mu}(t,x,y),\quad
			p(t,x,\tilde{y})=p^{\mu}(t,x,y),\\
			u(t,x,\tilde{y})&=u^{\mu}(t,x,y),\quad
			v(t,x,\tilde{y})=\mu^{-\f12} v^{\mu}(t,x,y).\\
		\end{aligned}
		\deqq
		Taking the leading order, the system \eqref{0-NS} is reduced to
		\beq\label{1-NS}
		\left\{\begin{aligned}
			&\rho_t + u \p_{x} \rho + v \p_{\tilde{y}} \rho =0,\\
			&\rho \p_t u + \rho u \p_{x} u + \rho v \p_{\tilde{y}} u  +  \p_{x} p = \p_{\tilde{y}}^2 u,\\
			& \p_{\tilde{y}} p=0,\\
			&\p_{x} u+\p_{\tilde{y}} v =0.
		\end{aligned} \right.
		\deq
		The third equation in the system \eqref{1-NS} implies that the leading order of boundary layers for the total pressure $p(t, x,\tilde{y})$ is invariant across the boundary layer, and should be matched to the outflow pressure $p(t, x)$ on top of boundary layer, that is, the trace of pressure of ideal MHD flow. Hence, we obtain
		$$p(t, x,\tilde{y})\equiv p(t, x).$$
		Furthermore, the density $\rho(t, x,\tilde{y})$, tangential component $u(t, x,\tilde{y})$ of velocity flied, should match the outflow density $\rho_e (t, x)$ and tangential velocity $u_e(t, x)$, on the top of boundary layer, that is
		$$\rho(t, x,\tilde{y}) \to \rho_e(t, x), u(t, x,\tilde{y}) \to u_e(t, x),  \text{as} ~ \tilde{y} \to +\infty.$$
		Then, we have the following matching conditions:
		\beqq
		\left\{\begin{aligned}
			&\p_t \re +\ue \p_x \re=0,\\
			&\re \p_t \ue+\re \ue \p_x \ue+\p_x p=0.\\
		\end{aligned}\right.
		\deqq
		Moreover, by virtue of the boundary condition \eqref{boundary}, one attains the following boundary condition
		\beqq
		u(t, x, 0) = v(t, x, 0)= 0.
		\deqq
		Therefore, one can obtain the following Prandtl system
		\beqq
		\left\{\begin{aligned}
			&\p_t \rho+u \p_x \rho+ v \p_y \rho=0,\\
			&\rho \p_t u+\rho u \p_x u+\rho v \p_y u+\p_x p
			-\p_y^2 u=0,\\
			&\p_x u+\p_y v=0,\\
			&(u, v)|_{y=0}=0,
			\underset{y\rightarrow +\infty}{\lim}(\rho, u)(t,x,y)
			=(\re(t,x),\ue(t,x)),\\
			&(\rho, u)|_{t=0}=(\rho_0, u_{0})(x, y),
		\end{aligned}\right.
		\deqq
		here we have replaced $\tilde{y}$ by $y$ for simplicity of notations.
		Thus, we obtain the Prandtl system \eqref{Prandtl}.
		
	\end{appendices}
	\phantomsection
	\addcontentsline{toc}{section}{\refname}
	
\end{document}